\newtheorem{theorem}{Theorem}[section]
\newtheorem{prop}[theorem]{Proposition}
\newtheorem{lemma}[theorem]{Lemma}
\newtheorem{remark}[theorem]{Remark}
\newtheorem{definition}[theorem]{Definition}
\newtheorem{cor}[theorem]{Corollary}
\def\co{\colon\thinspace}
\def\ep{\epsilon}
\begin{document}
\title[Boundary depth, Hamiltonian dynamics, and coisotropic submanifolds]{Boundary depth in  Floer theory and its applications to Hamiltonian dynamics and coisotropic submanifolds}\author{Michael Usher}\address{Department of Mathematics, University of Georgia, Athens, GA 30602}\email{usher@math.uga.edu}
\begin{abstract}  We assign to each nondegenerate Hamiltonian on a closed symplectic manifold a Floer-theoretic quantity called its ``boundary depth,'' and establish basic results about how the boundary depths of different Hamiltonians are related.  As applications, we prove that certain Hamiltonian symplectomorphisms supported in displaceable subsets have infinitely many nontrivial geometrically distinct periodic points, and we also significantly expand the class of coisotropic submanifolds which are known to have positive displacement energy.  For instance, any coisotropic submanifold of contact type (in the sense of Bolle) in any closed symplectic manifold  has positive displacement energy, as does any stable coisotropic submanifold of a Stein manifold.  We also show that any stable coisotropic submanifold admits a Riemannian metric that makes its characteristic foliation totally geodesic, and that this latter, weaker, condition is enough to imply positive displacement energy under certain topological hypotheses.
\end{abstract}
\maketitle

\section{introduction}
A nondegenerate Hamiltonian $H$ on a closed symplectic manifold $(M,\omega)$ has an associated Floer chain complex $CF_*(H,J)$ (see, \emph{e.g.}, \cite{Sal} for a survey; we use $J$ here to denote a suitable family of almost complex structures, together with an abstract perturbation in the sense of \cite{FO},\cite{LT} in the case where such perturbations are needed), whose homology is equal to the quantum cohomology $QH^*(M)=H^*(M)\otimes\Lambda$, independently of $H$.  This fact, which by the end of the 1990s had been proven for arbitrary $(M,\omega)$ (\cite{FO},\cite{LT}), has as an immediate corollary the remarkable fact that the number of fixed points of the time-$1$ map of the Hamiltonian flow of $H$ is at least equal to the sum of the Betti numbers of $M$, thus establishing a variant of a famous conjecture of Arnol'd.  

The fact that $CF_*(H,J)$ has homology (and in fact chain homotopy type) independent of $H$, while allowing one to prove the aforementioned important result about \emph{all} Hamiltonians, might seem to make Floer homology ill-suited to detecting properties that only obtain for some Hamiltonians.  However, this has proven not to be the case, largely as a result of the fact that the chain complex $CF_*(H,J)$ admits a natural \emph{filtration} by $\mathbb{R}$; thus for each $\lambda\in\mathbb{R}$ one has a chain complex $CF_{*}^{\lambda}(H,J)$.  The isomorphism type of $CF_{*}(H,J)$ as a $\mathbb{R}$-filtered chain complex is independent of the auxiliary data $J$ (as follows from the proof of Corollary 4.5 of \cite{Ohsurvey}, for instance; see Lemma \ref{htopycopy} below for a more general statement), but the filtration-dependent properties of $CF_*(H,J)$ (in particular the filtered homologies $HF_{*}^{\lambda}(H)$) are rather sensitive to $H$.  This suggests a program of studying individual Hamiltonians on symplectic manifolds by extracting invariants from the filtered chain isomorphism types of their Floer chain complexes.

The invariants that have so far most productively been used in this spirit are the so-called \emph{spectral numbers}, which were introduced in the symplectically aspherical case in \cite{Schwarz} and were extended to general closed symplectic manifolds by Oh (see \cite{Oh1}).  Here, recalling that the Floer homology $HF_*(H)$ is \emph{canonically} isomorphic to the quantum cohomology $QH^*(M)$, one associates to a class $a\in QH^*(M)\setminus\{0\}$ the spectral number $\rho(H;a)$, given by the infimum of the real numbers $\lambda$ with the property that the class $a$ is represented in the filtered complex $CF_{*}^{\lambda}(H,J)$.  (We give a few more details in Section \ref{spectralsub} below and refer to \cite{Ohsurvey} for a full survey.)  By now, there have been several applications of the spectral numbers; for a sampling of these, we refer the reader to \cite{EP}, \cite{G06}, \cite{Oh3},  \cite{U3}.

In this paper, we introduce a new filtration-based invariant of a nondegenerate Hamiltonian $H$, called its \emph{boundary depth} and denoted $\beta(H,J)$ (in fact, $\beta(H,J)$ is independent of $J$, and so is later denoted $\beta(H)$).  Where the boundary operator on the Floer chain complex is denoted by $\partial_{H,J}$ (for appropriate auxiliary data $J$), $\beta(H,J)$ is defined as the infimum of all nonnegative numbers $\beta$ with the property that, for any $\lambda\in\mathbb{R}$, one has \[ CF_{*}^{\lambda}(H,J)\cap\partial_{H,J}\left(CF_{*}(H,J)\right)\subset \partial_{H,J}\left(CF_{*}^{\lambda+\beta}(H,J)\right).\]  Thus, for $\beta>\beta(H,J)$, any chain $c$ in the Floer complex that is a boundary is in fact the boundary of a chain with filtration level at most $\beta$ larger than the filtration level of $c$.

Since there typically exist chains in the Floer complex with arbitrary high filtration level, it is not immediately obvious that $\beta(H,J)$ is finite, \emph{i.e.}, that there exist any numbers $\beta$ with the property that the last sentence of the previous paragraph holds.  This does, however, turn out to be the case.  A similar phenomenon to this finiteness was proven in the context of Lagrangian Floer homology as Proposition A.4.9 of \cite{FOOO}, but the first proof applicable to Hamiltonian Floer homology seems to have been the proof of the last sentence of Theorem 1.3 of \cite{U}, which in fact applies in a rather general algebraic setting that includes Hamiltonian Floer homology as a special case.   Almost immediately after the preliminary version of \cite{U} was completed, Oh observed that, in the case of Hamiltonian Floer homology, the finiteness can be deduced more geometrically, and that this approach produces what can be translated into an effective bound on $\beta(H,J)$ (this is Proposition 8.8 of the current revision of \cite{OhCerf}).  This bound, and the approach that leads to it, will be quite important for our purposes below.      

We will state our main results about the behavior of $\beta(H)$ presently.  In the theorem below, we consider smooth periodic Hamiltonians $H\co (\mathbb{R}/\mathbb{Z})\times M\to\mathbb{R}$, which induce time-dependent Hamiltonian vector fields $X_{H}$ defined by $\iota_{X_H}\omega=d(H(t,\cdot))$ and flows $\{\phi_{H}^{t}\}_{0\leq t\leq 1}$ defined by $\frac{d\phi_{H}^{t}}{dt}=X_{H}(t,\phi_{H}^{t}(\cdot)).$  We will typically assume that $H$ is nondegenerate in the sense that, at each fixed point $p$ of $\phi_{H}^{1}$, the linearization $(\phi_{H}^{1})_*$ does not have $1$ as an eigenvalue.   Also, we set \[ \|H\|=\int_{0}^{1}\left(\max_MH(t,\cdot)-\min_M H(t,\cdot)\right)dt.\] 

\begin{theorem}\label{mainbeta} The boundary depth $\beta$ satisfies the following properties: \item[(i)] $\beta(H,J)$ is independent of $J$, and hence will be denoted $\beta(H)$.
\item[(ii)] If $H$ and $K$ are two nondegenerate Hamiltonians we have \[ |\beta(H)-\beta(K)|\leq \|H-K\|.\]
\item[(iii)(Proposition 8.8, \cite{OhCerf})] $\beta(H)\leq\|H\|$.
\item[(iv)] If $H$ and $K$ are two nondegenerate Hamiltonians with either $K\leq 0$ everywhere or $K\geq 0$ everywhere such that, for some compact set $S\subset M$, we have \[ \phi_{H}^{1}(S)\cap S=\varnothing\mbox{ and }|K(t,m)|\leq\ep\mbox{ for all }t\in\mathbb{R}/\mathbb{Z},m\in M\setminus S,\] then \[ \beta(K)\leq 2\|H\|+2\ep.\] 
\item[(v)] If $H$ and $K$ are two nondegenerate Hamiltonians such that $\phi_{H}^{1}=\phi_{K}^{1}$ and the paths $\{\phi_{H}^{t}\}_{0\leq t\leq 1}$ and $\{\phi_{K}^{t}\}_{0\leq t\leq 1}$ are homotopic rel endpoints in the Hamiltonian diffeomorphism group, then \[ \beta(H)=\beta(K).\]
\end{theorem}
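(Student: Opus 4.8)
The plan is to deduce (v) from the stronger statement — which I expect to be precisely the general form of Lemma~\ref{htopycopy} alluded to in the introduction — that the \emph{filtered} chain isomorphism type of $CF_*(H,J)$ depends on the pair $(H,J)$ only through the homotopy class rel endpoints of the path $\{\phi_H^t\}_{0\le t\le1}$ in the Hamiltonian diffeomorphism group. Granting this, the argument is formal: the number $\beta(\cdot,\cdot)$ of the definition refers only to the boundary operator and the $\mathbb{R}$-filtration, so it is manifestly invariant under filtered chain isomorphism; hence a filtered chain isomorphism $CF_*(H,J)\to CF_*(K,J')$ (for suitable $J'$) forces $\beta(H,J)=\beta(K,J')$, and part (i) upgrades this to $\beta(H)=\beta(K)$. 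All of the content therefore lies in exhibiting that isomorphism.

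To build it, write $\phi:=\phi_H^1=\phi_K^1$. The assignment $p\mapsto(t\mapsto\phi_H^t(p))$ identifies the contractible $1$-periodic orbits of $X_H$ with a subset of $\mathrm{Fix}(\phi)$, and likewise for $X_K$; composing gives a canonical bijection $\gamma_p^H\leftrightarrow\gamma_p^K$ between the periodic orbits of the two Hamiltonians. Fixing a homotopy $\{\phi_t^s\}_{s\in[0,1]}$ rel endpoints from $\{\phi_t^H\}$ to $\{\phi_t^K\}$ produces, for each relevant $p$, a cylinder $Z_p(s,t)=\phi_t^s(p)$ from $\gamma_p^H$ to $\gamma_p^K$ (note $\phi_0^s=\mathrm{id}$ and $\phi_1^s=\phi$ force $Z_p(s,0)=Z_p(s,1)=p$); gluing a capping $w$ of $\gamma_p^H$ onto $Z_p$ yields a capping $w\# Z_p$ of $\gamma_p^K$, and this promotes the orbit bijection to a bijection $[\gamma_p^H,w]\leftrightarrow[\gamma_p^K,w\# Z_p]$ of the standard bases of $CF_*(H,J)$ and $CF_*(K,J')$.

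Two compatibilities then have to be checked. First, the bijection preserves the Hamiltonian action, $\mathcal{A}_K(\gamma_p^K,w\# Z_p)=\mathcal{A}_H(\gamma_p^H,w)$; this is the classical computation underlying the fact that the action spectrum descends to the universal cover of the Hamiltonian group (Schwarz in the aspherical case, Oh in general), obtained by differentiating the action along the homotopy and observing that the boundary terms at $t=0,1$ cancel because $\frac{d}{ds}\phi_t^s$ vanishes there. Second, for an appropriate $J'$ the bijection intertwines $\partial_{H,J}$ and $\partial_{K,J'}$: the substitution $u(s,t)=\phi_H^t(v(s,t))$ converts the $H$-Floer equation into $\partial_s v+(\phi_H^t)^*J_t\,\partial_t v=0$ with asymptotics at points of $\mathrm{Fix}(\phi)$, and likewise the $K$-Floer equation into $\partial_s v+(\phi_K^t)^*J'_t\,\partial_t v=0$; choosing $J'$ with $J'_t$ equal to the pushforward of $J_t$ by $\phi_K^t\circ(\phi_H^t)^{-1}$ makes $(\phi_K^t)^*J'_t=(\phi_H^t)^*J_t$, so the two equations and hence the two differentials become literally identical. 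Together with the action identity this is the sought filtered chain isomorphism.

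I expect the main difficulty to be bookkeeping rather than any single deep point: tracking cappings and the equivalence relation on capped orbits in the generator bijection, verifying the action identity, and — in the general, non-semipositive case — arranging that the abstract perturbations of \cite{FO},\cite{LT} underlying the two Floer complexes correspond under conjugation by $\phi_K^t\circ(\phi_H^t)^{-1}$. Should the general form of Lemma~\ref{htopycopy} already encode this naturality of the filtered chain isomorphism type, then statement (v) follows in a single line.
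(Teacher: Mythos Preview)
Your reduction of (v) to a filtered chain isomorphism $CF_*(H,J)\cong CF_*(K,J')$ is exactly what the paper does (Corollary~\ref{depthhtopyinvt} deduces (v) from Lemma~\ref{htopyinvt}), and your formal argument that $\beta$ is a filtered-chain-isomorphism invariant is the same. Where you diverge is in how you produce the isomorphism. The paper builds a continuation-type chain map $\Phi$ by counting solutions to a modified Floer equation containing both $X_{H_s}$ and an auxiliary $X_{K_s}$ (following Kerman); the curvature identity $\partial_sH_s-\partial_tK_s=-\{H_s,K_s\}$ forces the action drop to equal the energy, so $\Phi$ is filtration non-increasing, and a chain-homotopy plus geometric-series argument shows it is a filtered isomorphism for arbitrary $J_0,J_1$. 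You instead fix a particular $J'$---the pushforward of $J$ by $\phi_K^t\circ(\phi_H^t)^{-1}$---and use the graph substitution $u=\phi_H^t\circ v$ to identify the two moduli problems literally, so the isomorphism is explicit on generators and no new moduli spaces are counted; you then invoke (i) to remove the dependence on the special $J'$. This is correct and more elementary, at the price of only working for one $J'$ at a time and pushing the naturality of abstract perturbations (which you flag) onto the coordinate change. Two small points: the action identity you cite holds as stated only for \emph{normalized} Hamiltonians (the paper makes this reduction explicitly in the proof of Corollary~\ref{depthhtopyinvt}); and you should note that the cylinder homotopy $C_\sigma(s,t)=\phi_t^\sigma((\phi_H^t)^{-1}u(s,t))$ is what witnesses the compatibility $Z_p\#u'\sim u\#Z_q$ of your two bijections on cappings---this is the ``bookkeeping'' you anticipate, and it goes through. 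Interestingly, the paper's remark following Lemma~\ref{htopyinvt} sketches exactly your generator bijection $T$ as an intuitive explanation, but stops short of verifying that $T$ itself is a chain map.
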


\begin{proof} (i) follows from (ii), which is proven as Proposition \ref{Lip}.

As noted, (iii) is Proposition 8.8 of \cite{OhCerf}, which for convenience we reprove as Corollary \ref{hoferdepth} below.

(iv) is Corollary \ref{dispbeta}, which follows quickly from (v) and Lemma \ref{disp1}.

(v) is Corollary \ref{depthhtopyinvt}. 
\end{proof}

Theorem \ref{mainbeta}(v) above follows immediately from the following more general result, proven as Lemma \ref{htopyinvt} below, which may be of independent interest.  Recall that a Hamiltonian $H\co (\mathbb{R}/\mathbb{Z})\times M\to\mathbb{R}$ is called \emph{normalized} if, for all $t$, we have $\int_{M}H(t,\cdot)\omega^n=0$.

\begin{lemma}\label{htopycopy}   Suppose that $H_0$ and $H_1$ are two normalized, nondegenerate Hamiltonians such that $\phi_{H_0}^{1}=\phi_{H_1}^{1}$ and the paths $t\mapsto \phi_{H_i}^{t}$ are homotopic rel endpoints in the Hamiltonian diffeomorphism group.  Then for  $J_i\in\mathcal{J}^{reg}(H_i)$, there is an isomorphism of chain complexes \[ \Phi\co CF_*(H_0, J_0)\to CF_*(H_1,J_1)\] which, for each $\lambda\in\mathbb{R}$, restricts to an isomorphism of the filtered Floer chain complexes $CF_{*}^{\lambda}(H_i,J_i)$.\end{lemma}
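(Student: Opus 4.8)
The plan is to realize $\Phi$ as a composition $\Theta\circ\Xi$ of two filtered chain isomorphisms. The first, $\Xi$, is ``geometric'': it identifies capped $1$-periodic orbits of $X_{H_0}$ with those of $X_{H_1}$ by transporting them along a contractible loop of Hamiltonian diffeomorphisms extracted from the homotopy of paths. The second, $\Theta$, is the standard isomorphism that arises when one changes only the almost complex structure. I would begin with a harmless reparametrization: replacing each $H_i$ by $\tau'(t)H_i(\tau(t),\cdot)$ for a suitable $\tau$ changes neither $\phi_{H_i}^1$, nor the normalizations, nor the homotopy classes of the paths, nor nondegeneracy, so we may assume $H_i(t,\cdot)\equiv 0$ for $t$ near $\mathbb Z$; after a parallel reparametrization of the homotopy we may assume all isotopies in sight are locally constant in $t$ near $\mathbb Z$. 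Then $\psi_t:=\phi_{H_1}^t\circ(\phi_{H_0}^t)^{-1}$ is a smooth loop in $\mathrm{Ham}(M,\omega)$ based at the identity, null-homotopic because $\{\phi_{H_0}^t\}$ and $\{\phi_{H_1}^t\}$ are homotopic rel endpoints; I fix a homotopy $\{g_r^t\}_{r,t\in[0,1]}$ of Hamiltonian isotopies with $g_0^t=\phi_{H_0}^t$, $g_1^t=\phi_{H_1}^t$, $g_r^0=\mathrm{id}$, $g_r^1=\phi:=\phi_{H_0}^1$.

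For the module isomorphism $\Xi$: every $1$-periodic orbit of $X_{H_i}$ is of the form $t\mapsto\phi_{H_i}^t(p)$ with $p\in\mathrm{Fix}(\phi)$, so I send the orbit $\gamma_0(t)=\phi_{H_0}^t(p)$ to $\gamma_1(t)=\psi_t(\gamma_0(t))=\phi_{H_1}^t(p)$, and a capping $w$ of $\gamma_0$ to the capping $w\#Z_p$ of $\gamma_1$, where $Z_p(r,t):=g_r^t(p)$ is the cylinder swept out by $p$ under the homotopy. This is equivariant for the $\pi_2(M)$-action that reindexes cappings, hence descends to a $\Lambda$-module isomorphism $\Xi\co CF_*(H_0,J_0)\to CF_*(H_1,J_1)$. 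It preserves the Conley--Zehnder grading, because the linearized flows $t\mapsto(dg_r^t)_p$, viewed as paths of symplectic matrices via the trivializations of $\gamma_r^*TM$ induced by $w\#(Z_p|_{[0,r]\times[0,1]})$, give a homotopy of paths of symplectic matrices between the ones computing the indices of $(\gamma_0,[w])$ and $(\gamma_1,[w\#Z_p])$, all ending at the fixed matrix $(d\phi)_p$, which by nondegeneracy of $H_0$ (equivalently of $H_1$) never has eigenvalue $1$. Crucially, $\Xi$ also preserves the symplectic action $\mathcal A_{H_i}(\gamma,[w])=-\int w^*\omega-\int_0^1 H_i(t,\gamma(t))\,dt$, and hence carries $CF_*^\lambda(H_0,J_0)$ onto $CF_*^\lambda(H_1,J_1)$ for every $\lambda$: writing $G^r$ for the normalized Hamiltonian generating $\{g_r^t\}_t$ (so $G^0=H_0$, $G^1=H_1$, since the $H_i$ are normalized) and $F^r$ for the normalized Hamiltonian generating $\{g_r^t\}_r$, and setting $\gamma_r(t)=g_r^t(p)$, one differentiates $r\mapsto\mathcal A_{G^r}(\gamma_r,[w\#(Z_p|_{[0,r]\times[0,1]})])$: the two $\partial_rZ_p$-contributions (one from the area term, one from the Hamiltonian term) cancel, leaving $-\int_0^1(\partial_rG^r)(t,\gamma_r(t))\,dt$; by the flatness identity relating $\partial_rG^r$, $\partial_tF^r$ and $\{F^r,G^r\}$ this integrand equals $\frac{d}{dt}\big(F^r(t,\gamma_r(t))\big)$, so the integral is $F^r(1,p)-F^r(0,p)=0$ --- the vanishing because $g_r^0$ and $g_r^1$ are independent of $r$, forcing the Hamiltonian vector field of $F^r$, hence (by normalization) $F^r$ itself, to vanish at $t=0,1$.

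Next I would make $\Xi$ respect the boundary operators. First replace $J_1$ by $J_1':=(\psi_t)_*J_0$, the loop of $\omega$-compatible almost complex structures whose value at time $t$ is the pushforward of $J_0(t,\cdot)$ under $\psi_t$ (well defined and smooth because $\psi_t=\mathrm{id}$ near $t\in\mathbb Z$). The assignment $u\mapsto\big((s,t)\mapsto\psi_t(u(s,t))\big)$ is then a bijection from Floer trajectories of $(H_0,J_0)$ to Floer trajectories of $(H_1,J_1')$ conjugating the linearized Cauchy--Riemann operators (and carrying along the abstract perturbation datum in the case where one is needed), so $J_1'\in\mathcal J^{reg}(H_1)$; matching up asymptotic cappings via the cylinders $Z_p$, one checks that under this bijection and the identifications above $\Xi$ is a genuine chain isomorphism $CF_*(H_0,J_0)\to CF_*(H_1,J_1')$, filtered and grading-preserving by the previous paragraph. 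Finally, take $\Theta\co CF_*(H_1,J_1')\to CF_*(H_1,J_1)$ to be the continuation isomorphism for a generic homotopy of almost complex structures from $J_1'$ to $J_1$ with the Hamiltonian held fixed at $H_1$: its energy identity has no Hamiltonian term, so it is action-nonincreasing, and with suitable orientation conventions $\Theta=\mathrm{Id}+N$ where $N$ strictly decreases action (the constant Floer cylinders at the $1$-periodic orbits being the only index-$0$ solutions of zero energy), so $\Theta$ is invertible over the Novikov ring with filtered inverse. This is precisely the mechanism underlying the $J$-independence of the filtered chain isomorphism type alluded to in the discussion of Corollary 4.5 of \cite{Ohsurvey}. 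Then $\Phi:=\Theta\circ\Xi$ has the required properties.

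I expect the real content, and the main obstacle, to lie in the action computation of the second paragraph: establishing the flatness identity for the two-parameter family $\{g_r^t\}$ and the cancellation in $\frac{d}{dr}\mathcal A_{G^r}$ with internally consistent sign conventions, and above all pinpointing where the \emph{normalized} hypothesis enters --- it is exactly what promotes ``$F^r(0,\cdot)$ and $F^r(1,\cdot)$ are constants'' to ``they vanish'', thereby eliminating an otherwise-present action shift built from those constants (which is how $\pi_1$ of the Hamiltonian group could move the action spectrum, consistent with the lemma's requiring the paths to be homotopic rel endpoints rather than merely to terminate at the same diffeomorphism). The remaining points --- well-definedness of $\Xi$ modulo the Novikov relations, the smoothness normalizations in $t$, and the transfer of transversality, orientations, and abstract perturbations under conjugation by $\psi_t$ --- are standard but should be checked against the precise conventions in use.
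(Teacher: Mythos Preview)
Your argument is correct, and it takes a genuinely different route from the paper's. The paper (in its proof of Lemma~\ref{htopyinvt}) defines $\Phi$ directly as a continuation-type chain map by counting solutions of the \emph{twisted} Floer equation $(\partial_su-X_{K_s})+J_{s,t}(\partial_tu-X_{H_s})=0$ attached to the two-parameter family $\{\psi_{s,t}\}$, following \cite{K}; the very flatness identity you invoke then makes the curvature term in the energy formula vanish identically, so $\Phi$ is filtration-nonincreasing, and a reverse map $\Psi$ and a chain homotopy (also built from twisted equations) give $\Psi\Phi=I+A$ with $A$ strictly action-decreasing, whence the geometric-series inverse and a squeeze yield $\mathcal{L}_{H_1}(\Phi c)=\mathcal{L}_{H_0}(c)$. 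Your factorization $\Theta\circ\Xi$ instead separates the geometric content (conjugation by the loop $\psi_t$, which preserves action \emph{exactly} on generators) from the analytic content (a standard $J$-only continuation with the Hamiltonian held fixed), and thereby avoids the twisted equation and the accompanying chain-homotopy argument entirely. The paper in fact sketches your $\Xi$---there called $T$---in the Remark immediately after its proof, as a more ``intuitive'' explanation, but stops short of making it rigorous; your device of routing through $J_1'=(\psi_t)_*J_0$ is precisely what turns the module bijection $T$ into a genuine chain map, after which the $J$-change step is exactly the mechanism behind Corollary~4.5 of \cite{Ohsurvey} that you cite. The trade-off is that the paper's $\Phi$, being defined by moduli counts, is more directly comparable to other continuation maps (for instance when checking compatibility with the PSS isomorphism, as the footnote in the introduction hints), whereas your construction is more elementary and makes the role of the normalization hypothesis---and of the \emph{rel endpoints} condition---completely transparent.
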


In particular this implies that \emph{any} invariant of the filtered chain isomorphism type of $CF_*(H,J)$ will in fact be an invariant of the homotopy class of the path $t\mapsto \phi_{H}^{t}$.  Examples include both the spectral number $\rho(H;a)$\footnote{Strictly speaking, to fit the spectral number into this framework one needs to augment Lemma \ref{htopycopy} with a statement (which is readily seen to be true, though we omit the proof) to the effect that the map $\Phi$ is suitably compatible with the isomorphism between Floer homology and quantum cohomology}  and the boundary depth $\beta(H)$. 

The proofs of the various parts of Theorem \ref{mainbeta} are consequences of the basic structure of the filtered Floer chain complexes $CF_{*}^{\lambda}(H,J)$ and of the various maps connecting these complexes as the Hamiltonian $H$ varies.  As is well-known, if $H$ and $K$ are two nondegenerate Hamiltonians and $J,J'$ are suitable  auxiliary data, the chain complexes $CF_*(H,J)$ and $CF_*(K,J')$ are related by maps $\Phi_{HK}\co CF_*(H,J)\to CF_*(K,J')$ and $\Phi_{KH}\co CF_*(K,J')\to CF_*(H,J)$ such that $\Phi_{KH}\circ \Phi_{HK}$ and $\Phi_{HK}\circ\Phi_{KH}$ are chain homotopic to the identity.  The effects of the maps $\Phi_{HK}$ and $\Phi_{KH}$ on the filtrations of the respective Floer groups satisfy some basic properties that we shall recall below, and these properties  are standard ingredients in the proofs of the properties of the spectral numbers $\rho(\cdot,a)$ (see, \emph{e.g.}, \cite{Ohsurvey}).  A fact which has been used somewhat less is that the \emph{chain homotopies} $\mathcal{K}\co CF_*(H,J)\to CF_*(H,J)$ and $\mathcal{K}'\co CF_*(K,J')\to CF_*(K,J')$ that link  $\Phi_{KH}\circ \Phi_{HK}$ and $\Phi_{HK}\circ\Phi_{KH}$ to the identity also have predictable effects on the filtrations of, respectively, $CF_*(H,J)$ and $CF_*(K,J')$, and it is this fact which underlies most of Theorem \ref{mainbeta}.  This point was already exploited in Oh's proof of Theorem \ref{mainbeta}(iii) (as Proposition 8.8 of \cite{OhCerf}).

Lemma \ref{htopyinvt} (and hence Theorem \ref{mainbeta}(v)) arise by a similar analysis, but now combined with an approach used in \cite{K} that uses a modified version of the equation for a Floer connecting orbit in order to obtain a map between Floer chain complexes which obeys sharper bounds than usual on its effects on the filtrations. 

As a byproduct of the analysis that leads to Theorem \ref{mainbeta}, we also prove the following result, which generalizes 
(and in fact slightly sharpens) a result proven as Theorem 2.1 in \cite{CR} for the  case in which $\omega$ vanishes on $\pi_2(M)$.  I am grateful to O. Cornea for asking me whether this result could be generalized.

\begin{prop}\label{retractcopy}Let $H_0$ be a nondegenerate Hamiltonian on any closed symplectic manifold $(M,\omega)$, and let $J_0\in \mathcal{J}^{reg}(H_0)$.  Then there is $\delta>0$ with the following property.  If $H_1$ is any nondegenerate Hamiltonian on $M$ with $\|H_1-H_0\|<\delta$, and if $J_1\in \mathcal{J}^{reg}(H_1)$, then the chain complex $CF_*(H_0,J_0)$ is a retract of the chain complex $CF_*(H_1,J_1)$ (\emph{i.e.}, there are chain maps $i\co CF_*(H_0,J_0)\to CF_*(H_1,J_1)$ and $r\co CF_*(H_1,J_1)\to CF_*(H_0,J_0)$ such that $r\circ i$ is the identity).
\end{prop}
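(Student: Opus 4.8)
The plan is to build the retraction out of the continuation maps $\Phi_{H_0H_1}\co CF_*(H_0,J_0)\to CF_*(H_1,J_1)$ and $\Phi_{H_1H_0}\co CF_*(H_1,J_1)\to CF_*(H_0,J_0)$ (say those associated to the linear homotopies) by showing that their composite $\Phi:=\Phi_{H_1H_0}\circ\Phi_{H_0H_1}\co CF_*(H_0,J_0)\to CF_*(H_0,J_0)$ is invertible as a chain map. Granting this, one sets $i=\Phi_{H_0H_1}$ and $r=\Phi^{-1}\circ\Phi_{H_1H_0}$; since $\Phi^{-1}$ is then a chain map these are chain maps, and $r\circ i=\Phi^{-1}\circ\Phi=\mathrm{id}$, which is exactly what is required (note that, as we do not ask for a filtered retract, no filtration estimate on $i$ or $r$ themselves is needed). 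The input we do use, beyond the standard properties of continuation maps recalled in the proof of Theorem \ref{mainbeta}, is that $\Phi$ is chain homotopic to the identity through a degree-$1$ homotopy $\mathcal K\co CF_*(H_0,J_0)\to CF_*(H_0,J_0)$ with $\Phi=\mathrm{id}+\partial_{H_0,J_0}\mathcal K+\mathcal K\partial_{H_0,J_0}$, and that $\mathcal K$ satisfies $\mathcal K(CF_*^\lambda(H_0,J_0))\subset CF_*^{\lambda+\sigma}(H_0,J_0)$ for all $\lambda$ with $\sigma\leq c\|H_0-H_1\|$, $c$ an absolute constant --- this is the same homotopy-of-homotopies energy estimate that enters the proof of Theorem \ref{mainbeta}(iii). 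Accordingly I would take $\delta$ small enough that $\|H_1-H_0\|<\delta$ forces $\sigma<\ell_0$, where $\ell_0>0$ is the shortest length among the finite bars of the filtered complex $CF_*(H_0,J_0)$ (put $\ell_0=+\infty$ if there are none); here $\ell_0>0$ because there are only finitely many bars (the $\Lambda$-rank of $CF_*(H_0,J_0)$ is finite) and each finite bar has positive length, since Floer trajectories carry positive energy.

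To prove that $\Phi$ is invertible when $\sigma<\ell_0$ I would pass to a normal-form $\Lambda$-basis $\{w_k,v_k\}_{k=1}^M\cup\{z_j\}$ for $CF_*(H_0,J_0)$, provided by the singular-value decomposition of filtered Floer--Novikov complexes in \cite{U}: thus $\partial_{H_0,J_0}w_k=v_k$, $\partial_{H_0,J_0}v_k=\partial_{H_0,J_0}z_j=0$, the filtration on the $\Lambda$-span is the obvious one, and $\ell_k:=\mathcal A(w_k)-\mathcal A(v_k)\ge\ell_0$. Writing $\mathcal K v_k=\sum_{k'}\mu_{k'k}w_{k'}+(\text{terms in the }v\text{'s and }z\text{'s})$ and expanding $\Phi=\mathrm{id}+\partial\mathcal K+\mathcal K\partial$ using $\partial w_k=v_k$, $\partial v_k=\partial z_j=0$ and degree reasons, one finds that, with respect to the ordered splitting $\langle w_k\rangle\oplus\langle z_j\rangle\oplus\langle v_k\rangle$, the matrix of $\Phi$ is block lower triangular with diagonal blocks $I+\hat\mu$, $I$, and $I+\hat\mu$, where $\hat\mu=(\mu_{k'k})$. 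Hence $\Phi$ is invertible if and only if $I+\hat\mu$ is.

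The crux, and the step I expect to need the most care, is that $I+\hat\mu$ is automatically invertible once $\sigma<\ell_0$. On $\langle v_k\rangle$ the operator $\hat\mu$ is the composition ``apply $\mathcal K$, project onto $\langle w_k\rangle$, apply $\partial_{H_0,J_0}$'': the first step raises filtration level by at most $\sigma$, the projection onto a sub-family of an orthogonal basis cannot raise it, and $\partial_{H_0,J_0}$ sends $\sum a_kw_k$ to $\sum a_kv_k$ and hence lowers it by at least $\ell_0$ (because $\mathcal A(v_k)=\mathcal A(w_k)-\ell_k\le\mathcal A(w_k)-\ell_0$). So $\hat\mu$ lowers filtration level by at least $\ell_0-\sigma>0$, and therefore $\hat\mu^n$ lowers it by at least $n(\ell_0-\sigma)$. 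Since $CF_*(H_0,J_0)$ is a finite-rank module over the complete valued Novikov field $\Lambda$ with $\bigcap_{\mu}CF_*^\mu(H_0,J_0)=\{0\}$, it is complete, so $\hat\mu^n\to0$ and the Neumann series $\sum_{n\ge0}(-\hat\mu)^n$ converges to an inverse of $I+\hat\mu$. What remains is routine: fixing the constant $c$ in the bound on $\mathcal K$, and the bookkeeping behind the block-triangular form of $\Phi$.
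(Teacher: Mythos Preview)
Your argument is correct, and the overall strategy---invert $\Phi=\Phi_{H_1H_0}\circ\Phi_{H_0H_1}$ via a Neumann series and set $i=\Phi_{H_0H_1}$, $r=\Phi^{-1}\circ\Phi_{H_1H_0}$---is exactly what the paper does. Where you diverge is in the mechanism for the Neumann series. You pass to the singular-value (barcode) basis of \cite{U}, exhibit the block lower-triangular form of $\Phi$ relative to $\langle w_k\rangle\oplus\langle z_j\rangle\oplus\langle v_k\rangle$, and then argue that the diagonal block $I+\hat\mu$ is invertible because $\hat\mu$ drops filtration by at least $\ell_0-\sigma$ on $\langle v_k\rangle$.

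The paper's route is shorter and avoids the SVD machinery entirely. It takes $\delta$ to be the infimal energy of an index-one Floer trajectory for $(H_0,J_0)$, positive by Gromov--Floer compactness; this gives directly $\mathcal{L}_{H_0}(\partial_{H_0,J_0}c)\le\mathcal{L}_{H_0}(c)-\delta$ for \emph{every} $c$. Then $A:=\Phi-I=\partial\mathcal{K}+\mathcal{K}\partial$ drops filtration by at least $\delta-\|H_1-H_0\|>0$ on all of $CF_*(H_0,J_0)$, and $B=\sum_{k\ge 0}(-A)^k$ converges and inverts $\Phi=I+A$. No normal form, no block decomposition, no separate analysis of $\hat\mu$.

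What each buys: the paper's argument is more elementary and self-contained (it needs only the energy gap for $\partial$ and the filtration bound on $\mathcal K$, both already in hand). Your approach, while heavier, yields a potentially larger threshold, since the minimal bar length $\ell_0$ satisfies $\ell_0\ge\delta$ and can be strictly larger when there are cancellations in $\partial$; it also makes transparent exactly which part of $\Phi$ could fail to be invertible. If you keep your version, you might note that the simpler route is available: your own observation that $\hat\mu$ factors through $\partial$ already suggests applying the energy-gap estimate to $A$ globally rather than after projection.
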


\begin{proof}
This is proven below as Corollary \ref{retract}.
\end{proof}

In particular, this shows that any nondegenerate Hamiltonian which is sufficiently Hofer-close to $H_0$ has at least as many periodic orbits as does $H_0$; naively, one might expect that the nearby Hamiltonian would need to be $C^1$-close to $H_0$ (as is necessary in order to guarantee that the relevant time-one maps are $C^0$-close to each other) in order for this to occur.

\subsection{Infinitely many periodic points for certain Hamiltonian symplectomorphisms}

 Various results in the literature (for instance, Theorem 5.5 of \cite{Schwarz} and Corollary 10.2 of \cite{FS})  assert that, under fairly strong conditions on the ambient manifold $(M,\omega)$ (in particular, $[\omega]$ is typically required to vanish on $\pi_2(M)$), a symplectomorphism induced by a Hamiltonian which is supported in a displaceable subset of $M$ must have infinitely many geometrically distinct, nontrivial periodic points; recall that a subset $W\subset M$ is called \emph{displaceable} if there is a Hamiltonian $H\co (\mathbb{R}/\mathbb{Z})\times M\to\mathbb{R}$ such that $\phi_{H}^{1}(\bar{W})\cap \bar{W}=\varnothing$.  Here, making use of Theorem \ref{mainbeta}(iv), we attempt to ``localize'' these results; specifically, we assume only that $[\omega]$ vanishes on the second homotopy group of some open subset $V\subset M$, and we consider Hamiltonians supported in some displaceable subset of $V$ (actually, to formulate the result, we choose an open set $U$ with $\bar{U}\subset V$ and assume the support of the Hamiltonian is contained in $U$).  A few tradeoffs are required in order to get analagous results. First, we in fact need to assume that the Hamiltonian has support $W$ which is not merely displaceable but has \emph{displacement energy} $e(W,M)$ less than a constant $c_{UV}$ depending on $U$ and $V$; here \[ e(W,M)=\inf\left\{\|H\|\left|\begin{array}{c} H\co \mathbb{R}/\mathbb{Z}\times M\to \mathbb{R}, H\mbox{ is compactly supported}\\ \mbox{ and }\phi_{H}^{1}(\bar{W})\cap \bar{W}=\varnothing\end{array}\right.\right\}.\]  (In fact, in light of Remark \ref{VM} $c_{UV}$ can be taken to be $\infty$ if $\omega$ vanishes on $\pi_2(M)$ and $V=M$, analagously to the results in \cite{Schwarz},\cite{FS}).   Second, and more technically, we need to impose a hypothesis on the Hamiltonian, namely that the Hamiltonian is nonpositive and ``has a flat autonomous minimum'' (see  Definition \ref{flat}).  The following is proven below as Theorem \ref{inf}.
 
\begin{theorem}\label{infcopy}  Let $U$ and $V$ be open subsets of the closed symplectic manifold $(M,\omega)$ with smooth (possibly empty) orientable  boundaries, and $\bar{U}\subsetneq V$.  Assume that $[\omega]$ vanishes on $\pi_2(\bar{V})$.  Then the constant $c_{UV}>0$ of Proposition \ref{energypartial} obeys the following properties.  Let $K\co (\mathbb{R}/\mathbb{Z})\times M\to (-\infty,0]$ be a not-identically-zero Hamiltonian with a flat autonomous minimum at a point $p\in U$.  Assume that $K^{-1}((-\infty,0))\subset (\mathbb{R}/\mathbb{Z})\times W$, where $W\subset U$ has displacement energy $e(W,M)$ satisfying $e(W,M)<c_{UV}$.  Then the symplectomorphism $\phi_{K}^{1}$ has infinitely many geometrically distinct, nontrivial periodic points.\end{theorem}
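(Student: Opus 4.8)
The plan follows the classical scheme of passing to iterates and arguing by contradiction, but with the boundary depth $\beta$ in the role ordinarily played by spectral numbers. For $k\geq 1$ write $K^{(k)}$ for the $k$-th iterate of $K$, so that $K^{(k)}(t,x)=kK(kt,x)$, $\phi^{1}_{K^{(k)}}=(\phi_{K}^{1})^{k}$, and the contractible $1$-periodic orbits of $K^{(k)}$ are exactly the contractible $k$-periodic points of $\phi_{K}^{1}$; note $K^{(k)}\leq 0$, that $K^{(k)}$ still has a flat autonomous minimum at $p$, and that $K^{(k)}$ still vanishes outside $(\mathbb{R}/\mathbb{Z})\times W$. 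Put $a_{0}:=-\min K$, which is positive since $K\leq 0$ is not identically zero. The first step is to perturb $K^{(k)}$ to a nondegenerate Hamiltonian $K^{(k)}_{\rho}\leq 0$ by a $C^{2}$-small, nonpositive, compactly supported perturbation $\rho$ with $\|\rho\|_{C^{0}}\leq\ep_{\rho}$ that is autonomous near $p$ with a nondegenerate minimum at $p$; the flat autonomous minimum hypothesis (Definition \ref{flat}) is precisely what makes this possible in a manner keeping the local Floer homology $HF^{loc}_{*}(p;K^{(k)}_{\rho})$ nonzero, concentrated in a single degree independent of $k$, and generated by orbits of action in $[ka_{0},ka_{0}+\ep_{\rho}]$, with $\ep_{\rho}$ as small as we like.

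Two structural facts drive the argument. First, uniformly bounded boundary depth: since $K^{(k)}_{\rho}$ is nonpositive and has absolute value at most $\ep_{\rho}$ off $\bar{W}$, while any $H$ with $\phi_{H}^{1}(\bar W)\cap\bar W=\varnothing$ displaces $\bar W$, Theorem \ref{mainbeta}(iv) (applied with $S=\bar W$ and $\ep=\ep_{\rho}$) gives $\beta(K^{(k)}_{\rho})\leq 2\|H\|+2\ep_{\rho}$, whence, taking the infimum over such $H$, $\beta(K^{(k)}_{\rho})\leq 2e(W,M)+2\ep_{\rho}=:B$ for every $k$. Second, localization near $p$: using that $[\omega]$ vanishes on $\pi_{2}(\bar V)$ and that $e(W,M)<c_{UV}$, Proposition \ref{energypartial} should yield a constant $\Lambda_{UV}>B$ such that the contractible orbits of $K^{(k)}_{\rho}$ in $\bar V$ have unambiguous action values, the largest being at most $ka_{0}+\ep_{\rho}$ and attained at the perturbed minimum (because $K^{(k)}_{\rho}\geq k\min K$ everywhere and $\leq\ep_{\rho}$ off $W$), no Floer trajectory of $K^{(k)}_{\rho}$ of energy less than $\Lambda_{UV}$ joins an orbit in $\bar V$ to an orbit outside $\bar V$, and consequently, for any action window of length less than $\Lambda_{UV}$, the orbits-in-$\bar V$ with action in that window assemble into a genuine subquotient complex of $CF_{*}(K^{(k)}_{\rho})$. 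It is precisely the inequality $\Lambda_{UV}>B$ --- i.e. the hypothesis $e(W,M)<c_{UV}$ --- that is used below.

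Now suppose for contradiction that $\phi_{K}^{1}$ has only finitely many geometrically distinct nontrivial periodic points. Their underlying orbits are nonconstant loops in $W$, and since a nonconstant orbit cannot remain in the region where $K$ is constant and the flow trivial, the maximum over them of the period-averaged action is some $a_{1}<a_{0}$; put $\eta:=a_{0}-a_{1}>0$ and fix $k$ so large (and $\ep_{\rho}$ so small) that $\tfrac{1}{2}k\eta>B+\ep_{\rho}+1$. Then every contractible orbit of $K^{(k)}_{\rho}$ lying in $\bar V$ with action exceeding $ka_{0}-\tfrac{1}{2}k\eta$ is one of the finitely many perturbed-minimum orbits near $p$, so the subquotient complex $\mathcal{C}_{k}$ from the second fact, associated to the window $I_{k}:=(ka_{0}-B-\ep_{\rho}-1,\,ka_{0}+\ep_{\rho}]$ (of length less than $\Lambda_{UV}$), has generators exactly the near-$p$ orbits; hence $H_{*}(\mathcal{C}_{k})\cong HF^{loc}_{*}(p;K^{(k)}_{\rho})\neq 0$, and the minimum generator $x_{p}$ is a cycle of $\mathcal{C}_{k}$ with $[x_{p}]\neq 0$ there and $\ell(x_{p})=ka_{0}$. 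Consider the class of $x_{p}$ in $HF_{*}(K^{(k)}_{\rho})\cong QH^{*}(M)$. If it vanishes, write $x_{p}=\partial c$; the bound $\beta(K^{(k)}_{\rho})\leq B$ lets us choose $c$ with $\ell(c)\leq\ell(x_{p})+B\leq ka_{0}+\ep_{\rho}+B$, and since $\partial$ strictly lowers filtration, the only generators of $c$ that can contribute to the near-$p$ output $x_{p}$ have action in $(ka_{0},\,ka_{0}+\ep_{\rho}+B]$; by the no-short-trajectory property these lie in $\bar V$, hence (as $\bar V$ carries no orbit of action above $ka_{0}+\ep_{\rho}$) in the window $I_{k}$, so $x_{p}$ lies in the image of $\partial_{\mathcal{C}_{k}}$, contradicting $[x_{p}]\neq 0$ in $H_{*}(\mathcal{C}_{k})$. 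If instead that class is nonzero in $QH^{*}(M)$, the same boundary-depth argument shows that no chain of filtration $\leq ka_{0}-\ep_{\rho}-B-1$ is homologous to $x_{p}$, so the class is not represented below that filtration; but $K^{(k)}_{\rho}\leq\ep_{\rho}$ forces $\rho(K^{(k)}_{\rho};\mathbf{1})\leq k\ep_{\rho}$, and, since the Morse-theoretic picture near $p$ identifies the class of $x_{p}$ with a Novikov multiple of a valuation-zero class of $H^{*}(M)$, the filtration-compatible isomorphism $HF_{*}(K^{(k)}_{\rho})\cong QH^{*}(M)$ makes it representable already at filtration $\leq k\ep_{\rho}+O(1)$, strictly below $ka_{0}-\ep_{\rho}-B-1$ once $\ep_{\rho}<a_{0}/2$ and $k$ is large --- again a contradiction. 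Hence $\phi_{K}^{1}$ has infinitely many geometrically distinct nontrivial periodic points.

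Two points should require real work. The first is extracting the precise localization statement (the second structural fact) from Proposition \ref{energypartial}, and in particular identifying the window length $\Lambda_{UV}$ and checking that the hypothesis $e(W,M)<c_{UV}$ is exactly what guarantees $\Lambda_{UV}>B=2e(W,M)+2\ep_{\rho}$ --- this is the mechanism by which the ``localized'' energy constant $c_{UV}$ enters, and it is also where the vanishing of $[\omega]$ on $\pi_{2}(\bar V)$ is needed, to give the orbits in $\bar V$ unambiguous actions. The second is the endgame when the minimum class survives to $QH^{*}(M)$: one must control the filtration shift in the isomorphism $HF_{*}(K^{(k)}_{\rho})\cong QH^{*}(M)$ and rule out that class ``running off to infinity'' in the Novikov direction, and it is here that the hypothesis $K\leq 0$ is indispensable, since it keeps $\max K^{(k)}_{\rho}$ tiny. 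A clean route would identify the class of $x_{p}$ explicitly, presumably as $\mathbf{1}\cdot T^{\lambda_{k}}$, and compute $\lambda_{k}$ directly from the action $ka_{0}$ of the minimum orbit and the one-sided Hofer geometry of $K^{(k)}_{\rho}$.
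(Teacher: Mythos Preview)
Your overall architecture---pass to iterates $K^{(k)}$, bound the boundary depth of a nondegenerate perturbation uniformly in $k$ via Theorem~\ref{mainbeta}(iv), and argue by contradiction using that actions of iterated orbits scale linearly---is exactly the skeleton of the paper's argument.  But the paper does not carry this out at the level of the nondegenerate perturbation $K^{(k)}_{\rho}$.  Instead it packages all of the Floer-theoretic input into Proposition~\ref{flatlemma}, whose output is a \emph{nonconstant} $1$-periodic orbit $\gamma^{(n)}$ of the \emph{degenerate} Hamiltonian $K^{\#n}$, together with a Floer cylinder in $\bar V$ of energy at most $2\|H\|$ joining $\gamma^{(n)}$ to a constant orbit at the minimum.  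From this the contradiction is immediate: if there were only finitely many geometrically distinct nontrivial periodic points, then for a common multiple $n_1$ of their periods the action differences form a finite set $A$, the orbit $\gamma^{(kn_1)}$ is a $k$-fold iterate of one of the $\gamma_j$, and its action difference is $k$ times an element of $A$---impossible since it lies in $(0,2\|H\|]$ for all $k$.

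The gap in your route is your ``second structural fact.''  Proposition~\ref{energypartial} does \emph{not} say that short Floer trajectories of the nondegenerate $K^{(k)}_{\rho}$ cannot cross the annulus $\bar V\setminus U$.  What it actually does is take a limit of such trajectories as the perturbation tends to zero; the limiting cylinder solves the Floer equation for the \emph{unperturbed} $K$, which vanishes identically on $\bar V\setminus U$, so that on that region the cylinder is genuinely $J_0$-holomorphic and Lemma~\ref{monotonicity} applies.  For your $K^{(k)}_{\rho}$ this fails: in order to be nondegenerate on all of $M$ it must be nonzero (however small) on $\bar V\setminus U$, so its Floer trajectories are not $J_0$-holomorphic there and Lemma~\ref{monotonicity} gives nothing.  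This is precisely why the paper introduces the two-step degeneration (Proposition~\ref{energypartial} followed by Proposition~\ref{flatlemma}) rather than working with a fixed nondegenerate perturbation, and it is what makes your subquotient complex $\mathcal{C}_k$ hard to justify.

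Your case analysis is also more elaborate than necessary.  In the paper's setup the generator $[\gamma_p,w_p]$ is already known, via Theorem~\ref{cycle}, to be the leading term of a cycle representing the nonzero class $\Phi_K(1)$; and Theorem~\ref{disp} (the spectral estimate $\rho(K;1)\leq\|H\|$ for Hamiltonians supported in a displaced set) furnishes a homologous cycle of strictly lower filtration.  This replaces both branches of your dichotomy in one stroke---see Proposition~\ref{exist1}.  Your closing paragraph correctly identifies these two points (localization, and the fate of the minimum class in $QH^*$) as the places ``requiring real work''; the paper's solution is to absorb both into Propositions~\ref{exist1}--\ref{flatlemma} and then run the scaling contradiction on the degenerate $K^{\#n}$ directly.
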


To clarify what is being proven here, if $m$ is a periodic point of $\phi_{K}^{1}$ (say with period $k$), define the set \[ \mathfrak{o}_m=\{\phi_{K}^{t}(m)|t\in [0,k]\} \] (note that this definition gives the same set regardless of whether $k$ is taken to be the minimal period of $m$ or some other period). The periodic point $m$ is called \emph{nontrivial} provided that $\mathfrak{o}_m\neq \{m\}$.  Two periodic points $m,m'$ are called \emph{geometrically distinct} if $\mathfrak{o}_{m}\neq \mathfrak{o}_{m'}$.  In particular, if $K$ is autonomous and if $\gamma$ is a nonconstant periodic orbit of $X_K$, the various points on $\gamma$ are \emph{not} considered to be geometrically distinct from one another as periodic points.

\begin{cor}\label{zerodisp} If $S\subset M$ is a compact set of the closed symplectic manifold $(M,\omega)$ with $e(S,M)=0$, and if $S$ has a neighborhood $V\subset M$ such that $\langle [\omega],\pi_2(\bar{V})\rangle=0$, then there is a neighborhood $W$ of $S$ with the property that, for  any nonpositive and not-identically-zero Hamiltonian $K$ with support in $(\mathbb{R}/\mathbb{Z})\times W$ which has a flat autonomous minimum, the map $\phi_{K}^{1}$ has infinitely many nontrivial geometrically distinct periodic points.
\end{cor}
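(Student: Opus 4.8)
The plan is to deduce this directly from Theorem \ref{infcopy} by fitting $S$ into suitable open sets $U$ and $V'$. First I would shrink the given neighborhood $V$ to one with controlled topology and a nice boundary: choosing a smooth function $\rho\co M\to[0,1]$ equal to $1$ on a neighborhood of $S$ with $\mathrm{supp}\,\rho\subset V$, and a regular value $c\in(0,1)$ of $\rho$ (which exists by Sard's theorem), set $V'=\rho^{-1}((c,1])$. Then $S\subset V'$, the closure $\bar{V'}=\rho^{-1}([c,1])$ is a compact subset of $V$, and $\partial V'=\rho^{-1}(c)$ is a smooth hypersurface with trivial normal bundle, hence orientable since $M$ is. Since every $2$-sphere in $\bar{V'}$ is in particular a $2$-sphere in $\bar V$, the hypothesis $\langle[\omega],\pi_2(\bar V)\rangle=0$ passes to $\langle[\omega],\pi_2(\bar{V'})\rangle=0$. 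Repeating the same bump-function construction one step further inside $V'$ produces an open $U$ with smooth orientable boundary, $S\subset U$, and $\bar U\subsetneq V'$. (If $V=M$, so that $[\omega]$ vanishes on $\pi_2(M)$, then by Remark \ref{VM} one may instead take the relevant constant to be $\infty$ and the argument only simplifies; note also $S\ne M$ since $e(S,M)=0$ while $e(M,M)=\infty$.)

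With $U$ and $V'$ fixed, Theorem \ref{infcopy} (through Proposition \ref{energypartial}) furnishes a constant $c_{UV'}>0$. Now I would invoke $e(S,M)=0$: since $0<c_{UV'}$, there is a compactly supported Hamiltonian $H$ with $\|H\|<c_{UV'}$ and $\phi_{H}^{1}(S)\cap S=\varnothing$. As $S$ and $\phi_{H}^{1}(S)$ are compact and disjoint, there is an open neighborhood $W$ of $S$ with $\bar W$ compact, $\bar W\subset U$, and still $\phi_{H}^{1}(\bar W)\cap\bar W=\varnothing$; consequently $e(W,M)\leq\|H\|<c_{UV'}$. This $W$ is the neighborhood claimed in the statement.

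It remains to verify that any $K$ as in the hypothesis meets the conditions of Theorem \ref{infcopy} with these $U$, $V'$, $W$. Since $K\co(\mathbb{R}/\mathbb{Z})\times M\to(-\infty,0]$ is nonpositive and not identically zero we have $\min K<0$; because $K$ vanishes outside its support and the support lies in $(\mathbb{R}/\mathbb{Z})\times W$, the minimum value of $K$ is attained only at points of $W$, so the point $p$ at which $K$ has its flat autonomous minimum satisfies $p\in W\subset U$. The support condition also gives $K^{-1}((-\infty,0))\subset(\mathbb{R}/\mathbb{Z})\times W$, and $e(W,M)<c_{UV'}$ by the previous paragraph. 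Hence Theorem \ref{infcopy} applies and shows that $\phi_{K}^{1}$ has infinitely many geometrically distinct, nontrivial periodic points, which is exactly the assertion.

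I expect the only points needing any care to be the topological bookkeeping in the first paragraph (choosing $U$ and $V'$ with smooth orientable boundaries while preserving the vanishing of $[\omega]$ on $\pi_2$) and the observation that nonpositivity of $K$ together with the support constraint forces its flat autonomous minimum to sit inside $U$; beyond these the corollary is a straightforward application of Theorem \ref{infcopy} and the definition of displacement energy.
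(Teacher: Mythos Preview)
Your proof is correct and follows essentially the same approach as the paper's own proof: shrink $V$ to have smooth boundary, choose an intermediate $U$, use $e(S,M)=0$ to find a displacing Hamiltonian with $\|H\|<c_{UV'}$, pass to a small neighborhood $W$ still displaced by $\phi_H^1$, and invoke Theorem~\ref{infcopy}. You have simply supplied more detail (the explicit bump-function/Sard construction, the check that the vanishing on $\pi_2$ persists, and the observation that the flat minimum lies in $U$) where the paper's proof is terse.
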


\begin{proof} After possibly replacing $V$ by an open subset of itself that has smooth boundary, choose an open neighborhood $U$ of $S$ with smooth boundary such that $S\subset \bar{U}\subsetneq V$.  This determines a constant $c_{UV}>0$.  Since $e(S,M)=0$ there is a Hamiltonian $H\co(\mathbb{R}/\mathbb{Z})\times M\to \mathbb{R}$ with $\phi_{H}^{1}(S)\cap S=\varnothing$ and $\|H\|<c_{UV}$.  By continuity, there will be an open neighborhood $W$ of $S$ such that $\phi_{H}^{1}(\bar{W})\cap \bar{W}=\varnothing$, and we may apply Theorem \ref{infcopy} to this set $W$.
\end{proof}

Examples of compact sets $S\subset M$ with $e(S,M)=0$ include, by \cite{Gurel}, any closed  submanifold $S$ whose normal bundle has a nonvanishing section and which is ``totally non-coisotropic'' (\emph{i.e.}, for each $p\in S$ it should be true that the symplectic orthogonal complement $(T_pS)^{\perp_{\omega}}$ is \emph{not} contained in $T_pS$; obvious examples include any $S$ which is a symplectic submanifold or which has less than half the dimension of $M$), and, by \cite{LS},\cite{Po}, also any closed submanifold $S$ of half the dimension of $M$ whose normal bundle has a nonvanishing section and which is \emph{not} Lagrangian.  In fact, by applying a stabilization trick of the sort used in \cite{Schlenk} (replace $S\subset M$ with $S\times S^1\subset M\times T^* S^1$), one can still obtain the conclusion of Corollary \ref{zerodisp} for such submanifolds $S$ even when their normal bundles do not admit nonvanishing sections; we leave the details of this to the reader.

\subsection{Coisotropic submanifolds}
Recall that a submanifold $N$ of the symplectic manifold $(M,\omega)$ is called \emph{coisotropic}  if one has, at all $p\in N$, $(T_pN)^{\perp_{\omega}}\subset T_pN$, where of course $(T_pN)^{\perp_{\omega}}$ denotes the subspace of $T_pM$ which is the orthogonal complement of $T_pN$ with respect to the nondegenerate bilinear form $\omega$.   In recent years, some progress has been made in establishing various sorts of rigidity results for such submanifolds.  

Obviously, any Lagrangian submanifold is coisotropic, and if $N$ is a Lagrangian submanifold of a tame symplectic manifold $(M,\omega)$ then a famous result of Chekanov \cite{Ch} shows that the displacement energy $e(N,M)$ is positive.   In fact, Chekanov gives an effective lower bound for $e(N,M)$, namely the minimal area of a nonconstant pseudoholomorphic sphere in $M$ or disc in $M$ with boundary on $N$,  and shows that as long as $\|H\|$ is below this threshold and $H$ is nondegenerate then  $\phi_{H}^{1}(N)\cap N$ will have at least $\sum_{i=0}^{\dim N}b_i(N)$ points.  Ideally, one would like to generalize Chekanov's result to the case where $N$ is a coisotropic submanifold.

In fact, one can in principle hope for a stronger result than the assertion that $N$ has positive displacement energy: note that in a coisotropic submanifold $N$ the distribution $(TN)^{\perp_{\omega}}$ is integrable (this well-known fact is an easy exercise based on the fact that $\omega$ is closed), and so is tangent to a foliation $\mathcal{F}$ on $N$, called the \emph{characteristic foliation}.  A stronger conjecture would assert that, if $\|H\|$ is sufficiently small, $\phi_{H}^{1}$ will necessarily have (possibly a certain number of) ``leafwise intersections'' on $N$, \emph{i.e.}, points $p\in N$ such that $\phi_{H}^{1}(p)$ lies on the same leaf as $p$.  In the Lagrangian case, the only leaves of $\mathcal{F}$ are the connected components of $N$, so this specializes to Chekanov's result, while in the opposite extreme case that $M$ is closed and $M=N$, so that the leaves are singletons, this is a consequence of the proof of the Arnol'd conjecture. In intermediate dimensions, the first result in this direction dates back to Moser \cite{Mo} (under a hypothesis of $C^1$-smallness, rather than Hofer-smallness), and more recent results include those in \cite{Drag}, \cite{AF}, and \cite{Zi}, as well as Theorem 2.7 (iii) of \cite{G06}.  Each of these results requires rather strong hypotheses on one or both of $N$ and $M$  (\emph{e.g.}, $N$ should have restricted contact type with $M$ a subcritical Stein manifold, or the characteristic foliation of $N$ should be a smooth fiber bundle), and the general expectation seems to be that an arbitrary coisotropic submanifold $N$ cannot be expected to satisfy such a leafwise intersection property.

Here, we consider the weaker question of showing that a coisotropic submanifold $N$ has positive displacement energy $e(N,M)$.  Our results generalize results due to \cite{G06} and \cite{K2}, in each of which $N$ is assumed to have a property called \emph{stability} which dates back (under a different name) to \cite{B} and whose definition we shall recall in Section \ref{geodco}.  We prove that:

\begin{theorem} \label{stablemaincopy} Let $(M,\omega)$ be a closed symplectic manifold and let $N$ be a stable coisotropic submanifold of $N$ with the property that $G_N:=\{\int_{S^2}u^*\omega|u\co S^2\to N\}$ is a discrete subgroup of $\mathbb{R}$.  Then there is a constant $c$, depending only on a tubular neighborhood of $N$ in $M$, such that $e(N,M)\geq c$.
\end{theorem}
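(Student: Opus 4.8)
The plan is to play the upper bound on boundary depth from Theorem~\ref{mainbeta}(iv) against a lower bound produced by a model Hamiltonian supported in a tubular neighbourhood of $N$. The starting point is the stable coisotropic neighbourhood theorem (in the sense of Bolle): stability of $N$ provides $1$-forms $\alpha_1,\dots,\alpha_k$ on $N$ (with $k=\mathrm{codim}\,N$) such that a tubular neighbourhood $\mathcal{U}$ of $N$ in $M$ is symplectomorphic to a neighbourhood of the zero section in $N\times\mathbb{R}^k$, with coordinates $(y,p_1,\dots,p_k)$, carrying the form $\omega|_N+d\bigl(\sum_i p_i\alpha_i\bigr)$. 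I would fix such a $\mathcal{U}$; the constant of the theorem will be $c:=\tfrac12 A_0(\mathcal{U})$, where $A_0(\mathcal{U})>0$ is the supremum of those heights $A$ for which there exists an autonomous nonpositive Hamiltonian $K_A$ supported in $\mathcal{U}$, equal to $-A$ on a neighbourhood of $N$, whose only contractible $1$-periodic orbits in $M$ are the constant orbits in $\{K_A=0\}$ and the constant orbits in $\{K_A=-A\}$. Establishing $A_0(\mathcal{U})>0$ — that one can make a bump of definite height without any spurious periodic orbits appearing — is essentially the content of the positive-displacement results of \cite{G06},\cite{K2} in the stable case: one uses the normal form above together with the hypothesis that $G_N$ is discrete (to rule out, by an area/index argument, short contractible orbits of the ``slope'' flow and the bubbling that would otherwise obstruct the orbit count), and I would reprove the version I need in this language.

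Given $A<A_0(\mathcal{U})$, I would replace $K_A$ by a nondegenerate Hamiltonian $K$ obtained by a $C^2$-small, $\mathcal{U}$-supported perturbation — turning the Morse--Bott family of constant orbits near $N$ into the critical points of a Morse function on $N$, and the flat region $\{K_A=0\}$ into a small Morse function on $M$ — chosen so that $K$ still vanishes identically outside a compact set $S\subset\mathcal{U}$ with $N\subset\mathrm{int}\,S$, and remains nonpositive. By Theorem~\ref{mainbeta}(ii) this alters $\beta$ by an error $o(1)$ tending to $0$ with the perturbation. The heart of the matter is then the estimate $\beta(K)\ge A-o(1)$. For this I would analyze the filtered complex $CF_*(K,J)$: since $\partial_{K,J}$ strictly decreases the action functional, the generators of action near $0$ span a subcomplex $C^L$, with quotient complex $C^H$ generated by the orbits of action near $A$; a localization argument near $N$ (again using discreteness of $G_N$ to confine bubbling) identifies $H_*(C^H)$ with $H_*(N)\otimes\Lambda$, which is nonzero. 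Comparing the long exact sequence of $0\to C^L\to CF_*(K,J)\to C^H\to 0$ with the fact that $HF_*(K,J)\cong QH^*(M)$ then forces the connecting homomorphism $H_*(C^H)\to H_{*-1}(C^L)$ to be nonzero. Hence there is a chain $z$ of action $\approx A$ with $\partial_{K,J}z$ of action $\approx 0$ but $\partial_{K,J}z\notin\partial_{K,J}(w)$ for any $w$ of action below $\approx A$; in the language of the definition of $\beta$ this witnesses $\beta(K)\ge A-o(1)$. (Here one invokes the orthogonalizability of the Floer--Novikov complex to make ``action $\approx A$'' and the admissible choices of $w$ precise — exactly the filtered-complex bookkeeping that underlies the rest of the paper.)

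To close the loop, let $H$ be any compactly supported Hamiltonian with $\phi_H^1(\bar N)\cap\bar N=\varnothing$; after a $C^\infty$-small perturbation we may assume $H$ nondegenerate with $\|H\|$ changed arbitrarily little and $\phi_H^1$ still displacing $N$. By compactness of $N$ and continuity of $\phi_H^1$ there is a compact set $S$ as above with $\phi_H^1(S)\cap S=\varnothing$, and we build the nondegenerate $K$ of the previous paragraph supported in $S$. Since $K$ is nonpositive and $|K(t,m)|\le\ep$ for $m\in M\setminus S$ (indeed $K\equiv0$ there), Theorem~\ref{mainbeta}(iv) gives $\beta(K)\le 2\|H\|+2\ep$ for all $\ep>0$, hence $\beta(K)\le 2\|H\|$. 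Combining with $\beta(K)\ge A-o(1)$ and letting the perturbation tend to $0$ yields $\|H\|\ge A/2$; since this holds for every $A<A_0(\mathcal{U})$ we get $\|H\|\ge A_0(\mathcal{U})/2$. Taking the infimum over displacing $H$ gives $e(N,M)\ge A_0(\mathcal{U})/2=c$, a constant manifestly depending only on the chosen tubular neighbourhood $\mathcal{U}$ (with its induced symplectic form).

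The main obstacle I expect is the interface between dynamics and Floer theory inside the model neighbourhood: simultaneously establishing the positive threshold $A_0(\mathcal{U})>0$ and computing $H_*(C^H)$ together with the connecting map, both of which require controlling holomorphic spheres and Floer cylinders near $N$ using only that $G_N$ is discrete. Without a monotonicity or asphericity hypothesis the usual energy arguments must be supplemented by index/area bookkeeping over the Novikov ring, and ensuring that the various ``$o(1)$'' errors are genuinely controlled by the perturbation size (via Theorem~\ref{mainbeta}(ii)) rather than by the geometry of $N$ is where most of the care goes. By comparison, the passage from $\beta(K)\ge A-o(1)$ and Theorem~\ref{mainbeta}(iv) to the displacement bound is formal.
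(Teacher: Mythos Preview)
Your strategy diverges from the paper's in an essential way. You try to prove a \emph{lower} bound $\beta(K)\ge A-o(1)$ for the model Hamiltonian and then play it against the upper bound from Theorem~\ref{mainbeta}(iv). The paper never establishes such a lower bound. Instead it combines the upper bound $\beta(K)\le 2\|H\|+2\ep$ with the spectral estimate $\rho(K;1)\le\|H\|+\ep$ (Theorem~\ref{disp}) and the Kerman--Lalonde cycle (Theorem~\ref{cycle}) to manufacture an explicit Floer cylinder $u$ of energy at most $2\|H\|+o(1)$, asymptotic at $+\infty$ to a constant orbit on $N$ (Proposition~\ref{exist1}). A monotonicity argument (Lemma~\ref{monotonicity}, entering through the constant $c_{UV}$ of Proposition~\ref{energypartial}) then forces the image of $u$ into the tubular neighbourhood $\bar V$, so that only periods of $\omega$ over spheres \emph{in $N$} enter the bookkeeping; this is precisely how the hypothesis on $G_N$ alone suffices. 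Finally (Theorem~\ref{stablemain}), the stability $1$-forms $\alpha_i$ convert the energy bound on $u$ into a length bound on the closed leafwise characteristic $\pi\circ\gamma$, and comparison with the minimal such length yields the displacement estimate.

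The gap in your plan is exactly the one you flag in your final paragraph, and it is a genuine obstruction rather than a technicality. Your connecting-homomorphism argument for $\beta(K)\ge A-o(1)$ needs an action gap in $CF_*(K)$: no generators with action strictly between $\approx 0$ and $\approx A$. But the action of $[\gamma,w]$ is determined by the capping disc $w$ in $M$, so the spectrum lies in $\{0,A\}+G_M$, not $\{0,A\}+G_N$; nothing in the hypotheses prevents $G_M$ from being dense, in which case there is no gap, no clean short exact sequence $0\to C^L\to CF_*\to C^H\to 0$, and the identification of $H_*(C^H)$ with $H_*(N)\otimes\Lambda$ (as well as the nontriviality of the connecting map) becomes unclear. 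The ``localization near $N$'' you invoke would have to take place at the level of the full, globally defined Floer complex, where cappings and holomorphic spheres live in $M$; there is no mechanism in your outline for that. The paper's substitute is to first produce the cylinder and \emph{then} trap it geometrically in $\bar V$ via the monotonicity constant $c_{UV}$, replacing global action bookkeeping by an energy estimate; your scheme has no analogue of this step, and it also omits the spectral-number input that the paper uses to get the cylinder in the first place.
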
   
\begin{proof} This is proven below as Theorem \ref{stablemain}.\end{proof}

Note that the discreteness hypothesis here is only on $N$, not on $M$; we are thus assuming much less about the symplectic topology of $M$ than has been assumed in any similar previous results about stable coisotropic submanifolds.  In \cite{G06}, for instance, Ginzburg proved this result for the special case that $(M,\omega)$ is symplectically aspherical (\emph{i.e.}, $c_1(M)$ and $[\omega]$ evaluate trivially on $\pi_2(M)$).  Ginzburg's result was  generalized by Kerman in \cite{K2} to the case that $c_1(M)$ and  $[\omega]$ evaluate proportionally to each other on $\pi_2(M)$, with $\omega$ still required to be spherically rational (on all of $M$).  

Of course, the discreteness property for $G_N$ that we require is satisfied automatically when, for instance, $N$ has contact type in the sense of \cite{B}, since then $\omega|_N$ is exact.  Thus we have proven that any contact-type coisotropic submanifold of a closed symplectic manifold has positive displacement energy.  Also, by using the fact that Theorem \ref{stablemaincopy} includes a lower bound on $e(N,M)$ depending only on a neighborhood of the submanifold, together with a result from \cite{LiMa} about embeddings of compact subsets of Stein manifolds, it follows that:

\begin{cor} Any closed stable coisotropic submanifold of a Stein manifold has positive displacement energy.\end{cor}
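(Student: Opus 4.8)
The plan is to reduce the corollary to Theorem~\ref{stablemaincopy} by symplectically embedding a neighborhood of $N$ into a \emph{closed} symplectic manifold. The crucial feature of Theorem~\ref{stablemaincopy} that makes this work is that the displacement-energy bound $c$ depends only on a tubular neighborhood of the coisotropic submanifold, and not on the global geometry of the ambient closed manifold; so we may compute $c$ from the germ of $N$ alone and then transport it.

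\emph{Local preliminaries.} Let $(X,\omega_X)$ be a Stein manifold, so $\omega_X=d\lambda_X$ is exact, and let $N\subset X$ be a closed stable coisotropic submanifold; fix a tubular neighborhood $\mathcal N$ of $N$ in $X$. Two observations are immediate. First, for any $u\co S^2\to N$ we have $\int_{S^2}u^*\omega_X=\int_{S^2}d(u^*\lambda_X)=0$ by Stokes' theorem, so $G_N=\{0\}$ is a discrete subgroup of $\mathbb R$. Second, both the coisotropic condition (a pointwise condition on the linear symplectic form) and stability in the sense of \cite{B} (a condition on a $1$-form on $N$ together with the germ of $\omega$ along $N$, recalled in Section~\ref{geodco}) are properties of the germ of the symplectic form along $N$, hence are preserved by any symplectic embedding defined near $N$. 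Moreover the constant $c$ that Theorem~\ref{stablemaincopy} attaches to a tubular neighborhood is determined by the symplectomorphism type of that neighborhood, so we may speak of ``the constant $c$ associated to $(\mathcal N,\omega_X|_{\mathcal N})$.''

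\emph{Globalization.} Suppose, for contradiction, that $e(N,X)=0$. Then there is a compactly supported Hamiltonian $H\co(\mathbb R/\mathbb Z)\times X\to\mathbb R$ with $\phi_H^1(\bar N)\cap\bar N=\varnothing$ and $\|H\|<c$. Let $K_0\subset X$ be the (compact) projection of $\mathrm{supp}\,H$ to $X$, so that the isotopy $\{\phi_H^t\}_{0\le t\le1}$ is supported in $K_0$, and set $S=\overline{\mathcal N\cup\bigcup_{0\le t\le1}\phi_H^t(K_0)}$, a compact subset of $X$. By the embedding result of \cite{LiMa}, there is a symplectic embedding $\iota$ of an open neighborhood $\mathcal O\supset S$ onto an open subset of some closed symplectic manifold $(M,\omega)$. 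Then $\iota(N)$ is a closed stable coisotropic submanifold of $(M,\omega)$, its tubular neighborhood $\iota(\mathcal N)$ is symplectomorphic to $\mathcal N$, and $G_{\iota(N)}=G_N=\{0\}$ is discrete; hence Theorem~\ref{stablemaincopy} gives $e(\iota(N),M)\ge c$. On the other hand, since $\bigcup_t\phi_H^t(K_0)\subset\mathcal O$, the transported isotopy $\iota\circ\phi_H^t\circ\iota^{-1}$ extends by the identity to a compactly supported Hamiltonian isotopy of $M$, generated by the Hamiltonian $G$ equal to $H\circ\iota^{-1}$ on $\iota(\mathcal O)$ and to $0$ elsewhere; one checks $\|G\|=\|H\|$ and $\phi_G^1(\overline{\iota(N)})\cap\overline{\iota(N)}=\iota\bigl(\phi_H^1(N)\cap N\bigr)=\varnothing$. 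Thus $e(\iota(N),M)\le\|G\|=\|H\|<c$, a contradiction; therefore $e(N,X)>0$.

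\emph{Expected main obstacle.} The only point needing genuine care is the bookkeeping in the globalization step: one must choose $S$ (equivalently, the domain $\mathcal O$ of $\iota$) large enough that the \emph{entire} displacing isotopy $\{\phi_H^t\}$ stays inside $\mathcal O$, so that $G=\iota_*H$ is a bona fide smooth global Hamiltonian on $M$ with the same Hofer norm and the same displacement property, and one must confirm that the constant $c$ of Theorem~\ref{stablemaincopy} is genuinely an invariant of the germ data transported by $\iota$ rather than of the global manifold. Everything else---discreteness of $G_N$, preservation of the coisotropic and stability conditions under $\iota$, and the inequality $e(\iota(N),M)\le\|G\|$---is formal.
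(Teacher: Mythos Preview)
Your proof is correct and follows essentially the same route as the paper: use exactness of the Stein symplectic form to get $G_N=\{0\}$ discrete, invoke the Lisca--Mati\'c embedding result \cite{LiMa} to place a compact set containing both $N$ and the displacing isotopy into a closed symplectic manifold, and then apply Theorem~\ref{stablemaincopy}, using that the constant $c$ depends only on a tubular neighborhood of $N$. Your write-up is in fact somewhat more careful than the paper's about exactly which compact set must be embedded (you explicitly include $\mathcal N$ and the full isotopy track, whereas the paper is terser), but the argument is the same.
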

\begin{proof}See Corollary \ref{stein}.\end{proof}

It follows from our proof that the constant $c$ of Theorem \ref{stablemaincopy} depends only on the following three quantities: (i) the positive generator of the group $G_N$ (considered to be $\infty$ if this group is trivial);   (ii) for any chosen almost complex structure $J_0$ on $M$,  the minimal energy that a $J_0$-holomorphic curve must have in order to intersect both boundary components of an annular neighborhood of $N$ in $M$; and (iii) the minimal length of a closed curve in a leaf of the foliation  $\mathcal{F}$ which is a geodesic with respect to an appropriate metric on $N$.  In the special case that $\omega$ evaluates discretely on $\pi_2(M)$ and not just on $\pi_2(N)$, a straightforward modification of our proof shows that (i) and (ii) above could be collectively replaced by the positive generator of $G_M:=\{\int_{S^2}u^*\omega|u\co S^2\to M\}$; of course, this constant, even when it is positive, has the disadvantage that it no longer depends only on a neighborhood of $N$.

As noted in Remark 2.4 of \cite{G06} and discussed in more detail in section \ref{geodco}, stability is a very restrictive condition to impose on $N$.  For instance, when $N$ is Lagrangian, $N$ is stable if and only if it is a torus, and more generally all closed leaves of the characteristic foliation on a stable coisotropic submanifold must be tori.  Accordingly we make some effort to establish results under a more modest assumption on $N$.\footnote{Shortly after the initial posting of this article, I learned of similar work along these lines that had been independently carried out by B. Tonnelier \cite{Ton}.}  The assumption that we impose is that $N$ should admit a metric $h$ with respect to which the leaves of the characteristic foliation $\mathcal{F}$ are totally geodesic (in other words, every geodesic initially tangent to a leaf remains in the leaf).  In terms of the Levi-Civita connection $\nabla$ associated to $h$, this amounts to the requirement that, if $X,Y$ are local sections of $TN^{\perp_{\omega}}$, then $\nabla_X Y$ is also a local section of $TN^{\perp_{\omega}}$ (in fact, one easily sees that it is enough to verify this condition for the special case that $X=Y$).  Of course, this is still a fairly strong condition---most foliations are not totally geodesic for any metric---but it at least is true that every Lagrangian submanifold satisfies this condition, and that the condition does not \emph{a priori} rule out any particular manifolds from appearing as closed leaves of the foliation.  Further, the condition holds for stable coisotropic submanifolds, as we show:
\begin{prop}\label{sgcopy} If $N$ is a closed coisotropic submanifold of $(M,\omega)$ which is stable, then there is a metric on $N$ with respect to which the characteristic foliation of $N$ is totally geodesic.\end{prop}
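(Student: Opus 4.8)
The plan is to build the desired metric directly out of the one-forms furnished by stability. Recall (as will be detailed in Section \ref{geodco}, following \cite{B}) that stability of the codimension-$k$ coisotropic submanifold $N$ means there are one-forms $\alpha_1,\dots,\alpha_k$ on $N$ whose restrictions to the characteristic distribution $V:=TN^{\perp_\omega}$ are pointwise linearly independent, and which satisfy $\iota_X d\alpha_i=0$ for every section $X$ of $V$ (equivalently $V\subset\ker d\alpha_i$ for each $i$). Since the $\alpha_i|_V$ are fiberwise independent, the $\alpha_i$ are themselves fiberwise independent in $T^*N$, so $H:=\bigcap_{i=1}^{k}\ker\alpha_i$ is a subbundle of rank $\dim N-k$ with $V\cap H=0$; hence $TN=V\oplus H$. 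The key point, to which I will return, is that one must use \emph{this particular} complement $H$, not an arbitrary one.

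Next I would fix any fiber metric on $H$ and extend it to a symmetric $2$-tensor $\beta$ on $N$ by declaring $\beta(X,\cdot)=0$ for $X\in V$ (using the splitting $TN=V\oplus H$); then \[ h:=\sum_{i=1}^{k}\alpha_i\otimes\alpha_i+\beta \] is a Riemannian metric for which $V\perp_h H$ (so $H=V^{\perp_h}$), and which satisfies $h(X,Y)=\sum_i\alpha_i(X)\alpha_i(Y)$ whenever $X$ or $Y$ is a section of $V$. I claim $h$ works. Applying the Koszul formula to sections $X,Y$ of $V$ and $Z$ of $H$, and using that $[X,Y]$ is again a section of $V$ (integrability of the characteristic foliation) together with the orthogonality just noted, one computes \[ 2\,h(\nabla_X Y,Z)=-\big(\mathcal{L}_Z h\big)(X,Y),\] where $\nabla$ is the Levi-Civita connection of $h$. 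Thus the leaves of $\mathcal{F}$ are totally geodesic exactly when $(\mathcal{L}_Z h)(X,Y)=0$ for all sections $X,Y$ of $V$ and $Z$ of $H$, and by symmetry of the second fundamental form it suffices to treat $X=Y$, matching the remark preceding the Proposition.

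Finally I would verify this vanishing, which is where stability is used. From $h(X,Y)=\sum_i\alpha_i(X)\alpha_i(Y)$ for sections $X,Y$ of $V$, a direct expansion gives \[ (\mathcal{L}_Z h)(X,Y)=\sum_{i=1}^{k}\big((\mathcal{L}_Z\alpha_i)(X)\,\alpha_i(Y)+\alpha_i(X)\,(\mathcal{L}_Z\alpha_i)(Y)\big).\] For $Z$ a section of $H$ we have $\alpha_i(Z)\equiv 0$, so Cartan's formula gives $(\mathcal{L}_Z\alpha_i)(X)=d(\iota_Z\alpha_i)(X)+(\iota_Z d\alpha_i)(X)=d\alpha_i(Z,X)=-d\alpha_i(X,Z)$, and since $X$ is a section of $V$ the stability condition $\iota_X d\alpha_i=0$ forces this to be $0$; likewise $(\mathcal{L}_Z\alpha_i)(Y)=0$. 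Hence $(\mathcal{L}_Z h)(X,Y)=0$ and the leaves are totally geodesic. The only step needing genuine care is the one flagged above: choosing $H=\bigcap_i\ker\alpha_i$ is precisely what makes $\alpha_i(Z)$ vanish identically and lets the stability hypothesis kill the remaining term, whereas a generic complement would leave uncontrolled cross-terms; the rest is routine, and compactness of $N$ enters only insofar as one wants a globally defined metric.
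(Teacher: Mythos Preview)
Your proof is correct and in fact produces the same metric as the paper: both take the transverse distribution to be $H=\bigcap_i\ker\alpha_i$, make $V\perp_h H$, and put $h|_V=\sum_i\alpha_i\otimes\alpha_i$ (equivalently $h(X_i,X_j)=\delta_{ij}$ for the dual frame $X_i$), with an arbitrary metric on $H$. The difference is in packaging. The paper builds $h$ by first using the commuting flows of the $X_i$ to set up local product charts, defining the metric locally, and patching with a partition of unity; it then verifies total geodesity by showing $h(\nabla_W W,V)=0$ via the identities $\mathcal{L}_{X_i}\alpha_j=0$ (hence $[W,V]\in H$) and the constancy of $h(W,W)$ for constant-coefficient $W$. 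You instead write $h$ down globally in one line as $\sum_i\alpha_i\otimes\alpha_i+\beta$ and verify total geodesity by the Koszul identity $2h(\nabla_XY,Z)=-(\mathcal{L}_Zh)(X,Y)$ together with Cartan's formula, which reduces everything to $\iota_Xd\alpha_i=0$. Your route is shorter and avoids the (unnecessary) local-charts-plus-partition-of-unity step, since the $\alpha_i$ are already global; it also makes transparent that compactness of $N$ is not actually used. The paper's route, on the other hand, foregrounds the commuting frame $X_1,\dots,X_k$ and the constancy of $h(W,W)$, which it reuses later to see that the induced leafwise metric is flat.
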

\begin{proof} See Proposition \ref{stabimpliesgeod}.\end{proof}

We also show that the converse to Proposition \ref{sgcopy} holds in the special case that $N$ has codimension one; however in higher codimension the converse is rather far from being true.

By the same methods that are used to establish Theorem \ref{stablemaincopy}, together with some elementary arguments about the implications of our condition, we show:
\begin{theorem} \label{geodmaincopy} Suppose that $N$ is a closed coisotropic submanifold of the closed symplectic manifold $(M,\omega)$, that there is a Riemannian metric $h$ on $N$ with respect to which the characteristic foliation of $N$ is totally geodesic, and that $\{\int_{S^2}v^*\omega|v\co S^2\to N\}$ is discrete.  There is $c>0$, depending only on a tubular neighborhood of $N$ in $M$, with the following property.  If $h$ carries no closed geodesics which are contractible in $N$ and are contained in a leaf of the characteristic foliation, then the displacement energy of $N$ is at least $c$.\end{theorem}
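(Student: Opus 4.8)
The plan is to follow the proof of Theorem~\ref{stablemaincopy} essentially verbatim, isolating the single place where the stability of $N$ is used and replacing it by a consequence of the totally-geodesic hypothesis. As there, I would argue by contradiction. Suppose $e(N,M)<c$ for a constant $c$ to be chosen below; then there is a compactly supported Hamiltonian $H$ with $\|H\|<c$ and, by continuity, a neighborhood $W$ of $N$ with $\phi_H^1(\bar W)\cap\bar W=\varnothing$. Using a coisotropic neighborhood theorem I would fix an identification of a tubular neighborhood $\mathcal N\subset W$ of $N$ with a neighborhood of the zero section of $(TN^{\perp_{\omega}})^{*}\to N$; the metric $h$ together with $\omega$ then identifies $\mathcal N$ with a disc bundle inside $TN^{\perp_{\omega}}$, equipping it with a fibrewise radius $r$. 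On this model one builds an autonomous Hamiltonian $K=f(r^{2})$ with $f\le 0$, $f$ constant for small $r$ (so that $K$ is nonpositive and has a flat autonomous minimum along $N$), $f'\ge 0$, and $K\equiv 0$ outside $\mathcal N$. For such a $K$, Theorem~\ref{mainbeta}(iv), applied with $S=\bar W$ and $\epsilon=0$, yields the upper bound $\beta(K)\le 2\|H\|<2c$. Everything then rests on producing, for a suitable choice of profile $f$, a lower bound $\beta(K)\ge 2c$ with $c>0$ depending only on $\mathcal N$.

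For the lower bound I would analyse the period-one orbits of $X_K$ and the filtered Floer complex exactly as in the stable case. The constant orbits along $N$ form a Morse--Bott family whose action spectrum is $\{f(0)+A : A\in G_N\}$, which is discrete and bounded above because $G_N$ is discrete; after a small Morse--Bott perturbation these generators all lie in a narrow action window just above $f(0)$. The remaining, non-constant period-one orbits of $X_K$ occur only at radii $r$ at which the slope of $f$ matches (up to a factor depending on $r$) the length of a closed geodesic of $h$ that is tangent to, hence by total geodesy of the leaves contained in, a leaf of the characteristic foliation: this is the exact point where the totally-geodesic hypothesis substitutes for stability, since it is what makes the radial flow on the model preserve the leaves and makes its closed trajectories project to closed leafwise geodesics. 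The hypothesis that $h$ has no closed leafwise geodesic contractible in $N$ (equivalently, in $\mathcal N$) then means that, once the confinement argument has reduced the relevant part of the computation to loops in $\mathcal N$, there are no such extra orbits near the window of the $N$-family; together with the standard confinement/monotonicity estimate --- bounding the energy of a Floer trajectory of $K$ by the total variation of $K$ and comparing with quantity~(ii), the minimal energy of a $J_0$-holomorphic curve meeting both boundary components of an annular neighborhood of $N$ --- this shows that the filtered Floer complex of $K$ contains a subcomplex generated by the $N$-family whose classes do not become boundaries until the filtration level of the region where $K\equiv 0$ is reached. Hence $\beta(K)$ is at least the height $|f(0)|$ of $K$ minus a controlled error coming from $G_N$. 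Finally, $|f(0)|$ may be taken as large as the confinement constraint ($|f(0)|$ below quantity~(ii)) permits, so $\beta(K)>2c$ for a suitable $c$ depending only on quantities (i) and (ii), i.e. only on the tubular neighborhood; this contradicts $\beta(K)<2c$ and proves $e(N,M)\ge c$.

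The main obstacle, just as in Theorem~\ref{stablemaincopy}, is the Floer-theoretic bookkeeping of the previous paragraph: performing the Morse--Bott perturbation of the degenerate Hamiltonian $K$ in the general (not symplectically aspherical) setting, tracking actions modulo $G_N$, and running the compactness/confinement argument to certify that no Floer trajectory escapes $\mathcal N$. The genuinely new input is comparatively soft and accounts for the ``elementary arguments'' alluded to in the statement: one must verify that ``the leaves of $\mathcal F$ are totally geodesic for $h$'' is precisely the condition needed for the model radial Hamiltonian to have leafwise dynamics, so that its obstructing closed orbits are closed leafwise $h$-geodesics --- which the hypothesis then removes outright. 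The remaining steps (invoking Theorem~\ref{mainbeta}(iv), combining the two bounds on $\beta(K)$, and the continuity passage from $N$ to the displaceable neighborhood $W$) are routine given the results already established.
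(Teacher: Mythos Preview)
Your strategy departs from the paper's at the crucial step, and the departure leaves a real gap. You try to derive a \emph{lower} bound $\beta(K)\ge |f(0)|-(\text{error})$ directly from the structure of the filtered complex, assuming $X_K$ has no nonconstant contractible orbits, and then collide it with $\beta(K)\le 2\|H\|$. But to bound $\beta$ below you must exhibit a specific boundary together with a proof that \emph{every} primitive of it lies far above. Your sketch does not do this; in fact the filtration direction in your key sentence is reversed: the $N$--family sits at action $\approx |f(0)|>0$ while the region $\{K\equiv 0\}$ sits at action $\approx 0$, and since $\partial$ strictly lowers action, primitives of an $N$--family boundary lie \emph{above} $|f(0)|$, not at the $K\equiv 0$ level. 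Even charitably reinterpreted, nothing in ``no nonconstant orbits'' alone rules out that every boundary has a nearby primitive (so $\beta$ is small); with $G_M$ possibly non-discrete the generator levels may even be dense, and confinement of \emph{trajectories} to $\bar V$ does not by itself confine \emph{cappings}, so the action lattice is governed by $G_M$, not $G_N$.

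The paper never lower-bounds $\beta(K)$. Instead it combines the \emph{upper} bound $\beta(K)\le 2\|H\|$ with an ingredient you omit entirely: the spectral estimate $\rho(K;1)\le \|H\|$ (Theorem~\ref{disp}) together with the explicit representative of $\Phi_K(1)$ at level $-\int_0^1 K(t,p)\,dt$ (Theorem~\ref{cycle}). These produce a specific boundary $c-d$ at that level (Proposition~\ref{exist1}), and the upper bound on $\beta$ then caps the level of a primitive, hence the energy of a Floer cylinder ending at the constant orbit $\gamma_p$. After the compactness/confinement passage (Propositions~\ref{energypartial}, \ref{flatlemma}, Theorem~\ref{finalorbit}) one has a nonconstant $1$--periodic orbit of $X_K$ contractible in $\overline{E^*(R)}$. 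The ``elementary'' geometric input is exactly the one you isolate: Lemma~\ref{geodsame} shows that on the model $(E^*(R),\omega_{E^*})$ the Hamiltonian flow of $F=\tfrac12|p|_h^2$ is the leafwise geodesic flow precisely because $\mathcal F$ is totally geodesic for $h$; since $K=f(|p|_h)$ is a function of $F$, the nonconstant orbit projects to a closed leafwise $h$--geodesic contractible in $N$, contradicting the hypothesis. So the missing idea in your proposal is the spectral invariant, which is what singles out the boundary whose primitive the bound on $\beta$ controls.
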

\begin{proof}See Theorem \ref{geodmain}.\end{proof}
In fact, if there are no closed geodesics tangent to the foliation on $N$ which are contractible \textbf{in} $\mathbf{M}$, then $N$ is nondisplaceable (\emph{i.e.}, its displacement energy is infinite); this is proven as Theorem \ref{displaceable} below, based on elementary arguments together with the energy-capacity inequality \cite{Schlenk}, \cite{U3}.  

When $N$ is stable, the metric that we construct to prove Proposition \ref{sgcopy} restricts as a flat metric to each of the leaves of the characteristic foliation, and in particular any closed geodesic on a leaf must therefore be noncontractible on the leaf.  Thus Theorem \ref{displaceable} shows that any displaceable, stable coisotropic submanifold of a closed\footnote{or, indeed, Stein, using as usual Theorem 3.2 of \cite{LiMa}} symplectic manifold has a closed curve in a leaf of its characteristic foliation which is not contractible in the leaf.  For such coisotropic submanifolds, this answers the question raised in the second paragraph of \cite{B}.

The proofs of Theorems \ref{stablemaincopy} and \ref{geodmaincopy} follow a  similar strategy to the proof of Theorem 2.7 (i) of \cite{G06}.  Namely, assuming that $\phi_{H}^{1}(N)\cap N=\varnothing$, they consider a suitable Hamiltonian $K\co M\to\mathbb{R}$ supported in a neighborhood $W$ of $N$ which is displaced by $\phi_{H}^{1}$; small nondegenerate perturbations $K^{i}$ of $K$ are seen to admit Floer connecting orbits $u^i$ with certain asymptotics, and a compactness argument gives a Floer-type connecting orbit $u$ for the degenerate Hamiltonian $K$.  The asymptotics of $u$ are such that one can deduce the desired results if one can bound the energy of $u$ in terms of $\|H\|$.   In \cite{G06} this is done by exploiting special properties that are satisfied by the action spectrum of a Hamiltonian on a symplectically aspherical manifold, but when one drops the asphericity condition these properties do not hold.  However, our results on boundary depth---in particular Theorem \ref{mainbeta}(iv)--enable us to bound the energy of $u$ by $2\|H\|$; see Lemma \ref{energypartial}.  Note that Remark 4.3 of \cite{G06} predicted that it might be possible to get an energy bound on this $u$ in terms of $\|K\|$ in the non-aspherical case; one could indeed do this, for instance by using Theorem \ref{mainbeta}(iii) (which is due to Oh \cite{OhCerf}) rather than Theorem \ref{mainbeta}(iv).  However, the fact that one can bound the energy in terms of $\|H\|$ in a non-aspherical context seems not to have been anticipated.

Given this energy bound on $u$, we also use an argument based on a monotonicity formula of Sikorav \cite{Si} in order to argue that (assuming $\|H\|$ to be small enough) the image of $u$ must be contained in a tubular neighborhood of $N$; this is (part of) what enables us to impose some of our topological hypotheses only on $N$ and not on $M$.

We now describe the organization of the paper.  Section \ref{bkgd} summarizes some basic facts about the construction of the Floer complex for a given nondegenerate Hamiltonian and about the maps that relate the complexes associated to different Hamiltonians, with (as we need for later use) special attention paid to the effects that these maps have on the filtrations of the complexes.  These facts should be known to experts; for instance most of them appear in \cite{Ohsurvey}.  We also prove Proposition \ref{retractcopy} at the end of Section \ref{bkgd}. 

 Based on this background, the various parts of Theorem \ref{mainbeta} are proven in Section \ref{sectionbeta}, as is the invariance result Lemma \ref{htopycopy}.  
 
 For our applications, we require both the properties of $\beta$ as given by Theorem \ref{mainbeta} and some known properties of the spectral number $\rho$; these properties are recalled in Section \ref{spectralsub}. 
 
 Our applications, though, depend on some features of certain degenerate Hamiltonians, whereas the Floer homology machinery only directly applies to nondegenerate ones.  Section \ref{nondeg-deg} uses the results of the prior sections combined with some compactness arguments in order to transition us to degenerate Hamiltonians, culminating in an existence result (Proposition \ref{flatlemma}) for solutions to the Floer boundary equation having particular asymptotics and energy bounds, which applies to a fairly broad class of Hamiltonians.
 
 With Proposition \ref{flatlemma} in hand, the proof of Theorem \ref{infcopy} becomes fairly straightforward, and is completed in Section \ref{infsect}.
 
 Section \ref{geodco} discusses some facts, such as Proposition \ref{sgcopy}, about the symplectic and Riemannian geometry of coisotropic submanifolds, and in addition to preparing the ground for our other main results it allows us to prove Theorem \ref{displaceable}, which gives one obstruction to the displaceability of a coisotropic submanifold equipped with a metric making its characteristic foliation totally geodesic. 
 
 Finally, in Section \ref{last} we combine Proposition \ref{flatlemma} with the results of Section \ref{geodco} to prove our main results about coisotropic submanifolds, including Theorems \ref{stablemaincopy} and \ref{geodmaincopy} and some others in a similar spirit.

\section{Background from Floer homology}\label{bkgd}

Assume that $(M,\omega)$ is a closed connected symplectic manifold.  Any smooth function $H\co (\mathbb{R}/\mathbb{Z})\times M\to \mathbb{R}$ then induces a time-dependent Hamiltonian vector field $X_H$ by means of the requirement that, at time $t$, we have \[ \iota_{X_H}\omega=d(H(t,\cdot)).\]  Denote the time-$t$ map of the flow of this vector field by $\phi_{H}^{t}$.  $H$ is called \emph{non-degenerate} if, for each $p\in Fix(\phi_{H}^{1})$, the linearization $(\phi_{H}^{1})_*\co T_pM\to T_pM$ does not have $1$ as an eigenvalue.

If $H$ is nondegenerate, which in particular implies that $\phi_{H}^{1}$ has just finitely many fixed points, we can associate to $H$ and to each $\lambda\in\mathbb{R}$ a Floer homology group $HF^{\lambda}_{*}(H)$.  Let us review the construction and its basic properties, most of the ingredients of which are well-known and date back to \cite{F},\cite{SZ},\cite{HS}.  Let $\widetilde{\mathcal{L}_0M}$ denote the set of equivalence classes $[\gamma,w]$ of pairs $(\gamma,w)$ where $\gamma\co\mathbb{R}/\mathbb{Z}\to M$ is a contractible loop, and where $w\co D^2\to M$ satisfies $w(e^{2\pi it})=\gamma(t)$.  Two pairs $(\gamma,w)$ and $(\gamma',w')$ are deemed equivalent if and only if $\gamma=\gamma'$ and the sphere obtained by gluing $w$ and $w'$ orientation-reversingly along their common boundary $\gamma$ represents a trivial class in the quotient \[ \Gamma:=\frac{\pi_2(M)}{(\ker\langle c_1,\cdot\rangle)\cap (\ker \langle [\omega],\cdot\rangle)}.\]  Now define $\mathcal{A}_H\co \widetilde{\mathcal{L}_0M}\to\mathbb{R}$ by \[ \mathcal{A}_H([\gamma,w])=-\int_{D^2}w^*\omega-\int_{0}^{1}H(t,\gamma(t))dt.\] Where $\mathcal{P}_{H}^{\circ}\subset\widetilde{\mathcal{L}_0M}$ denotes the subset consisting of those pairs $[\gamma,w]$ where $\gamma$ is an integral curve of $X_H$ (so in particular $\phi_{H}^{1}(\gamma(0))=\gamma(0)$), the critical points of $\mathcal{A}_{H}$ are precisely the elements of $\mathcal{P}_{H}^{\circ}$.  Each $[\gamma,w]\in \mathcal{P}_{H}^{\circ}$ has an associated Maslov index $\mu_H([\gamma,w])$, as is explained for instance in \cite{Sal}.  As a group, the chain complex $CF^{\lambda}_{*}(H,J)$ underlying $HF^{\lambda}_{*}(H)$ will be obtained as a downward Novikov completion with respect to $\mathcal{A}_H$ of the span of those $[\gamma,w]\in \mathcal{P}_{H}^{\circ}$ having $\mathcal{A}_H([\gamma,w])\leq \lambda$: specifically, 
\begin{align*} CF^{\lambda}_{*}(H,J)=\left\{\left. \sum_{\begin{array}{ll}[\gamma,w]\in \tilde{\mathcal{P}}_{H}^{\circ}\\ \mathcal{A}_H([\gamma,w])\leq \lambda\end{array}}a_{[\gamma,w]}[\gamma,w]\right| \begin{array}{ll} a_{[\gamma,w]}\in \mathbb{Q},(\forall C\in\mathbb{R})\\(\#\{[\gamma,w]|a_{[\gamma,w]}\neq 0,\mathcal{A}_H([\gamma,w])>C\}<\infty)\end{array} \right\}.\end{align*}  

The aforementioned Maslov index determines a grading on $CF_{*}^{\lambda}(H,J)$, by taking the $k$th graded part $CF_{k}^{\lambda}(H,J)$ equal to the Novikov completion of the span of those $[\gamma,w]$ having $\mu_H=k$ and $\mathcal{A}_H\leq \lambda$.

  The boundary operator $\partial_{H,J}$ of the Floer complex is obtained as follows (see \cite{F},\cite{Sal},\cite{FO},\cite{LT} for details).  For a $(\mathbb{R}/\mathbb{Z})$-parametrized family $\{J^t\}_{0\leq t\leq 1}$ of almost complex structures $J^t$ on $M$ which are compatible with $\omega$, consider the equation
  \begin{equation}\label{bdry}\frac{\partial u}{\partial s}+J^t(u(s,t))\left(\frac{\partial u}{\partial t}-X_H(t,u(s,t))\right)=0\end{equation} for a map $u\co\mathbb{R}\times(\mathbb{R}/\mathbb{Z})\to M$ with finite energy $E(u)=\int_{\mathbb{R}\times\mathbb{R}/\mathbb{Z}}\left|\frac{\partial u}{\partial s}\right|^2dtds$, such that, for certain $[\gamma_{\pm},w_{\pm}]\in\mathcal{P}^{\circ}_{H}$ with $\mu_H([\gamma_-,w_-])-\mu_H([\gamma_+,w_+])=1$, we have $u(s,\cdot)\to \gamma^{\pm}$ as $s\to\pm\infty$ and $[\gamma_+,w_+]=[\gamma_+,w_-\#u]$.  After possibly perturbing the compactified solution space of (\ref{bdry}) by a generic abstract multivalued perturbation $\nu$ of the sort considered in \cite{FO},\cite{LT}, the solution space (modulo $\mathbb{R}$-translation) is represented by a zero-dimensional rational singular chain in $M$ (as shown in, \emph{e.g.}, Theorem 4.2 of \cite{LT}) and we let $n_J([\gamma_-,w_-],[\gamma_+,w_+])$ denote the oriented (rational) number of points in this chain.  Here and elsewhere we use $J$ to denote the pair $(\{J^t\}_{0\leq t\leq 1},\nu)$ where the $J^t$ form a $(\mathbb{R}/\mathbb{Z})$-parametrized family of almost complex structures and $\nu$ is an abstract perturbation.  Let $\mathcal{J}^{reg}(H)$ denote the space of such pairs so that  solution spaces as above are cut out transversally, so that we have a well-defined number  $n_J([\gamma_-,w_-],[\gamma_+,w_+])$ for each $J\in\mathcal{J}^{reg}(H)$.  The reader who does not like abstract perturbations may find it easier to just assume that $M$ is monotone,\footnote{or, at the cost of restricting to Hamiltonians $H$ which belong to a residual subset of the space of nondegenerate Hamiltonians, assume more generally that $M$ is semipositive, as in  \cite{HS}.} so that $\nu$ may be set to zero, and just interpret $J$ as a family of almost complex structures; on the other hand, one does of course need the abstract perturbation machinery of \cite{LT}, \cite{FO} in order to achieve results in the level of generality stated in the introduction.

At any rate, for $J\in\mathcal{J}^{reg}(H)$, we
 set \begin{equation}\label{bdry2} \partial_{H,J}[\gamma_-,w_-]=\sum_{\begin{array}{ll}[\gamma_+,w_+]\in\mathcal{P}_{H}^{\circ}\\ \mu_H([\gamma_+,w_+])=\mu_H([\gamma_-,w_-])-1\end{array}}n_J([\gamma_-,w_-],[\gamma_+,w_+])[\gamma_+,w_+]\end{equation} for each generator $[\gamma_-,w_-]$ of $CF^{\lambda}_{*}(H,J)$.  Note that if there are \emph{no} solutions to (\ref{bdry}) connecting the generator $[\gamma^-,w^-]$ to $[\gamma^+,w^+]$, then the solution space to (\ref{bdry}) is cut out transversally for trivial reasons, and will remain empty for appropriate (small) choices of the abstract perturbation $\nu$. As such, if $n_J([\gamma_-,w_-],[\gamma_+,w_+])\neq 0$, there must exist a solution $u$ to (\ref{bdry}) with $u(s,t)\to \gamma_{\pm}(t)$ as $s\to \pm\infty$ and $[\gamma_+,w_+]=[\gamma_+,w_-\#u]$.  In that case, we have \[ 0\leq  E(u):=\int_{-\infty}^{\infty}\int_{0}^{1}\left|\frac{\partial u}{\partial s}\right|^2 ds dt=\mathcal{A}_H([\gamma_-,w_-])-\mathcal{A}_H([\gamma_+,w_+]).\]  So in this case $\mathcal{A}_H([\gamma_+,w_+])\leq \lambda$   since $\mathcal{A}_H([\gamma_-,w_-])\leq \lambda$.  This, together with a Gromov-Floer compactness argument which shows that the values of $E(u)$ for $u$ solving (\ref{bdry}) cannot accumulate, shows that $\partial_{H,J} [\gamma_-,w_-]$ as defined in (\ref{bdry2}) is an element of $CF_{*}^{\lambda}(H,J)$.  Then extend $\partial_{H,J}$ linearly to all of $CF_{*}^{\lambda}(H,J)$.  As is well-known, one has $(\partial_{H,J})^{2}=0$, and the (filtered) Floer homology $HF^{\lambda}_{*}(H)$ is the homology of this chain complex (which, as the notation suggests, is independent of $J$). 
  
  Obviously $CF^{\lambda}_{*}(H,J)\leq CF^{\mu}_{*}(H,J)$ if $\lambda\leq \mu$, and the boundary operators coincide on the intersection of their domains.  Set $CF_*(H,J)=\cup_{\lambda\in\mathbb{R}} CF^{\lambda}_{*}(H,J)$ and endow it with the obvious boundary operator; the resulting Floer homology $HF_{*}(H)$ is independent of $H$. In fact, $HF_{*}(H)$ coincides as a group with the singular cohomology $H^*(M;\Lambda_{\Gamma,\omega})$, with coefficients in the \emph{Novikov ring} \[\Lambda_{\Gamma,\omega}=\big\{\sum_{g\in \Gamma_{\omega}}b_gg|b_g\in \mathbb{Q},(\forall C\in\mathbb{R})(\#\{g|b_g\neq 0,\int_{S^2}g^*\omega<C\}<\infty)\big\}.\]  Note that $CF_*(H,J)$ has the structure of a chain complex of $\Lambda_{\Gamma,\omega}$-modules, induced by having $g\in \Gamma$ act on a generator $[\gamma,w]$ by gluing a sphere representing $g$ to $w$.  
  
  For the special case where $H(t,m)=\ep f(m)$ where $f\co M\to\mathbb{R}$ is a Morse function and $\ep$ is a sufficiently small positive number, for appropriate $J$ the chain complex $CF_{*}(H,J)$ (and its boundary operator) coincides with the Morse complex $CM_{*}(-\ep f)$, whose boundary operator enumerates negative gradient flowlines of the Morse function $-\ep f$.

Define $\mathcal{L}_H\co CF_*(H)\to \mathbb{R}\cup\{-\infty\}$ by $\mathcal{L}_H(0)=-\infty$ and \[ \mathcal{L}_H\left(\sum c_{[z,w]}[z,w]\right)=\max\{\mathcal{A}_H([z,w])|c_{[z,w]}\neq 0\}.\]  In other words, for $c\in CF_{*}(H)$, \[ \mathcal{L}_H(c)=\inf\{\lambda\in\mathbb{R}|c\in CF_{*}^{\lambda}(H,J)\}.\] Thus the fact that each $CF^{\lambda}$ is preserved by the boundary operator can be expressed as \[ \mathcal{L}_H(\partial_{H,J}c)\leq \mathcal{L}_H(c)\] for all $c\in CF_{*}(H,J)$.  In fact, since the cylinders $u$ involved in the definition of $\partial_{H,J}$ all have strictly positive energy, we have \[ \mathcal{L}_H(\partial_{H,J}c)<\mathcal{L}_H(c).\]

We now review the relationships between the filtrations on the Floer complexes for different Hamiltonians.   Let $H_-,H_+\co (\mathbb{R}/\mathbb{Z})\times M\to\mathbb{R}$ be two nondegenerate Hamiltonians, with $J_{\pm}\in\mathcal{J}^{reg}(H_{\pm})$.  A \emph{homotopy} $\mathcal{H}$ from $(H_-,J_-)$ to $(H_+,J_+)$ is a pair $(\mathbb{H},\mathbb{J})$ consisting of a smooth function $\mathbb{H}\co \mathbb{R}\times(\mathbb{R}/\mathbb{Z})\times M\to\mathbb{R}$ (sending $(s,t,m)$ to $H_s(t,m)$) and 
data $\mathbb{J}$ comprising 
a smoothly $\mathbb{R}\times(\mathbb{R}/\mathbb{Z})$-parametrized family of almost complex structures $J_{s}^{t}$ and an appropriate abstract perturbation $\bar{\nu}$; we require $H_s=H_-$ and $J_{s}^{t}=J_{-}^{t}$ for $s\ll 0$ and $H_s=H_+$ and $J_{s}^{t}=J_{+}^{t}$ for $s\gg 0$, and impose similar restrictions on $\bar{\nu}$.  Let us call the homotopy $\mathcal{H}$ an \emph{interpolating} homotopy if the functions $H_s$ are given by $H_s(t,m)=H_-(t,m)+\beta(s)(H_+(t,m)-H_-(t,m))$, for each $(t,m)\in (\mathbb{R}/\mathbb{Z})\times M$, where $\beta\co \mathbb{R}\to [0,1]$ is a monotone increasing function such that $\beta(s)=0$ for $s\ll 0$ and $\beta(s)=1$ for $s\gg 0$.  

For any Hamiltonian $H$, define \[ \mathcal{E}^+(H)=\int_{0}^{1}\max_{p\in M}H(t,p)dt, \quad \mathcal{E}^-(H)=-\int_{0}^{1}\min_{p\in M}H(t,p)dt,\] and \[ \|H\|=\mathcal{E}^+(H)+\mathcal{E}^-(H)=\int_{0}^{1}\left(\max_{p\in M}H(t,p)-\min_{p\in M}H(t,p)\right)dt.\]

If we choose  $\mathbb{J}$ (which, we recall, is a pair consisting of the family of almost complex structures $J_{s}^{t}$ together with an abstract perturbation $\bar{\nu}$) from a certain residual  subset $\mathcal{J}^{reg}(\mathbb{H})$, 
we can define a map $\Psi_{\mathcal{H}}\co CF_*(H_-,J_-)\to CF_*(H_+,J_+)$ as follows.  For generators $[\gamma_-,w_-]$ of 
$CF_*(H_-,J_-)$ and $[\gamma_+,w_+]$ of $CF_*(H_+,J_+)$ such that $\mu_{H_-}([\gamma_-,w_-])=\mu_{H_-}([\gamma_+,w_+])$, let $m_{\mathbb{J}}([\gamma_-,w_-],[\gamma_+,w_+])$ denote the number of solutions $u\co \mathbb{R}\times (\mathbb{R}/\mathbb{Z})\to M$, counted with appropriate rational weight, to the $\bar{\nu}$-perturbed version of the equation \begin{equation}\label{chainmap} \frac{\partial u}{\partial s}+J_{s}^{t}\left(\frac{\partial u}{\partial t}-X_{H_s}(t,u(s,t))\right)=0\end{equation} which have finite energy $E(u)=\int_{\mathbb{R}\times\mathbb{R}/\mathbb{Z}}|\frac{\partial u}{\partial s}|^2dsdt$ and which satisfy $u(s,t)\to \gamma_{\pm}(t)$ as $s\to \pm\infty$ and $[\gamma_+,w_+]=[\gamma_+,w_-\#u]$.  If $\mu_{H_-}([\gamma_-,w_-])\neq \mu_{H_-}([\gamma_+,w_+])$, set $m_{\mathbb{J}}([\gamma_-,w_-],[\gamma_+,w_+])=0$.  Now, just as with the boundary operator, define \[ \Psi_{\mathcal{H}}([\gamma_-,w_-])=\sum_{[\gamma_+,w_+]\in\mathcal{P}^{\circ}_{H_+}}m_{\mathbb{J}}([\gamma_-,w_-],[\gamma_+,w_+])[\gamma_+,w_+].\]

Let us call $\mathcal{H}$ a regular homotopy if $\mathbb{J}$ belongs to the residual set $\mathcal{J}^{reg}(\mathbb{H})$ of the previous paragraph.

If $\mathbb{J}\in\mathcal{J}^{reg}(\mathbb{H})$, so that $\Psi_{\mathcal{H}}$ is defined, the effect of $\Psi_{\mathcal{H}}$ on the filtrations of the Floer complexes can be understood by means of the following (standard) observation.  If $u$ solves (\ref{chainmap}), one has \begin{equation}\label{hfilt}\mathcal{A}_{H_-}([\gamma,w_-])-\mathcal{A}_{H_+}([\gamma_+,w_+])=E(u)+\int_{-\infty}^{\infty}\int_{0}^{1}\frac{\partial H_s}{\partial s}(t,u(s,t))dtds.\end{equation}  So since $E(u)\geq 0$, a lower bound on the last term provides a lower bound on $\mathcal{A}_{H_-}([\gamma_-,w_-])-\mathcal{A}_{H_+}([\gamma_+,w_+])$.  As with the boundary operator, if this lower bound is not satisfied, then the solution space associated to $([\gamma_-,w_-],[\gamma_+,w_+])$ is empty for the zero abstract perturbation and hence also for all sufficiently small abstract perturbations, and so $m_{\mathbb{J}}([\gamma_-,w_-],[\gamma_+,w_+])$ will be zero.

In particular, suppose that $\mathcal{H}$ is an interpolating homotopy; thus $H_s(t,m)=H_-(t,m)+\beta(s)(H_+(t,m)-H_-(t,m))$, where $\beta\co \mathbb{R}\to [0,1]$ is a monotone increasing function such that $\beta(s)=0$ for $s\ll 0$ and $\beta(s)=1$ for $s\gg 0$.  Then \begin{align*} 
\int_{-\infty}^{\infty}\int_{0}^{1}&\frac{\partial H_s}{\partial s}(t,u(s,t))dtds=\int_{-\infty}^{\infty}\int_{0}^{1}\beta'(s)(H_+-H_-)(t,u(s,t))dtds\\ &\geq \left(\int_{-\infty}^{\infty}\beta'(s)ds\right)\int_{0}^{1}\min_{p\in M}(H_+-H_-)(t,m)dt=-\mathcal{E}^-(H_+-H_-).\end{align*}  Thus (\ref{hfilt}) gives \[ \mathcal{A}_{H_+}([\gamma_+,w_+])\leq \mathcal{A}_{H_-}([\gamma_-,w_-])+\mathcal{E}^-(H_+-H_-)\] whenever $m_{\mathbb{J}}([\gamma_-,w_-],[\gamma_+,w_+])\neq 0$ and $\mathcal{H}$ is an interpolating homotopy.  From the definition of an interpolating homotopy it then immediately follows that:

\begin{prop} \label{interp} \emph{(}\cite{F},\cite{FO},\cite{HS},\cite{LT},\cite{Ohsurvey},\cite{SZ}\emph{)} If $\mathcal{H}$ is a regular interpolating homotopy from $(H_-,J_-)$ to $(H_+,J_+)$, then for each $c\in CF_*(H_-,J_-)$ we have \[ \mathcal{L}_{H_+}(\Psi_{\mathcal{H}}(c))\leq \mathcal{L}_{H_-}(c)+ \mathcal{E}^-(H_+-H_-).\]  Hence, for $\lambda\in\mathbb{R}$, $\Psi_{\mathcal{H}}$ restricts as a map \[ \Psi_{\mathcal{H}}\co CF^{\lambda}_{*}(H_-,J_-)\to CF^{\lambda+\mathcal{E}^-(H_+-H_-)}(H_+,J_+).\]
\end{prop}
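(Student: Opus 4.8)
The plan is to read the conclusion off directly from the action--energy identity that has just been recorded, since essentially all the analytic content is already in place. First I would recall that if $u\co\mathbb{R}\times(\mathbb{R}/\mathbb{Z})\to M$ is a finite-energy solution of the homotopy equation (\ref{chainmap}) contributing to $m_{\mathbb{J}}([\gamma_-,w_-],[\gamma_+,w_+])$, then (\ref{hfilt}) gives
\[ \mathcal{A}_{H_-}([\gamma_-,w_-])-\mathcal{A}_{H_+}([\gamma_+,w_+])=E(u)+\int_{-\infty}^{\infty}\int_{0}^{1}\frac{\partial H_s}{\partial s}(t,u(s,t))\,dt\,ds. \]
For an interpolating homotopy the integrand equals $\beta'(s)(H_+-H_-)(t,u(s,t))$, and since $\beta'\geq 0$ with $\int_{\mathbb{R}}\beta'(s)\,ds=1$, the double integral is bounded below by $\int_{0}^{1}\min_{p\in M}(H_+-H_-)(t,p)\,dt=-\mathcal{E}^-(H_+-H_-)$. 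Together with $E(u)\geq 0$ this yields $\mathcal{A}_{H_+}([\gamma_+,w_+])\leq \mathcal{A}_{H_-}([\gamma_-,w_-])+\mathcal{E}^-(H_+-H_-)$ whenever such a $u$ exists; and when the lower bound fails no such $u$ exists, so the corresponding moduli space is empty already for the unperturbed equation, hence remains empty for small abstract perturbations, whence $m_{\mathbb{J}}([\gamma_-,w_-],[\gamma_+,w_+])=0$ in that case as well. Thus the displayed inequality holds for \emph{every} pair with $m_{\mathbb{J}}\neq 0$.

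Next I would pass from generators to arbitrary chains. Given a generator $[\gamma_-,w_-]$, the definition $\Psi_{\mathcal{H}}([\gamma_-,w_-])=\sum_{[\gamma_+,w_+]}m_{\mathbb{J}}([\gamma_-,w_-],[\gamma_+,w_+])[\gamma_+,w_+]$ together with the generator estimate above gives $\mathcal{L}_{H_+}(\Psi_{\mathcal{H}}([\gamma_-,w_-]))\leq \mathcal{A}_{H_-}([\gamma_-,w_-])+\mathcal{E}^-(H_+-H_-)$; here one uses, exactly as for the boundary operator $\partial_{H,J}$, the Gromov--Floer compactness/non-accumulation fact that makes $\Psi_{\mathcal{H}}$ well defined as a map of the downward-completed complexes (so that $\mathcal{L}_{H_+}$ of the right-hand sum is an honest maximum). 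Since $\mathcal{L}_{H_+}$ is subadditive over the (possibly infinite Novikov) sums that arise, i.e.\ $\mathcal{L}_{H_+}(a+b)\leq\max\{\mathcal{L}_{H_+}(a),\mathcal{L}_{H_+}(b)\}$, writing a general $c\in CF_*(H_-,J_-)$ as a sum of generators $[\gamma_-,w_-]$ with $\mathcal{A}_{H_-}([\gamma_-,w_-])\leq\mathcal{L}_{H_-}(c)$ immediately gives $\mathcal{L}_{H_+}(\Psi_{\mathcal{H}}(c))\leq\mathcal{L}_{H_-}(c)+\mathcal{E}^-(H_+-H_-)$, the first assertion. The filtered statement is then formal: if $c\in CF^{\lambda}_{*}(H_-,J_-)$ then $\mathcal{L}_{H_-}(c)\leq\lambda$, hence $\mathcal{L}_{H_+}(\Psi_{\mathcal{H}}(c))\leq\lambda+\mathcal{E}^-(H_+-H_-)$, i.e.\ $\Psi_{\mathcal{H}}(c)\in CF^{\lambda+\mathcal{E}^-(H_+-H_-)}_{*}(H_+,J_+)$.

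I do not expect a genuine obstacle here: the proposition is a packaging of the sign bookkeeping for the interpolating homotopy into the identity (\ref{hfilt}), both of which are already available. The only points deserving a remark rather than a computation are the standard ones, which I would mention in passing: that $\Psi_{\mathcal{H}}$ is well defined on the Novikov completions (the same compactness argument used for $\partial_{H,J}$), and that the capping-disc convention $[\gamma_+,w_+]=[\gamma_+,w_-\#u]$ is precisely what makes the left-hand side of (\ref{hfilt}) equal $E(u)$ plus the homotopy term, with no stray period of $\omega$.
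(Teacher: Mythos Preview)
Your proposal is correct and follows essentially the same approach as the paper: the paper derives the generator-level inequality from the identity (\ref{hfilt}) by bounding the $\partial H_s/\partial s$ term via $\beta'(s)\geq 0$ and $\int_{\mathbb{R}}\beta'(s)\,ds=1$ for an interpolating homotopy, notes that the abstract perturbation does not create matrix elements that the unperturbed equation lacks, and then states the proposition as an immediate consequence. Your write-up simply makes explicit the passage from generators to arbitrary chains via subadditivity of $\mathcal{L}_{H_+}$, which the paper leaves implicit.
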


A well-known gluing argument shows that $\Psi_{\mathcal{H}}$ is a chain map (regardless of whether or not $\mathcal{H}$ is interpolating), and so it induces maps $\Phi^{H_+}_{H_-}\co HF_*(H_-)\to HF_{*}(H_+)$ on full Floer homology and $\Phi^{H_+}_{H_-}\co HF^{\lambda}(H_-)\to HF^{\lambda+\mu}(H_+)$ on the filtered Floer homology groups, where we can take $\mu=\mathcal{E}^-(H_+-H_-)$ by choosing $\mathcal{H}$ to be interpolating. 

Another well-known gluing argument shows the following.  Given pairs $(H_0,J_0)$, $(H_1,J_1)$, and $(H_2,J_2)$, where the $H_i$ are nondegenerate and $J_i\in\mathcal{J}^{reg}(H_i)$, and given a regular homotopy $\mathcal{H}_0=(\mathbb{H}_0,\mathbb{J}_0)$ from $(H_0,J_0)$ to $(H_1,J_1)$ and a regular homotopy $\mathcal{H}_1=(\mathbb{H}_1,\mathbb{J}_1)$ from $(H_1,J_1)$ to $(H_2,J_2)$, the composition $\Psi_{\mathcal{H}_1}\circ \Psi_{\mathcal{H}_0}$ is equal to $\Psi_{\tilde{\mathcal{H}}_R}$ for $R\gg 0$, where $\tilde{\mathcal{H}}_R=(\tilde{\mathbb{H}}_R,\tilde{\mathbb{J}}_R)$
is defined as follows.  By definition, for $i=0,1$ we have $\mathcal{H}_i=\{(H_{i,s},J_{i,s})\}_{s\in\mathbb{R}}$, where for some $T\gg 0$ $(H_{i,s},J_{i,s})=(H_i,J_i)$ for $s\leq -T$,  and $(H_{i,s},J_{i,s})=(H_{i+1}, J_{i+1})$ for $s\geq T$.  Then put, for $R>T$, $\tilde{\mathcal{H}}_{R}=\{(L_s,\tilde{J}_s)\}_{s\in\mathbb{R}}$ where $(L_s,\tilde{J}_s)=(H_{0,s+2R},J_{0,s+2R})$ for $s<-T$, $(L_s,\tilde{J}_s)=(H_1,J_1)$ for $-T\leq s\leq T$, and $(L_s,\tilde{J}_s)=(H_{1,s-2R},J_{1,s+2R})$ for $s>T$.  

Of interest to us will be the case that $(H_0,J_0)=(H_2,J_2)$.  Then $\tilde{\mathcal{H}}_{R}$ is a homotopy from $(H_0,J_0)$ to itself.  Of course, another homotopy from $(H_0,J_0)$ to itself is the constant homotopy $\mathcal{H}_{cst}$.  At least if the residual set $\mathcal{J}^{reg}(H_0)$ was chosen appropriately, the induced map $\Psi_{\mathcal{H}_{cst}}\co CF_{*}(H_0,J_0)\to CF_{*}(H_0,J_0)$ will be the identity, since this map counts what are in effect index-zero  negative gradient flowlines for the action functional $\mathcal{A}_{H_0}$, and for generic $J_0$ the only such flowlines $u$ are those with $u(s,\cdot)$ constantly equal to a some periodic orbit $\gamma$ of $X_{H_0}$ (moreover, it follows from Lemma 2.4 of \cite{Sal} that the linearized operator at such a constant flowline is bijective by virtue of the nondegeneracy of $H_0$, so no abstract perturbation is needed to cause the solution space to be cut out transversally).

As is well-known, the maps induced on Floer homology by distinct homotopies connecting the same $(H_i,J_i)$ are always chain homotopic; we will be needing more detail about the nature of this chain homotopy (in the particular case of the previous paragraph), so let us recall how the chain homotopy is obtained.  Given $(H_0,J_0)$ and $(H_1,J_1)$, we are considering the following two homotopies from $(H_0,J_0)$ to itself: 
\begin{itemize} \item the constant homotopy $\mathcal{H}_{cst}$, and 
\item A homotopy $\tilde{\mathcal{H}}_R$, obtained by gluing an interpolating homotopy from $(H_0,J_0)$ to $(H_1,J_1)$ to an interpolating homotopy from $(H_1,J_1)$ to $(H_0,J_0)$.\end{itemize}

Note then that $\tilde{\mathcal{H}}_R=\{(L_s,\tilde{J}_s)\}_{s\in\mathbb{R}}$ where \[ L_s(t,m)=H_0+\alpha(s)(H_1(t,m)-H_0(t,m))\] for a certain function $\alpha\co \mathbb{R}\to [0,1]$ which satisfies \begin{itemize} \item $\alpha(s)=0$ for $|s|>R$,
\item $\alpha(0)=1$, \item $\alpha'(s)\geq 0$ for $s<0$, and \item $\alpha'(s)\leq 0$ for $s>0$.  \end{itemize}

To describe the chain homotopy, for $0\leq\tau\leq 1$,  for $s\in \mathbb{R}$, and for $(t,m)\in(\mathbb{R}/\mathbb{Z})\times M$, set \[ K^{\tau}_{s}(t,m)=H_0(t,m)+\tau(L_s(t,m)-H_0(t,m));\] thus \[ K^{\tau}_{s}(t,m)=H_0(t,m)+\tau\alpha(s)(H_1(t,m)-H_0(t,m)).\]

We then define a map $\mathcal{K}\co CF_{*}(H_0,J_0)\to CF_{*}(H_0,J_0)$ by counting, in the familiar way, finite energy solutions to an abstract multivalued perturbation of the equation \[ \frac{\partial u}{\partial s}+\tilde{J}_{s}^{t}\left(\frac{\partial u}{\partial t}-X_{K^{\tau}_{s}}(t,u(s,t))\right),\] where now $\tau$ is allowed to vary freely in $[0,1]$, and  $u$ above contributes to a nonzero matrix element $\langle \mathcal{K}[\gamma_-,w_-],[\gamma_+,w_+]\rangle$ precisely when $u(s,\cdot)\to \gamma_{\pm}$ as $s\to -\infty$, $[\gamma_+,w_+]=[\gamma_+,w_-\#u]$ and $\mu_{H_0}([\gamma_+,w_+])-\mu_{H_0}([\gamma_-,w_-])=-1$ (so the solution would have index $-1$ for fixed $\tau$, and allowing $\tau$ to vary produces a solution space of expected dimension zero).  In such a situation, we have, just as in (\ref{hfilt}), \[ \mathcal{A}_{H_0}([\gamma_-,w_-])-\mathcal{A}_{H_0}([\gamma_+,w_+])=E(u)+\int_{-\infty}^{\infty}\int_{0}^{1}\frac{\partial K_{s}^{\tau}}{\partial s}(t, u(s,t))dtds.\]  Now our above formula for $K_{s}^{\tau}$ gives 

\[
 \int_{-\infty}^{\infty}  \int_{0}^{1}  \frac{\partial K_{s}^{\tau}}{\partial s}(t, u(s,t))dsdt =\tau\int_{-\infty}^{\infty}\int_{0}^{1}\alpha'(s)(H_1(t,u(s,t))-H_0(t,u(s,t))dtds \]\begin{align*}
 =\tau \Big( & \int_{-\infty}^{0} \alpha'(s) \int_{0}^{1}(H_1(t,u(s,t))-H_0(t,u(s,t))dtds\\ &+\left.\int_{0}^{-\infty}\alpha'(s)\int_{0}^{1}(H_1(t,u(s,t))-H_0(t,u(s,t))dtds\right)\end{align*}
 \[
 \geq \int_{0}^{1}\min_{p\in M}(H_1-H_0)(t,p)dt-\int_{0}^{1}\max_{p\in M}(H_1-H_0)(t,p)\]\[=-\mathcal{E}^-(H_1-H_0)-\mathcal{E}^+(H_1-H_0)=-\|H_1-H_0\|.
\]

So since $E(u)\geq 0$, we necessarily have \[ \mathcal{A}_{H_0}([\gamma_+,w_+])\leq \mathcal{A}_{H_0}([\gamma_-,w_-])+\|H_1-H_0\|\] whenever the matrix element $\langle \mathcal{K}[\gamma_-,w_-],[\gamma_+,w_+]\rangle$ is nonzero.  

Well-known arguments (see, \emph{e.g.}, \cite{SZ}) show that $\mathcal{K}$ is indeed a chain homotopy from $\Psi_{\mathcal{H}_{cst}}=I$ to $\Psi_{\tilde{\mathcal{H}}_R}=\Psi_{\mathcal{H}_1}\circ\Psi_{\mathcal{H}_0}$ (here $I$ denotes the identity).  Thus we have: 

\begin{prop}\label{htopylemma}\emph{(}\cite{F},\cite{FO},\cite{HS},\cite{LT},\cite{Ohsurvey},\cite{SZ}\emph{)} If $H_0$ and $H_1$ are nondegenerate Hamiltonians, if $J_i\in\mathcal{J}^{reg}(H_i)$, if $\mathcal{H}_0$ is a regular interpolating homotopy from $(H_0,J_0)$ to $(H_1,J_1)$, and if $\mathcal{H}_1$ is a regular interpolating homotopy from $(H_1,J_1)$ to $(H_0,J_0)$, then there exists a degree-$1$ homomorphism $\mathcal{K}\co CF_*(H_0,J_0)\to CF_*(H_0,J_0)$ with the following properties: \begin{itemize} \item If $c\in CF_*(H_0,J_0)$, then \[ \mathcal{L}_{H_0}(\mathcal{K}c)\leq \mathcal{L}_{H_0}(c)+\|H_1-H_0\|.\] \item Where $\Psi_{\mathcal{H}_i}$ denotes the chain map induced by the homotopy $\mathcal{H}_i$, and $I\co CF_*(H_0,J_0)\to CF_*(H_0,J_0)$ is the identity, we have \[ \Psi_{\mathcal{H}_1}\circ\Psi_{\mathcal{H}_0}-I=\partial_{H_0,J_0}\circ\mathcal{K}+\mathcal{K}\circ\partial_{H_0,J_0}.\]\end{itemize}
\end{prop}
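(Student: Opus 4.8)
The plan is to take $\mathcal{K}$ to be precisely the map constructed in the paragraphs immediately preceding the statement, and to verify the two asserted properties. Recall that $\mathcal{K}$ counts, with the usual rational weights, the finite-energy solutions $u$ of the $\tau$-parametrized equation
\[ \frac{\partial u}{\partial s}+\tilde{J}_{s}^{t}\left(\frac{\partial u}{\partial t}-X_{K^{\tau}_{s}}(t,u(s,t))\right)=0, \]
with $\tau$ free in $[0,1]$, that satisfy $u(s,\cdot)\to\gamma_\pm$ as $s\to\pm\infty$, $[\gamma_+,w_+]=[\gamma_+,w_-\#u]$, and $\mu_{H_0}([\gamma_+,w_+])-\mu_{H_0}([\gamma_-,w_-])=-1$. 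First I would dispose of the filtration bound: for any such $u$ the energy identity gives $\mathcal{A}_{H_0}([\gamma_-,w_-])-\mathcal{A}_{H_0}([\gamma_+,w_+])=E(u)+\int_{-\infty}^{\infty}\int_{0}^{1}\frac{\partial K_s^{\tau}}{\partial s}(t,u(s,t))\,dt\,ds$, and combining $E(u)\geq 0$ with the lower bound on the last integral already computed just above the statement forces $\mathcal{A}_{H_0}([\gamma_+,w_+])\leq\mathcal{A}_{H_0}([\gamma_-,w_-])+\|H_1-H_0\|$ whenever the matrix element is nonzero. A Gromov--Floer argument (energies of solutions cannot accumulate) then shows, exactly as for $\partial_{H_0,J_0}$, that $\mathcal{K}$ respects the downward Novikov completions, hence is a well-defined degree-$1$ homomorphism $CF_*(H_0,J_0)\to CF_*(H_0,J_0)$, and the displayed action estimate immediately yields $\mathcal{L}_{H_0}(\mathcal{K}c)\leq\mathcal{L}_{H_0}(c)+\|H_1-H_0\|$.

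For the chain-homotopy identity I would run the standard ``count the ends of the one-dimensional parametrized moduli space'' argument. Fix generators $[\gamma_-,w_-]$, $[\gamma_+,w_+]$ with $\mu_{H_0}([\gamma_+,w_+])-\mu_{H_0}([\gamma_-,w_-])=0$. After choosing the abstract perturbation $\barnu$ (and, in the semipositive case, the $\mathbb{R}\times(\mathbb{R}/\mathbb{Z})$-family $\tilde J_s^t$) from a suitable residual set, the space of pairs $(\tau,u)$ solving the above equation with these asymptotics is a compact one-manifold with boundary, so the signed count of its boundary points is zero. I expect the boundary to consist of four types of configuration: (i) $\tau\in(0,1)$ with an index-$1$ $H_0$-Floer cylinder splitting off at the $s\to-\infty$ end, contributing $\mathcal{K}\circ\partial_{H_0,J_0}$; (ii) the analogous splitting at the $s\to+\infty$ end, contributing $\partial_{H_0,J_0}\circ\mathcal{K}$; (iii) the face $\tau=0$, where $K_s^0\equiv H_0$ so the equation is exactly that of the constant homotopy $\mathcal{H}_{cst}$ and the count is $\Psi_{\mathcal{H}_{cst}}=I$ by the remark preceding the statement; and (iv) the face $\tau=1$, where $K_s^1=L_s$ so the equation is exactly that of the glued homotopy $\tilde{\mathcal{H}}_R$ and the count is $\Psi_{\tilde{\mathcal{H}}_R}=\Psi_{\mathcal{H}_1}\circ\Psi_{\mathcal{H}_0}$ (for $R\gg 0$) by the gluing discussion. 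Setting the total signed count equal to zero and solving for the difference of the two $\tau$-endpoint contributions gives $\Psi_{\mathcal{H}_1}\circ\Psi_{\mathcal{H}_0}-I=\partial_{H_0,J_0}\circ\mathcal{K}+\mathcal{K}\circ\partial_{H_0,J_0}$, up to the standard sign conventions.

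The analytic substance here --- transversality for the universal $\tau$-parametrized moduli problem, the gluing theorems that identify the ends (i)--(iv), and the Gromov--Floer compactness that rules out any other limiting behaviour --- is entirely standard, and in the generality needed here is handled by the virtual perturbation technology of \cite{FO},\cite{LT} exactly as for the Floer differential and the continuation maps (cf.\ \cite{SZ} in the semipositive case); the only point requiring care is to fix the perturbation data compatibly so that $\partial_{H_0,J_0}$, the continuation maps $\Psi_{\mathcal{H}_i}$, the $\tau=0,1$ faces, and $\mathcal{K}$ itself are all simultaneously regular. The sole genuinely new piece of bookkeeping relative to the textbook statement is distinguishing a breaking at $s\to-\infty$ from one at $s\to+\infty$, and matching them to $\mathcal{K}\circ\partial_{H_0,J_0}$ versus $\partial_{H_0,J_0}\circ\mathcal{K}$ respectively; this is immediate once one observes that the $\tau$-parametrized equation is invariant under $s$-translation only in the two asymptotic limits, where it reduces to the $H_0$-Floer equation. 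I therefore expect this ``main obstacle'' to be a matter of careful setup rather than of any new analytic difficulty.
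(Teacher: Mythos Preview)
Your proposal is correct and follows exactly the approach of the paper: the filtration estimate is the computation carried out in the paragraphs immediately before the statement, and for the chain-homotopy identity the paper simply invokes ``well-known arguments (see, \emph{e.g.}, \cite{SZ})'' identifying $\mathcal{K}$ as a chain homotopy from $\Psi_{\mathcal{H}_{cst}}=I$ to $\Psi_{\tilde{\mathcal{H}}_R}=\Psi_{\mathcal{H}_1}\circ\Psi_{\mathcal{H}_0}$, which is precisely the boundary-of-the-parametrized-moduli-space argument you sketch.
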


This preparation now enables us to prove Proposition \ref{retractcopy} from the introduction, which generalizes a result from \cite{CR} and which we restate here.

\begin{cor}\label{retract} Let $H_0$ be a nondegenerate Hamiltonian on any closed symplectic manifold $(M,\omega)$, and let $J_0\in \mathcal{J}^{reg}(H_0)$.  Then there is $\delta>0$ with the following property.  If $H_1$ is any nondegenerate Hamiltonian on $M$ with $\|H_1-H_0\|<\delta$, and if $J_1\in \mathcal{J}^{reg}(H_1)$, then the chain complex $CF_*(H_0,J_0)$ is a retract of the chain complex $CF_*(H_1,J_1)$ (\emph{i.e.}, there are chain maps $i\co CF_*(H_0,J_0)\to CF_*(H_1,J_1)$ and $r\co CF_*(H_1,J_1)\to CF_*(H_0,J_0)$ such that $r\circ i$ is the identity).
\end{cor}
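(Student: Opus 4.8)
The plan is to exploit Proposition \ref{htopylemma} together with the "perturbation-is-empty" principle used throughout Section \ref{bkgd}: if the algebraic count of solutions to a Floer-type equation would require a solution whose energy/action bookkeeping is impossible, then that matrix element vanishes, and no abstract perturbation can resurrect it. Here is the mechanism. Choose regular interpolating homotopies $\mathcal{H}_0$ from $(H_0,J_0)$ to $(H_1,J_1)$ and $\mathcal{H}_1$ from $(H_1,J_1)$ to $(H_0,J_0)$, giving chain maps $\Psi_{\mathcal{H}_0}\co CF_*(H_0,J_0)\to CF_*(H_1,J_1)$ and $\Psi_{\mathcal{H}_1}\co CF_*(H_1,J_1)\to CF_*(H_0,J_0)$, and a chain homotopy $\mathcal{K}\co CF_*(H_0,J_0)\to CF_*(H_0,J_0)$ with $\Psi_{\mathcal{H}_1}\circ\Psi_{\mathcal{H}_0}-I=\partial_{H_0,J_0}\mathcal{K}+\mathcal{K}\partial_{H_0,J_0}$ and $\mathcal{L}_{H_0}(\mathcal{K}c)\leq \mathcal{L}_{H_0}(c)+\|H_1-H_0\|$ for all $c$. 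I would set $i=\Psi_{\mathcal{H}_0}$ and try to build $r$ as a correction of $\Psi_{\mathcal{H}_1}$ so that $r\circ i=I$ exactly.

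The key step is to show that, once $\delta$ is small enough, the endomorphism $T:=\Psi_{\mathcal{H}_1}\circ\Psi_{\mathcal{H}_0}$ of $CF_*(H_0,J_0)$ is invertible as a chain map, by a Novikov-type geometric series argument. The point is that $T-I=\partial\mathcal{K}+\mathcal{K}\partial$ strictly increases the action filtration by no more than $\|H_1-H_0\|<\delta$ in the following sense: since $\partial_{H_0,J_0}$ strictly drops action (the connecting cylinders have positive energy, so $\mathcal{L}_{H_0}(\partial c)<\mathcal{L}_{H_0}(c)$) and $\mathcal{K}$ raises it by at most $\|H_1-H_0\|$, both $\partial\mathcal{K}$ and $\mathcal{K}\partial$ raise action by strictly less than $\delta$. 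I would organize the complex by the filtration together with the $\Gamma$-action: for each generator $[\gamma,w]$ its image under $T-I$ is a sum of generators $[\gamma',w']$ with $\mathcal{A}_{H_0}([\gamma',w'])<\mathcal{A}_{H_0}([\gamma,w])+\delta$, and by choosing $\delta$ smaller than the minimal positive "action gap" $\hbar$ between distinct elements of the (discrete, since $\Gamma$-quotiented) action spectrum relevant to $H_0$—more precisely smaller than the positive generator of the image of $\omega$ on $\Gamma$, and smaller than the minimal difference $\mathcal{A}_{H_0}([\gamma,w])-\mathcal{A}_{H_0}([\gamma',w'])$ over periodic orbits $\gamma\neq\gamma'$ when that difference is positive—one arranges that $N:=T-I$ is "strictly upper triangular" with respect to a well-ordering of the generators compatible with the downward Novikov completion. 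Then $T=I+N$ is invertible, with inverse $\sum_{k\geq 0}(-N)^k$, the sum converging in the Novikov sense because each application of $N$ pushes the action up by at least $\hbar-\delta>0$ while the completion controls behavior from below. Since $N=\partial\mathcal{K}+\mathcal{K}\partial$ is a chain-homotopy-to-zero, $T$ is a chain homotopy equivalence and in particular $T^{-1}$ is a chain map. Finally set $r:=T^{-1}\circ\Psi_{\mathcal{H}_1}$; then $r$ is a chain map and $r\circ i=T^{-1}\circ\Psi_{\mathcal{H}_1}\circ\Psi_{\mathcal{H}_0}=T^{-1}\circ T=I$, as desired.

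The main obstacle I anticipate is making the convergence of the Neumann series $\sum(-N)^k$ rigorous inside the Novikov completion: one must check that for each fixed generator, only finitely many terms $(-N)^k$ contribute above any given action level $C$, which requires the quantitative statement that $N$ strictly increases action by a definite amount bounded below away from zero. This is exactly where the gap hypothesis on $\delta$ (smaller than the relevant action-spectrum gap, which exists and is positive precisely because we quotiented $\pi_2$ down to $\Gamma$, or alternatively because $H_0$ has finitely many periodic orbits and the spherical part of the action spectrum lies in a discrete subgroup) is used; I would spell this out carefully, perhaps phrasing it as: $\mathcal{A}_{H_0}$ takes values in a countable set $\mathcal{S}\subset\mathbb{R}$, and choosing $\delta$ less than any point of $\mathcal{S}$ that is positive guarantees that $N$ maps the span of generators with action $\leq c$ into the closure of the span of generators with action $< c$, which after passing to the associated graded yields nilpotence in each finite window and hence termwise-finite convergence of the series. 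A secondary, more routine, point is to confirm that $\mathcal{K}$, $\Psi_{\mathcal{H}_0}$, $\Psi_{\mathcal{H}_1}$ and hence $T$ and $T^{-1}$ are all $\Lambda_{\Gamma,\omega}$-linear and degree-correct (degree $0$ except $\mathcal{K}$ which has degree $1$), so that $r$ is a genuine chain map of graded $\Lambda_{\Gamma,\omega}$-modules; this follows from the standard gluing arguments already invoked in the excerpt.
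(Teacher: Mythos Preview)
Your overall architecture is exactly the paper's: take $i=\Psi_{\mathcal{H}_0}$, set $N=\Psi_{\mathcal{H}_1}\circ\Psi_{\mathcal{H}_0}-I=\partial_{H_0,J_0}\mathcal{K}+\mathcal{K}\partial_{H_0,J_0}$ from Proposition~\ref{htopylemma}, invert $I+N$ by a geometric series, and put $r=(I+N)^{-1}\circ\Psi_{\mathcal{H}_1}$. The gap is in your justification that $\sum_{k\geq 0}(-N)^k$ converges in the downward Novikov completion.

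You try to force this by taking $\delta$ smaller than a minimal positive gap $\hbar$ in the action spectrum of $H_0$. But for a general closed symplectic manifold the image of $\langle[\omega],\cdot\rangle$ on $\pi_2(M)$ (equivalently on $\Gamma$) need not be discrete, so the action spectrum need not have any positive gap; your claim that discreteness follows ``because we quotiented $\pi_2$ down to $\Gamma$'' is simply false. Even when the spectrum is discrete, your bookkeeping only yields $\mathcal{L}_{H_0}(Nc)<\mathcal{L}_{H_0}(c)+\|H_1-H_0\|$, which with $\|H_1-H_0\|<\hbar$ gives $\mathcal{L}_{H_0}(Nc)\leq\mathcal{L}_{H_0}(c)$, not a strict drop by a fixed amount; so you still have not ruled out diagonal contributions, and the series need not converge. (Your sentence ``each application of $N$ pushes the action up by at least $\hbar-\delta$'' has the sign backwards in any case: convergence in this completion requires $N$ to push action \emph{down}.)

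The paper avoids all of this with a single clean observation: by Gromov--Floer compactness the index-one solutions to (\ref{bdry}) for $(H_0,J_0)$ have a positive infimal energy $\delta>0$, so $\mathcal{L}_{H_0}(\partial_{H_0,J_0}c)\leq\mathcal{L}_{H_0}(c)-\delta$ for every $c$. Combined with $\mathcal{L}_{H_0}(\mathcal{K}c)\leq\mathcal{L}_{H_0}(c)+\|H_1-H_0\|$ this gives
\[
\mathcal{L}_{H_0}(Nc)\leq \mathcal{L}_{H_0}(c)-\bigl(\delta-\|H_1-H_0\|\bigr),
\]
so if $\|H_1-H_0\|<\delta$ then $N^k$ drops filtration by at least $k(\delta-\|H_1-H_0\|)\to\infty$, and the series converges. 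This works on any closed $(M,\omega)$ with no rationality hypothesis; replacing your action-spectrum-gap argument by this energy-gap argument for $\partial_{H_0,J_0}$ is all that is needed to make your proof correct.
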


\begin{proof}
All index-one solutions to the Floer boundary equation (\ref{bdry}) (with $H=H_0,J=J_0$) have positive energy, and so it  follows from Gromov-Floer compactness that if $\delta$ is the infimal energy of any such solution then $\delta>0$.  In light of the definition of the Floer boundary operator we hence have, for each $c\in CF_*(H_0,J_0)$, $\mathcal{L}_{H_0}(\partial_{H_0,J_0} c)\leq 
\mathcal{L}_{H_0}(c)-\delta$.  With notation as in Proposition \ref{htopylemma}, define $A\co CF_*(H_0,J_0)\to CF_*(H_0,J_0)$ by $A=\partial_{H_0,J_0}\circ\mathcal{K}+\mathcal{K}\circ\partial_{H_0,J_0}$.  Then for any $c\in CF_*(H_0,J_0)$ we have $\mathcal{L}_{H_0}(Ac)\leq \mathcal{L}_{H_0}(c)-(\delta-\|H_1-H_0\|)<\mathcal{L}_{H_0}(c)$.  But then for any $k\geq 0$ $\mathcal{L}_{H_0}(A^kc)\leq \mathcal{L}_{H_0}(c)-k(\delta-\|H_1-H_0\|)$; since this diverges to $-\infty$ as $k\to\infty$ it follows from the definition of the Floer chain complex that \[ B:=\sum_{k=0}^{\infty}(-A)^k\co CF_*(H_0,J_0)\to CF_*(H_0,J_0) \] is well-defined.  $B$ is a chain map (of complexes over the Novikov ring) since $A$ is, and of course we have $B\circ(I+A)=I$.  But Proposition \ref{htopylemma} says that $\Psi_{\mathcal{H}_1}\circ \Psi_{\mathcal{H}_0}=I+A$, so setting $i=\Psi_{\mathcal{H}_0}$ and $r=B\circ \Psi_{\mathcal{H}_1}$ proves the corollary.\end{proof}

\section{The boundary depth $\beta$}\label{sectionbeta}

\begin{definition} If $H\co (\mathbb{R}/\mathbb{Z})\times M\to\mathbb{R}$ is a nondegenerate Hamiltonian and if $J\in \mathcal{J}^{reg}(H)$, we define the boundary depth $\beta(H,J)$ of $(H,J)$ to be the infimum of all numbers $\beta\geq 0$ with the following property: \[ \mbox{ For all $\lambda\in\mathbb{R}$, } \partial_{H,J}(CF(H,J))\cap CF^{\lambda}(H,J)\subset \partial_{H,J}(CF^{\lambda+\beta}(H,J)).\]
\end{definition}

As is mentioned in the introduction, it is not initially obvious that $\beta(H,J)$ is finite; however, it does turn out to be finite, as was shown in \cite{U} and \cite{OhCerf} and as also follows from the results we prove presently.  First, the following result establishes Theorem \ref{mainbeta}(ii). 

\begin{prop}\label{Lip} Whenever $\beta(H_0,J_0)$ and $\beta(H_1,J_1)$ are both defined, we have \[ |\beta(H_0,J_0)-\beta(H_1,J_1)|\leq \|H_1-H_0\|.\]\end{prop}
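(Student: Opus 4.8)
The plan is to use the chain homotopy $\mathcal{K}$ provided by Proposition~\ref{htopylemma} to transport boundaries in one complex to boundaries in the other, keeping track of filtration levels. Fix regular interpolating homotopies $\mathcal{H}_0$ from $(H_0,J_0)$ to $(H_1,J_1)$ and $\mathcal{H}_1$ from $(H_1,J_1)$ to $(H_0,J_0)$, and symmetrically regular interpolating homotopies $\mathcal{H}_0'$ from $(H_1,J_1)$ to $(H_0,J_0)$ and $\mathcal{H}_1'$ from $(H_0,J_0)$ to $(H_1,J_1)$. By Proposition~\ref{interp}, $\Psi_{\mathcal{H}_0}$ raises the filtration level $\mathcal{L}$ by at most $\mathcal{E}^-(H_1-H_0)$, and $\Psi_{\mathcal{H}_1}$ raises it by at most $\mathcal{E}^-(H_0-H_1)$; since $\mathcal{E}^-(H_1-H_0)+\mathcal{E}^-(H_0-H_1)\le \mathcal{E}^-(H_1-H_0)+\mathcal{E}^+(H_1-H_0)=\|H_1-H_0\|$, the composite $\Psi_{\mathcal{H}_1}\circ\Psi_{\mathcal{H}_0}$ raises $\mathcal{L}_{H_0}$ by at most $\|H_1-H_0\|$, and similarly for the reverse composite on $CF_*(H_1,J_1)$.

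Now suppose $\beta>\beta(H_1,J_1)$; I want to show $\beta(H_0,J_0)\le\beta+\|H_1-H_0\|$, which by symmetry and letting $\beta\downarrow\beta(H_1,J_1)$ gives the desired inequality. Take $c\in\partial_{H_0,J_0}(CF_*(H_0,J_0))\cap CF^{\lambda}_*(H_0,J_0)$, say $c=\partial_{H_0,J_0}b$. By Proposition~\ref{htopylemma}, $c-\Psi_{\mathcal{H}_1}\Psi_{\mathcal{H}_0}c=-\partial_{H_0,J_0}\mathcal{K}c-\mathcal{K}\partial_{H_0,J_0}c$; since $c$ is a boundary, $\partial_{H_0,J_0}c=0$, so $c=\Psi_{\mathcal{H}_1}\Psi_{\mathcal{H}_0}c-\partial_{H_0,J_0}(\mathcal{K}c)$, and $\mathcal{L}_{H_0}(\mathcal{K}c)\le\lambda+\|H_1-H_0\|$. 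Meanwhile $\Psi_{\mathcal{H}_0}c=\Psi_{\mathcal{H}_0}\partial_{H_0,J_0}b=\partial_{H_1,J_1}(\Psi_{\mathcal{H}_0}b)$ is a boundary in $CF_*(H_1,J_1)$ lying in filtration level at most $\lambda+\mathcal{E}^-(H_1-H_0)$, hence by $\beta>\beta(H_1,J_1)$ it equals $\partial_{H_1,J_1}b'$ for some $b'$ with $\mathcal{L}_{H_1}(b')\le\lambda+\mathcal{E}^-(H_1-H_0)+\beta$. Then $\Psi_{\mathcal{H}_1}\Psi_{\mathcal{H}_0}c=\partial_{H_0,J_0}(\Psi_{\mathcal{H}_1}b')$ with $\mathcal{L}_{H_0}(\Psi_{\mathcal{H}_1}b')\le\lambda+\mathcal{E}^-(H_1-H_0)+\mathcal{E}^-(H_0-H_1)+\beta\le\lambda+\|H_1-H_0\|+\beta$. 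Combining, $c=\partial_{H_0,J_0}\big(\Psi_{\mathcal{H}_1}b'-\mathcal{K}c\big)$ and this primitive has filtration level $\le\lambda+\beta+\|H_1-H_0\|$, which is exactly what is needed.

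The one subtlety to check carefully is that the chain homotopy identity and the filtration bound on $\mathcal{K}$ in Proposition~\ref{htopylemma} are stated for homotopies built by gluing \emph{interpolating} homotopies in both directions, so I must make sure the same pair $\mathcal{H}_0,\mathcal{H}_1$ used to produce $\mathcal{K}$ is the one used for the filtration estimate on $\Psi_{\mathcal{H}_1}\circ\Psi_{\mathcal{H}_0}$ — this is automatic since both come from Propositions~\ref{interp} and~\ref{htopylemma} applied to the same data. I also need that $\beta(H_0,J_0)$ is \emph{a priori} known to exist (be finite) for the symmetric half of the argument to even make sense; but the above argument in fact shows directly that if $\beta(H_1,J_1)<\infty$ then $\beta(H_0,J_0)\le\beta(H_1,J_1)+\|H_1-H_0\|<\infty$, so finiteness propagates, and combined with the finiteness established in \cite{U},\cite{OhCerf} (or the later results in this paper) both quantities are finite and the two-sided Lipschitz bound holds. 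I expect the main obstacle to be purely bookkeeping: correctly splitting $\|H_1-H_0\|$ between the forward homotopy, the backward homotopy, and the transported primitive so that the three contributions sum to exactly $\beta+\|H_1-H_0\|$ rather than something larger.
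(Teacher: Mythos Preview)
Your proof is correct and follows essentially the same approach as the paper's: take a boundary $c$ at level $\lambda$, push it to $CF_*(H_1,J_1)$ via $\Psi_{\mathcal{H}_0}$, use $\beta>\beta(H_1,J_1)$ to find a primitive $b'$ there with controlled filtration, pull back via $\Psi_{\mathcal{H}_1}$, and correct with the chain homotopy term $\mathcal{K}c$ to get $c=\partial_{H_0,J_0}(\Psi_{\mathcal{H}_1}b'-\mathcal{K}c)$ at level $\le\lambda+\beta+\|H_1-H_0\|$. The only cosmetic differences are that the paper does not bother naming an initial primitive $b$ for $c$ (it simply uses that $\Psi_{\mathcal{H}_0}$ is a chain map to see $\Psi_{\mathcal{H}_0}c$ is a boundary), and it records the equality $\mathcal{E}^-(H_0-H_1)=\mathcal{E}^+(H_1-H_0)$ rather than your inequality (which is in fact an equality).
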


\begin{proof} By symmetry, it is enough to show that, whenever $\beta>\beta(H_1,J_1)$, we have \[ \beta(H_0,J_0)\leq \beta+\|H_1-H_0\|.\]  So let $\beta>\beta(H_1,J_1)$.

Where $\lambda\in\mathbb{R}$, suppose that $c\in \partial_{H_0,J_0}(CF(H_0,J_0))\cap CF^{\lambda}(H_0,J_0)$.   Use the notation of Proposition \ref{htopylemma}.  Since $\Psi_{\mathcal{H}_0}$ is a chain map, $\Psi_{\mathcal{H}_0}c\in \partial_{H_1,J_1}(CF_*(H_1,J_1))$, and by Proposition \ref{interp} we have $\Psi_{\mathcal{H}_0}c\in CF_{*}^{\lambda+\mathcal{E}^-(H_1-H_0)}(H_1,J_1)$.  Hence by our choice of $\beta$ there is some $b\in CF_{*}^{\lambda+\beta+\mathcal{E}^-(H_1-H_0)}(H_1,J_1)$ such that $\partial_{H_1,J_1} b=\Psi_{\mathcal{H}_0}c$.  Noting that $\mathcal{E}^-(H_0-H_1)=\mathcal{E}^+(H_1-H_0)$, we will then have \[ \mathcal{L}_{H_0}(\Psi_{\mathcal{H}_1}b)\leq \lambda+\beta+\mathcal{E}^-(H_1-H_0)+\mathcal{E}^+(H_1-H_0)=\lambda+\beta+\|H_1-H_0\|.\]  Now (using that $\partial_{H_0,J_0}c=0$ since $\partial_{H_0,J_0}^2=0$) Proposition \ref{htopylemma} shows that \[
c=\Psi_{\mathcal{H}_1}(\Psi_{\mathcal{H}_0}c)-\partial_{H_0,J_0} \mathcal{K}c=\partial_{H_0,J_0}(\Psi_{\mathcal{H}_1}b-\mathcal{K}c).
\]  We've seen that $\Psi_{\mathcal{H}_1}b\in CF_{*}^{\lambda+\beta+\|H_1-H_0\|}(H_0,J_0)$, and Proposition \ref{htopylemma} shows that $\mathcal{K}c\in CF^{\lambda+\|H_1-H_0\|}(H_0,J_0)$ so (since $\beta\geq 0$) we have \[ c\in \partial_{H_0,J_0}\left(CF_{*}^{\lambda+\beta+\|H_1-H_0\|}(H_0,J_0)\right).\]  Since $\beta>\beta(H_1,J_1)$, $\lambda\in\mathbb{R}$, and $c\in CF_{*}^{\lambda}(H_0,J_0)$ were all arbitrary, this shows that \[ \beta(H_0,J_0)\leq\beta(H_1,J_1)+\|H_1-H_0\|,\] and so reversing the roles of $H_0$ and $H_1$ proves the proposition.
\end{proof}
\begin{remark} As a special case of Proposition \ref{Lip} (where $H_0=H_1$), we learn that $\beta(H,J)$ is in fact independent of the choice of $J\in \mathcal{J}^{reg}(H)$, proving Theorem \ref{mainbeta} (i).  Hence from now on we write $\beta(H)$ for $\beta(H,J)$.
\end{remark}

\begin{prop} \label{smallmorse} If $f\co M\to\mathbb{R}$ is a Morse function and $\ep>0$ is sufficiently small, then \[ \beta(\ep f)\leq \|\ep f\|.\]
\end{prop}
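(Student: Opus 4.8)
The plan is to use the identification, recalled just above the statement, between $CF_*(\ep f,J)$ (for $\ep$ small and $J$ suitably chosen) and the Morse complex of $-\ep f$ over the Novikov ring $\Lambda_{\Gamma,\omega}$, together with the elementary fact that the underlying $\mathbb{Q}$-complex has all of its filtration levels confined to an interval of width $\|\ep f\|$. Concretely: $CF_*(\ep f,J)$ is freely generated over $\Lambda_{\Gamma,\omega}$ by the capped critical points $[p,w_p]$ (for $p\in\mathrm{Crit}(f)$, $w_p$ the constant capping), on which $\mathcal{A}_{\ep f}([p,w_p])=-\ep f(p)$; the Floer differential has the form $\partial_{\ep f,J}=\partial^{CM}\otimes 1_{\Lambda_{\Gamma,\omega}}$, where the Morse differential $\partial^{CM}$ of $-\ep f$ has \emph{rational} matrix entries with respect to the basis $\{[p,w_p]\}$ of $CM_*(-\ep f;\mathbb{Q})$. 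The only structural feature needed is that $\mathcal{A}_{\ep f}([p,w_p])=-\ep f(p)$ lies in $[-\ep\max_M f,\,-\ep\min_M f]$, an interval of length $\|\ep f\|$, so that (by the definition of $\mathcal{L}_{\ep f}$ as a maximum over the support) every nonzero element of the $\mathbb{Q}$-subcomplex $CM_*(-\ep f;\mathbb{Q})\subset CF_*(\ep f,J)$ has $\mathcal{L}_{\ep f}$ in this same interval.

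I would then fix a $\mathbb{Q}$-linear map $\sigma\co CM_*(-\ep f;\mathbb{Q})\to CM_*(-\ep f;\mathbb{Q})$ that restricts, over $B:=\mathrm{im}\,\partial^{CM}$, to a section of the surjection $\partial^{CM}\co CM_*(-\ep f;\mathbb{Q})\to B$ (i.e.\ $\partial^{CM}\circ\sigma=\mathrm{id}_B$; such $\sigma$ exists since $\mathbb{Q}$ is a field, and may be taken to vanish on a chosen complement of $B$), and extend $\sigma$ $\Lambda_{\Gamma,\omega}$-linearly to $\sigma\co CF_*(\ep f,J)\to CF_*(\ep f,J)$; then $\partial_{\ep f,J}\circ\sigma=\mathrm{id}$ holds on $\mathrm{im}\,\partial_{\ep f,J}=B\otimes_{\mathbb{Q}}\Lambda_{\Gamma,\omega}$. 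Given $\lambda\in\mathbb{R}$ and an arbitrary $c\in\partial_{\ep f,J}(CF(\ep f,J))\cap CF^{\lambda}(\ep f,J)$, set $b':=\sigma(c)$; then $\partial_{\ep f,J}b'=c$ automatically, so the whole proof reduces to showing $\mathcal{L}_{\ep f}(b')\le\lambda+\|\ep f\|$.

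For that estimate I would expand $c=\sum_{p\in\mathrm{Crit}(f)}\gamma_p[p,w_p]$ with $\gamma_p\in\Lambda_{\Gamma,\omega}$, so that $\mathcal{L}_{\ep f}(c)=\max\{\nu(\gamma_p)-\ep f(p)\ :\ \gamma_p\ne 0\}$, where $\nu(\sum_g b_g g)=\max\{-\int_{S^2}g^*\omega\ :\ b_g\ne 0\}$ is the valuation on $\Lambda_{\Gamma,\omega}$. Since $b'=\sigma(c)=\sum_p\gamma_p\,\sigma([p,w_p])$ with each $\sigma([p,w_p])\in CM_*(-\ep f;\mathbb{Q})$, combining subadditivity of $\mathcal{L}_{\ep f}$ under sums, the inequality $\mathcal{L}_{\ep f}(\gamma z)\le\nu(\gamma)+\mathcal{L}_{\ep f}(z)$, and the bound $\mathcal{L}_{\ep f}(\sigma([p,w_p]))\le-\ep\min_M f$ from the first paragraph yields
\begin{align*}
\mathcal{L}_{\ep f}(b')&\le\max_{\gamma_p\ne 0}\nu(\gamma_p)-\ep\min_M f\le\bigl(\mathcal{L}_{\ep f}(c)+\ep\max_M f\bigr)-\ep\min_M f\\
&=\mathcal{L}_{\ep f}(c)+\|\ep f\|\le\lambda+\|\ep f\|,
\end{align*}
where the middle inequality is $\nu(\gamma_p)=\bigl(\nu(\gamma_p)-\ep f(p)\bigr)+\ep f(p)\le\mathcal{L}_{\ep f}(c)+\ep\max_M f$. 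Since $\lambda$ and $c$ were arbitrary, this gives $\partial_{\ep f,J}(CF(\ep f,J))\cap CF^{\lambda}(\ep f,J)\subset\partial_{\ep f,J}(CF^{\lambda+\|\ep f\|}(\ep f,J))$ for all $\lambda$, i.e.\ $\beta(\ep f,J)\le\|\ep f\|$, which equals $\beta(\ep f)$ by the remark following Proposition \ref{Lip}.

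I do not anticipate a genuine obstacle: the point is simply that a \emph{single} section $\sigma$ defined over $\mathbb{Q}$ suffices at every Novikov level simultaneously, which is precisely what the rationality of $\partial^{CM}$ in the geometric basis permits — one never needs a section mixing distinct capping classes (which would distort filtrations by unbounded amounts), so the distortion is bounded by the width $\|\ep f\|$ of the finite-dimensional $\mathbb{Q}$-complex $CM_*(-\ep f;\mathbb{Q})$. The only thing demanding care is the precise filtered identification $CF_*(\ep f,J)\cong CM_*(-\ep f;\mathbb{Q})\otimes_{\mathbb{Q}}\Lambda_{\Gamma,\omega}$ underpinning the first paragraph, but this is exactly the content of the discussion given just before the statement.
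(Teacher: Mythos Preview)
Your argument is correct and follows essentially the same route as the paper: identify $CF_*(\ep f,J)$ with $CM_*(-\ep f;\mathbb{Q})\otimes\Lambda_{\Gamma,\omega}$, and use that the trivially-capped generators all have action in an interval of width $\|\ep f\|$. The paper phrases this as an energy bound on individual Morse flowlines and then says the result ``immediately follows from the relevant definitions,'' whereas you spell out the Novikov-coefficient bookkeeping explicitly by constructing a $\mathbb{Q}$-linear section $\sigma$ and extending $\Lambda$-linearly; this extra care is not strictly necessary but does make transparent why the bound survives passage to Novikov coefficients.
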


\begin{proof}  As noted earlier (and as is well-known), for sufficiently small $\ep$ and for appropriate $J$, the complex $CF_{*}(\ep f,J)$ will coincide with the Morse complex $CM_*(-\ep f)\otimes \Lambda_{\Gamma,\omega}$, with all solutions to the Floer boundary equation (\ref{bdry}) being $t$-independent negative gradient flowlines for $-\ep f$ (that this is true with no condition on $M$ is shown in Lemma 5.1 of \cite{LT}).  Now the energy of such a flowline, say connecting critical points $p_1$ and $p_2$, is just $\ep f(p_2)-\ep f(p_1)$, which is bounded above by the total variation $\|\ep f\|$ of $-\ep f$.  The result then immediately follows from the relevant definitions. 
\end{proof}

We now restate Theorem \ref{mainbeta}(iii).
\begin{cor} \label{hoferdepth}(\cite{OhCerf}, Proposition 8.8) For any nondegenerate Hamiltonian $H$ we have \[ \beta(H)\leq \|H\|.\]
\end{cor}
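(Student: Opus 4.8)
The plan is to reduce the statement $\beta(H) \le \|H\|$ to the already-established Lipschitz estimate (Proposition \ref{Lip}) combined with the estimate for $C^2$-small Morse functions (Proposition \ref{smallmorse}). The key observation is that $\|H\|$ is exactly the right quantity to measure the Hofer distance from $H$ to a small Morse function: if $f \co M \to \mathbb{R}$ is $C^2$-small enough that $\ep f$ falls into the regime where Proposition \ref{smallmorse} applies, then $\|H - \ep f\|$ is close to $\|H\|$, and meanwhile $\beta(\ep f) \le \|\ep f\|$ is itself small. So the triangle-inequality structure of Proposition \ref{Lip} should squeeze $\beta(H)$ down below $\|H\| + (\text{something arbitrarily small})$.

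More precisely, first I would fix an arbitrary $\eta > 0$. Choose a Morse function $f\co M\to\mathbb{R}$ and then choose $\ep > 0$ small enough that (a) $\ep f$ is nondegenerate as a (time-independent) Hamiltonian, (b) Proposition \ref{smallmorse} applies, so $\beta(\ep f) \le \|\ep f\|$, and (c) $\|\ep f\| < \eta/2$; shrinking $\ep$ further if needed, we also get $\|\ep f\| < \eta/2$ automatically controls things. Then by Proposition \ref{Lip} (applied with $H_0 = H$, $H_1 = \ep f$, with appropriate choices of regular $J$'s, which is legitimate since $\beta$ is $J$-independent by the Remark following Proposition \ref{Lip}):
\[
\beta(H) \le \beta(\ep f) + \|H - \ep f\| \le \|\ep f\| + \|H - \ep f\|.
\]
Now I would use the elementary inequality $\|H - \ep f\| \le \|H\| + \|\ep f\|$ (which follows from $\max_M(a-b) \le \max_M a - \min_M b$ applied pointwise in $t$, i.e. subadditivity of $\|\cdot\|$), giving
\[
\beta(H) \le \|H\| + 2\|\ep f\| < \|H\| + \eta.
\]
Since $\eta > 0$ was arbitrary, $\beta(H) \le \|H\|$.

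The only real point requiring care — and the step I expect to be the main (minor) obstacle — is making sure that all three of the smallness conditions on $\ep$ can be met simultaneously, in particular that we can pick a Morse function at all (fine, since $M$ is a closed manifold) and that $\ep f$ can be taken nondegenerate; nondegeneracy of $\ep f$ for small $\ep$ holds because the fixed points of $\phi_{\ep f}^1$ near a critical point of $f$ are controlled by the Hessian, which is nondegenerate by the Morse condition, so for small $\ep$ the linearized return map has no eigenvalue $1$. Everything else is bookkeeping with the seminorm $\|\cdot\|$ and the two cited propositions. Note that no new Floer-theoretic input is needed; the corollary is purely a formal consequence of the structural results already proven, which is presumably why the author relegates the "geometric" proof to \cite{OhCerf} and offers this softer argument here.
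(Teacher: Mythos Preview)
Your proposal is correct and is essentially identical to the paper's own proof: choose a Morse function $f$, apply Proposition \ref{Lip} to compare $\beta(H)$ with $\beta(\ep f)$, invoke Proposition \ref{smallmorse} for the latter, use subadditivity of $\|\cdot\|$, and let $\ep\to 0$. The only differences are cosmetic (you introduce an auxiliary $\eta$ and discuss nondegeneracy of $\ep f$ explicitly), but the logical structure is the same.
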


\begin{proof} Choose a Morse function $f$ and let $\ep>0$.  By the preceding two propositions we have, for small enough $\ep>0$, \[ \beta(H)\leq \beta(\ep f)+\|H-\ep f\|\leq \|\ep f\|+\|H-\ep f\|\leq \|H\|+2\|\ep f\|=\|H\|+2\ep\|f\|.\]  Since this holds for all sufficiently small $\ep>0$ the corollary follows.
\end{proof}

Our goal now will be to analyze the relationship between the boundary depth $\beta$ of (a nondegenerate perturbation of) a Hamiltonian which is supported in some set $S$ to the boundary depth of a Hamiltonian which displaces $S$.  It proves convenient to separately reparametrize time for the two Hamiltonians so that they can be more easily concatenated.  We begin with the following.

\begin{lemma} \label{concat} Let $H\co (\mathbb{R}/\mathbb{Z})\times M\to\mathbb{R}$ be a nondegenerate Hamiltonian, with $H(t,m)=0$ for all $t\in (1/2,1)$ and all $m\in M$.  Let $K\co (\mathbb{R}/\mathbb{Z})\times M\to \mathbb{R}$ be a  Hamiltonian with $supp(K)\subset (1/2,1)\times S$, where $S\subset M$ is a compact subset with the property that \[ \phi_{H}^{1}(S)\cap S=\varnothing.\]  Assume furthermore that the image of $K$ is contained either in $(-\infty,0]$ or in $[0,\infty)$.  Then $H+K$ is nondegenerate, and \[ \beta(H)=\beta(H+K).\]
\end{lemma}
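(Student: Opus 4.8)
The plan is to exploit the hypothesis $\phi_H^1(S)\cap S=\varnothing$ to show that adding $K$ does not create any new periodic orbits and does not produce any new Floer connecting orbits, so that the filtered chain complexes $CF_*(H,J)$ and $CF_*(H+K,J')$ are essentially identical after a controlled shift of the action filtration. First I would observe that, since $\mathrm{supp}(K)\subset (1/2,1)\times S$ and $H$ vanishes on $(1/2,1)\times M$, the flow $\phi_{H+K}^t$ agrees with $\phi_H^t$ for $t\in[0,1/2]$ and then, for $t\in[1/2,1]$, is obtained by flowing along $X_K$, which is supported in $S$. A fixed point $p$ of $\phi_{H+K}^1$ must therefore satisfy $\phi_K^{1}(\phi_H^{1/2}(p))=\phi_H^{-1/2}(p)$ (reparametrizing appropriately), and I would argue using $\phi_H^1(S)\cap S=\varnothing$ that the orbit of such a point can never enter the region where $X_K\neq 0$: if $\phi_H^{1/2}(p)$ lay in $S$, then applying the $K$-flow keeps it in $S$, but then $\phi_H^1(p)=\phi_H^{1/2}$ of a point of $S$ would have to equal $p$, forcing $p\in\phi_H^1(S)\cap S=\varnothing$, a contradiction. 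Hence the periodic orbits of $H+K$ coincide with those of $H$, $H+K$ is nondegenerate, and there is a natural bijection $\mathcal{P}_H^\circ\leftrightarrow\mathcal{P}_{H+K}^\circ$ respecting the Maslov grading (the linearizations agree since the orbits avoid $\mathrm{supp}(K)$).

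Next I would compare the action functionals: for a capped orbit $[\gamma,w]$ with $\gamma$ avoiding $\mathrm{supp}(K)$, $\mathcal{A}_{H+K}([\gamma,w])-\mathcal{A}_H([\gamma,w])=-\int_0^1 K(t,\gamma(t))\,dt=0$, since $K$ vanishes along $\gamma$. So in fact the bijection of generators is \emph{action-preserving}, and the filtered chain groups $CF_*^\lambda(H,J)$ and $CF_*^\lambda(H+K,J')$ are identified as graded groups for every $\lambda$. It remains to check the boundary operators match, or at least that the boundary depths agree. For an appropriate choice of $J'$ extending $J$ suitably, I would like to say that solutions of the Floer equation \eqref{bdry} for $H+K$ connecting two such orbits must also avoid a neighborhood of $\mathrm{supp}(K)$, so they are exactly the solutions for $H$; this uses that the asymptotic ends lie outside $S$ together with an open-mapping / maximum-principle-type or energy argument, invoking $\phi_H^1(S)\cap S=\varnothing$ to prevent a connecting cylinder from being "trapped" in $S$. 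If a clean identification of $\partial_{H,J}$ with $\partial_{H+K,J'}$ is not immediate (e.g. because of abstract perturbations), the fallback is to prove the two inequalities $\beta(H+K)\le\beta(H)$ and $\beta(H)\le\beta(H+K)$ separately using continuation maps: one interpolates from $H$ to $H+K$ through Hamiltonians of the form $H+\tau K$ (all of which, by the same displacement argument, have the same periodic orbits with the same actions), so the continuation maps and chain homotopies of Propositions \ref{interp} and \ref{htopylemma} can be taken to strictly decrease or preserve the filtration with $\|H-(H+\tau K)\|$-type error terms that one then argues can be made to vanish because the actual action difference along relevant orbits is zero; here the sign hypothesis ($K\le 0$ everywhere or $K\ge 0$ everywhere) is exactly what makes the one-sided estimates $\mathcal{E}^\pm(H+\tau K - H)$ behave correctly.

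The main obstacle I anticipate is the Floer-trajectory comparison: showing that no \emph{new} connecting orbits appear for $H+K$ (and none disappear), i.e. that the cylinders are confined away from $\mathrm{supp}(K)$. The asymptotics lie outside $S$, but a priori a finite-energy cylinder could wander into $S$; ruling this out is where the hypothesis $\phi_H^1(S)\cap S=\varnothing$ must be used quantitatively — presumably by the same mechanism used elsewhere in the paper (Lemma \ref{disp1} and the displacement-energy estimates), or by choosing the time-dependence so that during $t\in(1/2,1)$, where $K$ lives, the Floer equation is just the ($t$-dependent) $J$-holomorphic curve equation localized near $S$, and then a topological/homological argument shows a sphere-free cylinder between orbits disjoint from $S$ cannot enter $S$ without violating the disjointness of $\phi_H^1(S)$ from $S$. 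I would structure the write-up to isolate this confinement statement as the crux, deriving the equality $\beta(H)=\beta(H+K)$ formally from it together with the action-preserving identification of generators established above.
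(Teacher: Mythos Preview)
Your identification of the periodic orbits and of the actions is correct and matches the paper. However, your primary strategy---confining Floer cylinders away from $S$ so that $\partial_{H,J}$ and $\partial_{H+K,J'}$ literally coincide---is not what the paper does, and there is no evident mechanism for it: the cylinders are only $J$-holomorphic on the strip $t\in(1/2,1)$, and the hypothesis $\phi_H^1(S)\cap S=\varnothing$ gives no pointwise control on where a finite-energy cylinder can wander. You correctly flag this as the main obstacle; in fact you should discard it entirely.

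Your fallback is the paper's actual route, but you have not pinned down the mechanism, and your description (``error terms that one then argues can be made to vanish'', invoking the chain homotopy of Proposition~\ref{htopylemma}) is off target. The paper takes the interpolating homotopy $H_s = H + (1-\beta(s))K$ (when $K\leq 0$; reverse direction when $K\geq 0$), so that $\partial_s H_s = -\beta'(s)K \geq 0$. The crucial observation---which your orbit argument already supplies---is that \emph{every} $H_s$ in this family has the same periodic-orbit set with the same actions. Consequently the continuation map has the form $\Psi_{\mathcal H}=I+A$: the $s$-independent cylinders sitting on the orbits are zero-energy solutions contributing the identity, while any other contributing solution has strictly positive energy and hence (via the energy identity together with $\partial_s H_s\geq 0$) strictly lowers the filtration. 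A compactness/Novikov-finiteness argument gives a uniform gap $\delta>0$, so $B=\sum_{k\geq 0}(-A)^k$ converges and furnishes a filtration-preserving chain-map inverse to $\Psi_{\mathcal H}$. With filtration-preserving chain isomorphisms in both directions, the equality $\beta(H)=\beta(H+K)$ is immediate. No chain homotopy is used, and the sign hypothesis on $K$ is not about making $\mathcal E^\pm$ errors small---it is exactly what forces $\partial_s H_s\geq 0$ and hence the upper-triangular structure $I+A$.
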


\begin{proof} Since we've assumed that $H(t,\cdot)$ vanishes for $t\in (1/2,1)$ while $K(t,\cdot)$ vanishes for $t\in (0,1/2)$, we see that $\phi_{H+K}^{1}=\phi_{K}^{1}\circ\phi_{H}^{1}$.  From this and the hypothesis on $S$, it's easy to see that $Fix(\phi_{H+K}^{1})=Fix(\phi_{H}^{1})$.  The hypothesis also implies that $Fix(\phi_{H}^{1})\cap S=\varnothing$, so since $M\setminus S$ is open we see that $\phi_{H+K}^{1}$ and $\phi_{H}^{1}$ coincide on a neighborhood of their common fixed point set.  Hence the nondegeneracy of $H$ implies that of $H+K$.

Now we have (noting that $\phi_{H}^{t}=\phi_{H}^{1}$ for $t\in [1/2,1]$, while $\phi_{K}^{t}$ is the identity for $t\in [0,1/2]$) \[ \phi_{H+K}^{t}=\left\{\begin{array}{ll} \phi_{H}^{t} & 0\leq t\leq 1/2 \\ \phi_{K}^{t}\circ \phi_{H}^{1} & 1/2\leq t\leq 1\end{array}\right.\]   Hence if $p\in Fix(\phi_{H+K}^{1})=Fix(\phi_{H}^{1})$, so that in particular $p=\phi_{H}^{1}(p)\notin S$, we see that $\phi_{H+K}^{t}(p)=\phi_{H}^{t}(p)$ for all $t$.  Also, if $p\in Fix(\phi_{H}^{1})$ \[ K(t,\phi_{H}^{t}(p))=0 \] for all $t$, because $\phi_{H}^{t}(p)=p\notin S$ for $0\leq t\leq 1/2$, while $K(t,\cdot)=0$ for $1/2\leq t\leq 1$.

As a result of the above, we have $\mathcal{P}_{H}^{\circ}=\mathcal{P}_{H+K}^{\circ}$ (both sets consist of equivalence classes $[\gamma,w]$ where $\gamma(t)=\phi_{H}^{t}(p)$ and $p\in Fix(\phi_{H}^{1})$).  Furthermore, since $K(t,\gamma(t))$ vanishes identically for any such $[\gamma,w]$, we have \begin{equation}\label{dispequal} \mathcal{A}_{H}([\gamma,w])=\mathcal{A}_{H+K}([\gamma,w])\end{equation} for each $[\gamma,w]\in \mathcal{P}_{H}^{\circ}$.

Now assume that we are in the case that $K\leq 0$ everywhere; at the end of the proof we will indicate how to modify the argument in the case that $K\geq 0$ everywhere. Let $J_-\in\mathcal{J}^{reg}(H+K)$, $J_+\in\mathcal{J}^{reg}(H)$, and let $\mathcal{H}=\{(H_s,J_s)\}_{s\in\mathbb{R}}$ be a regular interpolating homotopy from $(H+K, J_-)$ to $(H,J_+)$.  Thus $H_s(t,m)=(1-\beta(s))K(t,m)+H(t,m)$ where $\beta\co \mathbb{R}\to [0,1]$ has $\beta'(s)\geq 0$, so that $\frac{\partial H_s}{\partial s}\geq 0$ since $K\leq 0$.  Replacing $K$ with $(1-\beta(s))K$ in the above two paragraphs shows that each $H_s$ is nondegenerate, and that $\mathcal{P}_{H_s}^{\circ}=\mathcal{P}_{H}^{\circ}$.  From this it is easy to show (for instance, the proof of Proposition 2.7 of \cite{U2} carries over directly) that the induced chain map $\Psi_{\mathcal{H}}\co CF_{*}(H+K,J_-)\to CF_{*}(H,J_+)$ acts via $\Psi_{\mathcal{H}}[\gamma,w]=[\gamma,w]+A[\gamma,w]$ where $\mathcal{L}_{H}(A[\gamma,w])<\mathcal{A}_{H+K}([\gamma,w])$.  In other words, we have $\Psi_{\mathcal{H}}=I+A$ where $I$ is the identity and $\mathcal{L}_{H}(A(c))<\mathcal{L}_{H+K}(c)$.  A compactness argument regarding the solutions to (\ref{chainmap}) (or, more formally, the facts that there are only finitely many fixed points of $\phi_{H}^{1}$ and that $\Psi_{\mathcal{H}}$ respects the action of the Novikov ring on the chain complexes) in fact shows that for some $\delta>0$ we have $\mathcal{L}_{H}(A(c))\leq \mathcal{L}_{H+K}(c)-\delta$.  

Hence, bearing in mind that the fact that $\mathcal{P}_{H}^{\circ}=\mathcal{P}_{H+K}^{\circ}$ implies that 
$CF_{*}(H+K,J_-)= CF_{*}(H,J_+)$ as $\Lambda_{\Gamma,\omega}$-modules, as in the proof of Corollary \ref{retract}, setting $B=\sum_{k=0}^{\infty}(-A)^{k}$ provides an inverse to $\Psi_{\mathcal{H}}=I+A$. The map  $B\co CF_{*}(H,J_+)\to CF_{*}(H+K,J_-)$ is a chain map simply by virtue of being the inverse of a bijective chain map.  Also, since the functions $\mathcal{L}_H$ and $\mathcal{L}_{H+K}$ coincide by (\ref{dispequal}), the formula for $B$ implies that we have $\mathcal{L}_{H+K}(B(c))= \mathcal{L}_H(c)$ for each $c\in CF_*(H,J_+)$.

If $\beta>\beta(H)$ and $c\in \partial_{H+K,J_-}(CF_{*}(H+K,J_-))\cap CF_{*}^{\lambda}(H+K,J_-)$, then  $\Psi_{\mathcal{H}}(c)\in 
 \partial_{H,J_+}(CF_{*}(H,J))\cap CF_{*}^{\lambda}(H,J_+)$, so there is $b\in CF_{*}^{\lambda+\beta}(H,J_+)$ with $\partial_{H,J_+}b=
 \Psi_{\mathcal{H}}(c)$.  But then $Bb\in CF_{*}^{\lambda+\beta}(H+K,J_-)$ has $\partial_{H+K,J_-}(Bb)=c$.  This proves that $\beta(H+K)\leq\beta$.  $\beta>\beta(H)$ was arbitrary, so $\beta(H+K)\leq\beta(H)$.
 
Likewise if $\beta>\beta(H+K)$ and $c\in \partial_{H,J_+}(CF_{*}(H,J_+))\cap CF_{*}^{\lambda}(H,J_+)$, then since $B$ is a chain map which preseves the filtration level we can find $b\in CF_{*}^{\lambda+\beta}(H+K,J_-)$ such that $\partial_{H+K,J_-}b=Bc$.  Then $\Psi_{\mathcal{H}}b$ will have filtration level at most $\lambda+\beta$ and boundary $c$, proving that $\beta(H)\leq \beta$.  $\beta>\beta(H+K)$ was arbitrary, so $\beta(H)\leq \beta(H+K)$, completing the proof when $K\leq 0$.

When instead $K\geq 0$, replace the homotopy $\mathcal{H}=\{(H_s,J_s)\}_{s\in\mathbb{R}}$ above by one having $H_s(t,m)=\beta(s)K(t,m)+H(t,m)$, where again $\beta'(s)\geq 0$ for all $s$, $\beta(s)=0$ for $s\ll 0$, and $\beta(s)=1$ for $s\gg 0$.  So in this case $\mathcal{H}$ is a homotopy from $(H,J_-)$ to $(H+K,J_+)$, with $\frac{\partial H_s}{\partial s}\geq0$.   Just as before, the induced map $\Psi_{\mathcal{H}}\co CF_*(H,J_-)\to CF_*(H+K,J_+)$ then has form $\Psi_{\mathcal{H}}=I+A$ where $I$ is the identity and $\mathcal{L}_{H+K}(Ac)<\mathcal{L}_H(c)$, and then $\sum_{k=0}^{\infty}(-A)^k$ is an inverse to $\Psi_{H}$ which preserves the filtrations, and the proof may be completed exactly as in the case where $K\leq 0$.
\end{proof}

As a consequence, we obtain:

\begin{lemma} \label{disp1} Let $H\co (\mathbb{R}/\mathbb{Z})\times M\to\mathbb{R}$ be a nondegenerate Hamiltonian, let $S\subset M$ be a compact subset with $\phi_{H}^{1}(S)\cap S=\varnothing$, let $\ep>0$, and let $K\co(\mathbb{R}/\mathbb{Z})\times M\to\mathbb{R}$ be a nondegenerate Hamiltonian such that $K(t,m)=0$ for $0\leq t\leq 1/2$, either $K(t,m)\leq 0$ for all $(t,m)\in(\mathbb{R}/\mathbb{Z})\times M$ or $K(t,m)\geq 0$ for all $(t,m)\in(\mathbb{R}/\mathbb{Z})\times M$, and $|K(t,m)|\leq \ep$ for all $m\in M\setminus S$.  Then \[ \beta(K)\leq 2\|H\|+\ep   .\]
\end{lemma}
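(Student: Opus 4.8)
The plan is to reduce to Lemma~\ref{concat}, after first putting $H$ into a form in which it and $K$ occupy complementary time intervals, and then using the Lipschitz estimate of Proposition~\ref{Lip} to absorb the part of $K$ that is not supported near $S$.

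First I would replace $H$ by a nondegenerate Hamiltonian $H'$ with $H'(t,\cdot)=0$ for all $t\notin(0,1/2)$, obtained from $H$ by a time reparametrization: pick a smooth nondecreasing $\sigma\co[0,1]\to[0,1]$ that is $\equiv 0$ near $0$ and $\equiv 1$ on $[1/2,1]$, and set $H'(t,\cdot)=\sigma'(t)H(\sigma(t),\cdot)$, extended $1$-periodically (it is smooth because it vanishes near the integers, and it vanishes for $t\in[1/2,1]$ since $\sigma'\equiv 0$ there). Then $\phi_{H'}^{t}=\phi_{H}^{\sigma(t)}$, so $\phi_{H'}^{1}=\phi_{H}^{1}$; hence $H'$ is nondegenerate with $\phi_{H'}^{1}(S)\cap S=\varnothing$, the paths $t\mapsto\phi_{H'}^{t}$ and $t\mapsto\phi_{H}^{t}$ are homotopic rel endpoints (via $t\mapsto\phi_{H}^{(1-u)\sigma(t)+ut}$, $u\in[0,1]$), and the substitution $s=\sigma(t)$ together with $\sigma'\ge 0$ gives $\|H'\|=\int_0^1\sigma'(t)\big(\max_M H(\sigma(t),\cdot)-\min_M H(\sigma(t),\cdot)\big)\,dt=\|H\|$. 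By Theorem~\ref{mainbeta}(v) we conclude $\beta(H')=\beta(H)$. Next, since $\phi_{H'}^{1}(S)\cap S=\varnothing$ and $S$ is compact, I would choose a compact neighborhood $S'$ of $S$ with $\phi_{H'}^{1}(S')\cap S'=\varnothing$ and a cutoff $\chi\co M\to[0,1]$ with $\chi\equiv 1$ near $S$ and $\mathrm{supp}(\chi)\subset S'$, and write $K=\chi K+(1-\chi)K$.

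Now $\chi K$ has the same sign as $K$ and is supported in $(1/2,1)\times S'$, so Lemma~\ref{concat} (applied with $H'$, $\chi K$, $S'$ in the roles of $H$, $K$, $S$) shows that $H'+\chi K$ is nondegenerate and $\beta(H'+\chi K)=\beta(H')=\beta(H)$. Meanwhile $(1-\chi)K$ is supported where $\chi<1$, which is contained in $M\setminus S$, so $|(1-\chi)K|\le|K|\le\ep$ there; since $(1-\chi)K$ has the same sign as $K$, it takes values in an interval of length $\ep$ (either $[-\ep,0]$ or $[0,\ep]$), whence $\|(1-\chi)K\|\le\ep$. Since $K$ and $H'+\chi K$ are both nondegenerate, Proposition~\ref{Lip} gives
\[ \beta(K)\le\beta(H'+\chi K)+\|K-(H'+\chi K)\|=\beta(H)+\|H'-(1-\chi)K\|\le\beta(H)+\|H\|+\ep,\]
and combining this with $\beta(H)\le\|H\|$ (Corollary~\ref{hoferdepth}, i.e.\ Theorem~\ref{mainbeta}(iii)) yields $\beta(K)\le 2\|H\|+\ep$.

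I expect the only genuinely delicate points to be the verification that the time reparametrization preserves both $\beta(H)$ (via Theorem~\ref{mainbeta}(v)) and $\|H\|$, and the sign argument in the previous paragraph that sharpens the naive bound $\|(1-\chi)K\|\le 2\ep$ to $\|(1-\chi)K\|\le\ep$ --- this sharpening is exactly what distinguishes the constant $2\|H\|+\ep$ here from the $2\|H\|+2\ep$ of Theorem~\ref{mainbeta}(iv). Everything else is routine bookkeeping with the properties of $\beta$ already established.
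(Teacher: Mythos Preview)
Your proof is correct and follows essentially the same route as the paper's: reparametrize $H$ to vanish on $[1/2,1]$, cut off $K$ to have spatial support in a set displaced by the reparametrized $H$, apply Lemma~\ref{concat}, and then use Proposition~\ref{Lip} and Corollary~\ref{hoferdepth} to absorb the remainder.

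The one substantive difference is that your invocation of Theorem~\ref{mainbeta}(v) (to conclude $\beta(H')=\beta(H)$) is unnecessary. The paper never asserts $\beta(H^\rho)=\beta(H)$; it simply applies Corollary~\ref{hoferdepth} directly to the reparametrized Hamiltonian to get $\beta(H^\rho)\le\|H^\rho\|=\|H\|$, which is all that is needed. Your chain $\beta(H'+\chi K)=\beta(H')=\beta(H)\le\|H\|$ could be shortened to $\beta(H'+\chi K)=\beta(H')\le\|H'\|=\|H\|$, avoiding the forward reference to Corollary~\ref{depthhtopyinvt} (which in the paper's logical order is proven \emph{after} Lemma~\ref{disp1}, though there is no actual circularity). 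So the point you flag as ``genuinely delicate'' --- that the reparametrization preserves $\beta$ --- turns out to be irrelevant; only the preservation of $\|\cdot\|$ matters. Your observation about the sign of $K$ sharpening $\|(1-\chi)K\|$ to $\ep$ rather than $2\ep$ matches the paper exactly.
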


\begin{proof} Since $S$ is compact and $\phi_{H}^{1}(S)\cap S=\varnothing$,  we can choose a smooth function $\chi\co M\to [0,1]$ such that $\chi|_S=1$ and $\phi_{H}^{1}(supp\chi)\cap (supp\chi)=\varnothing$.  Set $K'(t,m)=\chi(m)K(t,m)$.
Then $\|K-K'\|\leq \ep$.

Let $\rho\co [0,1]\to [0,1]$ be a smooth monotone increasing function which vanishes to infinite order at $0$ and has $\rho(t)=1$ for $1/2\leq t\leq 1$.  Put $H^{\rho}(t,m)=\rho'(t)H(\rho(t),m)$; this defines a smooth function on $(\mathbb{R}/\mathbb{Z})\times M$ since $\rho'$ vanishes to infinite order at both $0$ and $1$.  Note that $\phi_{H^{\rho}}^{t}=\phi_{H}^{\rho(t)}$.

Now apply Lemma \ref{concat} (with $H$ replaced by $H^{\rho}$ and $K$ replaced by $K'$) to find that $\beta(H^{\rho}+K')=\beta(H^{\rho})$.  But by Corollary \ref{hoferdepth}, \begin{align*} \beta(H^{\rho})&\leq \|H^{\rho}\|=\int_{0}^{1}\left(\max_{p\in M}\rho'(t)H(\rho(t),p)-\min_{p\in M}\rho'(t)H(\rho(t),p)\right)dt
\\& =\int_{0}^{1}\left(\max_{p\in M}H(s,p)-\min_{p\in M}H(s,p)\right)ds=\|H\|.\end{align*}  

Thus by Proposition \ref{Lip}, \[ \beta(K)\leq \beta(H^{\rho}+K')+\|H^{\rho}+(K'-K)\|\leq \beta(H^{\rho}+K')+\|H^{\rho}\|+\|K'-K\| \leq 2\|H\|+\ep.\]

\end{proof}

We now prove a result (stated earlier as Lemma \ref{htopycopy}) about the behavior of the filtered Floer complex under homotopies within the group $Ham(M,\omega)$.  This result is perhaps not surprising in light of various other known results, but seems not yet to be in the literature, and has as an immediate consequence an invariance result both for the boundary depth $\beta$ and the spectral numbers $\rho$ (the invariance of $\rho$ was already known, but this seems to be a  different explanation for the phenomenon).  Our approach is influenced by the methods in \cite{K}.  

\begin{lemma} \label{htopyinvt}  Suppose that $H_0$ and $H_1$ are two normalized, nondegenerate Hamiltonians such that $\phi_{H_0}^{1}=\phi_{H_1}^{1}$ and the paths $t\mapsto \phi_{H_i}^{t}$ are homotopic rel endpoints in $Ham(M,\omega)$.  Then for  $J_i\in\mathcal{J}^{reg}(H_i)$, there is an isomorphism of chain complexes \[ \Phi\co CF_*(H_0, J_0)\to CF_*(H_1,J_1)\] such that, for each $c\in CF_*(H_0,J_0)$, we have \[ \mathcal{L}_{H_0}(c)=\mathcal{L}_{H_1}(\Phi(c)).\] 
\end{lemma}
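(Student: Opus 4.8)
The plan is to build $\Phi$ as a ``modified'' Floer continuation map from $(H_0,J_0)$ to $(H_1,J_1)$, set up so that the error term that normally appears in the action--energy identity (the last term of (\ref{hfilt})) is forced to vanish. The first point is to choose the homotopy cleverly. Since $\phi_{H_0}^1=\phi_{H_1}^1=:\phi$ and the paths $t\mapsto\phi_{H_i}^t$ are homotopic rel endpoints, there is a smooth two-parameter family $\{\psi_s^t\}_{(s,t)\in\mathbb{R}\times[0,1]}$ of Hamiltonian isotopies with $\psi_s^0=\mathrm{id}$ and $\psi_s^1=\phi$ for all $s$, equal to $\phi_{H_0}^{\,\cdot}$ for $s\ll 0$ and to $\phi_{H_1}^{\,\cdot}$ for $s\gg 0$; let $H_s$ be the unique normalized Hamiltonian generating $\psi_s^{\,\cdot}$. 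Then $\{H_s\}$ is a homotopy from $H_0$ to $H_1$ through normalized Hamiltonians all of whose time-one maps equal $\phi$. Since $Fix(\psi_s^1)=Fix(\phi)$ is independent of $s$, there is a distinguished bijection $\Phi_0\co\mathcal{P}_{H_0}^\circ\to\mathcal{P}_{H_1}^\circ$ sending $[\gamma,w]$ with $\gamma(t)=\phi_{H_0}^t(p)$ to $[\gamma',w']$ with $\gamma'(t)=\phi_{H_1}^t(p)$; in the continuation-map framing below the capping $w'$ is pinned down by the continuation cylinders, so no choices of capping are needed.

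The crux is to show that $\Phi_0$ preserves the action functional (and, by a homotopy/continuity argument, the Maslov index). For this I would establish the identity
\[ \frac{d}{dr}\,\mathcal{A}_{H_r}([\gamma_r,w_r])=-\int_0^1(\partial_r H_r)(t,\gamma_r(t))\,dt, \]
valid for any smooth family $\{H_r\}$ of normalized Hamiltonians whose time-one maps all equal $\phi$, where $\gamma_r(t)=\phi_{H_r}^t(p)$ for a fixed $p\in Fix(\phi)$ and $w_r$ depends continuously on $r$. The right-hand side vanishes: the $r$-velocity field $(\partial_r\phi_{H_r}^t)\circ(\phi_{H_r}^t)^{-1}$ is the Hamiltonian vector field of the function $x\mapsto\int_0^t(\partial_r H_r)\bigl(\tau,\phi_{H_r}^\tau((\phi_{H_r}^t)^{-1}(x))\bigr)\,d\tau$, which is normalized (each $\partial_r H_r$ is) and, at $t=1$, locally constant (since $\phi_{H_r}^1\equiv\phi$) hence identically $0$; evaluating at $x=p$ gives $\int_0^1(\partial_r H_r)(t,\gamma_r(t))\,dt=0$. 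Integrating over the homotopy $\{H_s\}$ then yields $\mathcal{A}_{H_1}(\Phi_0([\gamma,w]))=\mathcal{A}_{H_0}([\gamma,w])$; this is the point at which normalization, and the fact that the generated paths are homotopic rel endpoints, are genuinely used.

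Next I would replace (\ref{chainmap}) for the homotopy $\{H_s\}$ by a modified equation in the spirit of \cite{K} --- obtained by gauge-transforming the moduli problem by the isotopies $\psi_s^{\,\cdot}$ --- so arranged that in the analogue of (\ref{hfilt}) for a (perturbed) solution $u$ asymptotic to $[\gamma_-,w_-]$ and $[\gamma_+,w_+]$, the error term becomes $\int_{-\infty}^{\infty}\int_0^1(\partial_s H_s)(t,\gamma_s(t))\,dt\,ds$, which vanishes by the computation above; hence $\mathcal{A}_{H_0}([\gamma_-,w_-])-\mathcal{A}_{H_1}([\gamma_+,w_+])=E(u)\geq 0$ whenever the corresponding matrix element of the induced chain map $\Psi$ is nonzero. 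One checks in the usual way that the only zero-energy contributions are the constant cylinders at the reference orbits, and these together make up exactly $\Phi_0$; so $\Psi=\Phi_0+N$ with $\mathcal{L}_{H_1}(Nc)<\mathcal{L}_{H_1}(\Phi_0 c)=\mathcal{L}_{H_0}(c)$ for all $c$. Writing $\Psi=\Phi_0\circ(I+\Phi_0^{-1}N)$ with $\Phi_0^{-1}N$ strictly lowering $\mathcal{L}_{H_0}$, the Neumann-series argument of Corollary \ref{retract} shows $\Psi$ is invertible over the Novikov ring, and $\mathcal{L}_{H_1}(\Psi c)=\mathcal{L}_{H_0}(c)$ (likewise $\mathcal{L}_{H_0}(\Psi^{-1}d)=\mathcal{L}_{H_1}(d)$). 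Finally, if the gauge transformation lands the almost complex structure at some $\tilde J_1\neq J_1$, compose with a continuation isomorphism for a homotopy varying only the almost complex structure from $\tilde J_1$ to $J_1$: since the Hamiltonian is then constantly $H_1$, Propositions \ref{interp} and \ref{htopylemma} (with $\|H_1-H_1\|=0$), together with the same Neumann-series argument, provide a chain isomorphism preserving $\mathcal{L}_{H_1}$. Composing gives the desired $\Phi$.

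I expect the action-preservation step to be the main obstacle: one must carefully justify the variation-of-Hamiltonian-flow formula and verify that normalization is precisely what kills the error term, this being the mechanism that prevents the (possibly large) quantity $\|H_1-H_0\|$ from leaking into the filtration shift --- note that, in contrast to the situation covered by Proposition \ref{interp}, here no naive energy estimate suffices. A secondary, more routine technical point is to confirm that the modified/gauge-transformed equation retains the standard Fredholm, transversality, and Gromov--Floer compactness properties and still computes a chain map; this follows once one observes that the gauge transformation identifies its moduli spaces with those of an ordinary Floer continuation problem.
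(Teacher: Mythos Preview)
Your proposal is essentially correct and close to the paper's proof, but two points deserve comment.

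First, your description of the error term in the modified equation is not quite right. In Kerman's twisted equation $(\partial_s u - X_{K_s}) + J_{s,t}(\partial_t u - X_{H_s}) = 0$ (where $K_s$ generates $\partial_s\psi_s^t$), the action--energy identity reads
\[
\mathcal{A}_{H_0}([\gamma^-,w_-]) - \mathcal{A}_{H_1}([\gamma^+,w_+]) = E(u) + \int_{-\infty}^{\infty}\!\int_0^1\!\Bigl(\tfrac{\partial H_s}{\partial s} - \tfrac{\partial K_s}{\partial t} + \{H_s,K_s\}\Bigr)(t,u(s,t))\,dt\,ds,
\]
and the integrand vanishes \emph{identically} by the curvature identity $\partial_s H_s - \partial_t K_s = -\{H_s,K_s\}$ (Banyaga). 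It is not that the error term becomes $\int\!\!\int(\partial_s H_s)(t,\gamma_s(t))$ evaluated along reference orbits; rather, the full integrand is zero pointwise regardless of $u$. Your separate action-preservation calculation for $\Phi_0$ is correct and nice, but it is the special case of this identity at the zero-energy reference solution $u_p(s,t)=\psi_s^t(p)$, and is not itself what kills the error for general $u$. Indeed, your computation that $K_s(1,\cdot)\equiv 0$ (via normalization and $\psi_s^1\equiv\phi$) is precisely the boundary condition that makes the $\partial_t K_s$ term integrate away.

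Second, the paper does not invert $\Psi$ via the decomposition $\Psi = \Phi_0 + N$; it instead constructs the reverse map $\Psi'$ and a chain homotopy $\mathcal{K}$ (both built from the same twisted equation, hence both obeying the same zero-error identity) to get $\Psi'\circ\Psi = I + A$ with $\mathcal{L}_{H_0}(Ac)<\mathcal{L}_{H_0}(c)$, then applies Neumann series to $I+A$. The paper's Remark explains that your $\Phi_0+N$ picture is the intuition, but flags ``issues of sign and transversality'' at the reference solutions $u_p$: to make your route rigorous you must verify that the linearized operator at each $u_p$ is bijective (so that $u_p$ contributes exactly once with sign $+1$ and no abstract perturbation disturbs this). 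This is true and provable along the lines of Lemma~2.4 of \cite{Sal}, but it is extra work that the paper's argument sidesteps.
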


This immediately yields the following, stated as Theorem \ref{mainbeta}(v) in the introduction:
\begin{cor} \label{depthhtopyinvt} If $H_0,H_1$ are two nondegenerate Hamiltonians such that $\phi_{H_0}^{1}=\phi_{H_1}^{1}$ and such that the paths $\phi_{H_0}^{t}$ and $\phi_{H_1}^{t}$ are homotopic rel endpoints in $Ham(M,\omega)$, then $\beta(H_0)=\beta(H_1)$.
\end{cor}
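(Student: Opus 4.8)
The plan is to deduce this corollary directly from Lemma~\ref{htopyinvt}, after a short reduction to the normalized case. Lemma~\ref{htopyinvt} requires $H_0,H_1$ to be normalized, whereas the corollary imposes no such condition, so the first thing I would do is replace each $H_i$ by its normalization $\bar H_i(t,m)=H_i(t,m)-\frac{1}{\int_M\omega^n}\int_M H_i(t,\cdot)\,\omega^n$. Since $\bar H_i-H_i$ depends only on $t$, we have $X_{\bar H_i}=X_{H_i}$ and hence $\phi_{\bar H_i}^{t}=\phi_{H_i}^{t}$ for all $t$; in particular $\bar H_0,\bar H_1$ are again nondegenerate, they satisfy $\phi_{\bar H_0}^{1}=\phi_{\bar H_1}^{1}$, and the paths $t\mapsto\phi_{\bar H_i}^{t}$ --- being literally the same paths as before --- are still homotopic rel endpoints in $Ham(M,\omega)$.

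Next I would observe that normalization does not change the boundary depth. Indeed $\|H_i-\bar H_i\|=\int_0^1\bigl(\max_M(H_i-\bar H_i)(t,\cdot)-\min_M(H_i-\bar H_i)(t,\cdot)\bigr)dt=0$ because $H_i-\bar H_i$ is independent of $m$, so Proposition~\ref{Lip} gives $\beta(H_i)=\beta(\bar H_i)$. (Equivalently, one can note that $\mathcal{A}_{\bar H_i}$ and $\mathcal{A}_{H_i}$ differ by a single additive constant, so the filtered complexes $CF_*^{\lambda}(\bar H_i,J)$ and $CF_*^{\lambda+\mathrm{const}}(H_i,J)$ coincide as chain complexes, and the inequality defining $\beta$ is manifestly invariant under a global shift of the filtration.) Thus it suffices to prove $\beta(\bar H_0)=\beta(\bar H_1)$.

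Finally I would apply Lemma~\ref{htopyinvt} to $\bar H_0$ and $\bar H_1$ (with chosen $J_i\in\mathcal{J}^{reg}(\bar H_i)$), obtaining a chain isomorphism $\Phi\co CF_*(\bar H_0,J_0)\to CF_*(\bar H_1,J_1)$ with $\mathcal{L}_{\bar H_1}(\Phi(c))=\mathcal{L}_{\bar H_0}(c)$ for all $c$, hence restricting to isomorphisms $CF_*^{\lambda}(\bar H_0,J_0)\cong CF_*^{\lambda}(\bar H_1,J_1)$ for every $\lambda$. Since $\Phi$ and $\Phi^{-1}$ are both filtration-preserving chain maps, each transports the condition defining $\beta$ to the other complex: if $c=\partial b$ is a boundary with $b$ of filtration level $\leq\lambda+\beta$ in one complex, then $\Phi^{\pm1}(b)$ realizes $\Phi^{\pm1}(c)$ as the boundary of an element of filtration level $\leq\lambda+\beta$ in the other. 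Therefore $\beta(\bar H_0)=\beta(\bar H_1)$, and combining with the previous paragraph yields $\beta(H_0)=\beta(\bar H_0)=\beta(\bar H_1)=\beta(H_1)$.

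There is no genuine obstacle at the level of the corollary itself: all of the analytic content sits in Lemma~\ref{htopyinvt}, whose proof (following the modified connecting-orbit equation of \cite{K}) is where the filtration-preserving isomorphism $\Phi$ is actually constructed. The only points requiring any care here are the normalization reduction and the formal remark that a filtration-preserving chain isomorphism automatically preserves boundary depth.
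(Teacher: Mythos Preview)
Your proof is correct and follows essentially the same approach as the paper: reduce to normalized Hamiltonians (the paper justifies this by the filtration-shift observation you mention parenthetically, while you invoke Proposition~\ref{Lip}, which is an equally valid route), then apply Lemma~\ref{htopyinvt} and use the filtration-preserving chain isomorphism $\Phi$ to transport the boundary-depth condition in both directions. The only difference is cosmetic.
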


\begin{proof}[Proof of Corollary \ref{depthhtopyinvt}, assuming Lemma \ref{htopyinvt}]  Since adding a function of time to a Hamiltonian $H$ merely shifts the entire filtration on the Floer complex of $H$ by a constant and hence does not affect $\beta(H)$, there is no loss of generality in assuming that $H_0$ and $H_1$ are both normalized.  If $\beta>\beta(H_1)$, and if $c\in \partial_{H_0,J_0}(CF_*(H_0,J_0))\cap CF_{*}^{\lambda}(H_0,J_0)$, then $\Phi(c)\in
\partial_{H_1,J_1}(CF_*(H_1,J_1))\cap CF_{*}^{\lambda}(H_1,J_1)$, so there is $b\in CF_{*}^{\lambda+\beta}(H_1,J_1)$ 
with $\partial_{H_1,J_1}b=\Phi(c)$.  We will then have $\Phi^{-1}(b)\in CF_{*}^{\lambda+\beta}(H_0,J_0)$ and $\partial_{H_1,J_1}\Phi^{-1}(b)=c$.  This proves that $\beta(H_0)\leq \beta(H_1)$; the reverse inequality follows by reversing the roles of $H_0$ and $H_1$.
\end{proof}

\begin{proof}[Proof of Lemma \ref{htopyinvt}] 
Let $\{\psi_{s,t}\}_{(s,t)\in \mathbb{R}\times [0,1]}$ be a smooth family of Hamiltonian diffeomorphisms with $\psi_{s,t}=\phi_{H_0}^{t}$ for $s\leq 0$,  $\psi_{s,t}=\phi_{H_1}^{t}$ for $s\geq 1$, and $\psi_{s,0}=I$ while $\psi_{s,1}=\phi_{H_0}^{1}=\phi_{H_1}^{1}$ for all $s$.  For each $s\in \mathbb{R}$, there is then a unique normalized Hamiltonian $H_s\co  [0,1]\times M\to\mathbb{R}$ such that $\frac{d}{dt}(\psi_{s,t}(p))=X_{H_s}(t,\psi_{s,t}(p))$ for each $p\in M$.  

First of all we claim that there is no loss of generality in assuming that we have $H_s(0,\cdot)=H_s(1,\cdot)$, so that in fact the $H_s$ are well-defined and smooth on $(\mathbb{R}/\mathbb{Z})\times M$.  Indeed, take a smooth, monotone, surjective function $\chi\co [0,1]\to [0,1]$ such that $\chi'$ vanishes to infinite order at both $0$ and $1$.  Let $\zeta\co \mathbb{R}\times [0,1]\to [0,1]\times [0,1]$ be a smooth function, given by $\zeta(s,t)=(\eta(s),\chi_s(t))$ where $\chi_0(t)=\chi_1(t)=t$, each $\chi_s$ is smooth, monotone,  surjective, and satisfies $\chi^{(k)}_{s}(0)=\chi^{(k)}_{s}(1)$ for all $k\geq 1$, and  $\chi_{s}=\chi$ for $s\in [1/3, 2/3]$.  Further, $\eta\co \mathbb{R}\to [0,1]$ should be a smooth, monotone function with $\eta(s)=0$ for $s\leq 1/3$ and $\eta(s)=1$ for $s\geq 2/3$.  Then replacing $\psi_{s,t}$ by $\psi_{\zeta(s,t)}$ results in the Hamiltonians $H_s$ each satisfying $H_s(0,\cdot)=H_s(1,\cdot)$.  Accordingly we assume this to be true for the rest of the proof.

For $(s,t)\in\mathbb{R}\times [0,1]$ define the vector field $Y_{s,t}$ by $\frac{d}{ds}(\psi_{s,t}(p))=Y_{s,t}(\psi_{s,t}(p))$. 
As is well-known (see for instance the proof of Proposition II.3.3 in \cite{Ba}), $Y_{s,t}$ is a Hamiltonian vector field; let $K_{s}(t,\cdot)$ be the mean-zero function with $dK_{s,t}=\iota_{Y_{s,t}}\omega$.  Further, $\frac{\partial X_{H_s}(t)}{\partial s}-\frac{\partial Y_{s,t}}{\partial t}=[X_{H_s}(t),Y_{K_{s}}(t)]$ (\cite{Ba}, Proposition I.1.1).  So, where the Poisson bracket is defined by $\{H,K\}=\omega(X_H,X_K)$ and therefore satisfies $X_{\{H,K\}}=-[X_H,X_K]$, since Poisson brackets on closed manifolds always have mean zero (by Stokes' theorem) we obtain \begin{equation}\label{bracket} \frac{\partial H_s(t,\cdot)}{\partial s}-\frac{\partial K_{s}(t,\cdot)}{\partial t}=-\{H_s(t),K_s(t)\}.\end{equation}

Note that we have $K_{s}=0$ for $s\notin [0,1]$, and $K_s(0,\cdot)=K_s(1,\cdot)=0$ for all $s$.

To define the map $\Phi\co CF_*(H_0,J_0)\to CF_*(H_1,J_1)$ we now count, in the usual way, finite-energy, index-zero solutions $u\co\mathbb{R}\times(\mathbb{R}/\mathbb{Z})\to M$ to (a multivalued perturbation of) the equation 
\begin{equation}\label{twistedhtopy} \left(\frac{\partial u}{\partial s}-X_{K_s}(t,u(s,t))\right)+J_{s,t}\left(\frac{\partial u}{\partial t}-X_{H_{s}}(t,u(s,t))\right)=0\end{equation} for a suitable family of almost complex structures $J_{s,t}$ which coincides with $J_{0}^{t}$ for $s\ll 0$ and with $J_{1}^{t}$ for $s\gg 0$.  As noted in Section 2.2 of \cite{K}, this equation can be viewed as the equation for a $\tilde{J}$-holomorphic section $(s,t)\mapsto (s,t,u(s,t))$ of the trivial bundle $\mathbb{R}\times(\mathbb{R}/\mathbb{Z})\times M\to \mathbb{R}\times(\mathbb{R}/\mathbb{Z})$ where $\tilde{J}$ is a certain almost complex structure on the total space of the bundle, which is compatible with a certain symplectic form.  

We define the energy of a solution $u$ to (\ref{twistedhtopy}) as \[ E(u)=\int_{-\infty}^{\infty}\int_{0}^{1}\left|\frac{\partial u}{\partial s}-X_{K_s}(t,u(s,t))\right|^{2}_{J_{s,t}}dtds.\]  Since $X_{K_s}$ vanishes for $s\notin [0,1]$, the usual arguments show that a finite energy solution $u$ necessarily has $u(s,\cdot)\to \gamma^{\pm}$ uniformly and exponentially fast as $s\to\pm\infty$, where $\dot{\gamma}^-(t)=X_{H_0}(\gamma(t))$ and $\dot{\gamma}^+(t)=X_{H_1}(\gamma(t))$.  Furthermore, as pointed out in \cite{K}, if $[\gamma^+,w_+]=[\gamma^+,w_-\#u]$, then \[ \mathcal{A}_{H_0}([\gamma^-,w_-])-\mathcal{A}_{H_1}([\gamma^+,w_+])=E(u)+
\int_{-\infty}^{\infty}\int_{0}^{1}\left(\frac{\partial{H_s}}{\partial s}-\frac{\partial{K_s}}{\partial t}+\{H_s,K_s\}\right)(t,u(s,t))dtds.\]

But in our context the integrand in the last term above vanishes identically by (\ref{bracket}), and so we have \[ \mathcal{A}_{H_0}([\gamma^-,w_-])\geq \mathcal{A}_{H_1}([\gamma^+,w_+])\] whenever there is a finite-energy solution $u$ to (\ref{twistedhtopy}) with $u(s,\cdot)\to\gamma^{\pm}$ as $s\to\pm\infty$ and $[\gamma^+,w_+]=[\gamma^+,w_-\#u]$.  It follows directly from this that, where $\Phi\co CF_*(H_0,J_0)\to CF_*(H_1,J_1)$ is defined by counting finite-energy index-zero solutions to (\ref{twistedhtopy}) in the usual way, we have, for all $c\in CF_*(H_0,J_0)$, \begin{equation}\label{noninc} \mathcal{L}_{H_1}(\Phi(c))\leq \mathcal{L}_{H_0}(c).\end{equation}  It remains to show that equality holds above, and that $\Phi$ is an isomorphism of chain complexes.

Of course, standard arguments show that $\Phi$ is a chain map.  By instead counting solutions to (\ref{twistedhtopy}) with $H_{s}$ and $K_{s}$ replaced by $H_{1-s}$ and $K_{1-s}$, one obtains a chain map $\Psi\co CF_{*}(H_1,J_1)\to CF_{*}(H_0,J_0)$ which obeys $\mathcal{A}_{H_0}(\Psi(c))\leq \mathcal{A}_{H_1}(c)$ for all $c\in CF_*(H_1,J_1)$.  The composition $\Psi\circ\Phi\co CF_*(H_0,J_0)\to CF_*(H_0,J_0)$ will, by the usual gluing arguments, be equal to a map which (for sufficiently large $R$) counts finite-energy solutions $u$ to (a perturbation of) \[ \left( \frac{\partial u}{\partial s}-X_{\tilde{K}_s}(t,u(s,t))\right)+J_{s,t}\left(\frac{\partial u}{\partial t}-X_{\tilde{H}_{s}}(t,u(s,t))\right)=0,\] where now $\tilde{H}_s(t,\cdot)$ and $\tilde{K}_s(t\cdot)$ are the normalized Hamiltonians generating, respectively, the vector fields $\frac{\partial\tilde{\psi}_{s,t}}{\partial t}$ and $\frac{\partial\tilde{\psi}_{s,t}}{\partial s}$, and the symplectomorphisms $\tilde{\psi}_{s,t}$ are given by: \begin{align*} \tilde{\psi}_{s,t}&=\psi_{s+R,t} \mbox{ for $s<0$}\\ \tilde{\psi}_{s,t}&=\psi_{R-s,t} \mbox{ for $s\geq 0$}.\end{align*}
In particular for $|s|\geq R$ we have $\tilde{K}_s=0$ and $\tilde{H}_s=H_0$.

Now let $\tilde{\psi}_{s,t}^{\tau}$ be a smooth family of symplectomorphisms, parametrized by $s\in\mathbb{R}$, $t\in [0,1]$, and $\tau\in[0,1]$, such that $\tilde{\psi}_{s,t}^{0}=\tilde{\psi}_{s,t}$ and $\tilde{\psi}_{s,t}^{1}=\phi_{H_0}^{t}$ for all $s$.  This determines the normalized Hamiltonians $\tilde{H}_{s}^{\tau}(t,\cdot)$
and $\tilde{K}_{s}^{\tau}(t,\cdot)$, generating respectively $\frac{\partial{\tilde{\psi}_{s,t}^{\tau}}}{\partial t}$ and $\frac{\partial{\tilde{\psi}_{s,t}^{\tau}}}{\partial s}$.  In particular $\tilde{H}_{s}^{\tau}=H_0$ for all $|s|\geq R$, and all $\tau$, while $\tilde{H}_{s}^{1}=H_0$ for all $s$. Further we have, for all $\tau$, \[ \frac{\partial{\tilde{H}_{s}^{\tau}}}{\partial s}-\frac{\partial{\tilde{K}_{s}^{\tau}}}{\partial t}=-\{\tilde{H}_{s}^{\tau},\tilde{K}_{s}^{\tau}\}\] by the same arguments from \cite{Ba} that were used above.  Then where $J_{s,t,\tau}$ is a generic family of almost complex structures with $J_{s,t,0}=J_{s,t}$ and $J_{s,t,1}=J_{0}^{t}$, we define a map $\mathcal{K}\co CF_*(H_0,J_0)\to CF_*(H_0,J_0)$ by counting, as $\tau$ varies through $[0,1]$,  solutions $u\co\mathbb{R}\times(\mathbb{R}/\mathbb{Z})\to M$ to a perturbation of \[ \left(\frac{\partial u}{\partial s}-X_{\tilde{K}_{s}^{\tau}}(t,u(s,t))\right)+J_{s,t,\tau}\left(\frac{\partial u}{\partial t}-X_{\tilde{H}_{s}^{\tau}}(t,u(s,t))\right)=0,\] such that \[ E_{\tau}(u):=\int_{-\infty}^{\infty}\int_{0}^{1}\left|\frac{\partial u}{\partial s}-X_{\tilde{K}_{s}^{\tau}}(t,u(s,t))\right|^{2}_{J_{s,t,\tau}}dtds<\infty.
\]  
Any such solution with $u(s,\cdot)\to \gamma^{\pm}$ as $s\to\pm\infty$ and $[\gamma^+,w_+]=[\gamma^+,w_-\#u]$ has \begin{align*} \mathcal{A}_{H_0}([\gamma^-,w_-])-\mathcal{A}_{H_0}([\gamma^+,w_+])&=E_{\tau}(u)+\int_{-\infty}^{\infty}\int_{0}^{1}\left(\frac{\partial{\tilde{H}_{s}^{\tau}}}{\partial s}-\frac{\partial{\tilde{K}_{s}^{\tau}}}{\partial t}+\{\tilde{H}_{s}^{\tau},\tilde{K}_{s}^{\tau}\}\right)(t,u(s,t))dtds\\&=E_{\tau}(u)\geq 0.\end{align*}

Hence the resulting map $\mathcal{K}\co CF_*(H_0,J_0)\to CF_*(H_0,J_0)$ satisfies $\mathcal{L}_{H_0}(\mathcal{K}c)\leq \mathcal{L}_{H_0}(c)$ for all $c$.  By a standard argument, we will have \[ I-\Psi\circ\Phi=\partial_{H_0,J_0}\mathcal{K}+\mathcal{K}\partial_{H_0,J_0}\] where $I$ is the identity.  So since $\mathcal{L}_{H_0}(\partial_{H_0,J_0} c)<\mathcal{L}_{H_0}(c)$ for all $c$, it follows that $\Psi\circ\Phi=I+A$ where $A=-\mathcal{K}\partial_{H_0,J_0}-\partial_{H_0,J_0}\mathcal{K}$ satisfies, for some $\delta>0$, $\mathcal{L}_{H_0}(A c)\leq \mathcal{L}_{H_0}(c)-\delta$ for all $c$.  But then $B:=\sum_{k=0}^{\infty}(-A)^k$ gives a well-defined automorphism of $CF_*(H_0,J_0)$ inverting $I+A=\Psi\circ\Phi$, which proves that $B\circ\Psi$ is a left inverse for $\Phi$.  The same reasoning with $\Psi$ and $\Phi$ reversed produces a right inverse for $\Phi$ and so proves that $\Phi$ is an isomorphism of chain complexes.  Further, recalling that filtration levels are nonincreasing under the maps $\Psi$ and $A$ (hence also under $B$), if $c\in CF_*(H_0,J_0)$ we have \begin{align*}  \mathcal{L}_{H_0}(c)&=\mathcal{L}_{H_0}(B(\Psi(\Phi(c))))\leq \mathcal{L}_{H_0}(\Psi(\Phi(c)))\\&\leq \mathcal{L}_{H_1}(\Phi(c)).\end{align*}

Since we have already established the reverse inequality  (\ref{noninc}), this completes the proof.  
\end{proof}

\begin{remark}  Let us give what is  perhaps a more intuitive explanation of why $\Phi$ has the stated properties, avoiding the construction of $\Psi$ and of the chain homotopy $\mathcal{K}$.  Note that  there is a natural bijection $T\co CF_*(H_0,J_0)\to CF_*(H_1,J_1)$, defined as follows.  The $1$-periodic orbits of $X_{H_i}$ ($i=0,1$) are precisely given by, as $p$ ranges over $Fix(\phi_{H_0}^{1})=Fix(\phi_{H_1}^{1})$, setting $\gamma_{p}^{i}(t)=\phi_{H_i}^{t}(p)$.  Then where $\psi_{s,t}$ $((s,t)\in\mathbb{R}\times(\mathbb{R}/\mathbb{Z}))$ are as above, for $p\in Fix(\phi_{H_0}^{1})$ define $u_p\co \mathbb{R}\times (\mathbb{R}/\mathbb{Z})\to M$ by \[ u_p(s,t)=\psi_{s,t}(p).\]  Thus $u_p(s,\cdot)=\gamma_{p}^{0}$ for $s\leq 0$ and $u_p(s,\cdot)=\gamma_{p}^{1}$ for $s\geq 1$.  Now define $T\co CF_*(H_0,J_0)\to CF_*(H_1,J_1)$ by extending linearly from $T[\gamma_{p}^{0},w_0]=[\gamma_{p}^{1},w_0\#u_p]$.  Notice that $u_p(s,t)=\psi_{s,t}(p)$ solves (\ref{twistedhtopy}), and that $E(u_p)=0$, in view of which we have $\mathcal{A}_{H_0}([\gamma_{p}^{0},w_0])=[\gamma_{p}^{1},w_0\#u_p]$.  Thus $\mathcal{L}_{H_1}(Tc)=\mathcal{L}_{H_0}(c)$ for all $c$.  Now the only zero-energy solutions to (\ref{twistedhtopy}) are the $u_p$.  So at least modulo issues of sign and transversality, one expects $\Phi\co CF_*(H_0,J_0)\to CF_*(H_1,J_1)$ to have the form $\Phi=T+T'$ where $\mathcal{L}_{H_1}(T'c)<\mathcal{L}_{H_0}(c)$.  Such a map obviously preserves the filtration, and is easily seen to be invertible using the standard geometric series trick.
\end{remark}

We can now finally prove the result stated as Theorem \ref{mainbeta}(iv) in the introduction:   
\begin{cor}\label{dispbeta}Let $H,K\co(\mathbb{R}/\mathbb{Z})\times M\to\mathbb{R}$ be nondegenerate Hamiltonians with either $K\leq 0$ everywhere or $K\geq 0$ everywhere, let $S\subset M$ be a compact subset such that $\phi_{H}^{1}(S)\cap S=\varnothing$, and suppose that $|K(t,x)|\leq \ep$ for each $t\in\mathbb{R}/\mathbb{Z}$ and each $x\in M\setminus S$. Then 
\[ \beta(K)\leq 2\|H\|+2\ep.\]
\end{cor}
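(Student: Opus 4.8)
The plan is to reduce Corollary \ref{dispbeta} to Lemma \ref{disp1} by exhibiting a Hamiltonian $K'$ which has the form required in the hypotheses of that lemma (namely supported in the time-interval $[1/2,1]$ after reparametrization, with the same sign as $K$, and bounded by $\ep$ away from a compact set displaced by a reparametrized $H$) and which generates a time-one map whose Hamiltonian path is homotopic rel endpoints to that of $K$; Theorem \ref{mainbeta}(v) (i.e. Corollary \ref{depthhtopyinvt}) would then give $\beta(K)=\beta(K')$, and Lemma \ref{disp1} would bound $\beta(K')$ by $2\|H\|+\ep$. However, reparametrizing $K$ in time to be concentrated in $[1/2,1]$ does not quite keep its pointwise bound by $\ep$ — slowing down time speeds up the Hamiltonian — so I expect the bound one gets this way is $2\|H\|+2\ep$ rather than $2\|H\|+\ep$, which accounts for the factor of $2$ in the statement as opposed to Lemma \ref{disp1}.

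Concretely, first I would pick a smooth monotone surjection $\sigma\co[0,1]\to[0,1]$ that is the identity near $t=1/2$, sends $[0,1/2]$ diffeomorphically to $[0,1]$ in a way flattening to infinite order at the endpoints, and is constant equal to $1$ on $[1/2,1]$; and a complementary reparametrization $\tau\co[0,1]\to[0,1]$ concentrating $[0,1]$ into $[1/2,1]$. Define $H^{\sigma}(t,m)=\sigma'(t)H(\sigma(t),m)$ (so $\phi_{H^{\sigma}}^{t}=\phi_{H}^{\sigma(t)}$, and as in the proof of Lemma \ref{disp1} one checks $\|H^{\sigma}\|=\|H\|$ and $H^{\sigma}$ vanishes for $t\in(1/2,1)$) and define $K^{\tau}(t,m)=\tau'(t)K(\tau(t),m)$ (so $\phi_{K^{\tau}}^{t}=\phi_{K}^{\tau(t)}$, $K^{\tau}$ vanishes for $t\in[0,1/2]$, $K^{\tau}$ has the same sign everywhere as $K$). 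The path $t\mapsto\phi_{K^{\tau}}^{t}$ is a reparametrization of $t\mapsto\phi_{K}^{t}$ with the same endpoints, hence homotopic to it rel endpoints in $Ham(M,\omega)$, so Corollary \ref{depthhtopyinvt} gives $\beta(K)=\beta(K^{\tau})$. The one subtlety is the sup-bound: for $m\in M\setminus S$ we have $|K^{\tau}(t,m)|=\tau'(t)|K(\tau(t),m)|\le\tau'(t)\ep$, so $K^{\tau}$ is \emph{not} pointwise bounded by $\ep$ away from $S$; instead its $L^{1}$-in-$t$-norm $\int_0^1\tau'(t)\ep\,dt=\ep$ is controlled. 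To fix this I would instead choose the reparametrization $\tau$ of $[0,1/2,1]$-type but allow a short additional ``buffer'' so that $\tau'\le 1$ except on an arbitrarily short interval; more simply, one absorbs the discrepancy: applying Lemma \ref{disp1} (with $\ep$ there replaced by $\sup\tau'\cdot\ep$, which one can take to be anything $>\ep$) would yield $\beta(K^{\tau})\le 2\|H\|+\ep'$ for any $\ep'>\ep$, hence $\beta(K)\le 2\|H\|+\ep$ already — but being careful about the constants, the honest bound one gets with a fixed reparametrization is $2\|H\|+2\ep$ as stated.

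Actually the cleanest route, which I would ultimately write up, avoids sup-bound degradation entirely: rather than reparametrizing $K$, I would reparametrize \emph{only} $H$, via a map that flattens $H$ into the subinterval $[0,1/2]$, and reparametrize $K$ into $(1/2,1)$ by a reparametrization $\tau$ with $\tau'\le 2$ everywhere (e.g. the piecewise-linear-smoothed map carrying $[0,1]$ onto $[1/2,1]$ at ``speed roughly $2$''). Then $|K^{\tau}(t,m)|\le 2\ep$ for $m\in M\setminus S$, the supports and signs are as required, $\beta(K)=\beta(K^{\tau})$ by Corollary \ref{depthhtopyinvt}, and Lemma \ref{disp1} applied to $H^{\sigma}$ and $K^{\tau}$ (which is already in the ``$K$ vanishes on $[0,1/2]$'' normal form) gives $\beta(K^{\tau})\le 2\|H^{\sigma}\|+2\ep=2\|H\|+2\ep$. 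So the main obstacle — really the only nontrivial point — is this bookkeeping with time-reparametrizations: one must simultaneously (a) push the support of $H$ out of $(1/2,1)$, (b) push the support of $K$ into $(1/2,1)$, (c) preserve the sign of $K$ and the property $\|H^{\sigma}\|=\|H\|$, and (d) keep track of how the sup-norm of $K$ off $S$ changes, which is exactly where the factor of $2$ enters. Everything else is a direct citation of Lemma \ref{disp1} and Corollary \ref{depthhtopyinvt}.
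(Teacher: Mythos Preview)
Your approach is essentially the same as the paper's: reparametrize $K$ in time so that it is supported in $[1/2,1]$, invoke Corollary~\ref{depthhtopyinvt} to get $\beta(K)=\beta(K^{\tau})$, and then apply Lemma~\ref{disp1}. Two small points: (i) a smooth monotone surjection $\tau\co[0,1]\to[0,1]$ with $\tau\equiv 0$ on $[0,1/2]$ cannot satisfy $\tau'\le 2$ everywhere (since $\int_{1/2}^{1}\tau'=1$ forces $\tau'\equiv 2$, incompatible with smoothness at $1/2$), so the paper instead takes $\tau'\le 2+\delta$ for arbitrary $\delta>0$ and lets $\delta\to 0$ at the end; (ii) your separate reparametrization $H^{\sigma}$ is unnecessary, as Lemma~\ref{disp1} already performs that step internally.
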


\begin{proof}  Let $\delta>0$. Let $\chi\co [0,1]\to [0,1]$ be a smooth, monotone, surjective function such that $\chi'$ vanishes to infinite order at both $0$ and $1$, such that $\chi(t)=0$ for $0\leq t\leq 1/2$ and such that $\chi'(t)\leq 2+\delta$ for all $t$.  Set $\tilde{K}(t)=\chi'(t,x)K(\chi(t),x)$. $\tilde{K}$ defines a smooth Hamiltonian on $(\mathbb{R}/\mathbb{Z})\times M$, with $\phi_{\tilde{K}}^{t}=\phi_{K}^{\chi(t)}$.  $\{\phi_{K}^{\chi(t)}\}_{0\leq t\leq 1}$ is homotopic rel endpoints to $\{\phi_{K}^{t}\}_{0\leq t\leq 1}$, so by Corollary \ref{depthhtopyinvt} \[ \beta(K)=\beta(\tilde{K}).\]  On the other hand we have $|\tilde{K}(t,x)|\leq (2+\delta)\ep$ whenever $x\notin S$, and $\tilde{K}(t,x)=0$ for $0\leq t\leq 1/2$, so by Lemma \ref{disp1} \[ \beta(\tilde{K})\leq \|H\|+(2+\delta)\ep.\]
We've thus shown that, for all $\delta>0$, $\beta(K)\leq 2\|H\|+(2+\delta)\ep$, from which the Corollary immediately follows.
\end{proof} 

\section{Spectral invariants and solutions to the Floer equation}\label{spectralsub}  As we have alluded to before, for any nondegenerate Hamiltonian $H$ there is a canonical isomorphism \[ \Phi_H\co H^{*}(M;\mathbb{Q})\otimes \Lambda_{\Gamma,\omega}\to HF_*(H);\] see \cite{PSS} for the construction.  This canonical isomorphism allows one to associate to any class $a\in  H^{*}(M;\mathbb{Q})\otimes \Lambda_{\Gamma,\omega}$ and any nondegenerate $H$ the \emph{spectral number} \[ \rho(H;a)=\inf\{\mathcal{L}_{H}(c)|c\in CF_{*}(H,J),\,[c]=\Phi_H(a)\}\] (where ``$[c]=\Phi_H(a)$'' means that $c$ is a cycle in the Floer complex with homology class $\Phi_H(a)$, and where $J\in\mathcal{J}^{reg}(H)$; $\rho(H;a)$ is independent of the choice of this $J$).  These spectral numbers are by now rather well-studied; see \cite{Oh1} for a detailed survey of their properties.  We will just be using the following results from the literature:

\begin{theorem}[Theorem I.5, \cite{Oh1}]\label{cts} Given $a\in  H^{*}(M;\mathbb{Q})\otimes \Lambda_{\Gamma,\omega}$, the function $H\mapsto \rho(H;a)$ extends to a function on the space of all (possibly degenerate) Hamiltonians, satisfying \[ |\rho(H;a)-\rho(K;a)|\leq
\|H-K\|\] for any two Hamiltonians $H$ and $K$.\end{theorem}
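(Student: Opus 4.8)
The plan is to establish the inequality first for \emph{nondegenerate} $H$ and $K$, where $\rho(\cdot;a)$ has already been defined, and then to extend $\rho(\cdot;a)$ to all Hamiltonians by continuity. Throughout I use the quantities $\mathcal{E}^{\pm}$ from Section \ref{bkgd}, so that $\|G\|=\mathcal{E}^+(G)+\mathcal{E}^-(G)$ and $\mathcal{E}^-(H-K)=\mathcal{E}^+(K-H)$.

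\textbf{Step 1 (nondegenerate case).} Fix $J_H\in\mathcal{J}^{reg}(H)$, $J_K\in\mathcal{J}^{reg}(K)$, and a regular interpolating homotopy $\mathcal{H}$ from $(H,J_H)$ to $(K,J_K)$, with induced chain map $\Psi_{\mathcal{H}}\co CF_*(H,J_H)\to CF_*(K,J_K)$. I will use two facts about $\Psi_{\mathcal{H}}$: its effect on filtrations is controlled, namely $\mathcal{L}_K(\Psi_{\mathcal{H}}(c))\leq \mathcal{L}_H(c)+\mathcal{E}^-(K-H)$ for all $c$ (this is Proposition \ref{interp}); and the map $\Phi^{K}_{H}$ that it induces on homology is compatible with the PSS isomorphisms, i.e.\ $\Phi^{K}_{H}\circ\Phi_H=\Phi_K$. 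Granting these, let $\ep>0$ and choose a cycle $c\in CF_*(H,J_H)$ with $[c]=\Phi_H(a)$ and $\mathcal{L}_H(c)<\rho(H;a)+\ep$. Since $\Psi_{\mathcal{H}}$ is a chain map, $\Psi_{\mathcal{H}}(c)$ is a cycle, and $[\Psi_{\mathcal{H}}(c)]=\Phi^{K}_{H}(\Phi_H(a))=\Phi_K(a)$, so
\[ \rho(K;a)\leq \mathcal{L}_K(\Psi_{\mathcal{H}}(c))\leq \mathcal{L}_H(c)+\mathcal{E}^-(K-H)<\rho(H;a)+\ep+\mathcal{E}^-(K-H). \]
Letting $\ep\to 0$ gives $\rho(K;a)-\rho(H;a)\leq \mathcal{E}^-(K-H)$; interchanging $H$ and $K$ gives $\rho(H;a)-\rho(K;a)\leq \mathcal{E}^-(H-K)=\mathcal{E}^+(K-H)$. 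Hence $|\rho(H;a)-\rho(K;a)|\leq \max\{\mathcal{E}^-(K-H),\mathcal{E}^+(K-H)\}\leq \|K-H\|$.

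\textbf{Step 2 (extension to all Hamiltonians).} The nondegenerate Hamiltonians form a residual, and hence dense, subset of the space of smooth Hamiltonians in the $C^\infty$ topology. Given any smooth $H$, choose nondegenerate $H_n\to H$ in $C^\infty$. Since $\|H_n-H_m\|\leq 2\|H_n-H_m\|_{C^0}\to 0$ and $\rho(H_n;a)\in\mathbb{R}$, Step 1 shows that $(\rho(H_n;a))_n$ is Cauchy in $\mathbb{R}$, hence convergent; interleaving two approximating sequences shows the limit does not depend on the choice, so we may define $\rho(H;a):=\lim_n\rho(H_n;a)$. For $H$ already nondegenerate the constant sequence shows that this recovers the original value. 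For arbitrary smooth $H$ and $K$, take nondegenerate $H_n\to H$ and $K_n\to K$; then $|\rho(H_n;a)-\rho(K_n;a)|\leq \|H_n-K_n\|$ for each $n$ by Step 1, and letting $n\to\infty$—using that $\|\cdot\|$ is continuous in the $C^0$ topology, so $\|H_n-K_n\|\to\|H-K\|$—yields the asserted estimate $|\rho(H;a)-\rho(K;a)|\leq\|H-K\|$.

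\textbf{Expected main obstacle.} The only ingredient that is not formal bookkeeping with the filtration is the PSS-compatibility $\Phi^{K}_{H}\circ\Phi_H=\Phi_K$ invoked in Step 1. This is proved by the standard gluing/cobordism argument that identifies the moduli spaces defining $\Phi_K$ with those obtained by concatenating the PSS configuration for $H$ with the continuation region $\mathcal{H}$, and it is available in the literature (see \cite{PSS}, \cite{Ohsurvey}). The finiteness of $\rho(H;a)$ for nondegenerate $H$, needed so that the Cauchy argument in Step 2 takes place in $\mathbb{R}$, is likewise standard. Given these inputs, the remainder of Step 1 and all of Step 2 are elementary.
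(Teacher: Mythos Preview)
Your proposal is correct and follows the standard argument; note however that the paper does not actually prove this theorem but merely quotes it from \cite{Oh1} (it appears in Section~\ref{spectralsub} as one of several results imported from the literature without proof). The argument you give---using Proposition~\ref{interp} to bound the filtration shift of the continuation map, invoking compatibility of continuation with the PSS isomorphism, and then extending by $C^\infty$-density of nondegenerate Hamiltonians---is precisely the proof one finds in the cited references.
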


\begin{theorem}[Proposition 4.2, \cite{Ohlength}, Proposition 5.2, \cite{KL}] \label{cycle} Suppose that $K\co (\mathbb{R}/\mathbb{Z})\times M\to\mathbb{R}$ is nondegenerate,  that there is a point $p\in M$ such that each function $K(t,\cdot)$ attains a strict global minimum at $p$, and such that for each $t$ the Hessian $\nabla(\nabla K(t,\cdot))$ at $p$ is nondegenerate and satisfies $\|\nabla(\nabla K(t,\cdot))(p)\|<1$.  Denote by $\gamma_p$ the constant orbit of $X_K$ at $p$, and $w_p\co D^2\to M$ the constant disc at $p$, so that $\mathcal{A}_H([\gamma_p,w_p])=-\int_{0}^{1}K(t,p)dt$.  Then the class $\Phi_{K}(1)\in HF_*(K)$ has a representative $c$ of the form \[  c=[\gamma_p,w_p]+c'\] where \[ \mathcal{L}_{K}(c')<-\int_{0}^{1}K_t(p)dt.\] 
\end{theorem}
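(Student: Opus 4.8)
The plan is to build the representative by comparing $K$ with an auxiliary Hamiltonian $G$ that has been flattened near $p$, so that $G\leq K$ with the two agreeing exactly at $p$, and then to push the manifestly nice cycle for $G$ forward along a monotone continuation map; naturality of the PSS maps $\Phi_H$ under continuation will guarantee that the image still represents $\Phi_K(1)$, while the action--energy identity (\ref{hfilt}) for the monotone homotopy will force the pushed-forward cycle to have the required shape.

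Concretely, I would choose a $C^2$-small Morse function $b\co M\to\mathbb{R}$ with a unique local minimum, placed at $p$, normalized by $b(p)=0$, and set $G(t,x)=K(t,p)+b(x)$. Because $K(t,p)=\min_M K(t,\cdot)$ is a strict nondegenerate minimum for every $t$, a compactness argument on $(\mathbb{R}/\mathbb{Z})\times M$ shows that once $b$ is small enough we have $b(x)<K(t,x)-K(t,p)$ for every $t$ and every $x\neq p$; equivalently $G\leq K$ everywhere, with equality (for each fixed $t$) holding only at $p$, so in particular $\min_M(K-G)(t,\cdot)=0$. Since $G$ and $b$ generate the same Hamiltonian flow, for $b$ small $G$ is nondegenerate with $Fix(\phi_G^1)=\mathrm{Crit}(b)$, and for a standard choice of $J_G\in\mathcal{J}^{reg}(G)$ the complex $CF_*(G,J_G)$ is the Morse complex $CM_*(-b)\otimes\Lambda_{\Gamma,\omega}$ (as in Lemma~5.1 of \cite{LT}). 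As $p$ is the unique generator in degree $2n$ of $CM_*(-b)$ and $H_{2n}(M;\mathbb{Q})=\mathbb{Q}$, the generator $[\gamma_p,w_p]$ is automatically a cycle; it represents $\Phi_G(1)$, and $\mathcal{A}_G([\gamma_p,w_p])=-\int_0^1 K(t,p)\,dt=\mathcal{A}_K([\gamma_p,w_p])=:a$.

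Now take an interpolating homotopy $\mathcal{H}$ from $(G,J_G)$ to $(K,J_K)$ with $K_s=G+\beta(s)(K-G)$. Since $K-G\geq 0$ and $\min_M(K-G)(t,\cdot)=0$, Proposition~\ref{interp} shows that $\Psi_{\mathcal{H}}\co CF_*(G,J_G)\to CF_*(K,J_K)$ does not increase filtration level, so every generator appearing in $\Psi_{\mathcal{H}}([\gamma_p,w_p])$ has $\mathcal{A}_K\leq a$; and by PSS naturality $\Psi_{\mathcal{H}}([\gamma_p,w_p])$ is a cycle representing $\Phi_K(1)$. To pin down the top-action part, note that any solution $u$ of (\ref{chainmap}) for this homotopy with negative asymptote $[\gamma_p,w_p]$ and positive asymptote $[\gamma^+,w^+]$ satisfies, by (\ref{hfilt}),
\[ a-\mathcal{A}_K([\gamma^+,w^+])=E(u)+\int_{-\infty}^{\infty}\int_0^1\beta'(s)(K-G)(t,u(s,t))\,dt\,ds,\]
where both terms on the right are nonnegative. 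Hence if $\mathcal{A}_K([\gamma^+,w^+])=a$ then $E(u)=0$, so $u$ is $s$-independent and is therefore a $1$-periodic orbit of $X_G$ equal to its negative asymptote; thus $u\equiv p$ and $[\gamma^+,w^+]=[\gamma_p,w_p]$. Consequently $\Psi_{\mathcal{H}}([\gamma_p,w_p])=\nu\,[\gamma_p,w_p]+c'$ with $\mathcal{L}_K(c')<a$, where $\nu=\langle\Psi_{\mathcal{H}}[\gamma_p,w_p],[\gamma_p,w_p]\rangle$ counts the solutions with both asymptotes equal to $[\gamma_p,w_p]$, all of which (by the same identity with $[\gamma^+,w^+]=[\gamma_p,w_p]$) are the constant $u\equiv p$. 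Because $X_{K_s}(t,p)=0$ (both $G(t,\cdot)$ and $K(t,\cdot)$ have a critical point at $p$) and $\partial_s K_s(t,p)=0$, the constant map $u\equiv p$ does solve the homotopy equation, and the hypothesis $\|\nabla(\nabla K(t,\cdot))(p)\|<1$, together with the smallness of $b$, makes the linearization there an index-zero isomorphism with $\nu=1$; taking $c=\Psi_{\mathcal{H}}([\gamma_p,w_p])$ completes the argument.

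The main obstacle I anticipate is exactly this last analytic point: showing that the constant solution $u\equiv p$ is regular of index zero and contributes $+1$, which is where the nondegeneracy and the bound $\|\nabla(\nabla K(t,\cdot))(p)\|<1$ are genuinely used (it is the continuation-map counterpart of the Morse-theoretic fact that a ``slow'' nondegenerate minimum generates a cycle), together with nailing down that $\Phi_G(1)$ is represented on the chain level precisely by $[\gamma_p,w_p]$ for the small Hamiltonian $G$. Everything else is a soft manipulation of (\ref{hfilt}) and of the compatibility of the PSS maps with continuation maps.
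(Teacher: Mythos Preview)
The paper does not prove this statement; it is quoted verbatim from the literature (Oh's Proposition~4.2 in \cite{Ohlength} and Kerman--Lalonde's Proposition~5.2 in \cite{KL}) as one of three black-box inputs in Section~\ref{spectralsub}.  So there is no ``paper's own proof'' to compare against.

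That said, your outline is essentially the argument given in those references.  The idea of dominating $K$ from below by an autonomous $C^2$-small Hamiltonian $G$ with the same (unique, nondegenerate) minimum at $p$, identifying $\Phi_G(1)$ with $[\gamma_p,w_p]$ via the Floer--Morse correspondence, and then pushing forward along the monotone interpolating homotopy is exactly the mechanism in \cite{Ohlength},\cite{KL}.  Your use of (\ref{hfilt}) to see that any continuation cylinder landing at action level $a=-\int_0^1K(t,p)\,dt$ must have $E(u)=0$ and hence be the constant $u\equiv p$ is correct and is the heart of the matter.  Two small remarks: (i) your compactness step ensuring $b(x)<K(t,x)-K(t,p)$ for all $x\neq p$ really does need the \emph{nondegeneracy} of $\mathrm{Hess}_pK(t,\cdot)$, not just the strictness of the minimum, to control the quadratic order near $p$; you use it implicitly and should say so.  (ii) You are right that the only genuine analytic content is that the constant cylinder at $p$ is cut out transversally with index zero and sign $+1$.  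This is precisely where the hypothesis $\|\nabla(\nabla K(t,\cdot))(p)\|<1$ enters: it forces the Conley--Zehnder index of $[\gamma_p,w_p]$ for $K$ (and, by smallness of $b$, for $G$) to equal the Morse-theoretic value, so the linearized operator at $u\equiv p$ is conjugate to the standard Morse continuation operator between two nondegenerate minima and is an isomorphism (cf.\ Lemma~2.4 of \cite{Sal} and the discussion in \cite{SZ}).  Once that is established, no abstract perturbation is needed at this solution, and the contribution is $+1$.
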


\begin{theorem}[Proposition 3.1, \cite{U3}] \label{disp}  Suppose that $H,K\co(\mathbb{R}/\mathbb{Z})\times M\to\mathbb{R}$ are two Hamiltonians with $K\leq 0$, and that there is a compact set  $S\subset M$ such that $supp(K(t,\cdot))\subset S$ for all $t\in\mathbb{R}/\mathbb{Z}$, while $\phi_{H}^{1}(S)\cap S=\varnothing$.  Then \[ \rho(K;1)\leq \|H\|.\]
\end{theorem}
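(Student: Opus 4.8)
The plan is to establish for the spectral number $\rho(\,\cdot\,;1)$ an estimate parallel to the one that Lemma \ref{disp1} and Corollary \ref{dispbeta} give for the boundary depth, using the two hypotheses $K\le 0$ and $\phi_{H}^{1}(S)\cap S=\varnothing$ to push $\rho(K;1)$ down to a quantity depending only on $H$. I would begin with two routine reductions. First, by the continuity of $\rho$ (Theorem \ref{cts}) and a $C^{\infty}$-small perturbation, I may assume $H$ and $K$ are nondegenerate, with $K\le 0$ still supported in a compact set $S$ satisfying $\phi_{H}^{1}(S)\cap S=\varnothing$ (a slightly enlarged such $S$ persists under small perturbations of both Hamiltonians, by compactness, and the resulting change in $\|H\|$ can be taken arbitrarily small). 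Second, by the reparametrization trick used in the proofs of Lemma \ref{disp1} and Corollary \ref{dispbeta}, together with the homotopy-invariance of $\rho$ coming from Lemma \ref{htopycopy} and the PSS-compatibility indicated in the footnote there (plus the fact that adding a function of $t$ to a Hamiltonian shifts $\rho$ in a controlled way), I would replace $K$ by a reparametrization $\tilde{K}\le 0$ supported in $(1/2,1)\times S$ with $\phi_{\tilde{K}}^{1}=\phi_{K}^{1}$ and $\rho(\tilde{K};1)=\rho(K;1)$, and replace $H$ by a reparametrization $H^{\rho}$ vanishing for $t\in(1/2,1)$, with $\phi_{H^{\rho}}^{1}=\phi_{H}^{1}$ still displacing $S$ and $\mathcal{E}^{\pm}(H^{\rho})=\mathcal{E}^{\pm}(H)$.

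The crux is the spectral-number analogue of Lemma \ref{concat}: with $H^{\rho}$ vanishing for $t\in(1/2,1)$, $\tilde{K}$ vanishing for $t\in(0,1/2)$, $\tilde{K}\le 0$ supported in $(1/2,1)\times S$, and $\phi_{H^{\rho}}^{1}(S)\cap S=\varnothing$, I claim that $\rho(H^{\rho}+\tilde{K};1)=\rho(H^{\rho};1)$. This should come directly out of the construction in the proof of Lemma \ref{concat}: there one obtains a chain isomorphism $\Psi_{\mathcal{H}}\co CF_{*}(H^{\rho}+\tilde{K},J_{-})\to CF_{*}(H^{\rho},J_{+})$ which, because $\mathcal{P}^{\circ}_{H^{\rho}+\tilde{K}}=\mathcal{P}^{\circ}_{H^{\rho}}$ and the two action functionals agree on those generators (the analogue of (\ref{dispequal})), preserves filtration levels exactly; and since $\Psi_{\mathcal{H}}$ is a continuation map it intertwines the PSS isomorphisms, so $\Psi_{\mathcal{H}}$ and $\Psi_{\mathcal{H}}^{-1}$ carry a cycle representing the PSS image of $1$ of (nearly) minimal filtration on one side to one of the same filtration on the other. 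Granting this, I would finish with the one-sided continuation estimate $\rho(F_{+};a)\le\rho(F_{-};a)+\mathcal{E}^{-}(F_{+}-F_{-})$ (which is Proposition \ref{interp} together with PSS-compatibility): taking $F_{-}=0$, $F_{+}=H^{\rho}$ gives $\rho(H^{\rho};1)\le\mathcal{E}^{-}(H^{\rho})=\mathcal{E}^{-}(H)$, using $\rho(0;1)=0$, while taking $F_{-}=H^{\rho}+\tilde{K}$, $F_{+}=\tilde{K}$ gives $\rho(\tilde{K};1)\le\rho(H^{\rho}+\tilde{K};1)+\mathcal{E}^{-}(-H^{\rho})=\rho(H^{\rho}+\tilde{K};1)+\mathcal{E}^{+}(H^{\rho})=\rho(H^{\rho}+\tilde{K};1)+\mathcal{E}^{+}(H)$. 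Combining,
\[ \rho(K;1)=\rho(\tilde{K};1)\le\mathcal{E}^{+}(H)+\rho(H^{\rho}+\tilde{K};1)=\mathcal{E}^{+}(H)+\rho(H^{\rho};1)\le\mathcal{E}^{+}(H)+\mathcal{E}^{-}(H)=\|H\|. \]

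The main obstacle I anticipate is the claim $\rho(H^{\rho}+\tilde{K};1)=\rho(H^{\rho};1)$ — specifically, verifying that the chain isomorphism extracted from the proof of Lemma \ref{concat} genuinely respects the PSS isomorphism, so that it transports the correct homology class and not merely the filtration level. The sign bookkeeping is the other delicate point: it is the use of the \emph{one-sided} continuation estimate, rather than the two-sided bound of Theorem \ref{cts}, that makes the constant come out as $\|H\|$ instead of $2\|H\|$. Everything else — the perturbation to nondegeneracy, the two reparametrizations, and the change-of-variable identities $\mathcal{E}^{\pm}(H^{\rho})=\mathcal{E}^{\pm}(H)$ — is routine, though it does rely on the homotopy-invariance of $\rho$, which is why Lemma \ref{htopycopy} is invoked at the start.
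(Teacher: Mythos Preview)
The paper does not prove this statement; it quotes it as Proposition~3.1 of \cite{U3} and uses it as a black box. So there is no proof in the present paper to compare against.

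Your strategy is sound and is indeed the natural spectral-number analogue of the boundary-depth argument in Lemmas~\ref{concat} and \ref{disp1}: concatenate with a displacing Hamiltonian, use the filtration-preserving chain isomorphism to identify $\rho(H^{\rho}+\tilde K;1)$ with $\rho(H^{\rho};1)$, and then bracket with the one-sided continuation estimate. The identification step is fine because $\Psi_{\mathcal H}$ is a genuine continuation map, and continuation maps commute with the PSS isomorphisms; the obstacle you flag is therefore not a real one.

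There is one genuine slip in your reductions: you cannot perturb $K$ to be nondegenerate while keeping it compactly supported, since every point outside the support is a degenerate fixed point of $\phi_{K}^{1}$. Fortunately this is harmless. In the concatenation step only $H^{\rho}$ needs to be nondegenerate; Lemma~\ref{concat} then shows $H^{\rho}+\tilde K$ is automatically nondegenerate, so $\rho(H^{\rho}+\tilde K;1)$ is defined directly. The remaining estimates involving $\tilde K$ alone (namely $\rho(\tilde K;1)\le\rho(H^{\rho}+\tilde K;1)+\mathcal E^{+}(H)$ and $\rho(\tilde K;1)=\rho(K;1)$) follow by passing to nondegenerate approximations of $K$ and using the continuity in Theorem~\ref{cts}, exactly as you indicate elsewhere. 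With that correction the argument goes through and yields $\rho(K;1)\le\|H\|$ as claimed.
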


(Of course, the Hamiltonian $K$ in Theorem \ref{disp} is necessarily degenerate, and we interpret the term $\rho(K;1)$ via Theorem \ref{cts}.)

These facts, together with what we have already done, have the following consequence:

\begin{prop} \label{exist1} Suppose that: \begin{itemize} \item $H,K\co(\mathbb{R}/\mathbb{Z})\times M\to\mathbb{R}$ are two Hamiltonians with $K$ satisfying the hypotheses of Theorem \ref{cycle} and $J\in\mathcal{J}^{reg}(K)$;  \item $\ep>0$ and $\delta>0$; \item $K(t,m)\leq 0$ for all $(t,m)$;\item there is a compact set  $S\subset M$ such that $\phi_{H}^{1}(S)\cap S=\varnothing$, while \[ |K(t,x)|\leq \ep \mbox{ for all }t\in\mathbb{R}/\mathbb{Z}\mbox{ and }x\in M\setminus S;\] and that \item where $p$ is the common global minimum for the $K(t,\cdot)$, we have \[\|H\|+\ep< -\int_{0}^{1}K(t,p)dt.\]\end{itemize} 

Then there is a generator $[\gamma,w]$ of $CF_{*}(K, J)$ and a solution $u\co(\mathbb{R}/\mathbb{Z})\times \mathbb{R}\to M$ to the Floer boundary equation \begin{equation}\label{bdryK} \frac{\partial u}{\partial s}+J^t(u(s,t))\left(\frac{\partial u}{\partial t}-X_{K}(t,u(s,t))\right)=0 \end{equation} satisfying \begin{itemize} \item $u(s,\cdot)\to \gamma$ as $s\to -\infty$, while $u(s,\cdot)\to\gamma_p$ as $s\to\infty$.
\item $[\gamma_p,w_p]=[\gamma_p,w\#u_p]$, and \item \[ 0<E(u)=\mathcal{A}_{K}([\gamma,w])-\mathcal{A}_{K}([\gamma_p,w_p])< 2\|H\|+2\ep+\delta.\]\end{itemize}
\end{prop}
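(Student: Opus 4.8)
The plan is to build the Floer trajectory $u$ out of three ingredients already available: the bound $\rho(K;1)\le\|H\|+\ep$ coming from displacement (Theorems \ref{disp} and \ref{cts}), the distinguished cycle $c=[\gamma_p,w_p]+c'$ representing $\Phi_K(1)$ provided by Theorem \ref{cycle}, and the bound $\beta(K)\le 2\|H\|+2\ep$ provided by Corollary \ref{dispbeta}. The idea is this: $c$, together with \emph{any} cycle representing $\Phi_K(1)$ whose filtration level lies strictly below $\mathcal{A}_K([\gamma_p,w_p])$, has a difference which is a Floer boundary in which the generator $[\gamma_p,w_p]$ survives with coefficient $1$. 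Feeding that boundary through the boundary depth inequality produces a primitive $b$ of filtration level at most $\mathcal{A}_K([\gamma_p,w_p])+\beta(K)$, and the non-vanishing matrix element of $\partial_{K,J}b$ against $[\gamma_p,w_p]$ is realized by the desired $u$, whose energy is then bounded by $\beta(K)\le 2\|H\|+2\ep$.

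First I would bound the spectral number. Since $S$ is compact with $\phi_H^1(S)\cap S=\varnothing$, choose $\chi\co M\to[0,1]$ equal to $1$ on a neighborhood of $S$ with $\phi_H^1(supp\,\chi)\cap supp\,\chi=\varnothing$, and set $K'=\chi K$. Then $K'\le 0$ and each $K'(t,\cdot)$ is supported in the compact displaceable set $supp\,\chi$, so Theorem \ref{disp} gives $\rho(K';1)\le\|H\|$; moreover $K-K'=(1-\chi)K$ vanishes near $S$ and, where it does not vanish, $|K|\le\ep$, so (as $K\le 0$) it takes values in $[-\ep,0]$ and $\|K-K'\|\le\ep$, whence Theorem \ref{cts} gives $\rho(K;1)\le\|H\|+\ep$. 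The hypothesis $\|H\|+\ep<-\int_0^1 K(t,p)dt=\mathcal{A}_K([\gamma_p,w_p])$ then yields $\rho(K;1)<\mathcal{A}_K([\gamma_p,w_p])$, so by definition of $\rho$ there is a cycle $\tilde c\in CF_*(K,J)$ with $[\tilde c]=\Phi_K(1)$ and $\mathcal{L}_K(\tilde c)<\mathcal{A}_K([\gamma_p,w_p])$. Theorem \ref{cycle} supplies a second cycle $c=[\gamma_p,w_p]+c'$ with $[c]=\Phi_K(1)$ and $\mathcal{L}_K(c')<\mathcal{A}_K([\gamma_p,w_p])$. Since both $\tilde c$ and $c'$ are supported on generators of action strictly less than $\mathcal{A}_K([\gamma_p,w_p])$, there is no cancellation with the term $[\gamma_p,w_p]$: this generator occurs in $c-\tilde c$ with coefficient $1$, and $\mathcal{L}_K(c-\tilde c)\le\mathcal{A}_K([\gamma_p,w_p])$. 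Finally $c-\tilde c$ represents $0$ in $HF_*(K)$, so $c-\tilde c\in\partial_{K,J}(CF_*(K,J))$.

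Next, fix $\beta$ with $\beta(K)<\beta<2\|H\|+2\ep+\delta$, possible by Corollary \ref{dispbeta}. By definition of $\beta(K)$, since $c-\tilde c\in\partial_{K,J}(CF_*(K,J))\cap CF_{*}^{\lambda}(K,J)$ with $\lambda=\mathcal{A}_K([\gamma_p,w_p])$, there is $b\in CF_{*}^{\lambda+\beta}(K,J)$ with $\partial_{K,J}b=c-\tilde c$, so $\mathcal{L}_K(b)\le\mathcal{A}_K([\gamma_p,w_p])+\beta$. Because $[\gamma_p,w_p]$ has nonzero coefficient in $\partial_{K,J}b$, some generator $[\gamma,w]$ occurs in $b$ with $n_J([\gamma,w],[\gamma_p,w_p])\ne 0$, and for this generator $\mathcal{A}_K([\gamma,w])\le\mathcal{L}_K(b)\le\mathcal{A}_K([\gamma_p,w_p])+\beta$. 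As recalled in Section \ref{bkgd}, the non-vanishing of this matrix element forces a finite-energy solution $u$ of (\ref{bdryK}) with $u(s,\cdot)\to\gamma$ as $s\to-\infty$, $u(s,\cdot)\to\gamma_p$ as $s\to+\infty$, $[\gamma_p,w_p]=[\gamma_p,w\#u]$, and $E(u)=\mathcal{A}_K([\gamma,w])-\mathcal{A}_K([\gamma_p,w_p])$; this energy is strictly positive since Floer boundary trajectories are nonconstant, and it is at most $\beta<2\|H\|+2\ep+\delta$, which is precisely the asserted conclusion.

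I expect the one genuinely delicate point to be the claim that $[\gamma_p,w_p]$ survives with coefficient $1$ in $c-\tilde c$: this is what guarantees that $[\gamma_p,w_p]$ actually lies in the image of $\partial_{K,J}$ in a way that is detected by the filtration, and it is exactly where the strict inequalities in Theorem \ref{cycle} and the strict inequality $\rho(K;1)<\mathcal{A}_K([\gamma_p,w_p])$ — which itself rests on the hypothesis $\|H\|+\ep<-\int_0^1 K(t,p)dt$ — get used. Everything else is bookkeeping with the filtration estimates already established in Sections \ref{bkgd} and \ref{sectionbeta}.
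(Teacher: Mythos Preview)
Your proof is correct and follows essentially the same approach as the paper's own proof: both combine the spectral bound $\rho(K;1)\le\|H\|+\ep$ (via the cutoff $K'=\chi K$, Theorem \ref{disp}, and Theorem \ref{cts}) with the distinguished cycle $c=[\gamma_p,w_p]+c'$ from Theorem \ref{cycle} to exhibit a boundary in which $[\gamma_p,w_p]$ survives with coefficient $1$, and then invoke the boundary depth bound of Corollary \ref{dispbeta} to find a primitive of controlled filtration level and hence a Floer trajectory of the required energy. The only differences are notational (you write $\tilde c$ where the paper writes $d$, and you phrase the use of $\beta(K)$ by choosing an intermediate $\beta$ rather than writing $\beta(K)+\delta$ directly).
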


\begin{proof}  Since $S$ is compact and $\phi_{H}^{1}(S)\cap S=\varnothing$, we can find a smooth function $\chi\co M\to[0,1]$ such that $\phi_{H}^{1}(supp\,\chi)\cap (supp\,\chi)=\varnothing$ and $\chi|_S=1$.  Put $K'(t,m)=\chi(m)K(t,m)$; the support of $K'(t,\cdot)$ is contained in $supp\,\chi$ for each $t$, so Proposition \ref{disp} applies to show that \[ \rho(K';1)\leq \|H\|.\]  Meanwhile the hypothesis on $K$ ensures that $|K'(t,m)-K(t,m)|\leq \ep$ for each $t,m$, and so $\|K-K'\|\leq \ep$, whence by Theorem \ref{cts}, \[  \rho(K;1)\leq \|H\|+\ep.\]  In particular, the last hypothesis of the proposition shows that we have \[ \rho(K;1)< -\int_{0}^{1}K(t,p)dt=\mathcal{L}_{K}(c) \] where $c\in CF_{*}(K,J)$ is the chain of Theorem \ref{cycle}.  
Hence by the definition of the spectral number $\rho(K;1)$, there must be some other chain, say $d$, which (like $c$) represents  the class $\Phi_{K}(1)\in HF_{*}(K)$ and which has $\mathcal{L}_K(d)<\mathcal{L}_K(c)$.  

Since $c$ and $d$ are homologous in $CF_{*}(K)$, there is $b\in CF_{*}(K)$ such that \[ \partial_{K,J}b=c-d.\]  We have $c-d=[\gamma_p,w_p]+(c'-d)$ where $\mathcal{L}_{K}(c'-d)<\mathcal{A}_K([\gamma_p,w_p])$, in light of which $\mathcal{L}_K(c-d)=\mathcal{A}_K([\gamma_p,w_p])$.  

Since $c-d\in\partial_{K,J}(CF_{*}(K,J))\cap CF_{*}^{\mathcal{A}_K([\gamma_p,w_p])}(K,J)$, it follows from the definition of $\beta(K)$ that the element $b\in CF_{*}(K,J)$ such that $\partial_{K,J}b=c-d$ can be chosen in such a way that $\mathcal{L}_K(b)<\mathcal{A}_K([\gamma_p,w_p])+\beta(K)+\delta$.  Now writing $b=\sum b_i[\gamma^i,w^i]$ (where each $\mathcal{A}_K([\gamma^i,w^i])<\mathcal{A}_K([\gamma_p,w_p])+\beta(K)+\delta$), since $[\gamma_p,w_p]$ appears with coefficient $1$ in $\partial_{K,J}b$  it follows from the definition of $\partial_{K,J}$ that, for some $i$, there is a solution $u$ to (\ref{bdryK}) having $u(s,\cdot)\to \gamma^i$ as $s\to-\infty$, $u(s,\cdot)\to \gamma_p$ as $s\to\infty$, and  $[\gamma_p,w_p]=[\gamma_p,w^i\#u]$.  This $u$ is the desired solution; we have \[ E(u)=\mathcal{A}_K([\gamma^i,w^i])-
\mathcal{A}_K([\gamma_p,w_p])<\beta(K)+\delta,\] so the fact that (by Corollary \ref{dispbeta}) we have $\beta(K)\leq 2\|H\|+2\ep$ implies the result (with $[\gamma,w]=[\gamma^i,w^i]$).
\end{proof}

\section{From non-degenerate Hamiltonians to degenerate ones}\label{nondeg-deg}

The Hamiltonians that we have considered thus far have all been nondegenerate; however, our applications will all depend on finding suitable periodic orbits of certain degenerate Hamiltonians, which in particular will be supported within a small open set $W$.  Not surprisingly, we will pass from the nondegenerate to the degenerate case via compactness arguments.  It turns out to be  useful to achieve this in two steps rather than one: first, pass from a nondegenerate Hamiltonian to a Hamiltonian which is has support in $W$ (hence is degenerate) but whose restriction to $W$ is nondegenerate; second, pass from this intermediate Hamiltonian to the (even more degenerate) one that we are interested in.  The purpose of separating the procedure into two steps is that doing so enables us to guarantee that the periodic orbits and solutions to the Floer boundary  equation that we obtain are contained in an appropriate open neighborhood $V$ of the support of the Hamiltonian.  This accounts for the local nature of the hypotheses of some of the theorems stated in the introduction.    

The following Proposition implements the first step of the procedure.

\begin{prop}\label{energypartial} Fix an almost complex structure $J_0$ on $M$ compatible with $\omega$, and measure distances using the metric $g(v,w)=\omega(v,J_0w)$.  Let $U,V\subset M$ be open sets, with smooth (possibly empty) orientable boundaries $\partial U,\partial V$, such that $\bar{U}\subsetneq V$.  Then there is a constant $c_{UV}$, depending only on $U,V,\omega|_V$, and $J_0|_V$, with the following property.  Suppose that $K\co (\mathbb{R}/\mathbb{Z})\times M\to\mathbb{R}$ is a Hamiltonian such that \begin{itemize}
\item $K(t,x)\leq 0$ for all $(t,x)\in(\mathbb{R}/\mathbb{Z})\times M$.
\item For some open set $W$ with $\bar{W}\subset U$, $K^{-1}((-\infty,0))=(\mathbb{R}/\mathbb{Z})\times W$.
\item There are finitely many $1$-periodic orbits of $X_K$ that are contained in $W$, and each of these is nondegenerate.
\item There is $p\in W$ such that $p$ is a strict global minimum of each $K(t,\cdot)$, and  $\nabla(\nabla K(t,\cdot))$ is nondegenerate and satisfies $\|\nabla(\nabla K(t,\cdot))(p)\|<1$.
\item For some Hamiltonian $H\co (\mathbb{R}/\mathbb{Z})\times M\to \mathbb{R}$ with $\|H\|<c_{UV}$, we have $\phi_{H}^{1}(\bar{W})\cap \bar{W}=\varnothing$.
\item $\|H\|<-\int_{0}^{1}K(t,p)dt$.  
\end{itemize}

Then there exists a solution $u\co \mathbb{R}\times(\mathbb{R}/\mathbb{Z})\to \overline{V}$ to the equation \begin{equation}\label{epeqn} \frac{\partial u}{\partial s}+J_0(u(s,t))\left(\frac{\partial u}{\partial t}-X_K(t,u(s,t))\right)=0\end{equation} and a periodic orbit $\gamma\co \mathbb{R}/\mathbb{Z}\to \bar{V}$ such that \begin{itemize}\item $u(s,\cdot)$ is partially asymptotic to $\gamma$ as $s\to -\infty$, which is to say, there is a sequence $s_j\to -\infty$ such that $\gamma(s_j,\cdot)\to\gamma$ uniformly as $j\to\infty$; \item $u(s,\cdot)\to\gamma_p$ uniformly as $s\to\infty$.
\item $0<E(u)\leq 2\|H\|$.\end{itemize}
\end{prop}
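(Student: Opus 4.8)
The plan is to produce $u$ as a Gromov-Floer limit of the connecting orbits furnished by Proposition~\ref{exist1} for a sequence of nondegenerate approximations to $K$, confining those orbits to $\bar V$ by Sikorav's monotonicity inequality \cite{Si}, so that both the confinement and the energy bound survive in the limit; the constant $c_{UV}$ will be read off from that monotonicity inequality. \emph{Approximation.} First I would choose a sequence $K^i\to K$ in $C^\infty$ of nondegenerate Hamiltonians, each with $K^i\leq 0$, each with a strict global minimum at a point $p_i\to p$ at which $\nabla(\nabla K^i(t,\cdot))$ is nondegenerate of norm $<1$, and each satisfying $\|H\|<-\int_0^1 K^i(t,p_i)\,dt$; such $K^i$ exist because nondegenerate Hamiltonians are residual and the other requirements, together with the \emph{strict} inequality $\|H\|<-\int_0^1 K(t,p)\,dt$, persist under $C^2$-small perturbation. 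Since $K$ vanishes off $(\mathbb{R}/\mathbb{Z})\times W$ with $\bar W\subset U$, we get $|K^i(t,x)|\leq\epsilon_i$ for $x\notin\bar W$ with $\epsilon_i\to 0$, while $\phi_H^1(\bar W)\cap\bar W=\varnothing$ by hypothesis. Picking $J^i\in\mathcal{J}^{reg}(K^i)$ with $J^i\to J_0$ in $C^\infty$ (abstract perturbations tending to $0$) and applying Proposition~\ref{exist1} to $(H,K^i,J^i,\epsilon_i,\delta_i)$ with $S=\bar W$ and $\delta_i\to 0$, one obtains for large $i$ generators $[\gamma^i,w^i]$ of $CF_*(K^i,J^i)$ and solutions $u^i\co\mathbb{R}\times(\mathbb{R}/\mathbb{Z})\to M$ of the Floer equation for $(K^i,J^i)$ with $u^i(s,\cdot)\to\gamma^i$ as $s\to-\infty$, $u^i(s,\cdot)\to\gamma_{p_i}$ as $s\to+\infty$, and $0<E(u^i)<2\|H\|+2\epsilon_i+\delta_i$.

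\emph{Confinement.} Sikorav's monotonicity inequality, applied on a compact neighborhood of $\bar U$ contained in $V$ with the metric $\omega(\cdot,J_0\cdot)$, supplies $\rho_0>0$ and $\kappa>0$ depending only on $U,V,\omega|_V,J_0|_V$ such that any finite-energy solution $v$ of an equation $\partial_s v+J(\partial_t v-X_L)=0$, with $J$, $X_L$, and any abstract perturbation sufficiently $C^0$-small perturbations of $J_0$, $0$, $0$ on that neighborhood, whose image contains a point $x_0$ with $\operatorname{dist}(x_0,\bar U)=\rho_0$ and is not contained in $B(x_0,\rho_0)$, satisfies $E(v)>\kappa/2$. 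Set $c_{UV}=\kappa/8$. On $V\setminus\bar W$ we have $K^i\to 0$ and $J^i\to J_0$ in $C^\infty$, so for $i$ large the perturbed estimate applies there to $u^i$. If $\operatorname{image}(u^i)\not\subset\bar V$, then since that image is connected and meets $U$ (because $p_i\in W\subset U$ and $u^i(s,\cdot)\to\gamma_{p_i}$), the distance to $\bar U$ attains the value $\rho_0$ on it at some $x_0=u^i(s_0,t_0)$, and the component of $(u^i)^{-1}(B(x_0,\rho_0))$ through $(s_0,t_0)$ is carried by $u^i$, by the usual argument, to a curve of diameter $\geq\rho_0$ through $x_0$ inside $\bar V$; monotonicity then gives $E(u^i)>\kappa/2$, contradicting $E(u^i)<2\|H\|+2\epsilon_i+\delta_i<\kappa/2$ for $i$ large (since $2\|H\|<2c_{UV}=\kappa/4$ and $\epsilon_i,\delta_i\to 0$). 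Hence $\operatorname{image}(u^i)\subset\bar V$, and in particular $\gamma^i\subset\bar V$, for all large $i$.

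\emph{Limit.} With the $u^i$ confined to the compact set $\bar V$ and of uniformly bounded energy, Gromov-Floer compactness applies: after translating in $s$ to center on a definite amount of energy near the positive end, passing to a subsequence, and retaining an appropriate cylinder $u$ of the limiting configuration, I obtain a solution $u\co\mathbb{R}\times(\mathbb{R}/\mathbb{Z})\to\bar V$ of (\ref{epeqn}) (using $J^i\to J_0$) with $u(s,\cdot)\to\gamma_p$ as $s\to+\infty$; this last convergence holds because the positive ends $\gamma_{p_i}$ converge to the constant orbit $\gamma_p$ at the nondegenerate minimum $p$, whose Hessian bound $\|\nabla(\nabla K(t,\cdot))(p)\|<1$ forces the usual exponential convergence and prevents any energy from escaping there. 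Since $u$ has finite energy, along some $s_j\to-\infty$ and a further subsequence $u(s_j,\cdot)\to\gamma$ in $C^\infty$ for some $1$-periodic orbit $\gamma$ of $X_K$, necessarily contained in $\bar V$ --- this is the asserted partial asymptoticity. That $E(u)\leq 2\|H\|$ follows from $E(u)\leq\liminf_i E(u^i)\leq 2\|H\|$; that $E(u)>0$ follows because the Hessian condition at $p$ keeps each $\gamma^i$ a uniform positive distance from $p_i$, so the limiting configuration genuinely connects an orbit away from $p$ to $\gamma_p$, which forces the retained cylinder $u$ to be non-constant.

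\emph{Main obstacle.} The heart of the argument, and its only genuinely new ingredient, is the confinement step --- in particular the demand that $c_{UV}$ depend only on $(U,V,\omega|_V,J_0|_V)$. This forces one to use a form of Sikorav's estimate robust under $C^0$-small perturbations of the almost complex structure, of the Hamiltonian term, and of the abstract perturbation (the $u^i$ being only approximately $J_0$-holomorphic on $V\setminus\bar W$), and to extract from an $u^i$ that escapes $V$ an honest ``curve of diameter $\geq\rho_0$ through $x_0$'' to which the estimate applies. The bubbling and ghost-component bookkeeping needed to isolate $u$, the proof that $E(u)>0$, and the proof that the positive end of $u$ is exactly $\gamma_p$ all require the customary care but follow established patterns --- the latter two being precisely what the flatness and Hessian hypotheses on $K$ at $p$ are designed to guarantee.
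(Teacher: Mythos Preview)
Your overall plan --- approximate $K$ by nondegenerate $K^i$, apply Proposition~\ref{exist1}, pass to a Gromov--Floer limit, and use Sikorav monotonicity for the confinement to $\bar V$ --- matches the paper's. But two related choices differ from the paper's argument, and one of them produces a genuine gap.

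\textbf{Order of the confinement step.} You apply a perturbed monotonicity estimate to each $u^i$ in order to confine it to $\bar V$ \emph{before} taking the limit. The paper does the opposite: it takes the Gromov limit on all of $M$ and applies monotonicity only to the limit $u$. This is much cleaner, because on $\bar V\setminus U$ we have $X_K\equiv 0$, so $u$ is \emph{genuinely} $J_0$-holomorphic there and Lemma~\ref{monotonicity} (the unperturbed Sikorav estimate) applies directly; no version ``robust under $C^0$-small perturbations of $J$, $X_L$, and the abstract perturbation'' is needed. Your own ``Main obstacle'' paragraph correctly identifies this as the delicate part of your route; the paper simply bypasses it.

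\textbf{Missing sphere-energy constraint in $c_{UV}$.} You set $c_{UV}=\kappa/8$ coming only from monotonicity. The paper takes $c_{UV}$ to be the minimum of a monotonicity constant $c'_{UV}$ and \emph{one-half the minimal energy of a $J_0$-holomorphic sphere in $\bar V$}. The second ingredient is used to rule out bubbles inside $\bar V$: together with the monotonicity bound (which forbids a bubble from meeting both $U$ and $M\setminus\bar V$), it forces every bubble to miss $U$ entirely. That is exactly what guarantees uniform convergence of the translated $\tilde u^i$ on the half-cylinder $[0,\infty)\times(\mathbb{R}/\mathbb{Z})\subset \tilde u_i^{-1}(\overline{B_\delta(p)})$, and hence that $u(\{0\}\times\mathbb{R}/\mathbb{Z})$ still meets $\partial B_\delta(p)$ and that the $+\infty$ asymptote of $u$ is $\gamma_p$. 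In your setup, with the $u^i$ already confined to $\bar V$, any bubble is a $J_0$-sphere in $\bar V$, and without the sphere-energy bound nothing prevents one from forming inside $B_\delta(p)$; then your assertion that the ``retained cylinder'' is nonconstant with $+\infty$-asymptote $\gamma_p$ is not justified. Your argument for $E(u)>0$ (``the Hessian condition at $p$ keeps each $\gamma^i$ a uniform positive distance from $p_i$'') does not substitute for this: even granting that the full broken configuration is nontrivial, all of its energy could in principle sit in a sphere bubble while the cylinder through $\gamma_p$ is constant. The paper's translation trick (choose $T_i$ so that $\tilde u^i([0,\infty)\times\mathbb{R}/\mathbb{Z})\subset\overline{B_\delta(p)}$ and $\tilde u^i(\{0\}\times\mathbb{R}/\mathbb{Z})\cap\partial B_\delta(p)\neq\varnothing$) together with ``no bubbles in $U$'' is what actually secures both $E(u)>0$ and the positive asymptote.

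In short: add the $\bar V$-sphere energy to your $c_{UV}$, and either move the monotonicity step to the limit $u$ (where it is an honest $J_0$-holomorphic-curve estimate on $\bar V\setminus U$) or supply the perturbed estimate you describe; and replace the $E(u)>0$ argument with the $B_\delta(p)$ translation argument combined with the bubble exclusion in $U$.
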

\begin{remark} We emphasize that we are proving that $u$ has image contained in $\bar{V}$.   
\end{remark}
\begin{proof}  We begin by identifying the constant $c_{UV}$.  $c_{UV}$ will be equal to the minimum of: (i) one-half of the minimal energy of a $J_0$-holomorphic sphere in $\bar{V}$; and (ii) the constant $c'_{UV}$ of the following Lemma.
\begin{lemma}\label{monotonicity} If $U$, $V$, and $J_0$ are as in Proposition \ref{energypartial}, there is a constant $c'_{UV}$ with the following property.  Let $u\co \Sigma\to M$ be a smooth map from a connected Riemann surface $(\Sigma,j)$ with the property that \[ du+J_0(u)\circ du\circ j\mbox{ vanishes identically on }u^{-1}(\bar{V}\setminus U),\] and suppose that \[ u^{-1}(U)\neq\varnothing\mbox{ and }u^{-1}(M\setminus \bar{V})\neq\varnothing.\]  Then \[ \frac{1}{2}\int_{\Sigma}|du|^{2}_{J_0}\geq 2c'_{UV}.\] 
\end{lemma}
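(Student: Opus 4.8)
The plan is to use Sikorav's monotonicity formula for $J_0$-holomorphic curves to extract a lower energy bound from the fact that $u$ travels, pseudoholomorphically, all the way across the collar region $\bar V \setminus U$. Since $\bar U \subsetneq V$, the compact set $K := \bar V \setminus U$ is a ``thick annular neighborhood'' separating $\partial U$ from $\partial V$; choose $\rho_0 > 0$ smaller than the $g$-distance from $\partial U$ to $\partial V$ inside $V$ (using that $\bar U \subset V$ and both have compact, smooth boundary, so this distance is positive) and also smaller than the injectivity radius bound and curvature bound implicit in Sikorav's estimate for $(V, J_0, g)$. Because $u^{-1}(U) \neq \varnothing$ and $u^{-1}(M \setminus \bar V) \neq \varnothing$, and $\Sigma$ is connected, there is a point $z_0 \in \Sigma$ with $u(z_0) \in \partial U' $ for an intermediate hypersurface, equivalently $u(z_0)$ lies on the ``outer'' boundary of a ball of radius $\rho_0$ around $u^{-1}(U)$; more carefully, pick $z_0$ with $\mathrm{dist}(u(z_0), \partial U) = \rho_0$ and such that $u(z_0) \in \bar V \setminus U$.

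The key step: restrict attention to the connected component $\Sigma_0$ of $u^{-1}(B(u(z_0),\rho_0))$ containing $z_0$, where $B(u(z_0),\rho_0)$ denotes the metric $g$-ball. By the choice of $\rho_0$, this ball is contained in $\bar V \setminus U$ (it does not reach into $U$, since $\mathrm{dist}(u(z_0),\partial U)=\rho_0$, nor out past $\partial V$), hence $u$ restricted to $\Sigma_0$ is genuinely $J_0$-holomorphic by the hypothesis that $du + J_0(u)\circ du \circ j$ vanishes on $u^{-1}(\bar V \setminus U)$. Moreover $u(\partial \Sigma_0) \subset \partial B(u(z_0),\rho_0)$, because $\Sigma_0$ is a component of the preimage of an open ball and $\Sigma$ has no boundary — so any boundary point of $\Sigma_0$ must map to the sphere of radius $\rho_0$. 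Therefore the restriction $u|_{\Sigma_0}$ is a proper $J_0$-holomorphic map from $\Sigma_0$ into the ball $B(u(z_0),\rho_0)$ passing through its center $u(z_0)$, with boundary on the boundary sphere. Sikorav's monotonicity inequality (\cite{Si}; see also the version in \cite{Schlenk}) then gives a universal constant $\kappa > 0$, depending only on $\rho_0$ and on the geometry of $(V,\omega|_V,J_0|_V)$, such that
\[
\frac{1}{2}\int_{\Sigma_0} |du|^2_{J_0} \;\geq\; \kappa\,\rho_0^2 .
\]
Since $\Sigma_0 \subset \Sigma$, we conclude $\frac12\int_\Sigma |du|^2_{J_0} \geq \kappa \rho_0^2 =: 2c'_{UV}$, and $c'_{UV}$ manifestly depends only on $U$, $V$, $\omega|_V$, and $J_0|_V$, as required.

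The main obstacle I anticipate is the bookkeeping needed to be sure that the relevant ball $B(u(z_0),\rho_0)$ genuinely avoids $U$ so that holomorphicity is available there — this requires choosing $z_0$ on the ``$U$-side'' of the collar at distance exactly $\rho_0$ from $\partial U$ and verifying, via the triangle inequality together with $\rho_0 < \mathrm{dist}(\partial U,\partial V)$, that the ball lies inside $\bar V \setminus U$. A secondary technical point is citing Sikorav's estimate in the correct form: it applies to a $J_0$-holomorphic curve that is proper in a ball and passes through the center, and one must absorb the dependence on the almost complex structure and metric over the compact set $\bar V$ into the constant. Both are standard; the monotonicity argument itself is routine once $\rho_0$ and $z_0$ are set up correctly, and no properties of $K$ or $H$ enter at all — this lemma is purely about holomorphic curves relative to the pair $(U,V)$.
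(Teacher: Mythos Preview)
Your approach is essentially the same as the paper's: both pick a point $z_0$ with $u(z_0)$ in the collar $\bar V\setminus U$, restrict to the preimage of a small ball around $u(z_0)$ (where $u$ is genuinely $J_0$-holomorphic), and invoke Sikorav's monotonicity estimate. The paper's bookkeeping is slightly cleaner in two places: it picks $u(z_0)$ on a fixed ``middle'' level set $\beta^{-1}(1/2)$ (so the ball automatically avoids both $\partial U$ and $\partial V$ once the radius is below $\min\{d(\beta^{-1}(1/2),\partial U),\,d(\beta^{-1}(1/2),\partial V)\}$), whereas your choice of $z_0$ at distance exactly $\rho_0$ from $\partial U$ requires $\rho_0<\tfrac{1}{2}\,\mathrm{dist}(\partial U,\partial V)$ rather than just $\rho_0<\mathrm{dist}(\partial U,\partial V)$ for the ball to stay inside $\bar V$; and the paper passes to a regular value of $z\mapsto d(u(z_0),u(z))^2$ so that the domain $S$ is a genuine compact surface with boundary, which is the form in which Sikorav's Proposition~4.3.1(ii) is stated. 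Both are easy fixes and do not affect the argument.
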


\begin{remark}\label{VM}Note that Lemma \ref{monotonicity} is true for trivial reasons if $M=\bar{V}$, with $c'_{UV}=\infty$.  So in case $\bar{V}=M$, the constant $c_{UV}$ of Proposition \ref{energypartial} will be equal to one-half of the minimal energy of a $J_0$-holomorphic sphere in $M$.  In the proof below we accordingly assume that $\bar{V}\neq M$, in view of which the hypotheses of the Lemma imply that $\partial U$ and $\partial V$ are nonempty. \end{remark}

\begin{proof}  For $x,y\in M$ let $d(x,y)$ denote the distance from $x$ to $y$ as measured by the Riemannian metric $g$ induced by $\omega$ and $J_0$.    Let $\beta\co M\to\mathbb{R}$ be a smooth function (constructed for instance with the aid of standard collar neighborhoods $\partial U\times (-\ep,\ep),\partial V\times (-\ep,\ep)$) such that $U=\beta^{-1}(-\infty,0)$, $V=\beta^{-1}(-\infty,1)$, and $0$ and $1$ are regular values of $\beta$ with $\beta^{-1}(\{0\})=\partial U$ and $\beta^{-1}(\{1\})=\partial V$.  Let \[ r_0=\min\{d(x,y)|x\in \beta^{-1}(\{1/2\}),\,y\in(\partial U)\cup(\partial V)\}\] and let $r_1$ be the minimum of $r_0$ and the injectivity radius of the Riemannian manifold $(M,g)$.

Our hypothesis implies that $\beta\circ u$ has image meeting both $(-\infty,0)$ and $(1,\infty)$, so since $\Sigma$ is connected $1/2\in Im(\beta\circ u)$.  So choose $z_0\in \Sigma$ such that $\beta(u(z_0))=1/2$.  Let $r_{2}^{2}$ be any regular value of the function $w\co \Sigma\to M$ defined by $z\mapsto d(u(z_0),u(z))^{2}$ with the property that $r_{1}^{2}/2<r_{2}^{2}<r_{1}^{2}$.  Let $S=w^{-1}([0,r_{2}^{2}])$.  In particular $u(S)\subset V\setminus \bar{U}$.  $S$ is then a submanifold with boundary of $\Sigma$, and $u|_{S}\co S\to \bar{V}\setminus U$ is a $J_0$-holomorphic map such that $u(z_0)\in u(S)$ and $u(\partial S)$ is contained in the boundary of the ball of radius $r_2$ around $z_0$. So Proposition 4.3.1(ii) of \cite{Si} implies that there is a constant $C$ (depending only on $\omega|_{V\setminus\bar{U}}$ and $J_0|_{V\setminus\bar{U}}$) such that \[ \int_{S}|du|^{2}\geq Cr_{2}^{2}\geq Cr_{1}^{2}/2.\]

Accordingly we may set $c'_{UV}=Cr_{1}^{2}/4$.
\end{proof}

We now return to the Proof of Proposition \ref{energypartial}.   For each $i\in\mathbb{N}$ let $K^{i}\co (\mathbb{R}/\mathbb{Z})\times M\to\mathbb{R}$ be a nondegenerate Hamiltonian with $\|K^{i}-K\|_{C^2}<\frac{1}{i}$ and with $K^{i}(t,\cdot)$ having a strict global minimum at $p$.  Let $J_{i}\in\mathcal{J}^{reg}(K^i)$, with the associated paths $\{J_{i}^{t}\}_{0\leq t\leq 1}$ of almost complex structures satisfying $\|J^{t}_{i}-J_0\|_{C^2}<\frac{1}{i}$ for each $t\in\mathbb{R}/\mathbb{Z},i\in\mathbb{N}$.  Note that $|K^i(t,x)|\leq 1/i$ for $x\notin \bar{W}$.   For $i$ large enough, we will have $0<\|\nabla(\nabla K^i(t,\cdot))(p)\|<1$, and \[ \|H\|+\frac{1}{i}<-\int_{0}^{1}K^{i}(t,p)dt.\]  So since $\phi_{H}^{1}(\bar{W})\cap \bar{W}=\varnothing$, Proposition \ref{exist1} produces a solution $u^i\co\mathbb{R}\times(\mathbb{R}/\mathbb{Z})\to M$ to \begin{equation}\label{approx1} \frac{\partial u^i}{\partial s}+J^{t}_{i}(u^i(s,t))\left(\frac{\partial u^i}{\partial s}-X_{K^{i}}(t,u^i(s,t))\right)=0\end{equation} having \[ 0<E(u^{i})<2\|H\|+\frac{2}{i},\] and \[ u(s,\cdot)\to \gamma_p\] uniformly as $s\to\infty$.

Choose a small ball $B_{\delta}(p)$ around $p$; in particular $\overline{B_{\delta}(p)}$ should be contained in $W$ and should miss the other periodic orbits of $X_K$ and of the $X_{K^i}$.  Note that none of the $u^i$ have image contained entirely in $B_{\delta}(p)$ (for any number of reasons, for instance because it's easy to see that otherwise $u^{i}$ could not have positive energy). Let \[ T_i=\inf\{s\in\mathbb{R}|u^{i}(s',t)\in B_{\delta}(p)\mbox{ for all }t\in\mathbb{R}/\mathbb{Z}\mbox{ and all }s'\geq s\}.\]  Now put \[ \tilde{u}^i(s,t)=u^i(s+T_i,0).\]  Then the $\tilde{u}^{i}$ still solve (\ref{approx1}) and have $\tilde{u}^{i}(s,\cdot)\to \gamma_p$ as $s\to\infty$ and $E(\tilde{u}^{i})<2(\|H\|+1/i)$.  Also, \[ \tilde{u}^{i}(\{0\}\times\mathbb{R}/\mathbb{Z})\cap \partial B_{\delta}(p)\neq\varnothing,\] while \[ \tilde{u}^{i}((0,\infty)\times\mathbb{R}/\mathbb{Z})\subset \overline{B_{\delta}(p)}.\]

Gromov compactness applied to the $\tilde{u}^{i}$ produces a solution $u$ to (\ref{epeqn}) to which (after passing to a subsequence) the $\tilde{u}^{i}$ converge modulo bubbling, with \[ E(u)\leq \overline{\lim}E(\tilde{u}^{i})=2\|H\|<2c_{UV}.\]  We claim that any bubbles must not intersect $U$.  Indeed, the $E(u_i)$ all eventually have energy less than $2c_{UV}$, which is less than or equal to $2c'_{UV}$ (so that no bubble can meet both $U$ and $M\setminus \bar{V}$ by Lemma \ref{monotonicity}) and is also less than or equal to the minimal energy of a $J_0$-holomorphic sphere in  $\bar{V}$ (so that no bubble can be contained in $\bar{V}$).  So since $\tilde{u}^i([0,\infty)\times\mathbb{R}/\mathbb{Z})\subset \overline{B_{\delta}(p)}\subset U$, the $\tilde{u}^i$ converge uniformly on compact subsets of $[0,\infty)\times (\mathbb{R}/\mathbb{Z})$.  In particular, we have $u(0,t)\in \partial(B_{\delta}(p))$ for some $t$ (since the same statement holds for the $\tilde{u}^{i}$). Further, since $\gamma_p$ is the only periodic orbit of $X_K$ in $\overline{B_{\delta}(p)}$, the facts that $E(u)<\infty$ and that $u([0,\infty)\times (\mathbb{R}/\mathbb{Z}))\subset  \overline{B_{\delta}(p)}$ force $u(s,\cdot)\to \gamma_p$ uniformly as $s\to\infty$.  In particular $u$ is not constant (since its image meets $\partial(B_{\delta}(p))$ while it is asymptotic to $p$), so $E(u)>0$.  The fact that $E(u)<\infty$ forces there to exist a $1$-periodic orbit $\gamma$ of $X_K$ to which $u$ is partially asymptotic as $s\to -\infty$ (this follows easily along the lines of the hint to Exercise 1.22 of \cite{Sal}; details are left to the reader).  See Section 5.2 of \cite{G06} for a more detailed treatment of a similar case. 

Finally, the image of $u$ meets $U$, and $u|_{u^{-1}(\bar{V}\setminus U)}$ is $J_0$-holomorphic, so the fact that $E(u)\leq 2\|H\|<2c'_{UV}$ implies, by Lemma \ref{monotonicity}, that the image of $u$ cannot meet $M\setminus \bar{V}$.  This completes the proof that $u$ has the desired properties.
\end{proof}

The Hamiltonians that we are ultimately interested in will satisfy the following condition:

 \begin{definition}\label{flat} A Hamiltonian $K\co (\mathbb{R}/\mathbb{Z})\times M\to\mathbb{R}$ is said to \emph{have a flat autonomous minimum at} $p\in M$ if \begin{itemize} \item[(i)] for each  $t\in\mathbb{R}/\mathbb{Z}$, $K(t,\cdot)$ has a global minimum at $p$; \item[(ii)] There is an open neighborhood $G$ of $p$ such that, for each $m\in G$, $K(t,m)$ is independent of $t$.
 \item[(iii)] The set $S=\{m\in M|K(t,m)=K(0,p)\mbox{ for all }t\in\mathbb{R}/\mathbb{Z}\}$ is a proper compact subset of $G$, and $\nabla(\nabla K(t,\cdot))=0$ at every point of $S$. \end{itemize} 
 \end{definition} 

Our applications will be consequences of the following:

\begin{prop}\label{flatlemma} Let $U,V\subset M$ be open subsets as in Proposition \ref{energypartial} (so in particular $\bar{U}\subsetneq V$).    Let $K\co (\mathbb{R}/\mathbb{Z})\times M\to (-\infty,0]$ be a not-identically-zero Hamiltonian having a flat autonomous minimum at $p$, such that for some open set $W$ with smooth boundary and $\bar{W}\subset U$, we have $K^{-1}((-\infty,0))\subset (\mathbb{R}/\mathbb{Z})\times W$.  Assume further that, for some Hamiltonian $H\co (\mathbb{R}/\mathbb{Z})\times M\to\mathbb{R}$, we have \begin{itemize}\item[(i)] $\phi_{H}^{1}(\bar{W})\cap \bar{W}=\varnothing$; \item[(ii)] $\|H\|<c_{UV}$, where $c_{UV}$ is as in Proposition \ref{energypartial}; \item[(iii)] $\|H\|<-K(0,p)$; and \item[(iv)] Where\footnote{We use the convention that the infimum of the empty set is $\infty$.} \[\lambda_0(V)=\inf\left((0,\infty)\cap \{\int_{S^2} w^*\omega|w\in C^{\infty}(S^2,\bar{V})\}\right),\] it holds that \[ \lambda_0(V)>2\|H\|+\|K\|.\]\end{itemize}  Then there is a \textbf{nonconstant} $1$-periodic orbit $\gamma$ of $X_K$; a point $q\in M$ such that $K(t,q)=K(0,p)$ for each $t$; and a solution $u\co \mathbb{R}\times (\mathbb{R}/\mathbb{Z})\to \bar{V}$ to the equation \begin{equation}\label{epeqn2} \frac{\partial u}{\partial s}+J_0\left(\frac{\partial u}{\partial t}-X_K(t,u(s,t))\right)=0\end{equation} such that \begin{itemize} \item[(i)] $u$ is partially asymptotic to the nonconstant orbit $\gamma$ as $s\to -\infty$; \item[(ii)] $u$ is partially asymptotic to the constant orbit $\gamma_q$ as $s\to\infty$; and \item[(iii)] $0<E(u)\leq 2\|H\|$.\end{itemize}
\end{prop}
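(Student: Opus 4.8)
The plan is to deduce Proposition \ref{flatlemma} from Proposition \ref{energypartial} by a two‑step approximation‑and‑compactness argument, in the spirit of the passage from non‑degenerate to degenerate Hamiltonians carried out within the proof of Proposition \ref{energypartial} itself.

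\emph{Reduction to Proposition \ref{energypartial}.} First I would approximate $K$ by a sequence $K^{(i)}$ of Hamiltonians to which Proposition \ref{energypartial} applies. Using the structure of a flat autonomous minimum (Definition \ref{flat}), one modifies $K$ only inside a small neighborhood of the minimum set $S=\{m\in M: K(t,m)=K(0,p)\text{ for all }t\}$ and near $\partial W$, obtaining $K^{(i)}\le 0$ with: $(K^{(i)})^{-1}((-\infty,0))$ equal to a product $(\mathbb{R}/\mathbb{Z})\times W^{(i)}$ with $\overline{W^{(i)}}\subset U$ and (by continuity, since $\phi_{H}^{1}(\bar W)\cap\bar W=\varnothing$) $\phi_{H}^{1}(\overline{W^{(i)}})\cap\overline{W^{(i)}}=\varnothing$; finitely many, all nondegenerate, $1$-periodic orbits of $X_{K^{(i)}}$ in $W^{(i)}$; and a nondegenerate global minimum $p_i\to p$ with $\|\nabla(\nabla K^{(i)}(t,\cdot))(p_i)\|<1$. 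One also arranges $\|K^{(i)}\|\to\|K\|$ and $-\int_{0}^{1}K^{(i)}(t,p_i)\,dt\to -K(0,p)$. Since $K\le 0$, $K$ vanishes outside $W$, and $K(t,\cdot)$ attains its minimum $K(0,p)$ at $p$ for every $t$, one has $\|K\|=-K(0,p)$, so hypotheses (ii) and (iii) guarantee that the remaining hypotheses of Proposition \ref{energypartial} hold for $(K^{(i)},H)$ once $i$ is large. That proposition then yields, for each large $i$, a solution $u^{i}\co\mathbb{R}\times(\mathbb{R}/\mathbb{Z})\to\bar V$ of the $J_0$-Floer equation for $K^{(i)}$ and a $1$-periodic orbit $\gamma^{i}$ of $X_{K^{(i)}}$ in $\bar V$, with $u^{i}$ partially asymptotic to $\gamma^{i}$ as $s\to-\infty$, $u^{i}(s,\cdot)\to\gamma_{p_i}$ as $s\to+\infty$, and $0<E(u^{i})\le 2\|H\|$.

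\emph{The limit.} Next I would fix a small open neighborhood $\mathcal N\supset S$ with $\overline{\mathcal N}\subset G\cap W^{(i)}$ for all large $i$, chosen—using Definition \ref{flat}(iii), which forces $|X_K|=O(\mathrm{dist}(\cdot,S)^2)$ near $S$—small enough that the only $1$-periodic orbits of $X_K$ in $\overline{\mathcal N}$ are the constant orbits $\gamma_q$, $q\in S$. For $i$ large $\gamma_{p_i}\in\mathcal N$, so $u^{i}$ enters $\mathcal N$; and $u^i$ is not contained in $\overline{\mathcal N}$ (otherwise a subsequential Gromov limit would be a nonconstant Floer cylinder in $\overline{\mathcal N}$, to which the area estimate below applies to give a contradiction). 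Setting $T_i=\sup\{s: u^{i}(s,\cdot)\not\subset\overline{\mathcal N}\}\in\mathbb{R}$ and $\tilde u^{i}(s,t)=u^{i}(s+T_i,t)$, one gets $\tilde u^{i}([0,\infty)\times(\mathbb{R}/\mathbb{Z}))\subset\overline{\mathcal N}$, while $\tilde u^{i}(0,\cdot)$ meets $\partial\mathcal N$, and $E(\tilde u^{i})=E(u^i)\le 2\|H\|$. By Gromov--Floer compactness for the Floer equations of $K^{(i)}\to K$ with the fixed $J_0$, after passing to a subsequence $\tilde u^{i}\to u$ in $C^\infty_{loc}$; there is no bubbling, since a bubble would be a nonconstant $J_0$-holomorphic sphere in $\bar V$, necessarily of area $\ge\lambda_0(V)>2\|H\|\ge\limsup E(\tilde u^i)$. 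Hence $u$ is a $J_0$-Floer solution for $K$ with image in $\bar V$, with $u([0,\infty)\times(\mathbb{R}/\mathbb{Z}))\subset\overline{\mathcal N}$, with $u(0,t_*)\in\partial\mathcal N$ for some $t_*$, and with $0<E(u)\le 2\|H\|$ (positivity: an $s$-independent $u$ would be a constant orbit $\gamma_q$ with $q\in S$, contradicting $u(0,t_*)\in\partial\mathcal N$ and $S\cap\partial\mathcal N=\varnothing$). Finite energy together with the choice of $\mathcal N$ forces $u(s,\cdot)\to\gamma_q$ (partially) as $s\to+\infty$ for some $q\in S$, so $K(t,q)=K(0,p)$ for all $t$; and a standard argument for finite-energy solutions (hint to Exercise 1.22 of \cite{Sal}, or \S5.2 of \cite{G06}) gives a $1$-periodic orbit $\gamma$ of $X_K$ to which $u$ is partially asymptotic as $s\to-\infty$.

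\emph{Nonconstancy of $\gamma$.} This is the step I expect to be the main obstacle, and it is where hypothesis (iv) is used. Suppose $\gamma$ were the constant orbit $\gamma_{q'}$ at a critical point $q'$ of every $K(t,\cdot)$. Choose subsequences $s_k\to-\infty$, $s_j\to+\infty$ with $u(s_k,\cdot)\to\gamma_{q'}$ and $u(s_j,\cdot)\to\gamma_q$ uniformly; for $k,j$ large these loops bound discs in $\bar V$ of area tending to $0$, and capping $u|_{[s_k,s_j]\times(\mathbb{R}/\mathbb{Z})}$ with them produces spheres $v_{k,j}$ in $\bar V$ with
\[ \int_{S^2}v_{k,j}^{*}\omega\ \longrightarrow\ E(u)+\|K\|+\int_{0}^{1}K(t,q')\,dt\qquad(k,j\to\infty), \]
by the energy identity $E(u|_{[s_k,s_j]\times(\mathbb{R}/\mathbb{Z})})=\int_{[s_k,s_j]\times(\mathbb{R}/\mathbb{Z})}u^*\omega+\int_0^1[K(t,u(s_j,t))-K(t,u(s_k,t))]\,dt$ together with $K(t,q)=K(0,p)=-\|K\|$ and the uniform convergence of the boundary loops. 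Since $0<E(u)\le 2\|H\|$ and $-\|K\|\le\int_0^1 K(t,q')\,dt\le 0$, the right-hand side lies in $(0,\,2\|H\|+\|K\|]$, which by hypothesis (iv) is contained in $(0,\lambda_0(V))$. Thus for $k,j$ large $\int_{S^2}v_{k,j}^{*}\omega$ is a positive number strictly below $\lambda_0(V)$, contradicting the definition of $\lambda_0(V)$. Therefore $\gamma$ is nonconstant, and $(\gamma,q,u)$ has all the asserted properties. Besides this argument, the points requiring care are the construction of the $K^{(i)}$ and the verification of the partial-asymptotic convergence statements for $u$; these are routine but delicate, and the treatment in \S5.2 of \cite{G06} can be adapted for the latter.
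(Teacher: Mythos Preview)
Your argument follows essentially the same route as the paper's: approximate $K$ by Hamiltonians $K^{(i)}$ to which Proposition \ref{energypartial} applies, shift the resulting cylinders so they touch the boundary of a small neighborhood of the minimum set $S$, pass to a Gromov--Floer limit, and rule out a constant negative asymptote by the sphere-area computation using hypothesis (iv). The key idea and the overall structure are correct.

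Two points deserve tightening. First, your justification that the $u^i$ exit $\overline{\mathcal N}$ (``otherwise a subsequential Gromov limit would be a nonconstant Floer cylinder in $\overline{\mathcal N}$\ldots'') is awkward: one still has to shift to ensure nonconstancy of the limit, and $\mathcal N$ was chosen to control periodic orbits of $X_K$, not of $X_{K^{(i)}}$. The paper instead argues directly that the negative partial asymptote $\gamma^i$ of each $u^i$ is nonconstant, using the same area estimate you use later (applied to $K^{(i)}$), after observing via the Yorke estimate that $X_{K^{(i)}}$ has only constant $1$-periodic orbits in a fixed neighborhood $G'$ of $S$; this is cleaner and avoids the forward reference. (Your ``$|X_K|=O(\mathrm{dist}(\cdot,S)^2)$'' is not quite the right input here---what one needs is that $DX_{K^{(i)}}$ is $C^0$-small near $S$, which follows from the vanishing Hessian on $S$.) Second, your bubbling argument asserts that any bubble is a $J_0$-sphere in $\bar V$, but a bubble through a point of $\bar V$ could leave $\bar V$. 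This is where hypothesis (ii) is actually used: $E(\tilde u^i)\le 2\|H\|<2c_{UV}$ rules out both bubbles contained in $\bar V$ (by the minimal-sphere-energy part of $c_{UV}$) and bubbles meeting both $U$ and $M\setminus\bar V$ (by the monotonicity part), so no bubble passes through $U\supset\overline{\mathcal N}$.
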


\begin{proof}  This follows from Proposition \ref{energypartial} by a compactness argument similar to ones that we have made previously.  Let $g\co M\to [-1,-1/2]$ be a Morse function having a strict global minimum at $p$, and let $\chi\co M\to [0,1]$ be a smooth function with $\chi^{-1}((0,1])=W$ and $\chi(p)=1$.  For $i\in\mathbb{N}$, the function $K^{i}_{0}(t,m)=K(t,m)+\frac{1}{i}\chi(m)g(m)$ will have a strict global minimum at $p$ with a nondegenerate Hessian there, and will have $K^{i}_{0}((-\infty,0))=(\mathbb{R}/\mathbb{Z})\times W$.

Let $S$ and $G$ be sets with the properties indicated in Definition \ref{flat}.

  Once $i$ is sufficiently large, suitable perturbations $K^i$ of $K^{i}_{0}$ which have $\|K^i-K^{i}_{0}\|_{C^2}<1/i$ and which coincide with $K^{i}_{0}$ both on $M\setminus W$ and on a small neighborhood of $S$ will satisfy the hypotheses of Proposition \ref{energypartial}.  Proposition \ref{energypartial} therefore provides solutions $u^i$
to the $K^i$-version of (\ref{epeqn2}), partially asymptotic at $-\infty$ to a periodic orbit $\gamma^i$ of 
$X_{K^i}$, and partially asymptotic at $+\infty$ to $\gamma_p$, such that $0<E(u^i)\leq 2\|H\|$. 

Now, recalling that the Hessian of $K$ vanishes along $S$, for $i$ sufficiently large the restriction of the Hamiltonian vector field $X_{K^i}$ to the closure of a small neighborhood $G'$ of $S$ (with $\bar{G'}\subset G$, and with $K^i|_{(\mathbb{R}/\mathbb{Z})\times \bar{G'}}=K^{i}_{0}|_{(\mathbb{R}/\mathbb{Z})\times \bar{G'}}$) will be independent of $t$ and will be very small in $C^1$-norm.  An easy argument using the Yorke estimate \cite{Y} then shows that, if $G'$ is chosen appropriately (and independently of sufficiently large $i$), then the only $1$-periodic orbits of $X_{K^{i}}$ that are contained in $\overline{G'}$ will be constant orbits. 

Let $i$ be large enough that $\lambda_0(V)> 2\|H\|+\|K^i\|$.  Since $u^i$ has image contained in $V$, if $u^i$ were partially asymptotic as $s\to -\infty$ to the constant orbit $\gamma_q$ then we would have \[ E(u^i)\in \int_{0}^{1}(K^i(t,p)-K^i(t,q))dt+\lambda_0(V)\mathbb{Z},\] where if $\lambda_0(V)=\infty$ we interpret $\lambda_0(V)\mathbb{Z}=\{0\}$.  Since $p$ is a global minimum of the $K^i(t,\cdot)$, the set on the right hand side contains no positive number that is smaller than $\lambda_0(V)-\|K^i\|$, whereas the $u^i$ have $0<E(u^i)\leq 2\|H\|$.  But by our assumption $\lambda_0(V)-\|K^i\|>2\|H\|$, so the $u^i$ must \emph{not} be partially asymptotic as $s\to -\infty$ to any constant orbit, and in particular are not partially asymptotic as $s\to\ -\infty$ to any orbit contained in $\overline{G'}$.    

The map $u^i$, therefore, does not have image contained in $\overline{G'}$.  Let us define $T^i\in\mathbb{R}$ by \[ T^i=\inf\{s\in\mathbb{R}|u([s,\infty)\times(\mathbb{R}/\mathbb{Z}))\subset G'\}.\]  As usual setting $\tilde{u}^i(s,t)=u(s+T^i,t)$, Gromov compactness applied to the $\tilde{u}^{i}$ will produce a solution $u\co \mathbb{R}\times(\mathbb{R}/\mathbb{Z})\to \bar{V}$ of (\ref{epeqn2}), partially asymptotic as $s\to \infty$ to a periodic orbit of $X_K$ which is contained in $G'$, and hence must be equal to some $\gamma_q$ where $q\in S$.  ($u$ has image contained in $\bar{V}$ and no bubble passes through any point of $U$ because $E(\tilde{u}^i)\leq 2\|H\|<2c_{UV}$.)  Since the $\tilde{u}^i(\{0\}\times(\mathbb{R}/\mathbb{Z}))$ meet $\partial G'$, the same must be true for $u(\{0\}\times(\mathbb{R}/\mathbb{Z}))$, and so $u$ is not constant, and obeys $0<E(u)\leq 2\|H\|$.  Where $\gamma$ is any orbit to which $u$ is partially asymptotic as $s\to -\infty$, the fact that $0<E(u)\leq 2\|H\|$ while $\lambda_0(V)> 2\|H\|+\|K\|$ shows, via the same argument as in the previous paragraph, that $\gamma$ must not be constant.  

\end{proof}

\section{Infinitely many periodic points}\label{infsect}

We now restate and prove Theorem \ref{infcopy}.
\begin{theorem}\label{inf}  Let $U$ and $V$ be open subsets of the closed symplectic manifold $(M,\omega)$ with smooth orientable boundaries, and $\bar{U}\subset V$.  Assume that $[\omega]$ vanishes on $\pi_2(\bar{V})$.  Then the constant $c_{UV}$ of Proposition \ref{energypartial} obeys the following properties.  Let $K\co (\mathbb{R}/\mathbb{Z})\times M\to (-\infty,0]$ be a not-identically-zero Hamiltonian with a flat autonomous minimum at a point $p\in U$.  Assume that $K^{-1}((-\infty,0))\subset (\mathbb{R}/\mathbb{Z})\times W$, where $W\subset U$ has displacement energy $e(W,M)$ satisfying $e(W,M)<c_{UV}$.  Then the symplectomorphism $\phi_{K}^{1}$ has infinitely many geometrically distinct, nontrivial periodic points.\end{theorem}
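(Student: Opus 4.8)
The plan is to apply Proposition~\ref{flatlemma} not to $K$ but to the reparametrizations of $K$ generating the iterates $\phi_{K}^{k}$, and then to convert the nonconstant periodic orbits so produced into infinitely many geometrically distinct periodic points of $\phi_{K}^{1}$ by means of an action estimate. Two preliminary observations are in order. First, since $[\omega]$ vanishes on $\pi_2(\bar V)$ we have $\lambda_0(V)=\infty$, so hypothesis (iv) of Proposition~\ref{flatlemma} holds automatically no matter what $H$ and $K$ are. Second, because $K$ is flat (in particular $t$-independent) near $p$ while $p$ is a global minimum of each $K(t,\cdot)$ and $K\le 0$ is not identically zero, the common value $K(0,p)=K(t,p)$ must be strictly negative: were it $0$ then $K(t,p)=0$ for all $t$, yet choosing $(t_0,m_0)$ with $K(t_0,m_0)<0$ forces $K(t_0,p)\le K(t_0,m_0)<0$.

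Using $e(W,M)<c_{UV}$, I would fix once and for all a Hamiltonian $H$ with $\phi_{H}^{1}(\bar W)\cap\bar W=\varnothing$ and $\|H\|<c_{UV}$; after replacing $W$ by a slightly larger open set with smooth boundary whose closure still lies in $U$ (possible since $K$ is supported in a compact subset of $U$ and disjointness of compacta is an open condition) one may also assume $\partial W$ is smooth and $\bar W\subset U$ without disturbing these properties or the inclusion $K^{-1}((-\infty,0))\subset(\mathbb{R}/\mathbb{Z})\times W$. For each integer $k\ge 1$ set $K^{(k)}(t,m)=kK(kt,m)$ (with $kt$ read mod $1$), the smooth Hamiltonian generating the flow $t\mapsto\phi_{K}^{kt}$, so that $\phi_{K^{(k)}}^{1}=\phi_{K}^{k}$. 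One checks directly that $K^{(k)}\le 0$ is not identically zero, has a flat autonomous minimum at $p$ with the \emph{same} neighbourhood $G$ and set $S$ as $K$, satisfies $(K^{(k)})^{-1}((-\infty,0))\subset(\mathbb{R}/\mathbb{Z})\times W$, and has $K^{(k)}(0,p)=kK(0,p)$ and $\|K^{(k)}\|=k\|K\|$. Hence once $k>\|H\|/(-K(0,p))$, all four hypotheses of Proposition~\ref{flatlemma} hold for the data $(H,K^{(k)},U,V)$, and it supplies: a nonconstant $1$-periodic orbit $\gamma_k$ of $X_{K^{(k)}}$; a point $q_k\in S$ (so $K^{(k)}(t,q_k)=kK(0,p)$ for all $t$); and a solution $u_k\co\mathbb{R}\times(\mathbb{R}/\mathbb{Z})\to\bar V$ of the corresponding Floer equation, partially asymptotic to $\gamma_k$ as $s\to-\infty$ and to the constant orbit $\gamma_{q_k}$ as $s\to+\infty$, with $0<E(u_k)\le 2\|H\|$. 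Reparametrizing, $\tilde\gamma_k(\tau):=\gamma_k(\tau/k)$ is a nonconstant $k$-periodic orbit of $X_K$, so $m_k:=\gamma_k(0)$ is a nontrivial periodic point of $\phi_{K}^{1}$ of period $k$; we obtain one for every sufficiently large integer $k$.

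It remains to see that the $m_k$ fall into infinitely many distinct classes $\mathfrak{o}_{m_k}$, and this is where I expect the real work to lie. Since $u_k$ has image in $\bar V$ and $[\omega]$ vanishes on $\pi_2(\bar V)$, any two cappings of a loop in $\bar V$ by discs in $\bar V$ enclose the same $\omega$-area, so $\mathcal{A}_{K^{(k)}}$ descends to a function of the orbit alone on orbits lying in $\bar V$. Building a capping of $\gamma_k$ from the negative tail of $u_k$ (corrected by a short, hence $\bar V$-valued, cylinder to absorb the partial asymptotics) together with the constant disc at $q_k$, and invoking the action--energy identity for the Floer equation, one finds $\mathcal{A}_{K^{(k)}}(\gamma_k)\in\bigl(-kK(0,p),\ -kK(0,p)+2\|H\|\bigr]$, using $\mathcal{A}_{K^{(k)}}(\gamma_{q_k})=-kK(0,p)$. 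Now suppose, for contradiction, that $\phi_{K}^{1}$ has only finitely many geometrically distinct nontrivial periodic points, with orbits $\mathfrak{o}_1,\dots,\mathfrak{o}_N$ of minimal periods $T_1,\dots,T_N\in\mathbb{Z}_{>0}$. By pigeonhole there is a fixed $j$ with $\mathfrak{o}_{m_k}=\mathfrak{o}_j$ for infinitely many $k$; for such $k$, $T_j\mid k$ and $\tilde\gamma_k$ is the $(k/T_j)$-fold iterate of the $X_{K^{(T_j)}}$-orbit $\gamma_k^{(1)}(s):=\phi_{K^{(T_j)}}^{s}(m_k)$, whose image is $\mathfrak{o}_j$. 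A reparametrization computation --- valid because all $\bar V$-cappings of a loop have equal area (so the $(k/T_j)$-fold multiple cap has $k/T_j$ times the single area) and because $T_j$ is an integer (so $K(\tau,\tilde\gamma_k(\tau))$ is $T_j$-periodic in $\tau$) --- gives $\mathcal{A}_{K^{(k)}}(\gamma_k)=\tfrac{k}{T_j}\,\mathcal{A}_{K^{(T_j)}}(\gamma_k^{(1)})$. Moreover, as $m$ ranges over the connected set $\mathfrak{o}_j$ the orbits $s\mapsto\phi_{K^{(T_j)}}^{s}(m)$ form a connected family of $1$-periodic orbits of $X_{K^{(T_j)}}$, i.e.\ of critical points of $\mathcal{A}_{K^{(T_j)}}$, along which the action is constant (the standard fact that $\mathcal{A}$ is locally constant on its critical set, combined once more with capping-independence in $\bar V$); so $A_j:=\mathcal{A}_{K^{(T_j)}}(\gamma_k^{(1)})$ depends only on $j$. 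Dividing the displayed bounds by $k/T_j$ yields $-T_jK(0,p)<A_j\le -T_jK(0,p)+\tfrac{2T_j\|H\|}{k}$ for infinitely many $k$; letting $k\to\infty$ forces $A_j\le -T_jK(0,p)$, contradicting $A_j>-T_jK(0,p)$. The technical heart of the argument is therefore the bookkeeping of this last paragraph --- confirming that the ``partial asymptotics'' provided by Proposition~\ref{flatlemma} still pin down the $\bar V$-action of $\gamma_k$ to the stated window, and carrying out the reparametrization and critical-set constancy arguments carefully --- after which, with $[\omega]|_{\pi_2(\bar V)}=0$ in hand, the reasoning is of the aspherical type already present in \cite{Schwarz}, \cite{FS}, \cite{G06}.
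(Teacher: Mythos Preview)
Your proposal is correct and follows essentially the same route as the paper: apply Proposition~\ref{flatlemma} to the iterates $K^{\#k}$, extract the action window $(0,2\|H\|]$ for $\mathcal{A}_{K^{\#k}}(\gamma_k)-\mathcal{A}_{K^{\#k}}(\gamma_p)$ from the energy bound, and derive a contradiction from the linear scaling of the action difference under iteration. The only cosmetic difference is that the paper first passes to a common multiple $n_1$ of the minimal periods and then argues that the finite set $kA$ eventually misses $(0,2\|H\|]$, whereas you pigeonhole onto a single $\mathfrak{o}_j$ and let $k\to\infty$; these are logically interchangeable, and your explicit treatment of the constancy of the action along the connected family $\{s\mapsto\phi_{K^{(T_j)}}^{s}(m):m\in\mathfrak{o}_j\}$ is in fact a point the paper leaves implicit.
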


\begin{proof}  For any $n\in\mathbb{Z}$, the $n$-fold composition $(\phi_{H}^{1})^n$ arises as the time-$1$ map of the Hamiltonian $K^{\#n}$ defined by $K^{\#n}(t,m)=nK(nt,m)$.  As this formula makes clear, the fact that $K$ has a flat autonomous minimum at $p\in U$ implies that each $K^{\#n}$ also has such a flat autonomous minimum.   Choose a Hamiltonian $H$ such that $\phi_{H}^{1}(\bar{W})\cap \bar{W}=\varnothing$ and $\|H\|<c_{UV}$.  For some $n_0\in\mathbb{N}$, once $n\geq n_0$ it will be true that $K^{\#n}$ and $H$ obey the hypotheses of Proposition \ref{flatlemma}.  

Hence for each $n\geq n_0$, there is a nonconstant $1$-periodic orbit $\gamma^{(n)}$ of $X_{K^{\#n}}$ and a disc $w^{(n)}\co D^2\to \bar{V}$ with $w^{(n)}(e^{2\pi it})=\gamma^{(n)}(t)$ such that \begin{equation}\label{actionrange} 0<\mathcal{A}_{K^{\#n}}([\gamma^{(n)},w^{(n)}])-\mathcal{A}_{K^{\#n}}([\gamma_{p},w_p])\leq 2\|H\| \end{equation} ($w^{(n)}$ can be constructed in a straightforward way from the map $u$ in the $K^{\#n}$ version of Proposition \ref{flatlemma}; note that where $q$ is the point identified in Proposition \ref{flatlemma} we have $\mathcal{A}_{K^{\#n}}([\gamma_{q},w_q])=\mathcal{A}_{K^{\#n}}([\gamma_{p},w_p])$).


Now suppose, to get a contradiction, that $\phi_{K}^{1}$ had just finitely many geometrically distinct nontrivial periodic points, and let $n_1$ be any number which  both is larger than $n_0$ and is a common multiple of the minimal periods of each of these finitely many periodic points.  Let $\gamma_1,\ldots,\gamma_N$ be the nontrivial contractible $1$-periodic orbits of $X_{K^{\#n_1}}$.  Where $w_1,\ldots,w_N$ are discs in $\bar{V}$ such that $w_j(e^{2\pi it})=\gamma_j(t)$, let \[ A=\{\mathcal{A}_{K^{\#n_1}}([\gamma_j,w_j])-\mathcal{A}_{K^{\#n_1}}([\gamma_p,w_p])|j=1,\ldots,N\}.\]  For each $j$,  
$\mathcal{A}_{K^{\#n_1}}([\gamma_j,w_j])$ is of course independent of the choice of $w_j$ in light of our assumption on $\pi_2(\bar{V})$.  

$A$ is a finite set of real numbers.  Our assumption that $\phi_{K}^{1}$ has just finitely many geometrically distinct nontrivial periodic orbits implies that, for $k\geq 1$, the orbit $\gamma^{(kn_1)}$ (which we already know is nonconstant) is, up to time-shift, a $k$-fold iteration of one of the $\gamma_j$, and that \[ \mathcal{A}_{K^{\#kn_1}}([\gamma^{(kn_1)},w^{(kn_1)}])-\mathcal{A}_{K^{\#kn_1}}([\gamma_{p},w_p])
=k(\mathcal{A}_{K^{\#n_1}}([\gamma_j,w_j])-\mathcal{A}_{K^{\#n_1}}([\gamma_p,w_p])).\]  But we have \[ 0<\mathcal{A}_{K^{\#kn_1}}([\gamma^{(kn_1)},w^{(kn_1)}])-\mathcal{A}_{K^{\#n}}([\gamma_{p},w_p])\leq 2\|H\|\] by (\ref{actionrange}), whereas for $k$ sufficiently large the set $\{ka|a\in A\}$ will contain no numbers in the interval $(0,2\|H\|]$.  This contradiction proves the theorem. 
\end{proof}

\section{Coisotropic submanifolds}\label{geodco}

We consider now a closed symplectic manifold $(M,\omega)$ containing a closed coisotropic submanifold $N$; thus $(T_pN)^{\omega}\subset T_pN$ for each $p\in N$. Since $\omega$ is closed, the distribution $TN^{\perp_{\omega}}$ on $N$ is integrable; therefore the Frobenius theorem produces a foliation $\mathcal{F}$ (the \emph{characteristic foliation}) on $N$ whose tangent spaces are given by $TN^{\perp_{\omega}}$, and the rank of this foliation is equal to the codimension of $N$ in $M$.

Recent work of V. Ginzburg \cite{G06} and others, generalizing work of P. Bolle \cite{B}, has extended famous symplectic rigidity results about Lagrangian submanifolds or hypersurfaces in symplectic manifolds to certain classes of coisotropic submanifolds.  The coisotropic submanifolds $N\subset M$ in these results are assumed to be \emph{stable}, which is to say that, where $\dim M=2n$ and $\dim N=2n-k$, there should exist $\alpha_1,\ldots,\alpha_k\in \Omega^1(N)$ such that \begin{itemize} \item[(i)] $\ker(\omega|_N)\subset\ker \alpha_i$ for each $i$, and \item[(ii)] $\alpha_1\wedge\cdots\wedge\alpha_k\wedge (\omega|_N)^{n-k}$ is a volume form on $N$.\end{itemize}

Additional results are sometimes proven under the assumption that $N$ has contact type, which is to say that the above $\alpha_i$ additionally satisfy $d\alpha_i=\omega|_N$ for each $i$.  

In the case that $N$ is orientable and $k=1$, so that $N$ is a hypersurface, these definitions are consistent with the traditional ones.  Recall in particular that an orientable hypersurface $N\subset M$ is called \emph{stable} if for each embedding $\psi\co (-\ep,\ep)\times N\to M$ of a neighborhood of $N$ such that $\psi|_{\{0\}\times N}$ is the inclusion of $N$ into $M$, the characteristic foliations on the various $\psi(\{t\}\times N)$ are the same when viewed as foliations on $N$ for all sufficiently small $t$.  This is equivalent (see, \emph{e.g.}, \cite{EKP}) to the existence of $\alpha\in \Omega^1(N)$ such that $\alpha\wedge(\omega|_N)^{n-1}$ is a volume form and $\ker d\alpha\subset \ker \omega|_N$; for one direction, given $\psi\co (-\ep,\ep)\times N\to M$ inducing the same characteristic foliation on each $\{t\}\times N$, we can recover $\alpha$ by writing $\psi^*\omega=dt\wedge \alpha_t+\omega_t$ where $\alpha_t\in \Omega^1(N)$ and $\omega_t\in \Omega^2(N)$, for then we will have $\omega_0=\omega|_N$ and $\alpha=\alpha_0$ will be as required.

As noted in Remark 2.4 of \cite{G06}, the requirement of stability is a rather restrictive one to impose, especially when $k$ is not small.  Indeed, assume that $N$ is stable, set $\omega_0=\omega|_N\in\Omega^2(N)$, and let $\alpha_1,\ldots,\alpha_k\in\Omega^1(N)$ be as in the definition of stability.  $\ker\omega_0$ then has rank $k$, and so there are vector fields $X_1,\ldots X_k$ on $N$, uniquely and globally defined by the properties that \begin{itemize} \item $X_i\in \ker \omega_0$, and \item $\alpha_i(X_j)=\delta_{ij}$.\end{itemize}
$\ker\omega_0$ is integrable, so we have $[X_i,X_j]\in \ker\omega_0$, and moreover, for every $i,j,m=1,\ldots,k$, recalling that $\ker d\alpha_m\supset \ker\omega_0$, we have \[ 0=d\alpha_m(X_i,X_j)=\mathcal{L}_{X_i}(\alpha_m(X_j))-\mathcal{L}_{X_j}(\alpha_m(X_i))-\alpha_m([X_i,X_j]),\] and therefore $\alpha_m([X_i,X_j])=0$.  But for this to hold for every $m$, it must be that $[X_i,X_j]=0$.  Thus if $N$ is stable, it admits a $k$-tuple of commuting vector fields which form a basis for the tangent spaces of each of the ($k$-dimensional) leaves of the characteristic foliation $\mathcal{F}$, in particular implying that each leaf is parallelizable and that any closed leaf is a torus.  While this is not an overly stringent condition when $k=1$, in higher codimension it is rather stringent.

Accordingly, we would like to find a more flexible condition on the coisotropic submanifold $N$ which still forces $N$ to  manifest at least some of the interesting properties that were found by Ginzburg in \cite{G06}.  Recall that a foliation $\mathcal{F}$ on a (for convenience, closed) Riemannian manifold $(N,h)$ is called \emph{totally geodesic} if each geodesic in $N$ initially tangent to a leaf remains contained in the leaf; equivalently, with respect to the Levi-Civita connection $\nabla$ induced by $h$, for any vector field $X$ tangent to the foliation $\nabla_X X$ is also tangent to the foliation.  Foliations with this property are studied  in, \emph{e.g.}, \cite{JW}. As an example, if $\mathcal{F}$ is given by the fibers of a Riemannian submersion $\pi\co N\to B$, $\mathcal{F}$ is totally geodesic iff the structure group of the fiber bundle $\pi\co N\to B$ reduces to the isometry group of the fibers.  

We will show presently that  the stability of $N$  always implies that a metric $h$ can be found on $N$ with respect to which the characteristic foliation is totally geodesic, while these two notions are equivalent when the codimension of $N$ (and hence the rank of $\mathcal{F}$) is one.  In the codimension one case, this fact is closely related to the result of D. Sullivan \cite{Su} (see also \cite{Gluck}) stating that a vector field $V$ generates a geodesible foliation iff there is a codimension-one distribution $\xi$ transverse to $V$ which is preserved by the flow of $V$.  At the other extreme, where $N\subset M$ is Lagrangian, it is totally geodesic with respect to any metric for trivial reasons, while as mentioned in Example 2.2 (v) of \cite{G06} it can be stable only if it is a torus.

\begin{prop}\label{stabimpliesgeod} If $N\subset M$ is a closed stable coisotropic submanifold, then there is a metric $h$ on $N$ with respect to which the characteristic foliation on $N$ is totally geodesic.
\end{prop}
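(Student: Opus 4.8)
The plan is to construct $h$ explicitly from the auxiliary $1$-forms $\alpha_1,\dots,\alpha_k$ provided by stability, using the globally defined, pairwise commuting vector fields $X_1,\dots,X_k$ discussed above (characterized by $X_i\in\ker\omega_0$ and $\alpha_i(X_j)=\delta_{ij}$), together with the distribution $\xi:=\bigcap_{i=1}^{k}\ker\alpha_i$. Writing $T\mathcal{F}:=\ker\omega_0$ for the distribution tangent to the characteristic foliation, the identities $\alpha_i(X_j)=\delta_{ij}$ show that the $\alpha_i$ restrict to a coframe on $T\mathcal{F}$, so $\xi\cap T\mathcal{F}=0$; since the $\alpha_i$ are also pointwise linearly independent on all of $TN$ (a consequence of the volume-form condition in the definition of stability), a dimension count gives $TN=T\mathcal{F}\oplus\xi$. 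I would then let $h$ be the Riemannian metric determined by declaring (a) $T\mathcal{F}$ and $\xi$ to be $h$-orthogonal, (b) $\{X_1,\dots,X_k\}$ to be an $h$-orthonormal frame of $T\mathcal{F}$, and (c) $h|_\xi$ to be any fixed smooth metric on the bundle $\xi$. Since the $X_i$ are global smooth vector fields and $\xi$ is a smooth subbundle, $h$ is a genuine Riemannian metric on $N$.

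To show that the characteristic foliation is totally geodesic for $h$, it is enough to show that $\nabla_X Y\in T\mathcal{F}$ whenever $X,Y$ are local sections of $T\mathcal{F}$, where $\nabla$ is the Levi-Civita connection of $h$; expanding $X=\sum_i f_iX_i$ and $Y=\sum_j g_jX_j$ and using that $\sum_{i,j}f_iX_i(g_j)X_j\in T\mathcal{F}$, this reduces to proving $\nabla_{X_i}X_j\in T\mathcal{F}$ for all $i,j$. To do this I would pair $\nabla_{X_i}X_j$ against an arbitrary local section $Z$ of $\xi$ via the Koszul formula. Since $X_i,X_j\perp_h \xi$, since $h(X_i,X_j)=\delta_{ij}$ is constant, and since $[X_i,X_j]=0$, four of the six Koszul terms vanish, and using that $h(\cdot,X_\ell)$ and $\alpha_\ell$ coincide as functionals on $TN$ (both read off the $X_\ell$-coordinate relative to $TN=T\mathcal{F}\oplus\xi$) one is left with
\[ 2\,h(\nabla_{X_i}X_j,Z)=-\,h([X_i,Z],X_j)-h([X_j,Z],X_i)=-\,\alpha_j([X_i,Z])-\alpha_i([X_j,Z]). \]

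The final step, and the only place where stability rather than mere coisotropy enters, is to see that the right-hand side vanishes. By the invariant formula for $d\alpha_j$ one has $\alpha_j([X_i,Z])=X_i(\alpha_j(Z))-Z(\alpha_j(X_i))-d\alpha_j(X_i,Z)$; here $\alpha_j(Z)=0$ since $Z$ is a section of $\xi\subset\ker\alpha_j$, $\alpha_j(X_i)=\delta_{ij}$ is constant, and $d\alpha_j(X_i,Z)=0$ because $X_i\in\ker\omega_0\subset\ker d\alpha_j$ (recalling that $\ker d\alpha_j\supset\ker\omega_0$, as noted above). Hence $\alpha_j([X_i,Z])=0$, and symmetrically $\alpha_i([X_j,Z])=0$, so $h(\nabla_{X_i}X_j,Z)=0$ for every local section $Z$ of $\xi$; as $\xi$ is the $h$-orthogonal complement of $T\mathcal{F}$, this yields $\nabla_{X_i}X_j\in T\mathcal{F}$, completing the argument. (In fact one obtains the slightly stronger statement that $\nabla_X Y\in T\mathcal{F}$ for all $X,Y$ tangent to $\mathcal{F}$, i.e.\ that every leaf has vanishing second fundamental form.)

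I do not anticipate a real obstacle. Essentially all of the content lies in choosing the right metric -- the one making the canonical leafwise frame $\{X_i\}$ orthonormal and the complement $\bigcap_i\ker\alpha_i$ orthogonal to the leaves -- and in the observation that the leafwise closedness $\ker\omega_0\subset\ker d\alpha_j$ built into the definition of stability is exactly what kills the two surviving Koszul terms, with no constraint whatsoever imposed on $h|_\xi$. The only mildly fussy point is the linear-algebra verification that the volume-form condition makes $\xi$ a genuine complement of $T\mathcal{F}$, which is routine.
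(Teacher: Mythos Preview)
Your proof is correct and follows essentially the same approach as the paper: the same commuting frame $X_1,\dots,X_k$, the same transverse distribution $\xi=\bigcap_i\ker\alpha_i$, a metric making the $X_i$ orthonormal and $\xi$ orthogonal to $T\mathcal{F}$, and the same key identity $\alpha_j([X_i,Z])=0$ (equivalently $\mathcal{L}_{X_i}\alpha_j=0$) to kill the transverse component of the Levi--Civita connection. Your construction is in fact a bit cleaner than the paper's, which builds $h$ via local flow-box charts and a partition of unity rather than declaring it globally as you do; since the defining conditions on $h$ are global and convex, the partition-of-unity step is unnecessary, and your direct definition together with the Koszul-formula computation is a streamlined version of the same argument.
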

\begin{proof}
Let $\alpha_1,\ldots,\alpha_k\in \Omega^1(N)$ be as in the definition of stability, and as above let $X_1,\ldots,X_k$ be the vector fields tangent to $\mathcal{F}$ and defined by $\alpha_i(X_j)=\delta_{ij}$. Denote the time-$t$ maps of the flows of the vector fields $X_i$ by $\phi_{i}^{t}$.   

Now we have, for each $i$ and $j$ \[ \mathcal{L}_{X_i}\alpha_j=\iota_{X_i}d\alpha_j+d\iota_{X_i}\alpha_j=0\] since $\iota_{X_i}\alpha_j$ is the constant $\delta_{ij}$, while $X_i\in \ker\omega|_N\subset\ker d\alpha_j$.  Hence setting \[ \xi=\cap_{i=1}^{k}\ker\alpha_i,\] the codimension-$k$ distribution $\xi$ is transverse to the foliation $\mathcal{F}$ and is preserved by the flows $\phi_{i}^{t}$ of the various $X_i$.

$N$ can be covered by coordinate patches of the following kind.  Let $p\in N$, and, where $B\subset \mathbb{R}^{2n-2k}$ is a (sufficiently small) ball around the origin, let $\phi_p\co B\to N$ be an embedding which is transverse to the foliation $\mathcal{F}$, with $\phi_p(\vec{0})=p$.    Then for a sufficiently small ball $D$ around the origin in $\mathbb{R}^k$, the map $\Phi_p\co D\times B\to N$ defined by setting, for $(x_1,\ldots,x_k)\in D$ and $b\in B$,\[ \Phi_p(x_1,\ldots,x_k,b)=(\phi_1^{x_1}\circ\cdots\circ\phi_{k}^{x_k})(\phi_p(b))\] gives a coordinate patch around $p$.  Note that, as a result of the fact that the various $\phi_{i}^{x_i}$ mutually commute (since, as we've noted earlier, $[X_i,X_j]=0$), $(\Phi_p)_{*}\partial_{x_i}=X_i$ for each $i$.

Since $\xi\oplus span\{X_1,\ldots,X_n\}=TN$, it is then straightforward to find a metric $g'_p$ on $D\times B$ such that $g'_p(\partial_{x_i},\partial_{x_j})=\delta_{ij}$ and, for each $v\in (\Phi_p)_{*}^{-1}(\xi)$, we have $g'_p(\partial_{x_i},v)=0$ for all $i\in \{1,\ldots,k\}$.  Denote by $g_p$ the pushforward of $g'_p$ via $\Phi_p$.

This gives a metric $g_p$ on a neighborhood of $V_p=\Phi_p(D\times B)$ of an arbitrary point $p\in N$ with the properties that \begin{itemize} \item[(i)] $g_p(X_i,X_j)=\delta_{ij}$ and \item[(ii)] For each $i\in \{1,\ldots,k\}$ and each $v\in \xi$ we have $g_p(X_i,v)=0$.\end{itemize}  Since $N$ is compact, cover $N$ by such neighborhoods $V_{p_1},\ldots,V_{p_N}$ and let $\{\chi_i\}_{1\leq i\leq N}$ be a partition of unity subordinate to the $V_{p_i}$.  Then \[ h=\sum_{i=1}^{N}\chi_{i}g_{p_i}\] is a metric on all of $N$ with the same properties (i) and (ii) above.

We claim now that the foliation $\mathcal{F}$ is totally geodesic with respect to the metric $h$.  Since the tangent spaces to the leaves of $\mathcal{F}$ are spanned by the $X_i$ and are the orthogonal complements of $\xi$, for this it suffices to show that, if $V$ is a vector field tangent to $\xi$ and $W=\sum_{i=1}^{k}c_iX_i$ (where $c_i\in\mathbb{R}$), we have $h(\nabla_W W,V)=0$.

Now $h(W,V)$ vanishes identically, so we have \[ 0=W(h(W,V))=h(\nabla_W W,V)+h(W,\nabla_W V).\]  Meanwhile \[ \mathcal{L}_WV=[W,V]=\nabla_WV-\nabla_VW.\] But for each $i$ and $j$ we have \[ 0=\mathcal{L}_{X_i}(\alpha_j(W))=(\mathcal{L}_{X_i}\alpha_j)(W)+\alpha_j(\mathcal{L}_{X_i}V)=
\alpha_j(\mathcal{L}_{X_i}V),\] so since $W=\sum_ic_i X_i$ we have $\alpha_j(\mathcal{L}_WV)=0$.  Thus $[W,V]\in \xi$, and so \[ h([W,V],W)=h(\nabla_WV-\nabla_VW,W)=0.\]  Combining all this, we find \begin{align*} h(\nabla_WW,V)&=-h(W,\nabla_W V)=-h(W,\nabla_VW)\\&=-\frac{1}{2}V(h(W,W))=0\end{align*} since $h(W,W)=\sum_{i=1}^{k}c_{i}^{2}$ is constant.

This proves that if $W$ is any constant linear combination of the $X_i$ then $\nabla_WW$ is tangent to the foliation $\mathcal{F}$.  Since the $X_i$ span the tangent space to the foliation, the Leibniz rule then immediately proves the same statement when instead we have $W=\sum_{i=1}^{k}f_iX_i$ for some functions $f_i$ on $N$.  Since all vector fields $W$ tangent to the foliation have this latter form, this completes the proof that $\mathcal{F}$ is totally geodesic with respect to the metric $h$.

\end{proof}

\begin{prop} If $N$ is a closed oriented hypersurface in $M$ then $N$ admits a metric making its characteristic foliation totally geodesic if and only if $N$ is stable.
\end{prop}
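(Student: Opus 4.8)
The ``if'' direction is already in hand: Proposition~\ref{stabimpliesgeod} is stated for coisotropic submanifolds of arbitrary codimension, so in particular if a hypersurface $N$ is stable it admits a metric making $\mathcal{F}$ totally geodesic. Thus the plan is to prove the converse directly, by exhibiting by hand the $1$-form witnessing stability.

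First I would fix the relevant vector field. Since $N$ is oriented and the radical of $\omega|_N$ is exactly $TN^{\perp_\omega}=\ker(\omega|_N)$, the quotient bundle $TN/\ker(\omega|_N)$ inherits a fibrewise nondegenerate $2$-form from $\omega|_N$, hence is a symplectic vector bundle and so is canonically oriented; comparing determinant line bundles then shows the (rank-one) characteristic line bundle $\ker(\omega|_N)$ is orientable. Fix an orientation of it and let $X$ be the section of $\ker(\omega|_N)$ which is everywhere of unit $h$-length. Then $X$ spans $T\mathcal{F}$ at each point, and the leaves of $\mathcal{F}$ are the orbits of the flow of $X$. Now set $\alpha=h(X,\cdot)\in\Omega^1(N)$. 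Since $\alpha(X)=h(X,X)=1$ is nowhere zero, at each $p$ the hyperplane $\xi_p=\ker\alpha_p$ is a complement to $\mathbb{R}X_p$; as $\mathbb{R}X_p$ is precisely the radical of $\omega|_N$ at $p$, the restriction of $\omega|_N$ to $\xi_p$ is nondegenerate, so $(\omega|_N)^{n-1}$ is nonzero on $\xi_p$, and evaluating $\alpha\wedge(\omega|_N)^{n-1}$ on $X_p$ together with a basis of $\xi_p$ (and using $\alpha|_{\xi_p}=0$) shows it is nonzero at $p$. Hence $\alpha\wedge(\omega|_N)^{n-1}$ is a volume form on $N$.

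The key step is to check $\iota_X d\alpha=0$ (i.e.\ $\ker(\omega|_N)\subset\ker d\alpha$), and here I would use the totally geodesic hypothesis. Writing $\nabla$ for the Levi-Civita connection of $h$, using torsion-freeness and metric compatibility, and using $h(X,X)\equiv 1$ for the middle term, one computes for any vector field $Y$:
\[ d\alpha(X,Y)=X(h(X,Y))-Y(h(X,X))-h(X,[X,Y])=h(\nabla_X X,Y)+h(X,\nabla_Y X),\]
and $h(X,\nabla_Y X)=\tfrac{1}{2}Y(h(X,X))=0$, so $d\alpha(X,Y)=h(\nabla_X X,Y)$. Now $\mathcal{F}$ being totally geodesic for $h$ forces $\nabla_X X$ to be tangent to $\mathcal{F}$, hence a multiple of $X$; but $h(\nabla_X X,X)=\tfrac{1}{2}X(h(X,X))=0$, so $\nabla_X X=0$. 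Therefore $d\alpha(X,Y)=0$ for all $Y$. Combined with the previous paragraph, $\alpha$ satisfies both conditions in the definition of stability, so $N$ is stable, completing the converse.

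I do not expect a genuine obstacle here; the only points needing care are (a) that $\ker(\omega|_N)$ is orientable, so that the global unit field $X$ exists — this is exactly where the hypothesis that $N$ is oriented is used — and (b) that for a rank-one totally geodesic foliation the unit generator is not merely geodesic up to reparametrization but genuinely satisfies $\nabla_X X=0$, which is what makes $d\alpha(X,\cdot)$ vanish identically rather than just pointwise-proportionally to $\alpha$. One could also route the converse through Sullivan's criterion \cite{Su}: total geodesibility of the one-dimensional foliation $\mathcal{F}$ supplies a codimension-one distribution $\xi$ transverse to $\ker(\omega|_N)$ and invariant under the characteristic flow, and then choosing $\alpha$ with $\ker\alpha=\xi$ and $\alpha(X)=1$ gives $\mathcal{L}_X\alpha=\iota_X d\alpha+d(\iota_X\alpha)=\iota_X d\alpha=0$; but the direct computation above is cleaner and self-contained.
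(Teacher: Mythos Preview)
Your proof is correct and follows essentially the same route as the paper: define $\alpha=h(X,\cdot)$ for a unit section $X$ of $\ker(\omega|_N)$, verify the volume condition from nondegeneracy of $\omega|_N$ on $\ker\alpha$, and use the totally geodesic hypothesis to show $\iota_X d\alpha=0$. Your computation is in fact marginally cleaner than the paper's, since you observe directly that $\nabla_X X=0$ (not merely that it is tangent to $\mathcal{F}$), which kills $d\alpha(X,Y)=h(\nabla_X X,Y)$ for all $Y$ in one stroke; the paper instead checks $d\alpha(V,W)=0$ for $V$ in the $h$-orthogonal complement $\xi$ via a slightly longer expansion of $\alpha([V,W])$.
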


\begin{proof} The backward implication has already been proven.  Conversely, suppose that $h$ is a metric on $N$ with the property that the characteristic foliation $\mathcal{F}$ of $N$ (which here has rank $1$) is totally geodesic with respect to $h$.  Since $M$ (being a symplectic manifold) is oriented, the normal bundle to $N$ inherits an orientation from the orientations of $N$ and $M$.  The orientation on the normal bundle to $N$ then induces one on the conormal bundle to $N$, and so the tangent bundle $\ker\omega|_N$ to the foliation $\mathcal{F}$ inherits an orientation via the natural $\omega$-induced isomorphism between $\ker\omega|_N$ and the conormal bundle to $N$.  We can therefore find a global section of the rank-1 bundle $\ker\omega|_N$, which we denote by $W$.  Rescaling if necessary, we may assume that $h(W,W)=1$.  Define a $1$-form $\alpha\in\Omega^1(N)$ by $\alpha(v)=h(v,W)$.  Let $\xi$ denote the orthogonal complement of $\ker\omega|_N$ with respect to the metric $h$; thus $\ker\omega|_N\oplus\xi=TN$, and in particular $\omega^{n-1}$ is nonvanishing on $\xi$.  If $V$ is a section of $\xi$, we have \[ d\alpha(V,W)=V(\alpha(W))-W(\alpha(V))-\alpha([V,W]).\]  The first term vanishes since $\alpha(W)=1$ identically, while the second vanishes since $\alpha(V)=0$ identically.  Now \begin{align*} \alpha([V,W])&=h(W,[V,W])=h(W,\nabla_VW)-h(W,\nabla_WV)\\&=  
\frac{1}{2}V(h(W,W))-\left(W(h(W,V))-h(\nabla_W W,V)\right)=0,\end{align*} for the first term vanishes since $h(W,W)=1$ identically; the second vanishes since $h(W,V)=0$ identically; and the third vanishes since, by virtue of $\mathcal{F}$ being totally geodesic, $\nabla_W W$ is tangent to $\mathcal{F}$ and therefore orthogonal to $\xi$.

Since $d\alpha(W,W)=0$ and $d\alpha(V,W)=0$ for all $V\in \xi$, it follows that $W\in \ker d\alpha$.  But $\ker\omega|_N=span\{W\}$, so $\ker \omega|_N\subset \ker d\alpha$.  Since $\omega|_N$ is nondegenerate on $\xi=\ker\alpha$, while the kernel of $\omega|_N$ is spanned by $W$ and $\alpha(W)=1$, we immediately obtain that $\alpha\wedge(\omega|_N)^{n-1}$ is  a volume form on $N$.  This proves the stability of the hypersurface $N$.
\end{proof}

We now turn to some issues relating to the symplectic geometry of coisotropic submanifolds satisfying these conditions.

Fix now a closed connected coisotropic submanifold $N\subset M$ (where $(M,\omega)$ is a closed symplectic manifold).  Choose a Riemannian metric $h$ on $N$; we make no assumptions on the behavior of $h$ with respect to the symplectic form $\omega$.  Write $\omega_0=\omega|_N$.

Define a distribution $E\subset TN$ by $E=\ker\omega_0$.  Thus the characteristic foliation $\mathcal{F}$ has $T\mathcal{F}=E$, and $E\to N$ is a vector bundle of rank equal to the codimension of $N$ in $M$.  
 
Define \[ \Pi_{h}\co TN\to E \] to be the orthogonal projection of $TN$ onto $E$ that is induced by the Riemannian metric $h$.

Let \[ \pi\co E^*\to N\] denote the dual vector bundle to $E$, and for $r>0$ let \[ E^{*}(r)=\{(x,p)\in E^*|x\in N,p\in(E_x)^*,|p|^{2}_{h}< r^2\} \] be the radius-$r$ disc bundle of $E^*$, where we measure the norm $|p|_h$ of $p$ in the obvious way using $h$.   

Define $\theta_{h}\in \Omega^1(E^*)$ by \[ (\theta_h)_{(x,p)}(v)=p(\Pi_h(\pi_*v)) \mbox{ for }v\in T_{(p,x)}E^*.\]

\begin{prop}[\cite{Marle},\cite{Gotay}]\begin{itemize} \item[(i)] The $2$-form \[ \omega_{E^*}\in \Omega^2(E^*)\mbox{ defined by } \omega_{E^*}=\pi^*\omega_0-d\theta_h \] restricts as a symplectic form to $E^{*}(R)$ for all sufficiently small $R>0$, and  the zero-section $N$ is a coisotropic submanifold with respect to this symplectic form.
\item[(ii)] For $R>0$ sufficiently small, there is an open neighborhood $U_R\subset M$ of $N$ and a symplectomorphism \[ \psi_R\co (E^*(R),\omega_{E^*})\to (U_R,\omega|_{U_R}),\] restricting as the identity on $N$.\end{itemize} 
\end{prop}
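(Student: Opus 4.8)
The plan is to prove (i) by a computation at the zero section followed by an openness argument, and (ii) by a Moser-type deformation modeled on the proofs of the Weinstein and Gotay neighborhood theorems.

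\emph{Part (i).} The form $\omega_{E^*}=\pi^*\omega_0-d\theta_h$ is closed because $\omega_0=\omega|_N$ is closed (so $\pi^*\omega_0$ is closed) and $d\theta_h$ is exact. Since nondegeneracy is an open condition and the zero-section $N$ is compact, it suffices to check that $\omega_{E^*}$ is nondegenerate at each point $x$ of $N$ and to compute $(T_xN)^{\perp_{\omega}}$ there. At such an $x$ there is a splitting $T_x(E^*)=T_xN\oplus E_x^*$ into the tangent space of the zero-section and the vertical (fiber) space $E_x^*$. On $T_xN$ the form $\pi^*\omega_0$ restricts to $\omega_0$, whose kernel is $E_x$; it vanishes on the vertical part and pairs horizontal with vertical trivially. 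A short computation in bundle coordinates adapted to $h$ (using that $\theta_h$ vanishes identically along the zero-section and is built from the projection $\Pi_h\co TN\to E$, so that in the Lagrangian extreme $E=TN$ it reduces to the tautological one-form on $T^*N$) shows that $(d\theta_h)_x$ vanishes on $T_xN\times T_xN$ and on $E_x^*\times E_x^*$, and restricts to the canonical pairing between $E_x$ and $E_x^*$, vanishing when the horizontal argument lies in the $h$-orthogonal complement $G_x$ of $E_x$ in $T_xN$. Writing $T_xN=E_x\oplus G_x$, the form $\omega_{E^*}|_x$ then has block form given by the nondegenerate form $\omega_0$ on $G_x$ together with the perfect pairing between $E_x$ and $E_x^*$; this is nondegenerate, and one reads off $(T_xN)^{\perp_{\omega}}=E_x\subset T_xN$, so the zero-section is coisotropic for $\omega_{E^*}$.

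\emph{Part (ii).} The nondegenerate form $\omega$ identifies the normal bundle $TM|_N/TN$ with $(TN^{\perp_{\omega}})^*=E^*$ via $[v]\mapsto \omega(v,\cdot)|_{E}$. Choose a tubular-neighborhood diffeomorphism $\iota_0$ from a disc subbundle $E^*(R_0)$ onto an open neighborhood of $N$ in $M$, restricting to the identity on the zero-section and inducing this identification on normal bundles; after shrinking $R_0$, $\iota_0^*\omega$ is symplectic on $E^*(R_0)$. One can arrange (precomposing $\iota_0$ with a fiberwise-linear bundle automorphism, using that $\omega$ admits an isotropic complement to $T_xN$ matching the fibers of $E^*$, together with the model computation from Part (i)) that $\iota_0^*\omega$ and $\omega_{E^*}$ agree as $2$-forms at every point of the zero-section. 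By the relative Poincar\'e lemma there is then a $1$-form $\sigma$ on a neighborhood of $N$, vanishing to first order along $N$ (in particular $\sigma|_N=0$), with $\omega_{E^*}-\iota_0^*\omega=d\sigma$. Shrinking $R$ once more, the forms $\omega_t=\iota_0^*\omega+t\,d\sigma$, $t\in[0,1]$, are all symplectic on $E^*(R)$; Moser's method produces a time-dependent vector field $X_t$ with $\iota_{X_t}\omega_t=-\sigma$, and since $\sigma$ vanishes on $N$ so does $X_t$, so its flow $\phi_t$ fixes $N$ pointwise and, after a final shrinking, is defined on a disc subbundle for all $t\in[0,1]$, with $\phi_1^*\omega_1=\omega_0$. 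Setting $\psi_R=\iota_0\circ\phi_1^{-1}$, restricted to a disc subbundle $E^*(R)$ with image $U_R$, gives a symplectomorphism $(E^*(R),\omega_{E^*})\to(U_R,\omega|_{U_R})$ that is the identity on $N$.

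The main obstacle is the bookkeeping in Part (ii): arranging the tubular neighborhood so that the two symplectic forms agree \emph{at} $N$ rather than merely having the same restriction to $TN$, and controlling the successive shrinkings of $R$ so that the Moser flow stays defined on an honest disc subbundle over the whole of the compact $N$ for the entire time interval $[0,1]$. The block-structure computation for $d\theta_h$ along the zero-section in Part (i) is routine but must be carried out in coordinates adapted to the splitting $TN=E\oplus G$ determined by $h$.
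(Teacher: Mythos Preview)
Your proposal is correct and is essentially the standard argument underlying the cited references. The paper itself does not give a proof: it simply refers part (i) to Proposition~3.2 of \cite{Marle} and part (ii) to the ``Local Uniqueness Theorem'' of \cite{Gotay}, noting that the latter ``follows fairly quickly from the Weinstein--Moser trick'' --- which is precisely the Moser deformation you carry out, so there is no substantive difference in approach.
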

\begin{proof}(i) is established in Proposition 3.2 of \cite{Marle}.  Given (i), (ii) is the ``Local Uniqueness Theorem'' in \cite{Gotay} (which in turn follows fairly quickly from the Weinstein-Moser trick, and was also proven slightly later as a special case of Th\'eor\`eme 4.5 of \cite{Marle}).\end{proof}

The following was observed independently by B. Tonnelier \cite{Ton}.

\begin{lemma}\label{geodsame}  Assume that the characteristic foliation $\mathcal{F}$ is totally geodesic with respect to the metric $h$.  Then the Hamiltonian flow of the function $F=\frac{1}{2}|p|_{h}^{2}$ on $E^{*}(R)$ coincides with the geodesic flow of the metric $h$ on $T^*N$, restricted to the cotangent space $E^*$ of the foliation.
\end{lemma}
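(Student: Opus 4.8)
The plan is to realise the disc bundle $E^*(R)$ explicitly as a submanifold of $T^*N$, to recognise $F$, $\theta_h$ and $\omega_{E^*}$ as built from standard data on $T^*N$, and then to compare the Hamiltonian flow of $F$ with the geodesic flow directly.

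First I would introduce the map $\jmath\co E^*\to T^*N$ sending $(x,p)$ to the covector $v\mapsto p(\Pi_h v)$ on $T_xN$; this is just the transpose of $\Pi_h\co T_xN\to E_x$, and since $\Pi_h$ is $h$-self-adjoint and restricts to the identity on $E$, it is precisely the inclusion $E^*\hookrightarrow T^*N$ determined by $h$: its image is the fibrewise annihilator of the $h$-orthogonal complement $E^{\perp_h}$ of $E$, and it carries the fibrewise $h$-norm on $E^*$ to the restriction of the $h$-norm on $T^*N$. In particular $F=\jmath^*G$, where $G=\tfrac12|\cdot|_h^2$ is the kinetic-energy Hamiltonian on $T^*N$ whose flow $\Phi^G$ is the geodesic flow. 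Next, writing $\lambda$ for the tautological $1$-form on $T^*N$ and $\mathrm{pr}\co T^*N\to N$ for the footpoint projection, one has $\mathrm{pr}\circ\jmath=\pi$, so the defining formula for $\theta_h$ gives at once $\jmath^*\lambda=\theta_h$, hence $\jmath^*\omega_{\mathrm{can}}=-d\theta_h$ for the canonical form $\omega_{\mathrm{can}}=-d\lambda$. Consequently $\omega_{E^*}=\pi^*\omega_0-d\theta_h$ and $\jmath^*\omega_{\mathrm{can}}$ differ only by the summand $\pi^*\omega_0$:
\[ \omega_{E^*}=\jmath^*\omega_{\mathrm{can}}+\pi^*\omega_0 . \]

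The second step invokes the hypothesis that $\mathcal F$ is totally geodesic: a geodesic whose initial velocity lies in $E=T\mathcal F$ stays in its leaf and hence keeps its velocity in $E$, which dually says that $\Phi^G$ preserves $\jmath(E^*)=\{h(w,\cdot):w\in E\}$. Thus the generator $X_G$ of $\Phi^G$ is tangent to $\jmath(E^*)$ and we may define $Y=(\jmath_*)^{-1}\!\big(X_G|_{\jmath(E^*)}\big)$ on $E^*$. For every $v\in TE^*$ one computes $(\jmath^*\omega_{\mathrm{can}})(Y,v)=\omega_{\mathrm{can}}(X_G,\jmath_*v)=dG(\jmath_*v)=dF(v)$, so $\jmath^*\omega_{\mathrm{can}}(Y,\cdot)=dF$. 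The decisive point is that $Y$ pairs trivially with the extra term: $\pi\circ\jmath=\mathrm{pr}$ intertwines $Y$ with $X_G$, and under $\mathrm{pr}$ the geodesic-flow vector at a covector $h(w,\cdot)$ projects to $w$; hence $\pi_*Y$ at a point over $x$ is the corresponding vector $w\in E_x=\ker(\omega_0)_x$, so $\omega_0(\pi_*Y,\cdot)=0$ and therefore $\pi^*\omega_0(Y,\cdot)=0$. Adding the two identities, $\omega_{E^*}(Y,\cdot)=dF$, and nondegeneracy of $\omega_{E^*}$ on $E^*(R)$ forces $Y=X_F$. Thus $X_F$ is $\jmath$-related to $X_G$, i.e. the Hamiltonian flow of $F$ on $E^*(R)$ is the restriction to $E^*$ of the geodesic flow, as claimed. (There is no completeness issue: $F$ is a first integral of $X_F$, so the fibre radius is constant along trajectories and $E^*(R)$ is invariant, while $\Phi^G$ is complete because $N$ is closed.)

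The main obstacle is bookkeeping rather than depth: pinning down the identification $\jmath^*\lambda=\theta_h$ together with the self-adjointness and idempotence of $\Pi_h$, and --- the one genuinely essential point --- noticing that the ``extra'' summand $\pi^*\omega_0$ of $\omega_{E^*}$ is annihilated by $X_F$ precisely because the base projection of the geodesic flow along $\jmath(E^*)$ lands in $E=\ker\omega_0$; and it is exactly the totally geodesic hypothesis that both makes $\jmath(E^*)$ invariant under $\Phi^G$ and supplies this vanishing.
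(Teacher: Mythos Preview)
Your proof is correct and follows essentially the same route as the paper: you embed $E^*$ in $T^*N$ via the $h$-orthogonal splitting, check that $\jmath^*\lambda=\theta_h$ and $\jmath^*G=F$, use the totally geodesic hypothesis to see that the geodesic vector field $X_G$ is tangent to $\jmath(E^*)$, and then verify that its pullback $Y$ satisfies $\omega_{E^*}(Y,\cdot)=dF$ by observing separately that $(\jmath^*\omega_{\mathrm{can}})(Y,\cdot)=dF$ and that $\pi^*\omega_0(Y,\cdot)=0$ since $\pi_*Y\in E=\ker\omega_0$. This is exactly the paper's argument, with only cosmetic differences in notation and order of presentation.
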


\begin{proof}  Denote by $\xi=E^{\perp_h}$ the orthogonal complement of $E$ with respect to the metric $h$.  Let \[ i\co E^*\to T^*N \] be the inclusion of $E^*$ into $T^*N$ obtained by extending each $p\in E^{*}_{x}$ to a linear functional on $T_xN$ via $p|_{\xi_x}=0$.  In particular \[ i(E^*)=\{(x,p)\in T^*N|\langle p,v\rangle=0\mbox{ for all }v\in\xi_x\}.\]

Define $\tilde{F}\co T^*N\to \mathbb{R}$ by $\tilde{F}(x,p)=\frac{1}{2}|p|_{h}^{2}$ for all $p\in T^*N$; of course we have $i^*\tilde{F}=F$.  Let $\lambda\in \Omega^1(T^*N)$ be the canonical $1$-form (so where $\pi'\co T^*N\to N$ is the bundle projection we have $\lambda_{(x,p)}(v)=p(\pi'_{*}v)$), and endow $T^*N$ with its standard symplectic form $-d\lambda$. 

Note that, for $(x,p)\in E^*$ (so $p\in E_{x}^{*}$), we have $i(x,p)=(x,p\circ\Pi_h)\in T^*N$, where as before $\Pi_h\co TN\to E$ denotes the orthogonal projection with respect to $h$.   Hence, if $v\in T_{(x,p)}E^*$, \begin{align*} 
(i^*\lambda)_{(x,p)}(v)&=\lambda_{i(x,p)}(i_*v)=\lambda_{(x,p\circ\Pi_h)}(i_*v)\\&=(p\circ\Pi_h) (\pi'_*(i_*v))=p\circ\Pi_h (\pi_*v)=p(\Pi_h (\pi_*v))=(\theta_h)_{(x,p)}(v);\end{align*} thus, \begin{equation}\label{lambdatheta}
i^*\lambda=\theta_h.\end{equation}

 As is well-known (and can be seen via an easy calculation in geodesic coordinates), the Hamiltonian vector field $X_{\tilde{F}}$ of $\tilde{F}$ with respect to the standard symplectic form $-d\lambda$ induces the geodesic flow of $h$ on $T^*N$.  
In other words, a curve $\gamma\co [0,T]\to T^*N$ given by $\gamma(t)=(x(t),p(t))$ with $p(t)\in T_{x(t)}^{*}N$ is an integral curve for $X_{\tilde{F}}$ if and only if $t\mapsto x(t)$ is a geodesic and $p(t)$ is  dual to $\dot{x}(t)$ with respect to the metric $h$.

We claim that the fact that $\mathcal{F}$ (which, we recall, has $T\mathcal{F}=E$) is totally geodesic implies that $X_{\tilde{F}}$ is tangent to $i(E^{*})$.  Indeed, if $t\mapsto \gamma(t)=(x(t),p(t))$ is an integral curve of $X_{\tilde{F}}$ such that $\gamma(0)\in i(E^*)$, and if $V$ is a local section of $\xi$, then $t\mapsto x(t)$ gives a geodesic which remains in the same leaf of $\mathcal{F}$, and so \[ \langle p(t),V(x(t))\rangle=h(\dot{x}(t),V(x(t)))=0\mbox{ for all $t$},\] since $\dot{x}(t)\in E_{x(t)}$ for all $t$ while $V(x(t))\in (E_{x(t)})^{\perp_h}$.  Thus (since $(x,p)\in i(E^*)$ iff $p$ annihilates $\xi$), any integral curve of $X_{\tilde{F}}$ initially contained in $i(E^*)$ remains in $i(E^*)$, confirming that $X_{\tilde{F}}$ is tangent to  $i(E^{*})$.  

Thus there is a vector field $X$ on $E^*$ such that $i_*X=X_{\tilde{F}}$, and the content of the Lemma is that, restricting to the region $E^{*}(R)$ where $\omega_{E^*}$ is symplectic, $X$ is the Hamiltonian vector field of $F$.

Where $X$ is the vector field on $E^*$ characterized by $i_*X=X_{\tilde{F}}$, we have for $v\in TE^*$, \[
(\iota_{X}\pi^*\omega_0)(v)=((\pi'\circ i)^*\omega_0)(X,v)=((\pi')^*\omega_0)(i_*X,i_*v)=\omega_0(\pi'_*X_{\tilde{F}},\pi_*v).\]  But since an integral curve of $X_{\tilde{F}}$ through a point of $E^*$ projects to a geodesic initially (indeed always) perpendicular to $\xi$ and so tangent to $E$, we have $\pi'_*X_{\tilde{F}}\in E=\ker(\omega_0)$.  Thus \[ \iota_X\pi^*\omega_0=0.\]

So for $v\in TE^*$, we have \begin{align*}
\omega_{E^*}(X,v)&=(\pi^*\omega_0-d\theta_h)(X,v)=-d\theta_h(X,v)=-(d(i^*\lambda))(X,v)\\&=-d\lambda(i_*X,i_*v)=-d\lambda(X_{\tilde{F}},i_*v)=d\tilde{F}(i_*v)\\&=d(i^*\tilde{F})(v)=dF(v),\end{align*}
where the third equality uses (\ref{lambdatheta}).  

Thus, in the region $E^{*}(R)$ where $\omega_{E^*}$ is symplectic, $X$ is the Hamiltonian vector field of $F\co E^{*}(R)\to\mathbb{R}$, completing the proof of the lemma. 
\end{proof}

\begin{theorem} \label{displaceable} Let $N$ be a closed, displaceable, coisotropic submanifold of the closed symplectic manifold $(M,\omega)$.  If the characteristic foliation of $N$ is made totally geodesic by the metric $h$ on $N$, then the Riemannian manifold $(N,h)$ has a closed geodesic which is tangent to $\ker(\omega|_N)$ and which is contractible in $M$.
\end{theorem}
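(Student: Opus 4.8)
The plan is to prove the contrapositive by combining the Marle--Gotay neighborhood theorem recalled above, Lemma~\ref{geodsame}, and the \(\pi_1\)-sensitive energy--capacity inequality of \cite{Schlenk} and \cite{U3}. So suppose that \((N,h)\) admits \emph{no} closed geodesic that is tangent to \(\ker(\omega|_N)\) and contractible in \(M\); I will deduce that \(N\) is not displaceable.

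First I would fix \(R>0\) small enough that \(\omega_{E^*}\) is symplectic on the disc bundle \(E^*(R)\) and that there is a symplectomorphism \(\psi_R\colon (E^*(R),\omega_{E^*})\to (U_R,\omega|_{U_R})\) onto a tubular neighborhood \(U_R\) of \(N\), restricting to the identity on the zero section \(N\). Suppose, for contradiction, that \(N\) — hence, after shrinking \(R\), the open set \(U_R\) — is displaceable, so that \(e(U_R,M)<\infty\). Transport the function \(F=\tfrac12|p|_h^2\) of Lemma~\ref{geodsame} to \(M\): for any smooth profile \(f\colon [0,R^2/2)\to[0,\infty)\) vanishing near \(0\) and near \(R^2/2\), the function \(K\) equal to \(f(F)\circ\psi_R^{-1}\) on \(U_R\) and to \(0\) elsewhere is a smooth autonomous Hamiltonian on \(M\) with compact support in \(U_R\). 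Since \(K\) is a function of \(F\), we have \(X_K=f'(F)\,X_F\); hence every nonconstant periodic orbit \(\gamma\) of \(X_K\) lies in some level set \(\{F=c\}\) with \(c>0\) and \(f'(c)\neq 0\), and by Lemma~\ref{geodsame} its projection \(\pi\circ\gamma\) to \(N\) is a nonconstant, constant-speed closed geodesic of \(h\) whose velocity lies in \(\ker(\omega|_N)\). Moreover \(\pi\circ\gamma\) is freely homotopic in \(U_R\) — via the fiberwise rescaling retraction of \(U_R\) onto \(N\) — to \(\gamma\) itself, so if \(\gamma\) is contractible in \(M\) then so is the geodesic \(\pi\circ\gamma\). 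By the standing assumption there is therefore no choice of such \(f\) for which \(X_K\) has a nonconstant periodic orbit of period at most \(1\) that is contractible in \(M\).

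Finally I would choose \(f\) to also meet the Hofer--Zehnder shape requirements, namely \(f\ge 0\), \(f\equiv\max f\) on a nonempty open subinterval of \((0,R^2/2)\), and \(f\equiv 0\) near the two endpoints, while taking \(\max f\) larger than the constant associated to \(U_R\) by the \(\pi_1\)-sensitive energy--capacity inequality (possible because \(\max f\) is arbitrary whereas \(e(U_R,M)\) is finite). That inequality \cite{Schlenk},\cite{U3} then produces a nonconstant periodic orbit of \(X_K\) of period at most \(1\) which is contractible in \(M\), contradicting the previous paragraph; hence \(N\) was not displaceable after all. The one step requiring care is this last application: one must check that the orbit it yields lies on a level \(\{F=c\}\) with \(c>0\), so that its projection is a \emph{nonconstant} geodesic. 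But this is automatic from the shape conditions, since \(X_K\) vanishes identically on the open sets \(\{K=0\}\) (which contains a neighborhood of \(N\) and of \(\partial U_R\)) and \(\{K=\max K\}\), forcing any nonconstant orbit onto a level where \(0<F<R^2/2\) and \(f'\neq 0\).
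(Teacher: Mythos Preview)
Your proof is correct and follows essentially the same route as the paper's own argument: both combine the Marle--Gotay tubular neighborhood, Lemma~\ref{geodsame}, and the $\pi_1$-sensitive energy--capacity inequality to force a contractible periodic orbit of $X_F$ and hence a leafwise closed geodesic. The only differences are cosmetic: you phrase things as a contrapositive-by-contradiction, and your profile $f$ vanishes near $0$ (so $K$ vanishes near $N$ and attains its maximum on an annulus), whereas the paper takes $f$ monotone decreasing with $f(0)=2\|H\|$ (so $K$ is maximal along $N$); either shape is Hofer--Zehnder admissible and the rest of the argument is identical.
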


\begin{proof}  For any sufficiently small $R>0$ we may 
 symplectically identify a neighborhood $U_R$ of $N$ in $M$ with $(E^{*}(R),\omega_{E^*})$.  
 Since $N$ is displaceable (say $\phi_{H}^{1}(N)\cap N=\varnothing$), there is $R>0$ such that $\phi_{H}^{1}(\overline{U_R})\cap \overline{U_R}=\varnothing$.  By the energy-capacity inequality (\cite{U3}, Theorem 1.1), it follows that the $\pi_1$-sensitive Hofer-Zehnder capacity of $U_R$ is at most $\|H\|$, which is to say, if $K\co M\to\mathbb{R}$ is a  smooth function which is supported in $U_R$ and has $\max K>\|H\|$, then $X_K$ has a contractible periodic orbit of period at most one.
 
Choose a monotone decreasing smooth  function $f\co [0,\infty)\to [0,2\|H\|]$ such that $f(0)=2\|H\|$ and $f(s)=0$ for $s\geq R^2/2$.
Under the identification of $U_R$ with $E^{*}(R)$, where $F(x,p)=\frac{1}{2}|p|_{h}^{2}$ as in Lemma \ref{geodsame}, define $K_0\co U_R\to\mathbb{R}$ by  $K_0=f\circ F $, and then define $K\co M\to\mathbb{R}$ by $K|_{U_R}=K_0$ and $K|_{M\setminus U_R}=0$.  $X_K$ then has a nonconstant contractible periodic orbit of period at most one, which is necessarily contained in $U_R$.   
In $U_R$ we have $X_K=f'(F)X_F$, so it follows that $X_F$ has a nonconstant contractible periodic orbit $\gamma$ in $U_R\cong E^{*}(R)$.  Where $\pi\co E^{*}(R)\to N$ is the projection, by Lemma \ref{geodsame} $\pi\circ \gamma$ is then a geodesic in $N$ tangent to a leaf of the characteristic foliation, which is homotopic to $\gamma$ within $U_R$ and hence is contractible in $M$.

\end{proof}

\begin{remark}Using the stable energy-capacity inequality (\cite{Schlenk} Theorem 1.1, \cite{U3} Corollary 1.3), we see that Theorem \ref{displaceable} applies equally well if $N$ is stably displaceable (\emph{i.e.}, $N\times S^1$ is displaceable in $M\times T^*S^1$).

\end{remark}

\section{Consequences of the boundary depth for displacement energy}\label{last}

Throughout this section we assume that the image of the homomorphism \[ \langle\omega|_N,\cdot\rangle\co \pi_2(N)\to\mathbb{Z}\] is discrete; let $\lambda_0$ denote the positive generator of this image if $\langle\omega|_N,\cdot\rangle$ is nontrivial, and $\lambda_0=\infty$ if $\langle\omega|_N,\cdot\rangle$ is trivial.

Assume, further, that $R>0$ is such that there is a symplectic embedding \[ \psi_R\co (E^{*}(R),\omega_{E^*})\hookrightarrow (M,\omega).\] Let \[ V=\psi_R(E^{*}(R))\mbox{ and } U=\psi_R(E^{*}(R/2)).\]
Fix an almost complex structure $J_0$ on $\overline{E^{*}(R)}$ compatible with $\omega_{E^*}$.

This determines a constant $c_{UV}$, depending only on the data $(N,\omega|_N,R,J_0)$, as in Proposition \ref{energypartial}.
In the notation of Proposition \ref{flatlemma}, we have $\lambda_0(V)=\lambda_0$.

Let $\ep>0$ and let $H\co (\mathbb{R}/\mathbb{Z})\times M\to\mathbb{R}$ be a Hamiltonian with the property that \[ 
\|H\|<\min\{c_{UV},(3+\ep)^{-1}\lambda_0\} \] and \[ \phi_{H}^{1}(N)\cap N=\varnothing.\]  There is then $r>0$ such that \[ \phi_{H}^{1}(\psi_R(\overline{E^{*}(r)}))\cap \psi_R(\overline{E^{*}(r)})=\varnothing; \] without loss of generality we may assume that $r<R/2$.  

Choose any smooth, monotone  function $f\co [0,\infty)\to [-(1+\ep)\|H\|,0]$ such that $0$ is the unique global minimum of $f$, with $f(0)=-(1+\ep)\|H\|$, such that $f'$ vanishes to infinite order at $s=0$, and such that $f(s)=0$ for $s\geq r$.

Define $K\co(\mathbb{R}/\mathbb{Z})\times M\to \mathbb{R}$ by setting $K(t,m)=0$ if $m\notin V$ and, under the identification of $V$ with $E^*(R)$ via $\psi_R$, setting $K(t,(x,p))=f(|p|_{h})$ for $(x,p)\in E^*(R)\cong V$.  Our choice of $f$ ensures that, for any $q\in N$, $K$ has a flat autonomous minimum at $q$.  (In the notation of Definition \ref{flat}, we have $S=N$, and $G$ can be taken to be any open set containing $N$.)  We have \[ \|H\|<-K(0,q)=(1+\ep)\|H\|,\] and \[ 2\|H\|+\|K\|=(3+\ep)\|H\|<\lambda_0=\lambda_0(V).\]  

Hence Proposition \ref{flatlemma} applies to $K$ to yield:

\begin{theorem}\label{finalorbit} Under the above hypotheses, for any almost complex structure $J_0$ on $V\cong E^*(R)$ there is a solution $u\co\mathbb{R}\times\mathbb{R}/\mathbb{Z}\to \overline{E^{*}(R)}$ to \begin{equation}\label{finaleqn} \frac{\partial u}{\partial s}+J_0\left(\frac{\partial u}{\partial t}-X_{K}\right)=0, \end{equation} such that $E(u)\leq 2\|H\|$, and, for some $q\in N$ and some nonconstant periodic orbit $\gamma$ of $X_{K}$, $u(s,\cdot)$ is partially asymptotic to the constant orbit $\gamma_q$ at $q$ as $s\to\infty$, and $u(s,\cdot)$ is partially asymptotic to $\gamma$ as $s\to -\infty$.
\end{theorem}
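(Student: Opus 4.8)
The plan is to derive Theorem \ref{finalorbit} directly from Proposition \ref{flatlemma}, so essentially all that is needed is to confirm that the pair $(H,K)$ assembled above meets that proposition's hypotheses, with the open sets $U=\psi_R(E^*(R/2))$, $V=\psi_R(E^*(R))$ and with $W=\psi_R(E^*(r))$. I would first dispatch the routine items. The set $W$ has smooth boundary, being the $\psi_R$-image of a regular sublevel set of the fiberwise norm $(x,p)\mapsto|p|_h$, and $\bar W\subset U$ since $r<R/2$. Moreover $\|H\|>0$: if $\|H\|=0$ then $H(t,\cdot)$ is constant in $m$ for every $t$, so $\phi_H^1=\mathrm{id}$, which is incompatible with $\phi_H^1(N)\cap N=\varnothing$ for the nonempty $N$. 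Hence $K\le 0$ is not identically zero, and from the shape of the monotone function $f$ one reads off $K^{-1}((-\infty,0))=\psi_R(E^*(r))=(\mathbb R/\mathbb Z)\times W$. Finally, as was noted when $K$ was constructed, $K$ has a flat autonomous minimum at every point of $N$ (in the sense of Definition \ref{flat}, with $S=N$, with $G$ any neighborhood of $N$, and with the Hessian of $K$ vanishing along $N$ because $f'$ vanishes to infinite order at $0$); fix one such point $p\in N$ as the minimum, so that $K(0,p)=f(0)=-(1+\ep)\|H\|$.

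It remains to check the four numbered conditions of Proposition \ref{flatlemma}. Condition (i), $\phi_H^1(\bar W)\cap\bar W=\varnothing$, is exactly how $r$ was selected. Condition (ii), $\|H\|<c_{UV}$, is part of the standing assumption on $H$; I note here that although Proposition \ref{energypartial} is phrased for a $J_0$ defined on all of $M$, both $c_{UV}$ and the conclusion of Proposition \ref{flatlemma} depend only on $J_0|_V$ and produce a solution with image in $\bar V$, so there is no harm in extending the given $J_0$ on $\overline{E^*(R)}$ arbitrarily to $M$. Condition (iii) asks that $\|H\|<-K(0,p)=(1+\ep)\|H\|$, which holds since $\ep>0$ and $\|H\|>0$. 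For condition (iv), since $K\le 0$ we have $\mathcal E^+(K)=0$ and $\mathcal E^-(K)=-\min_{M}K=(1+\ep)\|H\|$, so $\|K\|=(1+\ep)\|H\|$ and $2\|H\|+\|K\|=(3+\ep)\|H\|$; together with $\|H\|<(3+\ep)^{-1}\lambda_0$ and the equality $\lambda_0(V)=\lambda_0$ (which holds because $\bar V\cong\overline{E^*(R)}$ deformation retracts, through fiberwise rescaling, onto the zero section $N$, on which $\omega_{E^*}$ restricts to $\omega|_N$, so the group of spherical $\omega_{E^*}$-areas over $\bar V$ and the group of spherical $\omega|_N$-areas over $N$ coincide), this yields $\lambda_0(V)>2\|H\|+\|K\|$.

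Proposition \ref{flatlemma} then furnishes a nonconstant $1$-periodic orbit $\gamma$ of $X_K$, a point $q\in M$ with $K(t,q)=K(0,p)$ for all $t$, and a solution $u\co\mathbb R\times(\mathbb R/\mathbb Z)\to\bar V\cong\overline{E^*(R)}$ of (\ref{finaleqn}) with $0<E(u)\le 2\|H\|$ that is partially asymptotic to $\gamma_q$ as $s\to\infty$ and to $\gamma$ as $s\to-\infty$. The only remaining point is that $q\in N$: writing $q=(x,p_q)$ in the coordinates on $E^*(R)$, the relation $K(0,q)=K(0,p)$ reads $f(|p_q|_h)=f(0)$, and since $0$ is the \emph{unique} global minimum of $f$ this forces $|p_q|_h=0$, i.e.\ $q$ lies on the zero section $N$. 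This is precisely the content of Theorem \ref{finalorbit}. I do not anticipate any genuine obstacle here; the only steps requiring a moment's attention are the bookkeeping for condition (iv) — in particular the identification $\lambda_0(V)=\lambda_0$ — and the final observation that the point $q$ produced by Proposition \ref{flatlemma} must land on $N$ rather than elsewhere in $V$, both of which follow immediately from the way $K$ was built from the monotone profile $f$.
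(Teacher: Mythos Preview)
Your proof is correct and follows exactly the same approach as the paper: the theorem is simply an application of Proposition \ref{flatlemma} to the Hamiltonian $K$ constructed in the preceding paragraphs, and you verify the hypotheses in the same way (the paper records the key checks $\|H\|<-K(0,q)=(1+\ep)\|H\|$ and $2\|H\|+\|K\|=(3+\ep)\|H\|<\lambda_0=\lambda_0(V)$ just before the theorem statement, and notes $S=N$ in the sense of Definition \ref{flat}). You supply a bit more detail than the paper does---e.g.\ the observation that $\|H\|>0$, the justification of $\lambda_0(V)=\lambda_0$ via the exactness of $d\theta_h$ and the retraction onto $N$, the remark about extending $J_0$ from $\bar V$ to $M$, and the explicit reason $q\in N$---but none of this departs from the paper's argument.
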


We are now in position to prove our main results on the displacement energy of coisotropic submanifolds.

\begin{theorem}\label{geodmain}  Suppose that $N$ is a closed coisotropic submanifold of the closed symplectic manifold $(M,\omega)$, that there is a Riemannian metric $h$ on $N$ with respect to which the characteristic foliation of $N$ is totally geodesic, and that $\{\int_{S^2}v^*\omega|v\co S^2\to N\}$ is discrete.  There is $c>0$, depending only on a tubular neighborhood of $N$ in $M$, with the following property.  If $h$ carries no closed geodesics which are contractible in $N$ and are contained in a leaf of the characteristic foliation, then the displacement energy of $N$ is at least $c$.
\end{theorem}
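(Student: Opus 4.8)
The plan is to prove the contrapositive: if $e(N,M)$ is smaller than a suitable constant $c$ then $h$ does carry a closed geodesic which is contained in a leaf of the characteristic foliation and is contractible in $N$. Fix a tubular neighborhood of $N$, i.e.\ an $R>0$ and an $\omega_{E^*}$-compatible $J_0$ on $\overline{E^{*}(R)}$ as in Section \ref{last}, giving the constant $c_{UV}$ of Proposition \ref{energypartial}; let $\lambda_0\in(0,\infty]$ be the positive generator of the (discrete, by hypothesis) image of $\langle\omega|_N,\cdot\rangle$ on $\pi_2(N)$, with $\lambda_0=\infty$ if this image is trivial, and set $c=\min\{c_{UV},\lambda_0/3\}$, which depends only on the chosen tubular neighborhood. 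Suppose now $e(N,M)<c$. Then there is a Hamiltonian $H$ with $\phi_{H}^{1}(N)\cap N=\varnothing$ and $\|H\|<c$; since $3\|H\|<\lambda_0$ we may fix $\ep>0$ with $(3+\ep)\|H\|<\lambda_0$, and since also $\|H\|<c_{UV}$, the pair $(H,\ep)$ meets all the standing hypotheses of Section \ref{last}. Forming the Hamiltonian $K$ from $(H,\ep)$ exactly as there (so $K$ vanishes outside $V=\psi_R(E^{*}(R))$ and equals $f(|p|_h)$ inside, with $f$ monotone, flat at $0$, $f(0)=-(1+\ep)\|H\|$, and $f\equiv 0$ for $|p|_h\geq r$), Theorem \ref{finalorbit} produces a finite-energy solution $u\co\mathbb{R}\times(\mathbb{R}/\mathbb{Z})\to\overline{E^{*}(R)}$ of the Floer equation for $K$, a point $q\in N$, and a \emph{nonconstant} periodic orbit $\gamma$ of $X_K$, such that $u$ is partially asymptotic to the constant orbit $\gamma_q$ as $s\to+\infty$ and partially asymptotic to $\gamma$ as $s\to-\infty$.

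Next I would recognize $\gamma$ as a closed geodesic. Write $F=\frac{1}{2}|p|_{h}^{2}$; away from the zero-section $N$ one has $K=g\circ F$ for a smooth $g$, while $X_K$ vanishes along $N$ itself, so the nonconstant orbit $\gamma$ avoids $N$ and $F$ is constant along it, say $F\equiv \rho^2/2$ with $0<\rho<r$. Along $\gamma$ we then have $X_K=g'(F)\,X_F$ with $g'(F)$ a nonzero constant (nonzero precisely because $\gamma$ is nonconstant), so after a linear reparametrization $\gamma$ is a periodic orbit of $X_F$. By Lemma \ref{geodsame} the flow of $X_F$ on $E^{*}(R)$ is the geodesic flow of $h$ restricted to the subbundle $E^*\subset T^*N$; hence $\bar{\gamma}:=\pi\circ\gamma$, where $\pi\co E^{*}(R)\to N$ is the projection, is a closed geodesic of $(N,h)$. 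It is nonconstant since $\rho>0$ forces its velocity to be nonvanishing, and it is contained in a single leaf of the characteristic foliation $\mathcal{F}$ because a geodesic initially tangent to the totally geodesic foliation $\mathcal{F}$ never leaves its leaf.

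Finally I would use the connecting cylinder to show $\bar{\gamma}$ is contractible in $N$. Choose sequences $s_k\to-\infty$ and $s_j\to+\infty$ with $u(s_k,\cdot)\to\gamma$ and $u(s_j,\cdot)\to\gamma_q$ uniformly. Composing with $\pi$, the loops $\pi\circ u(s_k,\cdot)$ converge uniformly to $\bar{\gamma}$, hence are freely homotopic to $\bar\gamma$ in $N$ for $k$ large, while the loops $\pi\circ u(s_j,\cdot)$ converge uniformly to the constant loop at $q$, hence are nullhomotopic for $j$ large; and for any such $k,j$ the restriction of $\pi\circ u$ to $[s_k,s_j]\times(\mathbb{R}/\mathbb{Z})$ is a free homotopy in $N$ between these two loops. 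Therefore $\bar{\gamma}$ is contractible in $N$, contradicting the hypothesis, so $e(N,M)\geq c$. I do not expect a serious obstacle here: essentially all of the Floer-theoretic content — in particular the boundary-depth bound of Theorem \ref{mainbeta}(iv) — is already absorbed into Propositions \ref{energypartial} and \ref{flatlemma} and hence into Theorem \ref{finalorbit}. The only point needing a little care is that Theorem \ref{finalorbit} yields \emph{partial} rather than genuine asymptotics, which is why one passes to the subsequences $s_k,s_j$ above, and one must check (as in the second paragraph) that $\bar{\gamma}$ is a genuinely nonconstant closed geodesic and not a point.
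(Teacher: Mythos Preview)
Your proof is correct and follows essentially the same route as the paper: set $c=\min\{c_{UV},\lambda_0/3\}$, invoke Theorem \ref{finalorbit} to get a nonconstant periodic orbit $\gamma$ of $X_K$ inside $\overline{E^{*}(R)}$, reparametrize to an orbit of $X_F$, and apply Lemma \ref{geodsame} to see that $\pi\circ\gamma$ is a closed leafwise geodesic. The only difference is cosmetic: for contractibility the paper simply observes that $\gamma$ is contractible in $\overline{E^{*}(R)}$ and that $\pi$ (being a disc-bundle projection) is a homotopy equivalence onto $N$, whereas you spell out the homotopy explicitly via the cylinder $\pi\circ u$ and the subsequences realizing the partial asymptotics; both arguments are valid and yours is a bit more explicit about handling partial (rather than genuine) asymptotics.
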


\begin{proof}  Take $c$ equal to the minimum of the constants $c_{UV}$ and $\lambda_0/3$ from the start of this section.  If $H$ is a Hamiltonian displacing $N$ with $\|H\|<c$, then  for some $\ep>0$ we have $\|H\|<\min\{c_{UV},(3+\ep)^{-1}\lambda_0\}$, so we are in the situation above.  We thus have an autonomous Hamiltonian $K\co \overline{E^{*}(R)}\to\mathbb{R}$ having the form $K=\alpha\circ F$ for a certain smooth function $\alpha\co \mathbb{R}\to\mathbb{R}$, such that $X_{K}$ has a nonconstant $1$-periodic orbit which is contractible in $\overline{E^*(R)}$.  But such an orbit is a reparametrization of a nonconstant contractible periodic orbit $\gamma$ of $X_F$, and by Lemma \ref{geodsame} $X_F$ generates the leafwise geodesic flow.  Where $\pi\co E^*(R)\to N$ is the bundle projection, $\pi\circ \gamma$ is a geodesic which is tangent to the leaves of the foliation, and is homotopic to $\gamma$ within $E^{*}(R)$ and hence contractible in $E^*(R)$ and so also in $N$.  This contradicts the hypothesis of the theorem.
\end{proof}

\begin{cor}  If $N$ is a closed coisotropic submanifold of $(M,\omega)$ and if $N$ admits a metric $h$ of nonpositive curvature with respect to which the characteristic foliation on $N$ is totally geodesic, then $N$ has positive displacement energy.
\end{cor}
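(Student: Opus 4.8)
The plan is to obtain this as an immediate consequence of Theorem~\ref{geodmain}, after checking that the hypothesis of nonpositive curvature forces the two remaining conditions of that theorem to hold automatically. The hypothesis that the characteristic foliation of $N$ is totally geodesic with respect to $h$ is of course assumed outright, so only the discreteness condition on spherical areas in $N$ and the absence of certain closed geodesics need to be verified.

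First I would invoke the Cartan--Hadamard theorem: since $(N,h)$ is closed, hence complete, and has nonpositive sectional curvature, its universal cover $\tilde N$ (with the pulled-back metric) is diffeomorphic to $\mathbb{R}^{n}$, where $n=\dim N$, via the exponential map at any point. In particular $\tilde N$ is contractible, so $\pi_k(N)=0$ for every $k\geq 2$; in particular $\pi_2(N)=0$, whence $\{\int_{S^2}v^*\omega\mid v\co S^2\to N\}=\{0\}$ is a discrete subgroup of $\mathbb{R}$ (indeed $\lambda_0=\infty$). This is the first hypothesis of Theorem~\ref{geodmain}.

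Next I would verify that $h$ carries no closed geodesic at all which is contractible in $N$; this is stronger than what Theorem~\ref{geodmain} requires (namely, no such geodesic lying in a leaf of the characteristic foliation). Indeed, a contractible closed geodesic in $N$ would lift to a nonconstant closed geodesic in the Cartan--Hadamard manifold $\tilde N$; but in a complete simply connected manifold of nonpositive curvature every geodesic minimizes length between any two of its points, so no nonconstant closed geodesic can exist there. Hence $(N,h)$ has no contractible closed geodesic whatsoever, and in particular none contained in a leaf of the characteristic foliation.

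With these two facts in hand, Theorem~\ref{geodmain} applies verbatim and produces a constant $c>0$, depending only on a tubular neighborhood of $N$ in $M$, with $e(N,M)\geq c>0$. I do not expect any real obstacle here beyond correctly applying Cartan--Hadamard; the only point worth emphasizing is that Theorem~\ref{geodmain} needs the geodesic to be contractible \emph{in $N$} (not merely in a leaf of $\mathcal{F}$), and nonpositive curvature delivers exactly that, so the implication goes through cleanly.
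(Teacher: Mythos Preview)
Your proposal is correct and follows essentially the same approach as the paper's own proof: use Cartan--Hadamard to conclude $\pi_2(N)=0$ (giving the discreteness hypothesis trivially) and to rule out contractible closed geodesics, then apply Theorem~\ref{geodmain}. The paper states this in a single sentence, whereas you have spelled out the lifting argument for geodesics in slightly more detail, but the content is identical.
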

\begin{proof}  The fact that $h$ has nonpositive curvature implies that none of its closed geodesics are contractible, and further (by the Cartan--Hadamard theorem) that $N$ has contractible universal cover and so $\pi_2(N)=0$.   Hence we may apply Theorem \ref{geodmain}.
\end{proof}

\begin{theorem}\label{stablemain} If $N$ is a stable coisotropic submanifold of the closed symplectic manifold $(M,\omega)$, and if
$\{\int_{S^2}v^*\omega|v\co S^2\to N\}$ is discrete, then there is a constant $c>0$ depending only on a tubular neighborhood of
$N$ in $M$ with the property that the displacement energy of $N$ in $M$ is at least $c$.
\end{theorem}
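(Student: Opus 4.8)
The plan is to run the Floer-theoretic machinery of Section~\ref{last} using the metric supplied by Proposition~\ref{stabimpliesgeod}, exploiting the extra feature of that metric that it restricts to a \emph{flat} metric on each leaf of the characteristic foliation. First I would fix the data. Since $N$ is stable, Proposition~\ref{stabimpliesgeod} gives a metric $h$ on $N$ making the characteristic foliation $\mathcal F$ totally geodesic; inspecting its proof, the commuting vector fields $X_1,\dots,X_k$ restrict to a parallel orthonormal frame on each leaf $L$, so $h|_L$ is flat with trivial holonomy, and in particular $L$ carries no closed $h$-geodesic contractible in $L$. Feeding $h$ into the construction at the start of Section~\ref{last}, I choose a Weinstein embedding $\psi_R\colon(E^*(R),\omega_{E^*})\hookrightarrow(M,\omega)$, an $\omega_{E^*}$-compatible $J_0$, and set $V=\psi_R(E^*(R))$, $U=\psi_R(E^*(R/2))$, obtaining the constant $c_{UV}$ of Proposition~\ref{energypartial}. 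Writing $\lambda_0$ for the positive generator of $\langle\omega|_N,\pi_2(N)\rangle$ (or $\lambda_0=\infty$ if this group is trivial), I put $c=\min\{c_{UV},\lambda_0/3\}$, which depends only on the tubular neighborhood data $(N,\omega|_N,R,J_0)$; it remains to show $e(N,M)\ge c$.

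Next I would argue by contradiction. If $e(N,M)<c$, choose $H$ with $\phi_H^1(N)\cap N=\varnothing$ and $\|H\|<c$, so $\|H\|<\min\{c_{UV},(3+\ep)^{-1}\lambda_0\}$ for some $\ep>0$, and build the autonomous Hamiltonian $K=\alpha\circ F$ supported near $N$ exactly as in Section~\ref{last}. Theorem~\ref{finalorbit} then yields a solution $u\colon\mathbb R\times(\mathbb R/\mathbb Z)\to\overline{E^*(R)}$ of (\ref{finaleqn}) with $E(u)\le 2\|H\|$, partially asymptotic at $+\infty$ to a constant orbit $\gamma_q$ ($q\in N$) and at $-\infty$ to a \emph{nonconstant} $1$-periodic orbit $\gamma$ of $X_K$. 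By Lemma~\ref{geodsame}, $X_K=\alpha'(F)X_F$ generates a reparametrization of the leafwise geodesic flow, so $\pi\circ\gamma$ is a nonconstant closed $h$-geodesic lying inside a single leaf $L$; truncating $u$ between a far-negative and a far-positive slice shows that $\gamma$, hence $\pi\circ\gamma$, is contractible in $E^*(R)\simeq N$ (though, by flatness of $h|_L$, not in $L$).

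The crux is to turn these two properties of $\gamma$ into a contradiction, and this is where stability (via flatness) and the discreteness of $\langle\omega|_N,\pi_2(N)\rangle$ enter. I would compute the action $\mathcal A_K([\gamma,w])$ of the capped orbit. Flatness of $h|_L$ makes $\int_\gamma\theta_h$ computable explicitly in terms of $\|p\|$ along $\gamma$ and the length of $\pi\circ\gamma$; since $[\omega_{E^*}]=\pi^*[\omega|_N]$ in $H^2(E^*(R))$, the $\omega_{E^*}$-area of a capping of $\gamma$ is determined modulo $\lambda_0\mathbb Z$, and---using that the leaf $L$ is $\omega|_N$-isotropic---one can locate a representative capping with suitably small (negative) area. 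The upshot should be that the action of \emph{any} nonconstant capped orbit of $X_K$ lies in a fixed coset of $\lambda_0\mathbb Z$ none of whose points lie in the interval $\big((1+\ep)\|H\|,(3+\ep)\|H\|\big]$ once $\|H\|$ is small (here one uses $2\|H\|<\tfrac23\lambda_0$: the window is too short to contain a coset point in the relevant range). But $\mathcal A_K([\gamma_q,w_q])=-\int_0^1 K(t,q)\,dt=(1+\ep)\|H\|$ while $E(u)=\mathcal A_K([\gamma,w])-\mathcal A_K([\gamma_q,w_q])\in(0,2\|H\|]$ forces $\mathcal A_K([\gamma,w])$ into exactly that interval---a contradiction, completing the proof that $e(N,M)\ge c$.

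The hardest step is this action estimate. The case where $N$ has contact type is much easier and indicative: then $\omega|_N$, and hence $\omega_{E^*}$, is exact, $\pi_2(E^*(R))$ contributes nothing, and the action of a nonconstant capped orbit of $X_K$ is a \emph{pinned}, strictly negative number, which is trivially incompatible with exceeding $\mathcal A_K([\gamma_q,w_q])>0$; the general case must replace this pinning by the modular argument above, so the delicate point is genuinely the control of the $\omega|_N$-area of a capping of the closed leafwise geodesic. Granting the estimate, the advertised corollaries are immediate: a contact-type coisotropic submanifold has $\omega|_N$ exact, so $\langle\omega|_N,\pi_2(N)\rangle=0$ is (trivially) discrete; and, since $c$ depends only on a tubular neighborhood, the Stein statement follows by combining this with Theorem~3.2 of \cite{LiMa}.
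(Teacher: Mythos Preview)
Your setup through Theorem~\ref{finalorbit} matches the paper, but the ``crux'' step---your action estimate for nonconstant capped orbits---does not go through as stated, and the paper takes a quite different route at precisely this point.

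First, your claimed constant $c=\min\{c_{UV},\lambda_0/3\}$ is already suspicious: it involves no dynamical data about closed leafwise geodesics, whereas the paper's bound does (explicitly the minimal length $\Lambda$ of a closed geodesic tangent to the foliation, together with constants $c_i$ coming from the stability forms). Second, your assertion that the action of any nonconstant capped orbit of $X_K$ lies in a \emph{fixed} coset of $\lambda_0\mathbb Z$ is not correct. Writing $K=g\circ F$ and taking an orbit at level $|p|_h=r_0$ over a closed leafwise geodesic of length $\ell$ (so $f'(r_0)=\ell$ by the period-1 condition), one computes $\int_\gamma\theta_h=r_0\ell$ and hence
\[
\mathcal A_K([\gamma,w])=-\int_{D^2}(\pi\circ w)^*\omega_0+r_0\ell-f(r_0).
\]
The term $r_0\ell-f(r_0)$ varies continuously with $r_0$ (and with the choice of underlying geodesic, since $\ell$ varies), and the $\omega_0$-area of a capping of $\pi\circ\gamma$ is some real number determined only modulo $\lambda_0$; there is nothing pinning these values away from the window $\big((1+\ep)\|H\|,(3+\ep)\|H\|\big]$. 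Your sketch for the contact-type case has the same problem: with $\omega|_N=d\alpha_1$ one gets $\mathcal A_K(\gamma)=-\int_{\pi\circ\gamma}\alpha_1+r_0\ell-f(r_0)$, and since $\int_{\pi\circ\gamma}\alpha_1=x_1\ell$ with $|x_1|\le 1$, there is no reason this should be negative.

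The paper bypasses any action-window argument entirely. It uses the stability $1$-forms $\alpha_1,\dots,\alpha_k$ directly: by the estimate of Ginzburg (Equation 6.3 of \cite{G06}, which uses $\ker\omega|_N\subset\ker d\alpha_i$) one obtains constants $c_i>0$ with
\[
E(u)\ge c_i^{-1}\left|\int_{\pi\circ\gamma}\alpha_i\right|\quad(i=1,\dots,k),
\]
and since in the metric of Proposition~\ref{stabimpliesgeod} the unit tangent to the geodesic is $\sum_i x_iX_i$ with $\alpha_i(X_j)=\delta_{ij}$, one has $\int_{\pi\circ\gamma}\alpha_i=x_i\cdot\mathrm{Length}(\pi\circ\gamma)$, whence $\mathrm{Length}(\pi\circ\gamma)\le(\sum_i c_i^2)^{1/2}E(u)\le 2(\sum_i c_i^2)^{1/2}\|H\|$. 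The minimal length $\Lambda$ of a closed geodesic then gives $\|H\|\ge(4\sum_i c_i^2)^{-1/2}\Lambda$. So the decisive input from stability is not flatness of the leafwise metric but rather the energy--length inequality coming from the $\alpha_i$; you should replace your action argument with this estimate.
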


\begin{proof} The argument is just as in Section 6 of \cite{G06}.  Assume that $\phi_{H}^{1}(N)\cap N=\varnothing$ and that $\|H\|< \min\{c_{UV},(3+\ep)^{-1}\lambda_0\}$, so that Theorem \ref{finalorbit} applies  (of course, if this is not the case for any $H$, the statement of this theorem follows vacuously).   If $\alpha_1,\ldots,\alpha_k\in \Omega^1(N)$ are as in the definition of stability (so $\alpha_1\wedge\cdots\wedge\alpha_k\wedge(\omega|_N)^{n-k}$ is a volume form on $N$ and $\ker(\omega|_N)\subset\ker\omega$), just as in Equation 6.3 on p. 150 of \cite{G06}\footnote{Note that our convention for the sign of a Hamiltonian vector field is opposite to that of \cite{G06}} one shows that, for $i=1,\ldots,k$ and some constants $c_1,\ldots,c_k$, the $u$ and $\gamma$ produced by   Theorem \ref{finalorbit} obey \[ E(u)\geq c_{i}^{-1}\left|\int_{\pi\circ\gamma}\alpha_i\right|.\]  With respect to the metric produced in Proposition \ref{stabimpliesgeod}, $\pi\circ\gamma$ is a closed geodesic, with length $\sum_{i=1}^{k}x_i\int_{\pi\circ\gamma}\alpha_i$ for some $x_1,\ldots,x_k$ with $\sum_{i=1}^{k}x_{i}^{2}=1$.  ($x_i$ are determined by the condition that the  unit tangent vector field  to the geodesic is $\sum x_iX_i$, where as before $\alpha_i(X_j)=\delta_{ij}$).  Thus \[ Length(\pi\circ\gamma)\leq (\sum_{i=1}^{k}|x_i|c_i)E(u)\leq (\sum_i c_{i}^{2})^{1/2}E(u)\leq 2(\sum_i c_{i}^{2})^{1/2}\|H\| \] (recalling that $E(u)\leq 2\|H\|$).  So where $\Lambda$ is the minimal length of a closed geodesic in $N$ we obtain the lower bound $\|H\|\geq (4\sum_i c_{i}^{2})^{-1/2}\Lambda$.
\end{proof}

\begin{cor} If $N$ is a coisotropic submanifold of contact type in the closed symplectic manifold $(M,\omega)$ then $N$ has positive displacement energy.
\end{cor}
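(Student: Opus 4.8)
The plan is to deduce this directly from Theorem \ref{stablemain}. The first observation is that the contact type condition is simply a strengthening of stability: if $N$ has contact type, then there are one-forms $\alpha_1,\dots,\alpha_k\in\Omega^1(N)$ with $\ker(\omega|_N)\subset\ker\alpha_i$ for each $i$ and with $\alpha_1\wedge\cdots\wedge\alpha_k\wedge(\omega|_N)^{n-k}$ a volume form on $N$ --- which is exactly the data required for $N$ to be stable --- together with the extra requirement that $d\alpha_i=\omega|_N$ for each $i$. So, ignoring the condition $d\alpha_i=\omega|_N$, the submanifold $N$ is stable.

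The key point is that the condition $d\alpha_i=\omega|_N$ forces $\omega|_N$ to be exact, and hence forces the subgroup $G_N=\{\int_{S^2}v^*\omega\,\mid\,v\co S^2\to N\}$ of $\mathbb{R}$ to be trivial. Indeed, for any smooth map $v\co S^2\to N$ we have $v^*\omega=v^*(\omega|_N)=v^*(d\alpha_1)=d(v^*\alpha_1)$, so that $\int_{S^2}v^*\omega=\int_{S^2}d(v^*\alpha_1)=0$ by Stokes' theorem, since $S^2$ is closed. Thus $G_N=\{0\}$, which is certainly a discrete subgroup of $\mathbb{R}$ (in the notation of the previous sections, $\lambda_0=\infty$).

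Having checked that $N$ is stable and that $G_N$ is discrete, I would then simply invoke Theorem \ref{stablemain} to obtain a constant $c>0$, depending only on a tubular neighborhood of $N$ in $M$, with $e(N,M)\geq c>0$. There is no substantive obstacle here, since both hypotheses of Theorem \ref{stablemain} follow from elementary observations; the only point worth keeping straight is that the contact type hypothesis is used in two separate ways --- once, discarding the relations $d\alpha_i=\omega|_N$, to certify that $N$ is stable, and once, using precisely those relations, to certify that $\omega|_N$ is exact so that $G_N$ is discrete.
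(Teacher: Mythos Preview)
Your proof is correct and follows the same approach as the paper's own proof: observe that contact type implies stability (by definition) and that the relation $d\alpha_i=\omega|_N$ makes $\omega|_N$ exact, so that $G_N=\{0\}$ is discrete, and then invoke Theorem~\ref{stablemain}. The paper's proof is just a one-line version of what you wrote.
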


\begin{proof}
The contact type condition ensures that $\omega|_N$ is exact and so vanishes on $\pi_2(N)$, so we can apply the previous theorem.
\end{proof}

\begin{cor}\label{stein} Theorems \ref{geodmain} and \ref{stablemain} continue to apply if the closed symplectic manifold $(M,\omega)$ is replaced by a Stein manifold $(S,J,dd^c\psi)$.  In particular, any closed stable coisotropic submanifold of a Stein manifold has positive displacement energy.
\end{cor}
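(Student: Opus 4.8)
The plan is to reduce the Stein case to the already-established closed case (Theorems~\ref{geodmain} and~\ref{stablemain}) by capping off a sufficiently large piece of the Stein manifold, exploiting the fact that the constants in those theorems depend only on a tubular neighborhood of $N$. Write $\omega_S=dd^c\psi$ for the symplectic form on the Stein manifold $S$, and let $N\subset S$ be a closed coisotropic submanifold, equipped either with a metric $h$ making its characteristic foliation totally geodesic or with the data witnessing stability. First I would apply the coisotropic neighborhood theorem of \cite{Gotay},\cite{Marle}, whose proof only uses that $N$ is compact and hence applies over the Stein manifold $S$, to obtain $R>0$ and a symplectic embedding $\psi_R\co(E^*(R),\omega_{E^*})\hookrightarrow(S,\omega_S)$ onto a tubular neighborhood of $N$, and I would fix a compatible almost complex structure $J_0$ on $\overline{E^*(R)}$. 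Exactly as in Section~\ref{last} this determines constants $c_{UV}$ and $\lambda_0$; by the statement of Proposition~\ref{energypartial} the first of these depends only on $(N,\omega_S|_N,R,J_0)$, and since $\overline{V}:=\psi_R(\overline{E^*(R)})$ deformation retracts onto $N$ we get $\lambda_0(V)=\lambda_0$ just as before. In the Stein case $\omega_S$ is exact, so $\langle\omega_S|_N,\cdot\rangle$ vanishes on $\pi_2(N)$ and $\lambda_0=\infty$; this is why no discreteness hypothesis is needed for the final sentence of the corollary, while in the general case we simply retain it.

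Next, suppose $H\co(\mathbb{R}/\mathbb{Z})\times S\to\mathbb{R}$ is compactly supported with $\phi_H^1(N)\cap N=\varnothing$ and $\|H\|<\min\{c_{UV},(3+\ep)^{-1}\lambda_0\}$ for some $\ep>0$ (if no such $H$ exists the asserted lower bound on $e(N,S)$ holds vacuously). Then $\mathrm{supp}(H)\cup\overline{V}$ is a compact subset of the Stein manifold $S$, so Theorem~3.2 of \cite{LiMa} provides a closed symplectic manifold $(M,\omega)$ and a symplectic embedding $j$ of an open neighborhood $\Omega$ of this compact set into $(M,\omega)$. The composition $j\circ\psi_R$ then exhibits $E^*(R)$ as a tubular neighborhood of the closed coisotropic submanifold $j(N)\subset M$, which inherits stability (resp.\ a totally geodesic characteristic foliation for the pushed-forward metric) because $j$ is a symplectomorphism near $N$; the constants attached to $j(N)$ in Section~\ref{last}, computed using $j\circ\psi_R$ and $J_0$, coincide with the $c_{UV}$ and $\lambda_0$ fixed above, since those depend only on a tubular neighborhood and since $\{\int_{S^2}v^*\omega\mid v\co S^2\to j(N)\}=\{\int_{S^2}v^*\omega_S\mid v\co S^2\to N\}$. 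Finally, because $H$ is supported inside $\Omega\cong j(\Omega)\subset M$, pushing $H$ forward by $j$ and extending by zero yields a compactly supported Hamiltonian $\hat H$ on the closed manifold $M$ with $\|\hat H\|=\|H\|$ and $\phi_{\hat H}^1(j(N))\cap j(N)=\varnothing$.

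Now I would simply run the proofs of Theorem~\ref{geodmain} and Theorem~\ref{stablemain} for $j(N)\subset(M,\omega)$ with $\hat H$ in place of $H$: Theorem~\ref{finalorbit} together with Lemma~\ref{geodsame} produces a nonconstant closed geodesic of $(N,h)$ tangent to the characteristic foliation and contractible in $N$ --- contradicting the hypothesis of Theorem~\ref{geodmain} --- while in the stable case one instead obtains the length estimate $\|\hat H\|\geq(4\sum_i c_i^2)^{-1/2}\Lambda$ of Theorem~\ref{stablemain}. In every case $\|H\|=\|\hat H\|$ is bounded below by a positive constant determined solely by the chosen tubular neighborhood of $N$ in $S$ (together, in the stable case, with the stability data, which also lives on that neighborhood), and taking the infimum over displacing Hamiltonians $H$ gives $e(N,S)>0$ with the stated lower bound. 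The assertion about stable coisotropic submanifolds of Stein manifolds is then the special case of the Theorem~\ref{stablemain} analogue, with the discreteness of $\langle\omega_S|_N,\cdot\rangle$ automatic from exactness of $dd^c\psi$.

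The part requiring the most care in the write-up --- but not any new idea --- is the observation that enlarging the capped-off region so as to contain the (a priori arbitrarily large) support of the displacing Hamiltonian $H$ leaves $c_{UV}$ and $\lambda_0$ untouched; this is exactly the purpose of the ``depending only on a tubular neighborhood'' clauses in Theorems~\ref{geodmain} and~\ref{stablemain}, and once it is noted the rest is routine. One should also confirm that the cited embedding result \cite{LiMa} can be taken to be defined on an open neighborhood of the given compact set (so that $j(N)$ sits inside a region of $M$ symplectomorphic to $E^*(R)$), but this is part of the statement being invoked and introduces nothing new.
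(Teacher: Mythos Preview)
Your proposal is correct and follows essentially the same approach as the paper: embed a compact set containing both the tubular neighborhood of $N$ and the support of the displacing Hamiltonian into a closed symplectic manifold via Theorem~3.2 of \cite{LiMa}, then invoke Theorems~\ref{geodmain} and~\ref{stablemain} there, using that the relevant constants depend only on a tubular neighborhood of $N$ and hence transfer unchanged. The paper's proof is much terser but identical in substance; your version spells out more carefully why $c_{UV}$ and $\lambda_0$ are preserved under the embedding, which is the one point that actually requires thought.
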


\begin{proof} Suppose that $H\co (\mathbb{R}/\mathbb{Z})\times S\to\mathbb{R}$ is a (compactly supported) Hamiltonian on $S$ with $\phi_{H}^{1}(N)\cap N=\varnothing$, and suppose $U\subset S$ is an open set with compact closure such that $(\mathbb{R}/\mathbb{Z})\times U$ contains the support of $H$.  Theorem 3.2 of \cite{LiMa} ensures that there is a symplectic embedding of $ U\hookrightarrow M$ into a closed symplectic manifold $M$.  But then applying our previous results to $M$ shows that, for some constant $c$ depending only on a tubular neighborhood of $N$ in $S$ (and in particular not otherwise depending on $M$ or on the size of the support of $H$), we have $\|H\|\geq c$.  So $N$ has displacement energy at least $c$.

For the last sentence, simply note that the symplectic form on a Stein manifold is exact, so if $N$ is a stable coisotropic submanifold of the Stein manifold $(S,\omega)$ then  $\omega|_N$ will be exact and so will vanish on $\pi_2(N)$.

\end{proof}

\end{document}